\newcommand{\dbtilde}[1]{\accentset{\approx}{#1}}
\newtheorem{theo}{Theorem}[section]
\newtheorem{thm}{Theorem}[section]
\newtheorem{prop}[theo]{Proposition}
\newtheorem{lemma}[theo]{Lemma}
\newtheorem{cor}[theo]{Corollary}
\newtheorem{question}[theo]{Question}
\newtheorem{defn}[theo]{Definition}
\newtheorem{conjecture}{Conjecture}  
\newtheorem{conj}[conjecture]{Conjecture}
\theoremstyle{remark}
\newtheorem{rmk}[theo]{Remark}
\newtheorem{example}[theo]{Example}
\newcommand{\BA}{{\mathbb{A}}}
\newcommand{\BC}{{\mathbb{C}}}
\newcommand{\BG}{{\mathbb{G}}}
\newcommand{\BH}{{\mathbb{H}}}
\newcommand{\BL}{{\mathbb{L}}}
\newcommand{\BN}{{\mathbb{N}}}
\newcommand{\BQ}{{\mathbb{Q}}}
\newcommand{\BR}{{\mathbb{R}}}
\newcommand{\BZ}{{\mathbb{Z}}}
\newcommand{\CA}{{\mathcal A}}
\newcommand{\CB}{{\mathcal B}}
\newcommand{\CC}{{\mathcal C}}
\newcommand{\CD}{{\mathcal D}}
\newcommand{\CL}{{\mathcal L}}
\newcommand{\CM}{{\mathcal M}}
\newcommand{\CN}{{\mathcal N}}
\newcommand{\CO}{{\mathcal O}}
\newcommand{\CP}{{\mathcal P}}
\newcommand{\CT}{{\mathcal T}}
\newcommand{\CU}{{\mathcal U}}
\newcommand{\CX}{{\mathcal X}}
\newcommand{\Fg}{{\mathfrak{g}}}
\newcommand{\Fq}{{\mathfrak{q}}}
\newcommand{\fq}{{\mathfrak{q}}}
\newcommand{\ch}{\mathsf{ch}}
\newcommand{\id}{\mathrm{id}}
\newcommand{\pt}{{\mathsf{p}}}
\newcommand{\blangle}{\big\langle}
\newcommand{\brangle}{\big\rangle}
\newcommand{\Mbar}{{\overline M}}
\newcommand\ev{\operatorname{ev}}
\newcommand{\Pic}{\mathop{\rm Pic}\nolimits}
\newcommand{\DT}{\mathsf{DT}}
\newcommand{\GW}{\mathsf{GW}}
\DeclareFontFamily{OT1}{rsfs}{}
\DeclareFontShape{OT1}{rsfs}{n}{it}{<-> rsfs10}{}
\DeclareMathAlphabet{\curly}{OT1}{rsfs}{n}{it}
\newcommand{\p}{\mathbb{P}}
\newcommand\Id{\operatorname{Id}}
\newcommand\Spec{\operatorname{Spec}}
\newcommand\Hilb{\operatorname{Hilb}}
\newcommand{\A}{\mathsf{A}}
\newcommand{\vd}{\mathsf{vd}}
\newcommand{\vir}{\mathsf{vir}}
\newcommand{\std}{\mathsf{std}}
\newcommand{\rel}{\mathrm{rel}}
\DeclareFontFamily{OT1}{rsfs}{}
\DeclareFontShape{OT1}{rsfs}{n}{it}{<-> rsfs10}{}
\DeclareMathAlphabet{\curly}{OT1}{rsfs}{n}{it}
\newcommand\End{\operatorname{End}}
\newcommand{\1}{v_{\varnothing}}
\newcommand{\Aut}{\mathrm{Aut}}
\newcommand{\DR}{\mathsf{DR}}
\newcommand{\QMod}{\mathsf{QMod}}
\newcommand{\Mod}{\mathsf{Mod}}
\newcommand{\wt}{{\mathsf{wt}}}
\DeclareMathOperator{\Wt}{\mathsf{WT}}
\DeclareMathOperator{\mon}{\mathrm{mon}}
\newcommand{\taut}{{\mathrm{taut}}}
\def\act{{\textup{act}}}
\newcommand{\Mon}{\mathsf{Mon}}
\newcommand{\SO}{\mathrm{SO}}
\newcommand{\so}{\mathfrak{so}}
\newcommand{\SL}{\mathrm{SL}}
\newcommand{\AHJ}{\mathsf{AHJac}}
\newcommand{\QJac}{\mathsf{QJac}}
\newcommand{\QJ}{\mathsf{QJac}}
\newcommand{\MQJac}{\mathsf{MQJac}}
\newcommand{\Jac}{\mathsf{Jac}}
\newcommand{\Z}{\mathsf{Z}}
\newcommand{\vacuum}{\1}
\title{Holomorphic anomaly equations for the Hilbert scheme of points of a K3 surface}
\author{Georg Oberdieck}
\date{}
\begin{document}
\maketitle


\setcounter{section}{0}

\begin{abstract}
We conjecture that the generating series of Gromov-Witten invariants of the Hilbert schemes of $n$ points on a K3 surface are
quasi-Jacobi forms and satisfy a holomorphic anomaly equation.
We prove the conjecture in genus $0$ and for at most $3$ markings -- for all Hilbert schemes and for arbitrary curve classes.
In particular, for fixed $n$,
the reduced quantum cohomologies of all hyperk\"ahler varieties of $K3^{[n]}$-type are determined up to finitely many coefficients.

As an application we show that the generating series of $2$-point Gromov-Witten classes are vector-valued Jacobi forms of weight $-10$,
and that the fiberwise Donaldson-Thomas partition functions of an order two CHL Calabi-Yau threefold are Jacobi forms.

\end{abstract}
\vspace{20pt}
\setcounter{tocdepth}{1}
\tableofcontents

\newpage
\section{Introduction}

\subsection{Overview}
An irreducible hyperk\"ahler variety is a simply-connected smooth projective
variety $X$ such that $H^0(X,\Omega_X^2)$ is generated by a holomorphic-symplectic form \cite{Beauville}.
A topological classification of these varieties is unknown so far.
However, among the 4 families of examples which are known,
the most studied case are the Hilbert schemes of points on a K3 surface and their deformations,
which are called of $K3^{[n]}$-type.
In this article we study the Gromov-Witten theory (the intersection theory of the moduli space
of stable maps) with target a hyperk\"ahler variety of $K3^{[n]}$-type.
We state two new fundamental conjectures: finite generation by quasi-Jacobi forms and holomorphic anomaly equations.
We prove this conjecture for the case of most interest: in genus $0$ and for up to three markings, with no restriction on the curve class.
As a corollary, the reduced quantum cohomology of a $K3^{[n]}$-hyperk\"ahler variety is determined up to finitely many coefficients.
We also find that the series of $2$-point Gromov-Witten classes define vector-valued Jacobi forms of weight $-10$.
This implies that the Donaldson-Thomas partition functions of CHL Calabi-Yau threefolds are Jacobi forms
proving conjectures of \cite{BO-CHL}.

Together with the multiple cover conjecture \cite{ObMC, QuasiK3} we obtain a complete conjectural
picture of the Gromov-Witten theory of $K3^{[n]}$-hyperk\"ahler varieties, which is proven for genus zero and up to three markings.

\subsection{Gromov-Witten theory}
Let $S^{[n]}$ be the Hilbert scheme of $n$ points on a smooth projective K3 surface $S$.
Let 
\[ \Mbar_{g,N}(S^{[n]},\beta+rA) \]
be the moduli space 
of $N$-marked genus $g$ stable maps to $S^{[n]}$ of
non-zero
degree 
\[ \beta + rA \in H_2(S^{[n]},\BZ) \cong H_2(S,\BZ) \oplus \BZ A, \]
where 
$A$ is the exceptional curve class.
Because $S^{[n]}$ 
is irreducible hyperk\"ahler, 
the virtual fundamental class of the moduli space of stable maps in the sense of \cite{LT,BF} vanishes.
Instead Gromov-Witten theory is defined by the reduced virtual class \cite{MP_GWNL,BL, KT1, KL}:
\[
\left[ \Mbar_{g,N}(S^{[n]},\beta+rA) \right]^{\vir} \in A_{\vd}(\Mbar_{g,N}(S^{[n]},\beta+rA)),
\quad \vd=2n(1-g)+N+1.
\]
The first values of the virtual dimension $\vd$ are listed in Table~\ref{table:virtual dimension} below.

If $2g-2+N>0$, 
let 
$\tau : \Mbar_{g,N}(S^{[n]},\beta+rA) \to \Mbar_{g,N}$
be the forgetful morphism
to the moduli space of stable curves. Consider the pullback of a {\em tautological class} \cite{FP}
\[ \taut := \tau^{\ast}(\alpha), \quad \alpha \in R^{\ast}(\Mbar_{g,N}). \] 
In the unstable cases $2g-2+N \leq 0$ we always set $\taut := 1$.
Given cohomology classes $\gamma_i \in H^{\ast}(S^{[n]})$ the reduced Gromov-Witten invariants of $S^{[n]}$ are defined by
\[
\blangle \taut ; \gamma_1, \ldots, \gamma_N \brangle^{S^{[n]}}_{g, \beta+rA}
=
\int_{[ \Mbar_{g,n}(S^{[n]}, \beta+rA) ]^{\vir}}
\taut \cup
\prod_{i=1}^{N} \ev_i^{\ast}(\gamma_i).
\]

\subsection{Generating series}
Consider an elliptic K3 surface $\pi : S \to \p^1$ with a section, and let
\[ B,F \in H_2(S,\BZ) \]
be the class of the section and a fiber of $\pi$ respectively.

By \cite[Cor.2]{ObMC} (based on the global Torelli theorem \cite{Ver,HuyTor}) 
for any hyperk\"ahler variety of $K3^{[n]}$-type $X$
and for any effective curve class $\gamma \in H_2(X,\BZ)$
there exists an $\ell \geq 1$ and a deformation 
\[ (X, \gamma) \rightsquigarrow (S^{[n]}, \ell B + dF + rA), \quad \quad (d \geq 0, r \in \BZ) \]
such that $\gamma$ is kept of Hodge type along the deformation. 
If $\gamma$ is primitive, we can choose $\ell=1$.
By deformation invariance,
it follows that all Gromov-Witten invariants of $K3^{[n]}$-type hyperk\"ahler varieties
are determined by the generating series:
\begin{equation} \label{Fgl} 
F^{S^{[n]}}_{g,\ell}(\taut; \gamma_1, \ldots, \gamma_N) = 
\sum_{d = -\ell}^{\infty} \sum_{r \in \BZ} \left\langle \taut ; \gamma_1, \ldots, \gamma_N \right\rangle^{S^{[n]}}_{g, \ell (B+F)+dF+rA} q^d (-p)^r.
\end{equation}
By convention we assume here that $r=0$ in case $n=1$. We will always assume that $n \geq 1$.

The series \eqref{Fgl} and in particular their modular properties are the main topic of this paper.
To state our main conjectures and results we require 
the Looijenga-Lunts-Verbitsky (LLV) Lie algebra and quasi-Jacobi forms.

\begin{table}
\begin{center}
{\renewcommand{\arraystretch}{1.1}\begin{tabular}{| c | c | c | c | c | c | c | c |}
\hline
   Genus $g$ & $0$ & $1$ & $2$ & $3$ & $4$ & $5$ & $6$ \\
\hline
$S^{[1]}$ & $0$ & $1$ & $2$ & $3$ & $4$ & $5$ & $6$ \\
$S^{[2]}$ & $2$ & $1$ & $0$ & & & & \\
$S^{[3]}$ & $4$ & $1$ &  &  & & & \\
$S^{[4]}$ & $6$ & $1$ &  &  &  & & \\
$S^{[5]}$ & $8$ & $1$ &  &  &  &  & \\
$S^{[6]}$ & $10$ & $1$ & &  &  &  &  \\
\hline
\end{tabular}}
\caption[]{The first non-negative values of the (reduced) virtual dimension of $\Mbar_{g,0}(S^{[n]},\beta+rA)$.
If a field is empty, all Gromov-Witten invariants in this genus vanish.
Hence for $S^{[n]}$ with $n>1$ the most interesting case is genus zero.
\label{table:virtual dimension}}
\end{center}
\end{table}

\subsection{Looijenga-Lunts-Verbitsky Lie algebra}
The LLV algebra \cite{LL, V} of the hyperk\"ahler variety $S^{[n]}$ is the Lie subalgebra
\[ \act : \Fg(S^{[n]}) \hookrightarrow \End H^{\ast}(S^{[n]}) \]
generated by the operators of cup product with classes in $H^2(S^{[n]},\BQ)$ as well as their Lefschetz duals (if they exist),
see Section~\ref{subsec:LLV algebra}.
Concretely, we have an isomorphism
\[ \Fg(S^{[n]}) \cong \wedge^2 (V \oplus U_{\BQ}) \]
where $U_{\BQ}$ is the hyperbolic lattice with basis $e,f$ and intersection form $\binom{0\ 1}{1\ 0}$, and
\[ V = H^2(S^{[n]},\BQ) \cong H^2(S,\BQ) \overset{\perp}{\oplus} \BQ \delta, \quad (\delta,\delta)=2-2n \]
is endowed with the Beauville-Bogomolov-Fujiki quadratic form.

We require the operators
\begin{equation} \label{LLV operators}
\begin{gathered}
U = \act(F \wedge f), \quad T_{\alpha} = \act(\alpha \wedge F), \quad \alpha \in \{ B, F \}^{\perp} \subset V  \\
\Wt = \act(e \wedge f + B \wedge F)
\end{gathered}
\end{equation}
The {\em weight operator} $\Wt \in \End H^{\ast}(S^{[n]})$ is semisimple and defines a grading:
\[ \Wt(\gamma) = \wt(\gamma) \gamma, \quad \wt(\gamma) \in \{ -n, \ldots, n \}. \]
For a class $\gamma \in H^{2i}(S^{[n]})$, the complex cohomological degree of $\gamma$ is denoted by $\deg(\gamma)=i$.

\subsection{Quasi-Jacobi forms}
Jacobi forms are holomorphic functions $f : \BC \times \BH \to \BC$
which satisfy a transformation law under the Jacobi group $\Gamma \ltimes \BZ^2$, where
$\Gamma \subset \SL_2(\BZ)$ is a congruence subgroup \cite{EZ}.
Quasi-Jacobi forms are constant terms of almost holomorphic Jacobi forms, see Section~\ref{sec:quasi-Jacobi forms}.
The algebra of quasi-Jacobi forms is bi-graded by weight $k$ and index $m$:
\[ \QJac(\Gamma) = \bigoplus_{m \geq 0} \bigoplus_{k \in \BZ} \QJac(\Gamma)_{k,m}. \]
The graded summands $\QJac(\Gamma)_{k,m}$ are finite-dimensional.
We usually identify a quasi-Jacobi form with its Fourier expansion
in the variables
\[ p = e^{2 \pi i x}, \quad q = e^{2 \pi i \tau}, \quad (x,\tau) \in \BC \times \BH. \] 

Recall that the algebra of quasi-modular forms $\QMod(\Gamma)$ is
a free polynomial ring over the subalgebra of its modular forms,
\[ \QMod(\Gamma) = \Mod(\Gamma)[ G_2 ] \]
where we used the second Eisenstein series
\[ G_2(\tau) = -\frac{1}{24} + \sum_{n \geq 1} \sum_{d|n} d q^n. \]
Similarly, for quasi-Jacobi forms we always have an embedding
\[ \QJac(\Gamma) \subset \Jac(\Gamma)[ G_2, \A ] \]
where $\Jac(\Gamma)$ is the algebra of weak Jacobi forms and
$\A$ is the logarithmic derivative
\[ \A(p,q) = p \frac{d}{dp} \log \Theta(p,q) \]
of the classical Jacobi theta function
\[
\Theta(p,q) 
=  (p^{1/2}-p^{-1/2})\prod_{m\geq 1} \frac{(1-pq^m)(1-p^{-1}q^m)}{(1-q^m)^{2}}.
\]

Since the generators $G_2$ and $\A$ are free over $\Jac$ one obtains
{\em anomaly operators}
\[ \frac{d}{d G_2} : \QJac(\Gamma)_{k,m} \to \QJac(\Gamma)_{k-2,m}, \quad \frac{d}{d \A} : \QJac(\Gamma)_{k,m} \to \QJac(\Gamma)_{k-1,m} \]
which control the transformation behavior of any quasi-Jacobi form under the Jacobi group.

\subsection{Main conjectures}
We state three fundamental conjectural properties of the series $F_{g,\ell}$.
The first expresses the series $F_{g,\ell}$ in terms of the series of primitive invariants $F_{g,1}$.
Consider the $\ell$-th formal Hecke operator
of weight $k$
which acts on power series $f = \sum_{d,r} c(d,r) q^d p^r$
by
\[ T_{k,\ell} f =
\sum_{n,r} \left( \sum_{a|(\ell,n,r)} a^{k-1} c\left( \frac{\ell n}{a^2}, \frac{r}{a} \right) \right) q^n p^r. \]

For $i \in \{ 1, \ldots, N \}$ let $\gamma_i \in H^{\ast}(S^{[n]})$ be $(\wt,\deg)$-bihomogeneous classes.

\begin{conj}[Multiple Cover Conjecture, {\cite[Sec.2.6]{ObMC}}] \label{MCConjecture}
For all $\ell>0$ we have
\begin{equation} 
F^{S^{[n]}}_{g,\ell}(\taut; \gamma_1, \ldots, \gamma_N) = 
\ell^{\sum_i (\deg(\gamma_i) - n - \wt(\gamma_i))}
T_{k,\ell} 
F^{S^{[n]}}_{g,1}(\taut; \gamma_1, \ldots, \gamma_N)
\label{ellmc} \end{equation}
where $k = n(2g-2+N) + \sum_i \wt(\gamma_i)$.
\end{conj}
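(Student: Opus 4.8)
The plan is to derive \eqref{ellmc} from two inputs: invariance of the reduced invariants under the monodromy group of $S^{[n]}$, and a geometric analysis of multiple covers in imprimitive classes. By the deformation invariance recalled above it suffices to argue on the fixed elliptic surface $\pi:S\to\p^1$, so that \eqref{ellmc} becomes a relation between the coefficient of $q^d p^r$ in $F^{S^{[n]}}_{g,\ell}$, namely the invariant attached to the class $\ell B+(\ell+d)F+rA$, and the primitive coefficients $c(d',r')$ of $F^{S^{[n]}}_{g,1}$, attached to $B+(1+d')F+r'A$.

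First I would pin down the monodromy action. After exponentiation the operators $U=\act(F\wedge f)$, $T_\alpha=\act(\alpha\wedge F)$ and $\Wt$ generate a subgroup of the monodromy group that acts simultaneously on $H^{\ast}(S^{[n]})$ and, through the induced action on $H_2$, on curve classes; the pair $(\wt(\gamma_i),\deg(\gamma_i))$ records precisely how each insertion transforms. Because reduced invariants are monodromy invariant, $\blangle\taut;\gamma_1,\ldots,\gamma_N\brangle^{S^{[n]}}_{g,C}$ depends only on the Beauville--Bogomolov square $(C,C)$, the divisibility $m=\gcd(\ell,d,r)$ of $C$, and the $(\wt,\deg)$-type of the insertions. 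A short computation gives $(C,C)=2\ell d+(A,A)r^2$, the $B$- and $F$-terms cancelling, so dividing $C$ by $a\mid m$ scales $(C,C)$ by $a^{-2}$; correspondingly $c(\ell d/a^2,r/a)$ is the primitive coefficient attached to a class of square $(C,C)/a^2$, matching the structure on the right of \eqref{ellmc}. Tracking the action of the same operators on $\taut$ and on the $\gamma_i$ under such a rescaling should account for the prefactor $\ell^{\sum_i(\deg(\gamma_i)-n-\wt(\gamma_i))}$ and identify $k=n(2g-2+N)+\sum_i\wt(\gamma_i)$ as the Jacobi weight of $F^{S^{[n]}}_{g,1}$.

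The decisive step is to produce the summation over $a\mid m$. Monodromy gives only that invariants of equal square and divisibility agree; it cannot relate different divisibilities, and in particular cannot by itself express a divisibility-$m$ invariant through primitive ($m=1$) invariants. To generate the sum I would stratify the stable maps in the class $\ell B+(\ell+d)F+rA$ by the divisibility $a$ of their image class and, via a degeneration adapted to the elliptic fibration $\pi$, arrange that the locus of $a$-fold covers is controlled by primitive curves of square $(C,C)/a^2$, matching its contribution---the automorphisms of the cover and the normalization of the reduced virtual class---to $a^{k-1}c(\ell d/a^2,r/a)$. A complementary route is a modularity bootstrap: if $F^{S^{[n]}}_{g,1}$ and $F^{S^{[n]}}_{g,\ell}$ are quasi-Jacobi forms of weight $k$, and $T_{k,\ell}$ raises the index by the factor $\ell$, then both sides of \eqref{ellmc} lie in a single finite-dimensional space, so the identity reduces to matching finitely many Fourier coefficients; those of divisibility $1$ coincide with the primitive invariants by monodromy and agree with the $a=1$ term of $T_{k,\ell}$.

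I expect this multiple-cover summation to be the main obstacle. It is the one place where genuinely new geometric input is needed: it asserts that the primitive-image contribution in an imprimitive class depends only on the square and not on the divisibility, and that covers enter with the precise weight $a^{k-1}$. Such a statement fails for the unreduced theory and is delicate even here---it is the reduced analogue of the Aspinwall--Morrison and Gopakumar--Vafa multiple-cover formulas---which is exactly why \eqref{ellmc} remains conjectural outside genus $0$ with few markings. The modular bootstrap trades this obstacle for the quasi-Jacobi property of every $F^{S^{[n]}}_{g,\ell}$, but establishing that property in imprimitive classes appears to require the same multiple-cover structure, so the two difficulties are entangled rather than independent.
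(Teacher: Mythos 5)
The statement you are trying to prove is Conjecture~\ref{MCConjecture}: it is an open conjecture, not a theorem of the paper. The paper offers no proof of it in this generality; the only established case, $g=0$ and $N\leq 3$ (Theorem~\ref{MCConjecture for g=0 N<=3}), is imported from \cite{QuasiK3}, where it is proven by Nesterov's quasi-map wall-crossing and a double-cosection argument on moduli of stable pairs --- neither of the two routes you propose. Your own text concedes that the ``decisive step'' (producing the Hecke sum over $a \mid \gcd(\ell,d,r)$) is exactly the missing geometric input, so what you have written is a plan with an acknowledged hole at its center, not a proof.

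Beyond that honest admission, both of your proposed routes to close the hole fail concretely. The geometric route --- stratifying $\Mbar_{g,N}(S^{[n]},\ell B+(\ell+d)F+rA)$ by the divisibility $a$ of the image class and assigning each stratum the contribution $a^{k-1}c(\ell d/a^2,r/a)$ --- is precisely the naive multiple-cover heuristic that does not interact with the (reduced) virtual class: the virtual class does not decompose along such strata, and no degeneration adapted to $\pi$ is known that makes it do so; this is why the conjecture is open even for $n=1$ beyond $\ell\leq 2$ \cite{BB}. The modularity bootstrap is circular relative to the logical structure of the paper: the quasi-Jacobi property of $F^{S^{[n]}}_{g,\ell}$ for $\ell>1$ (Conjecture~\ref{Conj: Quasi Jacobi property}) is \emph{deduced from} Conjecture~\ref{MCConjecture} in Proposition~\ref{prop:Implication} via Proposition~\ref{prop:Hecke wrong weight with poles}, so you may not assume it as an input. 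Even granting it, finite-dimensionality of $\frac{1}{\Delta^\ell}\QJac_{k+12\ell,\ell(n-1)}(\Gamma_0(\ell))$ plus agreement of the divisibility-one Fourier coefficients (which monodromy does give, since $\gcd(\ell,\ell+d,r)=\gcd(\ell,d,r)$) does not force the remaining coefficients to agree: distinct elements of that space can share all their primitive coefficients, and you give no argument ruling this out. Finally, your monodromy reduction is stated too loosely --- monodromy invariance transforms the insertions along with the curve class, as in the paper's Section on monodromies preserving Hodge type and in the equivalent formulation Conjecture~\ref{conj mc hilb} with the isometries $\varphi_k$; the invariant with \emph{fixed} insertions does not depend only on the square, divisibility and $(\wt,\deg)$-type, and making this precise is exactly what \cite[Lemma 3]{ObMC} (Lemma~\ref{lemma:k evals}'s companion, Lemma 10.4 in the paper) is for.
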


The second conjecture concerns the modular behaviour.
Define the modular discriminant
\[ \Delta(q) = q \prod_{n \geq 1} (1-q^n)^{24}, \]
which is a modular form for $\SL_2(\BZ)$ of weight $12$,
and the congruence subgroup 
\[ \Gamma_{0}(\ell) = \left\{ \binom{a\ b}{c\ d} \in \SL_2(\BZ) \middle| c \equiv 0 \text{ mod } \ell \right\}. \]

\begin{conj}[Quasi-Jacobi form Property] \label{Conj: Quasi Jacobi property} For all $\ell > 0$:
\[ F^{S^{[n]}}_{g,\ell}(\taut; \gamma_1, \ldots, \gamma_N) \in \frac{1}{\Delta(q)^{\ell}} \QJac_{k+12 \ell, \ell (n-1)}(\Gamma_0(\ell)), \]
where $k=n(2g-2+N) + \sum_i \wt(\gamma_i) - 10$.
\end{conj}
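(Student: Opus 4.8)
The plan is to reduce the statement to the primitive case $\ell = 1$ and to establish the modularity of the primitive series by combining the known modular behaviour of the reduced Gromov-Witten theory of the K3 surface $S$ with the symmetry encoded by the LLV algebra. First I would invoke Conjecture~\ref{MCConjecture}, which expresses $F^{S^{[n]}}_{g,\ell}$ as a scalar multiple of $T_{k_0,\ell} F^{S^{[n]}}_{g,1}$ with $k_0 = n(2g-2+N) + \sum_i \wt(\gamma_i)$. Granting the multiple cover formula, the statement splits into two independent parts: a purely modular assertion that the formal Hecke operator $T_{k_0,\ell}$ carries $\tfrac{1}{\Delta}\QJac_{k+12,\,n-1}(\SL_2(\BZ))$ into $\tfrac{1}{\Delta^{\ell}}\QJac_{k+12\ell,\,\ell(n-1)}(\Gamma_0(\ell))$; and the base case $\ell=1$, namely that $\Delta \cdot F^{S^{[n]}}_{g,1}$ is a quasi-Jacobi form of weight $k+12$ and index $n-1$ for the full modular group.

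For the Hecke step I would argue directly on Fourier expansions. Writing out $T_{k_0,\ell}$ as in its definition, the classical theory of Jacobi--Hecke operators shows that a form of index $m$ is sent to one of index $\ell m$ and that the level is raised to $\Gamma_0(\ell)$: the divisibility conditions $a \mid (\ell, n, r)$ produce the $\Gamma_0(\ell)$-transformation, while tracking the discriminant through the substitution $(d,s) \mapsto (\ell n / a^2,\, r/a)$ verifies the index multiplication. The two points requiring care are the $\Delta$-normalisation -- here I would use that $\Delta^{-1}$ is a weight $-12$ modular form and that the bookkeeping weight $k_0$ exceeds the true modular weight by the fixed shift $10$ -- and the passage from Jacobi to \emph{quasi}-Jacobi forms, which amounts to checking that the Hecke action intertwines correctly with the anomaly operators $\tfrac{d}{dG_2}$ and $\tfrac{d}{d\A}$. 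Since each graded piece $\QJac(\Gamma)_{k,m}$ is finite-dimensional, this is in the end a finite verification.

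The substance of the argument lies in the base case. In genus $0$ the series $F^{S^{[n]}}_{0,1}$ are governed by the reduced quantum cohomology of $S^{[n]}$, hence by the two- and three-point functions together with the WDVV associativity relations, and I would establish the modularity of these building blocks along two axes. The $q$-dependence, which records the fibre degree $d$ of a class $\ell(B+F)+dF+rA$, I would reduce to the reduced Gromov-Witten theory of the surface $S$ itself by degeneration and reduction to the theory of $S$; the resulting series are modular by the Katz--Klemm--Vafa formula and Noether--Lefschetz theory, the factor $\tfrac{1}{\Delta}$ entering as the generating function of rational curves on $S$. The $p$-dependence, recording the exceptional degree $r$ and hence the index, I would control through the LLV operators $U = \act(F \wedge f)$ and $T_\alpha = \act(\alpha \wedge F)$: monodromy equivariance of the reduced virtual class translates their action on $H^\ast(S^{[n]})$ into the elliptic transformation law in $p$, which is exactly the Jacobi behaviour of index $n-1$, consistent with $(\delta,\delta) = 2-2n$. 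Finitely many initial invariants, computable directly, then pin down the remaining coefficients inside the finite-dimensional target space.

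The hardest point is this base case, and within it the quasi-holomorphic contribution. That the invariants are modular up to corrections is comparatively soft; the delicate assertion is that these corrections are captured \emph{exactly} by the $G_2$- and $\A$-dependence prescribed by the holomorphic anomaly equation, equivalently that the WDVV relations are compatible with the anomaly operators acting on the quasi-Jacobi ring. It is this compatibility that confines $F^{S^{[n]}}_{0,1}$ to the predicted space rather than to a strictly larger one. Beyond genus $0$ and beyond three markings the three-point reduction no longer determines the series, and no comparable structural handle on the higher invariants is presently available -- which is precisely why the statement remains conjectural outside the range established in this paper.
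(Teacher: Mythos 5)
Your overall skeleton agrees with the paper's: reduce to the primitive case via the multiple cover conjecture together with a Hecke-operator analysis (Proposition~\ref{prop:Implication}), and prove the primitive case by combining quasi-modularity inherited from the K3 surface with the elliptic transformation law coming from monodromy and the LLV operators (Proposition~\ref{prop:quasi-modularity} and Proposition~\ref{prop:constrains monodromy}). However, two of your steps have genuine gaps.

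First, your explanation of where $\Gamma_0(\ell)$ comes from is wrong. You write that ``the classical theory of Jacobi--Hecke operators shows that \dots the level is raised to $\Gamma_0(\ell)$.'' It does not: the Eichler--Zagier Hecke operator, applied at the \emph{correct} weight, maps $\QJac_{k,m}(\SL_2(\BZ))$ to $\QJac_{k,m\ell}(\SL_2(\BZ))$ with no congruence subgroup appearing (Proposition~\ref{prop:Hecke quasi Jacobi}). The subgroup enters only because the multiple cover formula applies $T_{k_0,\ell}$ at the formal weight $k_0 = n(2g-2+N)+\sum_i \wt(\gamma_i)$, whereas the true modular weight of $F_{g,1}$ (after absorbing $\Delta^{-1}$) is $k_0-10$. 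Handling this wrong-weight Hecke operator is the actual content of the step: one decomposes $T_{k,\ell} = \sum_{e\cdot d=\ell} c_{k,k'}(e)\, B_e\, T_{k',d}$ by M\"obius inversion, where $(B_e f)(x,\tau)=f(ex,e\tau)$, and it is the scaling operators $B_e$ that introduce $\Gamma_0(e)\subset\Gamma_0(\ell)$ as well as the pole order $\Delta^{-\ell}$ (Lemma~\ref{lemma:Hecke shift}, Propositions~\ref{prop:Hecke wrong weight} and~\ref{prop:Hecke wrong weight with poles}). You do flag the weight shift of $10$ as needing care, but you then attribute the level structure to the wrong mechanism, and ``arguing directly on Fourier expansions'' cannot establish transformation behaviour under a congruence subgroup.

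Second, in the base case $\ell=1$ you never supply the bridge that gives access to the $q$-direction. ``Degeneration and reduction to the theory of $S$,'' supported by the KKV formula and Noether--Lefschetz theory, is not such a bridge: there is no degeneration of $S^{[n]}$ to the K3 surface. The ingredient the proof actually requires is the GW/Hilb correspondence (Theorem~\ref{thm:GW/Hilb}): Nesterov's quasimap wall-crossing together with the GW/PT correspondence for $K3\times\p^1$ in classes primitive over $S$, which identifies the genus-$0$ three-point series of $S^{[n]}$ under $p=e^z$ with relative Gromov--Witten series of $(S\times\p^1,S_{0,1,\infty})$; the product formula then reduces these to the quasi-modularity of the elliptic K3 surface (Theorem~\ref{thm:MPT}), showing each $z^r$-coefficient is quasi-modular of a prescribed weight. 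Only then do the monodromy constraints (your $p$-direction, which you identify correctly) combine with this via the structural lemmas on $z$-expansions and elliptic transformation laws to force membership in $\QJac_{k+12,n-1}$. Two further misconceptions in your closing paragraph: WDVV plays no role (the reduction from $N\le 2$ to $N=3$ is by the divisor equation, and $N=3$ is handled directly), and the quasi-Jacobi property does not hinge on compatibility of WDVV with the anomaly operators --- the $\frac{d}{d\A}$-equation is proven simultaneously with the monodromy constraints by induction on the weight of the insertions, while the $\frac{d}{dG_2}$-equation is a separate statement (Conjecture~\ref{Conj:HAE}) that is not needed for Conjecture~\ref{Conj: Quasi Jacobi property}.
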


The difference in the values of $k$ in Conjectures~\ref{MCConjecture} and~\ref{Conj: Quasi Jacobi property}
was explained in \cite[Sec.7.3]{OPix2}.
It is responsible for the appearence of the congruence subgroup $\Gamma_0(\ell)$,
and also leads to the unusual $4$-th term in the holomorphic anomaly equation for $\frac{d}{d G_2}$ below.

Conjecture~\ref{Conj: Quasi Jacobi property} would determine any $F_{g,\ell}(\cdots)$ up to finitely many coefficients.
However, in order to know their transformation property under the Jacobi group
and also to make them depend on substantially less coefficients,
we will conjecture their dependence on the quasi-Jacobi generators $G_2$ and $\A$:

For $2g-2+N>0$ define the degree $0$ Gromov-Witten invariants
\[
F_{g}^{S^{[n]},\std}(\taut ; \gamma_1, \ldots, \gamma_N) := \int_{[ \Mbar_{g,N}(S^{[n]},0) ]^{\std}} \tau^{\ast}(\taut) \prod_{i=1}^{N} \ev_i^{\ast}(\gamma_i),
\]
where we let $[ \ldots ]^{\std}$ denote the standard (non-reduced!) virtual class
in the sense of \cite{LT,BF}.
Explicit formulas are given in \eqref{degree zero virtual class} below.

\begin{conj}[Holomorphic Anomaly Equation]\label{Conj:HAE}
 Assume Conjecture~\ref{Conj: Quasi Jacobi property}. 
We have
\begin{align*}
\frac{d}{d G_2} F^{S^{[n]}}_{g,\ell}(\taut; \gamma_1, \ldots, \gamma_N)
= & F^{S^{[n]}}_{g-1,\ell}(\taut'; \gamma_1, \ldots, \gamma_N, U) \\
& + 2 \sum_{\substack{g=g_1 + g_2 \\ \{ 1, \ldots, N \} = A \sqcup B }} F^{S^{[n]}}_{g_1,\ell}(\taut_1 ; \gamma_A, U_1) F_{g_2}^{S^{[n]},\std}(\taut_2 ; \gamma_B , U_2 ) \\
& - 2 \sum_{i=1}^{N} F^{S^{[n]}}_{g,\ell}( \psi_i \cdot \taut ; \gamma_1, \ldots, \gamma_{i-1}, U \gamma_i, \gamma_{i+1}, \ldots, \gamma_N) \\
& - \frac{1}{\ell} \sum_{a,b} (g^{-1})_{ab} T_{e_a} T_{e_b} F^{S^{[n]}}_{g,\ell}(\taut; \gamma_1, \ldots, \gamma_N)
\end{align*}
and
\[ \frac{d}{d \A} F^{S^{[n]}}_{g,\ell}(\taut; \gamma_1, \ldots, \gamma_N) = T_{\delta} F^{S^{[n]}}_{g,\ell}(\taut; \gamma_1, \ldots, \gamma_N) \]
where
\begin{itemize}
\item we have identified the operator $U \in \End H^{\ast}(S^{[n]})$ with the class
\[ U \in H^{\ast}(S^{[n]} \otimes S^{[n]}) \]
using Poincar\'e duality and the conventions of Section~\ref{subsec:Conventions},
\item $U_1, U_2$ stands for summing over the K\"unneth decomposition of $U \in H^{\ast}((S^{[n]})^2)$,
\item the $e_a$ form a basis of $\{ F, B \}^{\perp} \subset H^2(S,\BQ)$ and
$g_{ab} = \langle e_a, e_b \rangle$ is the pairing matrix,
\item for any $\alpha \in \{ B, F \}^{\perp} \subset V  $ we set
\begin{equation} \label{T alpha action}
T_{\alpha} F^{S^{[n]}}_{g, \ell}(\taut; \gamma_1, \ldots, \gamma_N)
:= \sum_{i=1}^{N} F^{S^{[n]}}_{g,\ell}(\taut; \gamma_1, \ldots, \gamma_{i-1}, T_{\alpha} \gamma_i, \gamma_{i+1}, \ldots,  \gamma_N),
\end{equation}
\item in the stable case, where $\taut = \tau^{\ast}(\alpha)$, we let $\taut':=\tau^{\ast} \iota^{\ast}(\alpha)$
where $\iota : \Mbar_{g-1,N+2} \to \Mbar_{g,N}$ is the gluing map, in the unstable case, where $\taut=1$, we set $\taut':=1$,
\item $\taut_1, \taut_2$ stands for summing over the K\"unneth decomposition of
$\xi^{\ast}(\taut)$ where $\xi$ is the gluing map
\[ \xi : \Mbar_{g_1,|A|+1}(S^{[n]},\beta+rA) \times \Mbar_{g_2,|B|+1} \to \Mbar_{g,N}(S^{[n]},\beta+rA), \]
\item we let $\psi_i \in H^2(\Mbar_{g,N}(S^{[n]},\beta+rA))$ be the cotangent line class at the $i$-th marking.
\end{itemize}
\end{conj}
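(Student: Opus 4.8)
I will treat the statement in the range in which it is a theorem, namely genus $g=0$ with $N\le 3$ markings; the general case remains the conjecture. Throughout I assume Conjecture~\ref{Conj: Quasi Jacobi property}, so that each $F^{S^{[n]}}_{g,\ell}$ is (after multiplication by $\Delta^\ell$) a quasi-Jacobi form and the operators $\frac{d}{dG_2}$, $\frac{d}{d\A}$ are defined on it. The plan is first to reduce to the primitive case $\ell=1$, and then to prove the two equations separately: the $\frac{d}{d\A}$-equation by a monodromy argument, and the $\frac{d}{dG_2}$-equation by a degeneration of the elliptic fibre.

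For the reduction, Conjecture~\ref{MCConjecture} writes $F^{S^{[n]}}_{g,\ell}$ as $T_{k,\ell}$ applied to $F^{S^{[n]}}_{g,1}$ up to an explicit power of $\ell$. Since $\frac{d}{dG_2}$ and $\frac{d}{d\A}$ are defined through the polynomial generators $G_2,\A$ over the ring of weak Jacobi forms, propagating the $\ell=1$ equations to general $\ell$ amounts to computing the commutators of these operators with the Hecke operator $T_{k,\ell}$ inside $\QJac(\Gamma_0(\ell))$. I expect the $\tfrac1\ell$-prefactors, the level $\Gamma_0(\ell)$, and the quadratic correction $-\tfrac1\ell\sum_{a,b}(g^{-1})_{ab}T_{e_a}T_{e_b}$ to be exactly what forces these commutators to close; this step is a computation internal to the theory of quasi-Jacobi forms and Hecke operators, and I would carry it out first.

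For the elliptic anomaly I read $\frac{d}{d\A}$ as the infinitesimal failure of $F^{S^{[n]}}_{g,\ell}$ to be elliptic, i.e.\ to transform correctly under translations of the Jacobi variable $x$ (where $p=e^{2\pi i x}$ records the exceptional degree $r$). These translations are realized by an element of the monodromy group $\Mon$ of the family of Hilbert schemes whose infinitesimal action on $H^{\ast}(S^{[n]})$ is exactly $T_\delta=\act(\delta\wedge F)$. By deformation and monodromy invariance of the reduced invariants, this forces $\frac{d}{d\A}F^{S^{[n]}}_{g,\ell}$ to agree with the insertion of $T_\delta$ at the markings in the sense of~\eqref{T alpha action}; concretely I would verify the identity by comparing the transformation of the generators $G_2,\A$ under the elliptic part of the Jacobi group with the LLV action on the insertions $\gamma_i$, coefficient by coefficient.

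The modular anomaly $\frac{d}{dG_2}$ is the crux and I expect it to be the main obstacle. Here the failure of modularity in $\tau$ is governed by the degeneration of the elliptic fibre $F$ of $S$ into a nodal rational curve, and the right-hand side is precisely the Gromov--Witten degeneration contribution of that node: a separating node yields the splitting product $\sum F^{S^{[n]}}_{g_1,\ell}(\ldots,U_1)\,F^{S^{[n]},\std}_{g_2}(\ldots,U_2)$, a non-separating node yields the genus-lowering term $F^{S^{[n]}}_{g-1,\ell}(\ldots,U)$, and in both cases the vanishing cycle is inserted as the LLV class $U=\act(F\wedge f)$. The $\psi_i$-term and the $T_{e_a}T_{e_b}$-term are the corrections coming, respectively, from the discrepancy between the reduced and the standard obstruction theories and from the interaction of the section class with the fibre. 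In genus $0$ the term $F^{S^{[n]}}_{g-1,\ell}$ drops out ($g-1<0$), leaving only the splitting, $\psi$, and $T_{e_a}T_{e_b}$ contributions. The hard part is to make ``degeneration of the elliptic fibre'' rigorous at the level of the \emph{reduced} virtual class and to match it term by term with $\frac{d}{dG_2}$ of the quasi-Jacobi form; I would do this in the provable range by combining the explicit formulas for the genus $0$, $N\le 3$ invariants obtained while proving Conjecture~\ref{Conj: Quasi Jacobi property} with a direct computation of their $G_2$-derivatives, checking each boundary term against the geometric splitting.
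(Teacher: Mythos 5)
Your plan agrees with the paper in two of its three parts. The paper likewise deduces the imprimitive case from $\ell=1$ using the multiple cover conjecture together with a study of ``wrong-weight'' Hecke operators on quasi-Jacobi forms (Propositions~\ref{prop:Hecke wrong weight} and~\ref{prop:Hecke wrong weight with poles}, applied in Proposition~\ref{prop:Implication}); the $\tfrac1\ell$-prefactor and the level $\Gamma_0(\ell)$ indeed come out of exactly the commutator computation you anticipate. It also derives the $\frac{d}{d\A}$-equation from monodromy: the shift $e^{\lambda T_\delta}$ is shown to be a monodromy operator of $S^{[n]}$, and deformation invariance plus an induction on $\sum_i\wt(\gamma_i)$ yields $\frac{d}{d\A}F=T_\delta F$ (Proposition~\ref{prop:constrains monodromy}). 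Up to here your proposal is sound and essentially the paper's argument.

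The gap is in the $\frac{d}{dG_2}$-equation, which you rightly call the crux but for which your mechanism would not work. There is no degeneration formula, for the reduced class or otherwise, in which ``degenerating the elliptic fibre $F$ of $S$ to a nodal rational curve'' outputs the $G_2$-derivative of the generating series; the separating/non-separating node picture is a heuristic reading of the \emph{shape} of the right-hand side, not a proof device, and your fallback (``direct computation of the $G_2$-derivatives of the explicit genus $0$, $N\le 3$ formulas'') is not a proof either. What the paper actually does is route the problem through the GW/Hilb correspondence, which your proposal never invokes: Nesterov's quasi-map wall-crossing combined with the GW/PT correspondence for $K3\times\p^1$ in primitive classes (Theorem~\ref{thm:GW/Hilb}) converts the genus-$0$, $3$-pointed invariants of $S^{[n]}$ into \emph{all-genus} relative Gromov--Witten invariants of $(S\times\p^1,S_{0,1,\infty})$; the product formula reduces these to the elliptic K3 surface $S$, where quasi-modularity and the all-genus holomorphic anomaly equation of~\cite{HAE} are known; these are lifted to the relative disconnected setting (Theorem~\ref{thm:rel HAE}) via a boundary-restriction formula for relative GW classes (Proposition~\ref{prop:restriction to boundary}); finally the resulting terms are matched one by one against the conjectured right-hand side under $p=e^z$, using $\bigl(\tfrac{d}{dG_2}\bigr)_z=\tfrac{d}{dG_2}-2z\tfrac{d}{d\A}-2z^2(n-1)$. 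This correspondence also defeats a structural simplification you rely on: under GW/Hilb the $A$-degree variable $p$ on the Hilbert scheme side corresponds to the genus parameter on the $S\times\p^1$ side, so $F^{S^{[n]}}_{0,1}$ encodes relative invariants of \emph{all} genera and no genus-$0$-only computation can suffice. Relatedly, while $F^{S^{[n]}}_{g-1,\ell}$ does drop from the conjecture's right-hand side when $g=0$, the corresponding genus-lowering term on the relative side does \emph{not} drop --- it produces the index term $(2-2n)z^2F_0$ --- so the expected simplification in genus $0$ does not occur.
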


Conjecture~\ref{Conj:HAE} determines any $F_{g,\ell}$ up to a finite list of coefficients,
where the list is sufficiently short for this to be actually useful in applications.
For example, the conjecture determines all Gromov-Witten invariants of $S^{[2]}$
from seven elementary computations, see \cite{COT} where this leads to a Yau-Zaslow type formula for the counts of genus $2$ curves on hyperk\"ahler fourfolds of $K3^{[2]}$-type.

For K3 surfaces (the case of the Hilbert scheme of $n=1$ points) the above conjectures are well-known.
In this case, Conjecture~\ref{MCConjecture} was made in \cite{K3xE}, and
Conjecture~\ref{Conj: Quasi Jacobi property} reduces to the prediction
of Maulik, Pandharipande and Thomas that the series $F_{g,\ell}$ are quasi-modular forms for $\Gamma_0(\ell)$, see \cite{MPT}.
The holomorphic anomaly equation (Conjecture~\ref{Conj:HAE}) was proven in \cite{HAE} for $\ell=1$ and then conjectured in \cite{BB} for arbitrary $\ell$.
There is also sufficient evidence:
%

\begin{thm}[\cite{MPT, HAE, BB}]
For $S^{[1]} \cong S$, the above conjectures hold for all $g,N$ and $\ell \in \{ 1,2 \}$.
\end{thm}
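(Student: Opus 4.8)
The plan is to specialize all three conjectures to $n=1$ and to match each resulting statement with a theorem already in the literature. First I would record the simplifications that occur for $n=1$: one has $S^{[1]}\cong S$, there is no exceptional curve class (so $r=0$ and the variable $p$ never appears), the half-exceptional class $\delta$, for which $(\delta,\delta)=2-2n=0$, is absent (equivalently $\delta=0$), and the predicted index $\ell(n-1)$ vanishes. Consequently each series $F^{S}_{g,\ell}(\taut;\gamma_1,\ldots,\gamma_N)$ is a power series in $q$ alone, and a quasi-Jacobi form of index $0$ is just a quasi-modular form, so that $\QJac_{k',0}(\Gamma_0(\ell))=\QMod_{k'}(\Gamma_0(\ell))$.

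With this dictionary, Conjecture~\ref{Conj: Quasi Jacobi property} asserts for $n=1$ that
\[ F^{S}_{g,\ell}(\taut;\gamma_1,\ldots,\gamma_N)\in \frac{1}{\Delta(q)^{\ell}}\,\QMod_{k+12\ell}(\Gamma_0(\ell)),\qquad k=(2g-2+N)+\textstyle\sum_i \wt(\gamma_i)-10. \]
For $\ell=1$ this is exactly the quasi-modularity theorem of Maulik--Pandharipande--Thomas \cite{MPT} for the reduced Gromov--Witten invariants of a K3 surface in a primitive class, once the weight normalization (the factor $\Delta^{-1}$ and the shift by $-10$) is matched with their conventions; for $\ell=2$ it is the divisibility-two extension of Bae--Buelles \cite{BB}. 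The only genuine check is that the weight operator $\Wt$ for $n=1$ assigns $\wt(\gamma)\in\{-1,0,1\}$ on $H^0,H^2,H^4$, so that $k$ computed from the LLV grading agrees with the cohomological weight in \cite{MPT,BB}.

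Next I would treat Conjecture~\ref{MCConjecture}: for $n=1$ the prefactor $\ell^{\sum_i(\deg(\gamma_i)-1-\wt(\gamma_i))}$ is pinned down by the bidegrees of the insertions, and the content is that the divisibility-$\ell$ series is the Hecke transform $T_{k,\ell}$ of the primitive one. For $\ell=2$ this is precisely the expression of the imprimitive invariants through the primitive ones in \cite{BB}, while $\ell=1$ is a tautology. For Conjecture~\ref{Conj:HAE}, the $\tfrac{d}{d\A}$-equation is vacuous when $n=1$: since $\delta=0$ we have $T_\delta\equiv 0$, consistent with the absence of any $\A$-dependence at index $0$. The $\tfrac{d}{dG_2}$-equation reduces, upon identifying $U=\act(F\wedge f)$ with its action on $H^{\ast}(S)$ and the associated K\"unneth class, to the genus-$g$ holomorphic anomaly equation for K3 surfaces conjectured in \cite{BB} and established in \cite{HAE}.

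The main obstacle is not new geometric input---all of it resides in \cite{MPT,HAE,BB}---but the careful reconciliation of conventions so that the specialized conjectures match the cited theorems verbatim. The most delicate point is the fourth, Hecke-type term $-\tfrac{1}{\ell}\sum_{a,b}(g^{-1})_{ab}\,T_{e_a}T_{e_b}\,F^{S}_{g,\ell}$ of the $\tfrac{d}{dG_2}$-equation and its role in producing the subgroup $\Gamma_0(\ell)$: one must verify that for $n=1$ the operators $T_{e_a}=\act(e_a\wedge F)$ and the pairing matrix $g_{ab}=\langle e_a,e_b\rangle$ on $\{F,W\}^{\perp}\subset H^2(S,\BQ)$ are normalized so that this term agrees with the corresponding correction in the K3 anomaly equation of \cite{HAE}, and that the degree-zero series $F^{S,\std}_{g}$ and the $\psi_i$-term coincide in the two formulations.
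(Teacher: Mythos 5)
Your proposal is correct and is essentially the paper's own argument: the theorem is stated there purely as a citation of \cite{MPT}, \cite{HAE} and \cite{BB}, and its content is exactly the specialization-and-matching you carry out, where for $n=1$ the exceptional class, the variable $p$, the class $\delta$ (hence $T_{\delta}$) and the index $\ell(n-1)$ all vanish. With that dictionary, Conjecture~\ref{Conj: Quasi Jacobi property} becomes the quasi-modularity statements of \cite{MPT} ($\ell=1$) and \cite{BB} ($\ell=2$), Conjecture~\ref{MCConjecture} is trivial for $\ell=1$ and is the main theorem of \cite{BB} for $\ell=2$, and Conjecture~\ref{Conj:HAE} reduces (using that for $n=1$ the operator $U$ coincides with $\pi^{\ast}\pi_{\ast}$, i.e.\ the class $\Delta_B$) to the K3 holomorphic anomaly equation proven in \cite{HAE} and extended to divisibility two in \cite{BB}.
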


For Hilbert schemes of points $S^{[n]}$ with $n>1$
Conjecture~\ref{MCConjecture} was proposed in \cite{ObMC}
based on computations using Noether-Lefschetz theory.
Since then the following strong evidence for all $n \geq 1$ was given:

\begin{thm}[{\cite[Thm.1.4]{QuasiK3}}]\label{MCConjecture for g=0 N<=3}
 Conjecture~\ref{MCConjecture} holds for $g=0$ and $N \leq 3$ markings. \end{thm}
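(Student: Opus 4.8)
The plan is to identify the invariants $F^{S^{[n]}}_{0,\ell}$ with structure constants of the small quantum cohomology of $S^{[n]}$ and to deduce the multiple cover formula from its two geometric sources: the reduced genus $0$ theory of the K3 surface $S$, which governs the fibre direction recorded by $q$, and the local geometry of the exceptional rational curves, which governs the exceptional direction recorded by $p$. Since $2g-2+N \leq 1$ when $g=0$ and $N \leq 3$, the tautological class is $\taut = 1$ throughout; the cases $N \in \{0,1\}$ vanish for dimension reasons, so the genuine content is the three-point function $F^{S^{[n]}}_{0,\ell}(1;\gamma_1,\gamma_2,\gamma_3)$, equivalently the quantum product on $H^{\ast}(S^{[n]})$.

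First I would fix the geometry. By deformation invariance and the global Torelli theorem it suffices to work with an elliptic K3 surface $\pi : S \to \p^1$ with a section and curve classes $\ell(B+F)+dF+rA$. I would then normalise the insertions: the divisor equation removes a divisor insertion $D$ at the cost of the factor $D \cdot (\ell(B+F)+dF+rA)$, which is linear in $(d,r)$ and therefore intertwines cleanly with the Hecke operator; and the LLV operators $T_{\alpha}$ and $U$ of \eqref{LLV operators}, together with the monodromy of the family, relate insertions of equal bidegree $(\wt,\deg)$. This reduces the problem to $\gamma_i$ drawn from a fixed finite set of $(\wt,\deg)$-bihomogeneous generators.

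The heart of the matter is to express quantum multiplication by a divisor on $S^{[n]}$ as an explicit operator on the Fock space $\bigoplus_{m \geq 0} H^{\ast}(S^{[m]})$. There are two types of divisor. For a class pulled back from $H^2(S,\BQ)$, the purely quantum part of the multiplication operator is a sum of Nakajima operators whose structure constants are exactly the reduced genus $0$ Gromov--Witten invariants of $S$; these account for the $q$-dependence coming from the fibre class $F$. For the exceptional divisor $\delta$, the operator is governed by the genus $0$ multiple cover contributions of the exceptional $\p^1$'s; these account for the $p$-dependence coming from the exceptional class $A$. The divisor equation together with the WDVV equations then reduces every genus $0$ two- and three-point function to these two operators and classical cup products, so it suffices to prove the multiple cover formula for the operators themselves. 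In the $q$-direction this is the multiple cover formula for the K3 surface $S$ (the case $n=1$, which is known, cf. \cite{MPT,HAE,BB}), and in the $p$-direction it is the elementary multiple cover rule for a rational curve; the two-variable Hecke operator $T_{k,\ell}$ of Conjecture~\ref{MCConjecture} is precisely the simultaneous application of these two rules.

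The main obstacle will be the combination of the two directions into a single operator $T_{k,\ell}$ carrying the correct weight and the correct power of $\ell$. Matching the weight $k = n(N-2) + \sum_i \wt(\gamma_i)$ and the prefactor $\ell^{\sum_i(\deg(\gamma_i)-n-\wt(\gamma_i))}$ requires tracking how the index $\ell(n-1)$ of Conjecture~\ref{Conj: Quasi Jacobi property} enters through the Beauville--Bogomolov norm $(\delta,\delta)=2-2n$ and how the semisimple weight grading $\Wt$ interacts with the Fock-space grading. In particular I expect the delicate point to be showing that the joint cover combinatorics in the variables $(q,p)$ produce exactly the divisor sum $\sum_{a \mid (\ell,n,r)} a^{k-1}$ — with the common divisibility condition linking $\ell$, the fibre degree and the exceptional degree, and with the precise exponent $k-1$ — rather than a product of two independent one-variable sums. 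Resolving this bookkeeping, using the weight shift explained after Conjecture~\ref{Conj: Quasi Jacobi property}, is what upgrades the two separate multiple cover rules to the stated formula.
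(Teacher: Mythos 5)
Your proposal diverges from how this statement is actually established, and it contains gaps that I do not think can be repaired along the lines you sketch. First, on provenance: the paper does not prove this theorem at all — it is imported verbatim from \cite[Thm.1.4]{QuasiK3}. The proof there runs through Nesterov's quasimap wall-crossing \cite{N1,N2}, which identifies the genus~$0$ invariants of $S^{[n]}$ with relative stable-pairs/Quot-scheme invariants of $S\times \p^1$, and the multiple cover formula is then proven on that side using the double-cosection and degeneration techniques of \cite{PT}. No quantum-cohomology formalism (WDVV, divisor operators on Fock space) enters anywhere.

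Second, on substance, there are three genuine gaps. (i) Your reduction ``divisor equation plus WDVV reduces every two- and three-point function to the two divisor operators and classical cup products'' fails for $n\geq 3$: associativity only determines quantum products of classes lying in the subring generated by $H^2(S^{[n]})$, and $H^{\ast}(S^{[n]})$ is not generated by divisors — for instance $b_4(S^{[3]})=299$ while $\dim \mathrm{Sym}^2 H^2(S^{[3]})=276$, so already $H^4(S^{[3]})$ contains classes invisible to any divisor-generated reduction. Since Conjecture~\ref{MCConjecture} concerns arbitrary bihomogeneous insertions $\gamma_i$, whose weights $\wt(\gamma_i)$ and degrees $\deg(\gamma_i)$ enter the exponents of the formula, this is fatal rather than cosmetic. (ii) The key input you take as available — an explicit Nakajima-operator formula for reduced quantum multiplication by divisors on $S^{[n]}$, with structure constants given by genus~$0$ invariants of $S$ in the $q$-direction and exceptional-curve multiple covers in the $p$-direction — is a theorem for $\Hilb^n(\BC^2)$ \cite{OkPandHilbC2} and for $A_n$-resolutions \cite{MO}, but for K3 surfaces it is an open conjecture (essentially that of \cite{HilbK3}, refined in \cite{vIOP}); indeed the results of the present paper are offered as \emph{evidence} for it, so assuming it here is circular. (iii) The point you defer to ``bookkeeping'' — that two independent one-variable cover rules combine into the single Hecke sum $\sum_{a\mid(\ell,n,r)}a^{k-1}$ with \emph{joint} divisibility in $\ell$, the fibre degree and the exceptional degree, and with weight $k$ depending on the $\wt(\gamma_i)$ — is precisely the content of the theorem; in \cite{QuasiK3} it emerges from the geometry of wall-crossing and deformation of K3 geometries (in the form of Conjecture~\ref{conj mc hilb}, via real isometries moving $\beta/k$ to a primitive class), not from combinatorics of covers, and nothing in your setup forces the coupled divisibility condition rather than a product of two separate divisor sums.
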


The quasi-Jacobi form property (Conjecture~\ref{Conj: Quasi Jacobi property})
appeared in an early form already in \cite[Conj.J]{HilbK3},
where it was stated in genus $0$ for primitive classes.
On the other hand, the holomorphic anomaly equation (Conjecture~\ref{Conj:HAE}) is new and a main result of this paper.

Holomorphic anomaly equations are predicted for the Gromov-Witten theory of Calabi-Yau manifolds by string theory \cite{BCOV}.
In the last years, this structure was proven in various geometries,
such as for elliptic orbifold projective lines \cite{MRS}, 
elliptic curves \cite{HAE}, formal elliptic curves \cite{Wang2},
local $\p^2$ \cite{LP1,CI}, local $\p^1 \times \p^1$ \cite{Lho1,Wang1}
relative $(\p^2,E)$ \cite{BFGW},
$\BC^3/\BZ_3$ \cite{LP2,CI},
toric Calabi-Yau 3-folds \cite{Eynard1,Eynard2,Fang1,Fang2},
the formal quintic 3-fold \cite{LP3}, the quintic 3-fold \cite{GJR,CGL},
and (partially) elliptic fibrations \cite{OPix2} and K3 fibrations \cite{Lho2}.
Conjecture~\ref{Conj:HAE} is maybe the first instance where a general holomorphic anomaly equation is considered in higher dimensions.
The interaction here with the LLV Lie algebra is a new phenomenon that needs further exploration.
E.g., are there connections with the Lie algebra which appears in \cite{AMS}?

\subsection{Main results}
The main result of this paper is the following.
\begin{thm} \label{thm:Main result}
For all Hilbert schemes of points $S^{[n]}$ (i.e. any $n \geq 1$),
Conjecture~\ref{Conj: Quasi Jacobi property} and Conjecture~\ref{Conj:HAE} holds for $g=0$ and $N \leq 3$ markings.
\end{thm}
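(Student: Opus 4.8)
The plan is to first reduce the general level $\ell$ to the primitive case $\ell = 1$, and then to prove the two conjectures directly for primitive classes. By Theorem~\ref{MCConjecture for g=0 N<=3} (the Multiple Cover Conjecture in genus $0$ with $N \leq 3$), every series $F^{S^{[n]}}_{0,\ell}$ is obtained from $F^{S^{[n]}}_{0,1}$ by applying the Hecke operator $T_{k',\ell}$, with $k' = n(N-2) + \sum_i \wt(\gamma_i)$, up to the explicit power of $\ell$. It therefore suffices to (i) prove Conjectures~\ref{Conj: Quasi Jacobi property} and~\ref{Conj:HAE} for $\ell = 1$, where $\Gamma_0(1) = \SL_2(\BZ)$ and $T_{k',1} = \id$, and (ii) show that $T_{k',\ell}$ carries $\frac{1}{\Delta}\QJac(\SL_2(\BZ))$ into $\frac{1}{\Delta^{\ell}}\QJac(\Gamma_0(\ell))$ with the stated weight and index, and intertwines with the derivations $\frac{d}{dG_2}$ and $\frac{d}{d\A}$ up to the correction terms. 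Step (ii) is a formal computation in the theory of quasi-Jacobi forms and Hecke operators; the discrepancy between the weights $k'$ and $k = k'-10$ is precisely what produces the level $\Gamma_0(\ell)$ and the fourth ($\ell^{-1}$) term in the anomaly equation, exactly as analysed in~\cite[Sec.7.3]{OPix2}. I would place this in the dedicated section on quasi-Jacobi forms.

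It remains to treat $\ell = 1$. Since $\Mbar_{0,N}$ is either unstable or a point for $N \leq 3$, we have $\taut = 1$ throughout, and comparing the complex degree $\sum_i \deg(\gamma_i) \leq 2nN$ of the integrand with the virtual dimension $2n + N + 1$ shows that $F^{S^{[n]}}_{0,1}(\gamma_1,\dots,\gamma_N)$ vanishes unless $N \in \{2,3\}$ and $\sum_i \deg(\gamma_i) = 2n + N + 1$. Thus only the $2$-point and $3$-point functions are nontrivial. For the quasi-Jacobi property I would establish the a priori modular structure geometrically: the $q$-direction (fiber class $F$) carries the quasi-modularity of the reduced Gromov--Witten theory of the K3 surface, transported to $S^{[n]}$ by degenerating the elliptic fibration and analysing curves in the fiber and exceptional directions as in~\cite{MPT}; the $p$-direction (exceptional class $A$) contributes the elliptic Jacobi variable, and the index $n-1$ attached to the exceptional direction, together with the hyperbolic pairing on $\langle B, F \rangle$, produces the theta decomposition. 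Combined with the finite-dimensionality of $\frac{1}{\Delta}\QJac_{k+12,n-1}(\SL_2(\BZ))$, these constraints reduce the claim to matching finitely many Fourier coefficients, evaluated by explicit genus-$0$ computations on $S^{[n]}$.

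Granting the quasi-Jacobi property, the holomorphic anomaly equation becomes a finite identity between two quasi-Jacobi forms, which I would prove by identifying each term on the right-hand side with a geometric degeneration of the generating series. For $g = 0$ the genus $g-1$ term vanishes. The separating-node term $2\sum_{A \sqcup B} F^{S^{[n]}}_{0,1}(\gamma_A, U_1)\, F^{S^{[n]},\std}_0(\gamma_B, U_2)$ records the factorization of the series at the boundary of $\Mbar_{0,N}(S^{[n]},\beta+rA)$, the inserted classes $U_1, U_2$ arising from the K\"unneth decomposition of the LLV operator $U = \act(F \wedge f)$ along the node; for $N = 2$ this term vanishes, since the reduced $1$-point factor is zero by dimension, so the $2$-point anomaly is carried entirely by the descendent term $-2\sum_i F^{S^{[n]}}_{0,1}(\psi_i; \dots, U\gamma_i, \dots)$ and the lattice term $-\sum_{a,b}(g^{-1})_{ab} T_{e_a} T_{e_b} F^{S^{[n]}}_{0,1}$. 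The $\psi_i$-corrections come from comparing the cotangent twist built into $\frac{d}{dG_2}$ with the cotangent line classes on the moduli of maps, while the lattice sum encodes the derivative of the series in the two directions spanning $\{F,W\}^{\perp}$, i.e.\ the controlled failure of modularity in $\tau$. The elliptic equation $\frac{d}{d\A} F^{S^{[n]}}_{0,1} = T_{\delta} F^{S^{[n]}}_{0,1}$ is matched analogously: differentiating the theta decomposition in $p$ reproduces the action of $T_\delta = \act(\delta \wedge F)$ attached to the exceptional divisor. In practice I would verify both equations by writing $F^{S^{[n]}}_{0,1}$ in the explicit theta-decomposed form of the previous step and using the known images of $\Theta$, $G_2$ and $\A$ under the anomaly operators, so that the whole identity collapses to the same finite check on the generating coefficients.

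The main obstacle is the explicit determination of the $2$-point function and the proof of its Jacobi transformation law with the precise weight and index, that is, upgrading the geometric reduction to the K3 theory into the a priori statement that $F^{S^{[n]}}_{0,1}$ is quasi-Jacobi of weight $k+12$ and index $n-1$, rather than merely quasi-modular in $q$. A second, more technical difficulty lies in the bookkeeping of the anomaly equation: matching the LLV operator $U$ and its K\"unneth decomposition, the descendent $\psi_i$-insertions, and the lattice sum $\sum_{a,b}(g^{-1})_{ab} T_{e_a} T_{e_b}$ against the algebraic derivations exactly, including the anomalous fourth term responsible for the weight discrepancy between Conjectures~\ref{MCConjecture} and~\ref{Conj: Quasi Jacobi property}. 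Establishing that these geometric and algebraic operations agree precisely, and not merely up to an undetermined correction, is where I expect the real work to lie.
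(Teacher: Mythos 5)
Your reduction of the imprimitive case to $\ell=1$ via the multiple cover formula and Hecke operators is correct and coincides with the paper's own argument (Proposition~\ref{prop:Implication}, resting on Propositions~\ref{prop:Hecke wrong weight} and~\ref{prop:Hecke wrong weight with poles}). The gaps are in the primitive case, which is where the content of the theorem lies. First, you have no mechanism for importing modularity from the K3 surface to $S^{[n]}$: ``degenerating the elliptic fibration \ldots as in \cite{MPT}'' is an argument about curves on $S$ itself, and there is no direct Noether--Lefschetz-type argument for the Hilbert scheme. The paper's mechanism is the GW/Hilb correspondence (Nesterov's quasi-map wall-crossing combined with the GW/PT correspondence for $K3\times \p^1$, Theorem~\ref{thm:GW/Hilb}): it converts $F^{S^{[n]}}_{0,1}(\lambda_1,\lambda_2,\lambda_3)$ into relative Gromov--Witten theory of $(S\times\p^1,S_{0,1,\infty})$, where the product formula reduces everything to the elliptic K3 surface, so that \cite{MPT} yields quasi-modularity of each $z$-coefficient (Step 1 of Proposition~\ref{prop:quasi-modularity}) and the K3 holomorphic anomaly equation of \cite{HAE}, pushed through the boundary-restriction formula into Theorem~\ref{thm:rel HAE}, yields the anomaly equation term by term (the ``Term 1--4'' lemmata). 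Second, you have no proof of the elliptic transformation law in $p$: saying that the exceptional direction and the hyperbolic pairing ``produce the theta decomposition'' restates the desired conclusion. The paper obtains this functional equation from deformation invariance under the monodromy operators $e^{\lambda T_{\delta}}$ and the sign involution (global Torelli plus Markman's description of $\Mon(S^{[n]})$, Sections~\ref{mon:involution} and~\ref{monodromy:shift}), which give the $p\mapsto pq^{\lambda}$ and $p\mapsto p^{-1}$ symmetries and, simultaneously, the equation $\frac{d}{d\A}F = T_{\delta}F$ (Proposition~\ref{prop:constrains monodromy}). Without both ingredients, the two constraints you intend to combine are simply not available, and the passage from them to genuine quasi-Jacobi forms (Lemmas~\ref{lemma:elliptic to Jac 1}--\ref{lemma:z expansion 2}, plus the holomorphy check at lattice points) cannot be run.

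Your fallback --- finite-dimensionality of $\frac{1}{\Delta}\QJac_{k+12,n-1}$ plus matching finitely many coefficients --- is also not a viable proof scheme. Finite-dimensionality does not allow one to certify that a given power series lies in that space by checking finitely many Fourier coefficients, and the holomorphic anomaly equation is a family of identities over all $n$ and all insertions whose right-hand side involves genuinely new series (insertions $U\gamma_i$, $T_{e_a}T_{e_b}\gamma_i$, and separating-node products), so it cannot collapse to a single finite check; nor are the required invariants of $S^{[n]}$ computable by elementary means --- avoiding exactly this is the point of the theorem. Finally, one concrete error: for $N=2$ the separating-node term does \emph{not} vanish. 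You appear to use $\vd = 2n(1-g)+N+1$ from the introduction, which is inconsistent with Table~\ref{table:virtual dimension}; the correct reduced virtual dimension is $(2n-3)(1-g)+N+1$, so $\Mbar_{0,1}(S^{[n]},\beta+rA)$ has virtual dimension $2n-1$, exactly matching the degree of the K\"unneth factors of $U$. Indeed, the paper's computation in Proposition~\ref{prop:HAE for Z} shows this term equals $2n\,\mathbf{G}(p,q)^{n-1}\Delta(q)^{-1}\int_{S^{[n]}}\gamma_1\cup\gamma_2$, and it is precisely the reason for the correction term $\mathbf{G}^n/(\Theta^2\Delta)\cdot\Delta_{S^{[n]}}$ in the definition \eqref{def 2 pt function} of $\Z^{S^{[n]}}$. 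So the $N=2$ identity you propose to verify is not even the correct identity.
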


In particular, this result shows that for fixed $n$, computing finitely many Gromov-Witten invariants of $S^{[n]}$,
where $S$ is the elliptic K3 surface,
determines all $3$-pointed genus $0$ invariants
of all Hilbert schemes of $n$ points on K3 surfaces.
This shows the following qualitative result (see \cite{HilbK3} for the definition of reduced quantum cohomology):
\begin{cor}
For any $n \geq 1$, the reduced quantum cohomologies of $\mathrm{QH}^{\ast}(X)$ of all hyperk\"ahler varieties of $K3^{[n]}$-type $X$
can be effectively reconstructed from finitely many Gromov-Witten invariants of $S^{[n]}$, where $S \to \p^1$ is the elliptic K3 surface with section.
\end{cor}
%

\begin{example}
Let $\CL \in H^{2n}(S^{[n]})$ be the class of a fiber of the Lagrangian fibration
$S^{[n]} \to \p^n$.
An easy computation\footnote{In the Nakajima basis of Section~\ref{subsec:Nakajima operators}, we have $\CL=\Fq_1(F)^{n} \1$,
which implies the claim.}
shows $\wt(\CL)=-n$.
Hence by the Theorem we find
\begin{equation} \label{exm fdfsdf} F_{g=0,1}^{S^{[n]}}( 1; \CL, \CL ) \in \frac{1}{\Delta(q)} \QJac_{2-2n, n-1}. \end{equation}
The space $\QJac_{2-2n, n-1}$ is one-dimensional spanned by $\Theta(p,q)^{2n-2}$,
so \eqref{exm fdfsdf} is determined up to a single constant.
The class of a line in the section $\p^n \subset S^{[n]}$ is $B-(n-1)A$.
Since there is a unique line through any two points in $\p^n$, we have
\[ \blangle 1 ; \CL, \CL \brangle^{S^{[n]}}_{0, B-(n-1)A} = 1. \]
This yields the explicit evaluation
\begin{equation} \label{FF evaluation} 
F_{0,1}^{S^{[n]}}( 1; \CL, \CL ) = (-1)^{n-1}\frac{\Theta(p,q)^{2n-2}}{\Delta(q)}.
\end{equation}
This evaluation was previously obtained (with hard work) in \cite[Thm 1]{HilbK3}. \qed
\end{example}

We give a more fundamental example, where the holomorphic anomaly equation determines the transformation law of the quasi-Jacobi form.
Consider the generating series of $2$-point Gromov-Witten classes
\[
\widetilde{\Z}^{S^{[n]}}(p,q) = \sum_{d = -1}^{\infty} \sum_{r \in \BZ} q^d (-p)^r (\ev_1 \times \ev_2)_{\ast}\left( [ \Mbar_{0,2}(S^{[n]}, B+(d+1)F+rA) ]^{\vir} \right)
\]
which is an element of $H^{\ast}(S^{[n]})^{\otimes 2} \otimes q^{-1} \BC((p))[[q]]$.
Add a quasi-Jacobi correction term:
\begin{equation} \label{def 2 pt function}
\Z^{S^{[n]}}(p,q) := \widetilde{\Z}^{S^{[n]}}(p,q) - \frac{\mathbf{G}(p,q)^{n}}{\Theta(p,q)^2 \Delta(q)} \Delta_{S^{[n]}}
\end{equation}
where $\Delta_{S^{[n]}}$ is the class of the diagonal in $(S^{[n]})^{2}$, and
\[ \mathbf{G}(p,q) = -\Theta(p,q)^2 \left( p \frac{d}{dp} \right)^2 \log(\Theta(p,q)). \]

We have the following corollary:
\begin{cor} \label{cor:transformation law of 2 pt operator}
Under the variable change $p=e^{2 \pi i x}$ and $q=e^{2 \pi i \tau}$, the function
\[ \Z^{S^{[n]}} : \BC \times \BH \to H^{\ast}(S^{[n]} \times S^{[n]},\BC),
\quad (x,\tau) \mapsto \Z^{S^{[n]}}(x,\tau) \] 
is a vector-valued Jacobi form of weight $-10$ and index $n-1$ with a double poles at lattice points.
In particular, we have the transformation laws
\begin{multline*}
\Z^{S^{[n]}}\left( \frac{x}{c \tau + d}, \frac{a \tau + b}{c \tau + d} \right)  
= (c \tau + d)^{-10-\Wt} e\left( \frac{c (n-1) x^2}{c \tau + d} \right) \\
\cdot \exp\left( - \frac{c}{c \tau + d} \left[ \frac{1}{4 \pi i} \sum_{\alpha,\beta} (\tilde{g}^{-1})_{\alpha \beta} T_{\alpha} T_{\beta} + x T_{\delta} \right] \right)
\Z^{S^{[n]}}(x,\tau)
\end{multline*}
\vspace{-15pt}
\begin{gather*}
\Z^{S^{[n]}}(x + \lambda \tau + \mu, \tau)
= e\left( -(n-1) \lambda^2 \tau - 2 \lambda (n-1) x \right) \exp\left( \lambda T_{\delta} \right) \Z^{S^{[n]}}(x,\tau),
\end{gather*}
for all $\binom{a\ b}{c\ d} \in \SL_2(\BZ)$ and $\lambda, \mu \in \BZ$,
where we have written $e(x)=e^{2 \pi i x}$ for $x \in \BC$.
\end{cor}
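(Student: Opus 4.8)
The plan is to deduce the corollary from the two statements of Theorem~\ref{thm:Main result} applied to the genus~$0$, two-point series, combined with the general transformation theory of quasi-Jacobi forms under the Jacobi group. First I would record the dictionary between the operator-valued series and the scalar series $F^{S^{[n]}}_{0,1}$: pairing $\widetilde{\Z}^{S^{[n]}}$ against $\gamma_1\otimes\gamma_2$ under the Poincar\'e pairing on $(S^{[n]})^2$ recovers $F^{S^{[n]}}_{0,1}(1;\gamma_1,\gamma_2)$, so every matrix coefficient of $\widetilde{\Z}^{S^{[n]}}$ is one of the series controlled by Theorem~\ref{thm:Main result}. By Conjecture~\ref{Conj: Quasi Jacobi property} (a theorem in this range), with $g=0$, $N=2$, $\ell=1$, each coefficient lies in $\frac{1}{\Delta(q)}\QJac_{k+12,\,n-1}$ with $k=\wt(\gamma_1)+\wt(\gamma_2)-10$, and since $\Delta$ is modular of weight $12$ it transforms with scalar weight $\wt(\gamma_1)+\wt(\gamma_2)-10$. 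Because $\Wt$ is skew-adjoint for the Poincar\'e pairing, the direction dual to $\gamma_1\otimes\gamma_2$ has $\Wt$-eigenvalue $-\wt(\gamma_1)-\wt(\gamma_2)$, so these scalar weights assemble exactly into the operator weight $-10-\Wt$, while the index $n-1$ is the one in Conjecture~\ref{Conj: Quasi Jacobi property}. The correction term $\frac{\mathbf{G}(p,q)^{n}}{\Theta(p,q)^2\Delta(q)}\Delta_{S^{[n]}}$ is itself, after clearing $\Delta$, quasi-Jacobi of the same weight and index, so $\Z^{S^{[n]}}$ is a quasi-Jacobi form valued in $H^{\ast}((S^{[n]})^2)$, with its only poles, double and at the lattice points, coming from the factor $\Theta^{-2}$.

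\textbf{From anomaly operators to the automorphy factor.} The transformation of a quasi-Jacobi form of weight $k$ and index $m$ under the Jacobi group is governed entirely by its images under the two anomaly operators $\frac{d}{dG_2}$ and $\frac{d}{d\A}$: under $\binom{a\ b}{c\ d}$ one picks up $(c\tau+d)^{k}\,e\!\left(\frac{cmx^2}{c\tau+d}\right)$ times the exponential of $-\frac{c}{c\tau+d}$ applied to a fixed combination of $\frac{1}{4\pi i}\frac{d}{dG_2}$ and $x\,\frac{d}{d\A}$, and under $x\mapsto x+\lambda\tau+\mu$ one picks up $e(-m\lambda^2\tau-2m\lambda x)$ times $\exp(\lambda\,\frac{d}{d\A})$. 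I would apply this componentwise to $\Z^{S^{[n]}}$ and substitute the holomorphic anomaly equation of Theorem~\ref{thm:Main result}. The equation $\frac{d}{d\A}F^{S^{[n]}}_{0,1}=T_{\delta}F^{S^{[n]}}_{0,1}$ turns the $\frac{d}{d\A}$-terms into $T_{\delta}$ acting on the two insertions, i.e.\ on the two tensor factors of $H^{\ast}((S^{[n]})^2)$, giving directly the translation law $\exp(\lambda T_{\delta})$ with index factor $e(-(n-1)\lambda^2\tau-2\lambda(n-1)x)$. This is consistent on the correction term because $\mathbf{G}$ is $\A$-free (it is $-\Theta^2$ times a combination of $\wp$ and $G_2$), while the summed operator $T_{\delta}$ annihilates the diagonal $\Delta_{S^{[n]}}$ by skew-adjointness.

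\textbf{The $\frac{d}{dG_2}$-equation and the main cancellation.} For the modular direction I would analyse the four terms of the anomaly equation for $\frac{d}{dG_2}F^{S^{[n]}}_{0,1}(1;\gamma_1,\gamma_2)$. The $F_{g-1}$-term vanishes for $g=0$; the last term is exactly $-\sum_{a,b}(g^{-1})_{ab}T_{e_a}T_{e_b}$, which, with the $\frac{1}{4\pi i}$ of the general formula and the extension of the pairing to the $\delta$-direction supplied by the index factor, produces the operator $\frac{1}{4\pi i}\sum_{\alpha,\beta}(\tilde{g}^{-1})_{\alpha\beta}T_{\alpha}T_{\beta}$ of the stated $\SL_2$-automorphy factor (signs and normalization being routine bookkeeping). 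The remaining two terms, the splitting term $2\,F^{S^{[n]}}_{0,1}(1;U_1)\,F_{0}^{S^{[n]},\std}(1;\gamma_1,\gamma_2,U_2)$, where only $A=\varnothing$ survives since a degree-zero genus-zero std vertex needs at least three special points, and the cotangent-line term $-2\sum_i F^{S^{[n]}}_{0,1}(\psi_i;\dots,U\gamma_i,\dots)$, must be cancelled by the $G_2$-anomaly of the correction term. Here one uses that $\frac{d}{dG_2}\mathbf{G}$ is a constant multiple of $\Theta^2$ (the $\wp$-versus-$G_2$ relation), so $\frac{d}{dG_2}\frac{\mathbf{G}^n}{\Theta^2\Delta}$ is a multiple of $\frac{\mathbf{G}^{n-1}}{\Delta}$, whose associated class is proportional to $\Delta_{S^{[n]}}$. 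The claim is that, after evaluating the degree-zero std three-point invariants as classical triple products and assembling the one-point series $F^{S^{[n]}}_{0,1}(1;U_1)$ with the $\psi$-insertions, the splitting and cotangent contributions sum to exactly this multiple of $\Delta_{S^{[n]}}$. Verifying this identity is the main obstacle: it is the single point where the geometry, namely the shape of the splitting and $\psi$-terms and the explicit degree-zero virtual class, must match the chosen analytic completion $\frac{\mathbf{G}^n}{\Theta^2\Delta}$ on the nose.

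\textbf{Conclusion.} Once the cancellation holds, one has $\frac{d}{dG_2}\Z^{S^{[n]}}=-\sum_{a,b}(g^{-1})_{ab}T_{e_a}T_{e_b}\Z^{S^{[n]}}$ and $\frac{d}{d\A}\Z^{S^{[n]}}=T_{\delta}\Z^{S^{[n]}}$, so the general quasi-Jacobi transformation formula yields precisely the two displayed automorphy laws. The operators $T_{\alpha}$ and $T_{\delta}$ strictly lower the $\Wt$-grading, which takes finitely many values in $\{-n,\dots,n\}$, hence they are nilpotent and the exponentials are finite sums; the operator-valued automorphy factor is therefore well defined, and its cocycle property is inherited from the genuine group action underlying the quasi-Jacobi transformation theory. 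This exhibits $\Z^{S^{[n]}}$ as a vector-valued meromorphic Jacobi form, with double poles at the lattice, of weight $-10$ and index $n-1$, as claimed.
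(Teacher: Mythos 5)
Your overall strategy is the same as the paper's: the paper first proves an operator-valued anomaly equation for $\Z^{S^{[n]}}$ (Proposition~\ref{prop:HAE for Z}) by feeding the $g=0$, $N=2$ case of Theorem~\ref{thm:Main result} into the definition \eqref{def 2 pt function}, and then deduces the corollary from the general quasi-Jacobi transformation law (Lemma~\ref{lemma:QJac transformation laws}) together with the anti-symmetry of $\Wt$; your setup, weight/index/pole bookkeeping, and your treatment of the elliptic ($\frac{d}{d\A}$) direction all agree with this.

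However, your bookkeeping in the $\frac{d}{dG_2}$ direction contains a genuine error. In the paper's proof only the splitting term cancels against the anomaly of the correction term: using $U(\pt_{S^{[n]}}) = n\,\Fq_1(F)\Fq_1(\pt)^{n-1}\1$ and the known one-point evaluation of \cite[Thm.~2]{HilbK3}, one has
\[
2\,F_{0,1}^{S^{[n]}}\bigl(U(\gamma_1\cup\gamma_2)\bigr)
= 2n\,\frac{\mathbf{G}(p,q)^{n-1}}{\Delta(q)}\int_{S^{[n]}}\gamma_1\cup\gamma_2
= \left(\int_{S^{[n]}}\gamma_1\cup\gamma_2\right)\frac{d}{dG_2}\,\frac{\mathbf{G}^n}{\Theta^2\Delta},
\]
the last step by $\frac{d}{dG_2}\mathbf{G}=2\Theta^2$. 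The cotangent-line terms take no part in this cancellation, and indeed they cannot: they are not proportional to the diagonal. Instead, by the divisor equation one has
$F_{0,1}^{S^{[n]}}(\psi_1;U\gamma_1,\gamma_2) = F_{0,1}^{S^{[n]}}(U\gamma_1,e_F\gamma_2) - F_{0,1}^{S^{[n]}}(e_F U\gamma_1,\gamma_2)$,
and with $T_e=e_F$, $T_f=-U$ the two $\psi$-terms assemble into $-2\,F_{0,1}^{S^{[n]}}\bigl(T_eT_f(\gamma_1\otimes\gamma_2)\bigr)$: they are precisely the hyperbolic block $T_eT_f+T_fT_e$ of the operator $\sum_{\alpha,\beta}(\tilde g^{-1})_{\alpha\beta}T_\alpha T_\beta$, whose indices run over a basis of $\{W,F\}^{\perp}\subset V\oplus U_{\BQ}$ and hence include $e,f$ as well as $\delta$ and the $e_a$. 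In your proposal this block has no source: the "main cancellation" you formulate (splitting \emph{plus} $\psi$-terms against the correction term) is quantitatively false, since the splitting term alone already accounts for the correction anomaly, and your automorphy factor, built only from the $T_{e_a}T_{e_b}$ of the holomorphic anomaly equation plus an unexplained "extension of the pairing to the $\delta$-direction supplied by the index factor," is missing the entire $U_{\BQ}$-part. Incidentally, the evaluation you flag as the main obstacle is not something to be verified on the nose within the proof: it is the quoted one-point evaluation from \cite{HilbK3}, an external input which the paper simply cites.
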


We refer to Section~\ref{subsec:2 pt function} for the precise definitions and conventions that we use here.
A formula for the series $\Z^{S^{[n]}}(p,q)$ was conjectured in \cite{HilbK3}
and then refined to an explicit conjecture in \cite{vIOP}.
The above corollary yields strong evidence for this conjecture.

The cycle $\Z^{S^{[n]}}(p,q)$ also appears naturally in
the Pandharipande-Thomas theory of the relative threefold $(S \times \p^1,S_{0,\infty})$.
Indeed, by Nesterov's quasi-map wallcrossing \cite{N1,N2} and the computation of the wall-crossing term in \cite{QuasiK3} one has
\[
\Z^{S^{[n]}}(p,q) = \sum_{d,r} q^{d} (-p)^{r} (\ev_{0} \times \ev_{\infty})_{\ast} \left[ P^{\sim}_{r,  (B+(d+1)F,n)}(S \times \p^1, S_{0,\infty}) \right]^{\vir}
\]
where the moduli space on the right parametrizes stable pairs $(F,s)$ on the
relative rubber target $(S \times \p^1, S_{0,\infty})^{\sim}$ with Chern character $\ch_3(F)=r$.
Consider the Pandharipande-Thomas theory of $S \times E$, where $E$ is an elliptic curve.
By using the evaluation in \cite{HAE}
and by degenerating the elliptic curve \cite{K3xE}, one obtains the closed formula:
\[
\sum_{n=0}^{\infty} \tilde{q}^{n-1} \int_{S^{[n]} \times S^{[n]}} \Z^{S^{[n]}}(p,q) \cup \Delta_{S^{[n]}} = -\frac{1}{\chi_{10}(p,q,\tilde{q})}
\]
where $\chi_{10}$ is the weight $10$ Igusa cusp form (as in \cite{K3xE}).
Because Fourier coefficients of Siegel modular forms are Jacobi forms, this matches nicely with
Corollary~\ref{cor:transformation law of 2 pt operator}.

\subsection{An application: CHL Calabi-Yau threefolds} \label{subsec:intro CHL}
Let $S \to \p^1$ be an elliptic K3 surface with section $B$ and fiber class $F$,
and let $g : S \to S$ be a symplectic involution such that
\[ \Pic(S) = \begin{pmatrix} -2 & 1 \\ 1 & 0 \end{pmatrix} \oplus E_8(-2) \]
where the first summand is generated by $B,F$ and the second summand is the anti-invariant part.\footnote{These K3 surfaces arise as follows: Let $R \to \p^1$
be a generic rational elliptic surface,
and let $\p^1 \to \p^1$ be a double cover, branched away from the discriminant.
Then consider the K3 surface $S = R \times_{\p^1} \p^1$
and let $g$ be the composition of the covering involutions with the fiberwise multiplication by $(-1)$. This involution is symplectic and has the desired properties, see \cite[Sec.5.1]{BO-CHL}.}
Let $E$ be an elliptic curve and let $\tau : E \to E$ be translation by a $2$-torsion point.
The Chaudhuri--Hockney--Lykken (CHL) Calabi-Yau threefold associated to $(g,\tau)$ is the quotient
\[ X = ( S \times E )/ \langle g \times \tau \rangle. \]
The group of algebraic $1$-cycles on $X$ is
\[ N_1(X) \cong \mathrm{Span}_{\BZ}(B,F) \oplus \BZ [E'] \]
where the second summand records the degree over the elliptic curve $E' = E/\langle \tau \rangle$.

Define the Donaldson-Thomas partition function
\[ \DT_n(X) = \sum_{d \geq -1} \sum_{r \in \BZ} \DT_{r,(B+dF,n)} q^{d-1} (-p)^{r} \]
where we used the reduced Donaldson-Thomas invariants (see \cite{BO-CHL})
\[ \DT_{r,\beta} = \int_{[ \Hilb_{r,\beta}(X)/E ]^{\vir}} 1. \]

\begin{thm} \label{thm:CHL} Every $\DT_n(X)$ is a Jacobi form of weight $-6$ and index $n$, that is
\[ \DT_n(X) \in \frac{1}{\Theta(p,q)^2 \Delta(\tau)} \Jac_{4, n}(\Gamma_0(2)). \]
\end{thm}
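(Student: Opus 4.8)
The plan is to express $\DT_n(X)$ through the two--point function $\Z^{S^{[n]}}$ and then to deduce its modularity from Corollary~\ref{cor:transformation law of 2 pt operator}. As $X=(S\times E)/\langle g\times\tau\rangle$ is an isotrivial fibration over $E'=E/\langle\tau\rangle$ with fibre $S$ and monodromy $g$, I first degenerate the base $E'$ to a nodal rational curve; the étale double cover $E\to E'$ then degenerates to a cycle of two copies of $\p^1$ interchanged by $\tau$, so that $X$ degenerates to $S\times\p^1$ with the ends $S_0,S_\infty$ glued by the symplectic involution $g$. Applying the degeneration formula in reduced Donaldson--Thomas theory and converting the relative invariants of $(S\times\p^1,S_{0,\infty})$ into the rubber stable--pairs geometry via Nesterov's quasi--map wall--crossing \cite{N1,N2,QuasiK3} --- precisely the identity defining $\Z^{S^{[n]}}$ recalled above, but now with the gluing twisted by $g$ --- I expect to obtain
\[ \DT_n(X)=\big\langle\,\Z^{S^{[n]}}(p,q),\ \Gamma_{g^{[n]}}\,\big\rangle=\mathrm{Tr}\big(g^{[n]}_\ast\circ\Z^{S^{[n]}}\big), \]
where $\Gamma_{g^{[n]}}\in H^\ast((S^{[n]})^2)$ is the graph of the induced symplectic involution $g^{[n]}\colon S^{[n]}\to S^{[n]}$ and $\Z$ is viewed as an operator. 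The double pole $\tfrac{1}{\Theta^2\Delta}$ is then inherited from the pole of $\Z^{S^{[n]}}$ itself; tracking the diagonal correction in the definition of $\Z$ against the Lefschetz number of $g^{[n]}$ is a bookkeeping step I would carry out to pin down the normalization.

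Modularity now reduces to analysing this twisted trace by means of Corollary~\ref{cor:transformation law of 2 pt operator}, which presents $\Z^{S^{[n]}}$ as a \emph{vector--valued} Jacobi form whose automorphy factor is an operator built from $\Wt$, the $T_\alpha$ and $T_\delta$. Pairing against $\Gamma_{g^{[n]}}$ must convert this into a scalar automorphy factor, and here the defining properties of the CHL involution are decisive. Since $g$ fixes $B$ and $F$ it fixes $W$ and $F$, so $g^{[n]}_\ast$ commutes with $\Wt$ and with $T_\delta$; since $g$ acts by $-1$ on the anti--invariant lattice $E_8(-2)$, one has $g^{[n]}_\ast T_\alpha=-T_\alpha g^{[n]}_\ast$ for every anti--invariant $\alpha\in\{F,W\}^\perp$, so by cyclicity of the trace the anti--invariant part of the quadratic operator $\sum_{\alpha,\beta}(\tilde g^{-1})_{\alpha\beta}T_\alpha T_\beta$ enters only through a scalar theta multiplier. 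This is the exact analogue of the untwisted pairing $\langle\Z^{S^{[n]}},\Delta_{S^{[n]}}\rangle$, which is a Fourier--Jacobi coefficient of $-1/\chi_{10}$ and hence an honest Jacobi form for the \emph{full} modular group.

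The main obstacle, and the crux of the argument, is to prove that $\Gamma_{g^{[n]}}$ is an eigenvector of the full operator automorphy factor precisely for the subgroup $\Gamma_0(2)$, and to read off the resulting scalar weight, index and multiplier. I expect the congruence condition $c\equiv0\bmod 2$ to emerge because the form of $E_8(-2)$ appearing in $\sum(\tilde g^{-1})_{\alpha\beta}T_\alpha T_\beta$ is divisible by two, so that its inverse produces half--integral denominators that are cleared only for even $c$; and the passage from the untwisted weight to the weight $-6$ of the statement should likewise be governed by the eight anti--invariant directions of $E_8(-2)$ contributing a theta factor. Concretely I would decompose $H^\ast(S^{[n]})$ into $g^{[n]}$--eigenspaces, use the anticommutation $T_\alpha g^{[n]}_\ast=-g^{[n]}_\ast T_\alpha$ to collapse the operator exponential on the twisted trace to this theta multiplier, and verify that the $g$--invariant directions together with $(c\tau+d)^{-\Wt}$ reproduce, after clearing the pole $\Theta^2\Delta$, the scalar automorphy factor of a weak Jacobi form of weight $4$ and index $n$ for $\Gamma_0(2)$. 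The genuinely delicate endpoint is to confirm that no quasi--Jacobi ($G_2$ or $\A$) anomaly survives the twisted trace, so that the result is an \emph{honest} Jacobi form; the untwisted Igusa--cusp--form evaluation $-1/\chi_{10}$ recalled above is the decisive consistency check for the entire computation.
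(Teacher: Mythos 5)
Your first step (degenerating $X$ to $S\times\p^1$ with the two ends glued by $g$, then applying the degeneration formula and Nesterov's wall-crossing) is exactly the paper's, but your intermediate formula $\DT_n(X)=\mathrm{Tr}\big(g^{[n]}_{\ast}\circ\Z^{S^{[n]}}\big)$ is wrong, and this is where the argument breaks. The reason $N_1(X)\cong\mathrm{Span}_{\BZ}(B,F)\oplus\BZ[E']$ is that the anti-invariant lattice is killed in the quotient: for every $\alpha\in E_8(-2)$ the class $B+dF+\alpha$ on $S$ pushes forward to $B+dF$ on $X$. Consequently the degeneration formula for $\DT_{r,(B+dF,n)}$ produces a sum over \emph{all} lifts $\alpha\in E_8(-2)$ of relative invariants of $(S\times\p^1,S_{0,\infty})$ in the classes $(B+dF+\alpha,n)$ — not just the $\alpha=0$ contribution, which is all that $\Z^{S^{[n]}}$ (defined using only classes $B+(d+1)F+rA$) can see. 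The paper packages this lattice sum as the $E_8(-2)$-refined series $\widetilde{F}_0(\Gamma_g)\big|_{\zeta^{\alpha}=1}$ of Proposition~\ref{prop:extended HAE}; rewriting it via the translation monodromies $e^{-T_{\alpha}}$ produces the theta function $\Theta_{E_8(2)}(q)=E_4(q^2)$, a weight-$4$ modular form for $\Gamma_0(2)$. This lattice theta function is the true source of both the weight shift from $-10$ to $-6$ and the congruence subgroup $\Gamma_0(2)$; neither can be extracted from your formula, which omits all $\alpha\neq 0$ terms (it also drops the factor $\tfrac12$ coming from the $\BZ_2$-quotient in the degeneration).

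Your proposed substitute mechanism — that the anti-commutation $g^{[n]}_{\ast}T_{\alpha}=-T_{\alpha}g^{[n]}_{\ast}$ together with cyclicity of the trace collapses the operator automorphy factor of Corollary~\ref{cor:transformation law of 2 pt operator} into a scalar theta multiplier — cannot work. The Beauville--Bogomolov pairing is block diagonal for the invariant/anti-invariant decomposition, so every monomial in $\sum_{\alpha,\beta}(\tilde g^{-1})_{\alpha\beta}T_{\alpha}T_{\beta}$ contains either zero or two anti-invariant factors; hence this operator \emph{commutes} with $g^{[n]}_{\ast}$ and nothing cancels. Pairing the operator exponential against the fixed class $\Gamma_g$ merely yields a finite linear combination of quasi-Jacobi forms of the original weights — a finite-dimensional twisted trace cannot manufacture a weight-$4$ theta series. (Your own consistency check points the same way: $\int\Z^{S^{[n]}}\cup\Delta_{S^{[n]}}$ is a Fourier--Jacobi coefficient of $-1/\chi_{10}$, of weight $-10$ for the full modular group, and replacing $\Delta_{S^{[n]}}$ by $\Gamma_g$ in a pairing cannot change the weight or introduce a level.) Finally, the vanishing of the $G_2$- and $\A$-anomalies is not automatic: in the paper it follows from the extended holomorphic anomaly equation of Proposition~\ref{prop:extended HAE} (used as in Proposition~\ref{prop:HAE for Z}), the commutations $[T_{e_a},g]=0$, $[U,g]=0$, $T_{\delta}(\Gamma_g)=0$, and an exact cancellation between the $G_2$-derivative of $\widetilde{F}_0(\Gamma_g)\big|_{\zeta^{\alpha}=1}$ and that of the wall-crossing correction $\tfrac12\,\frac{\mathbf{G}(p,q)^n}{\Theta(p,q)^2\Delta(q)}\,\Theta_{E_8(2)}(q)\,\mathrm{Tr}\big(g\,\big|\,H^{\ast}(S^{[n]})\big)$ — a cancellation that cannot even be set up starting from your expression for $\DT_n(X)$.
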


The rank $1$ Donaldson-Thomas invariants of $X$ in arbitrary curve classes are determined from the series 
$\DT_n$ by the multiple cover formula of \cite{QuasiK3} and a degeneration argument \cite{BO-CHL}.
Hence Theorem~\ref{thm:CHL} puts strong constraints on the full rank $1$ Donaldson-Thomas theory of $X$.
For an explicit conjectural formula for the $\DT_n$, see \cite{BO-CHL}.

Our methods can apply also to arbitrary CHL Calabi-Yau threefolds which are
associated to symplectic automorphism of K3 surfaces of any finite order.
The above is just the simplest case notation-wise and choosen here to illustrate the method.
The Donaldson-Thomas theory of general CHL Calabi-Yau threefolds will be studied at a later time.

\subsection{Fiber classes and Lagrangian fibrations}
Assume that we are in the stable case
$2g-2+N>0$. Consider the generating series of Gromov-Witten invariants
in fiber classes of the Lagrangian fibration $S^{[n]} \to \p^n$:
\[ F^{S^{[n]}}_{g,0}(\taut; \gamma_1, \ldots, \gamma_N) :=
\sum_{d \geq 0} \sum_{\substack{k \in \BZ \\ (d,k) \neq 0}} 
\left\langle \taut ; \gamma_1, \ldots, \gamma_N \right\rangle^{S^{[n]}}_{g, dF+rA} q^d (-p)^r. \]
We have to exclude here the term $(d,r) = (0,0)$, because
reduced Gromov-Witten invariants are not defined for a vanishing curve class.
The price that we pay for this unnatural definition
is that we work modulo the constant term below.
Given power series $f,g \in \BC((p))[[q]]$ we write
$f\equiv g$ if the two power series are equal in $\BC((p))[[q]]/\BC$, or equivalently if $f=g+c$ for a constant $c\in \BC$.
In the unstable cases $2g-2+N \leq 0$ we define
\[ F^{S^{[n]}}_{g,0}(\taut; \gamma_1, \ldots, \gamma_N) = 0. \]

We first state the conjectural quasi-Jacobi property and holomorphic anomaly equation.

\begin{conj} \label{conj:Fiber class} Assume that $2g-2+N>0$.  We have the following:
\begin{enumerate}[leftmargin=17pt]
\item[(i)] (Quasi-Jacobi form property)
Up to a constant term,
$F^{S^{[n]}}_{g,0}(\taut; \gamma_1, \ldots, \gamma_N)$
is a meromorphic quasi-Jacobi form of weight $k=n(2g-2+N) + \sum_i \wt(\gamma_i)$ and index $0$
with poles at torsion points $z=a \tau + b$, $a,b \in \BQ$.
\item[(ii)] (Holomorphic Anomaly Equations) Modulo constants, i.e. in $\BC((p))[[q]]/ \BC$, we have
\begin{align*}
\frac{d}{d G_2} F^{S^{[n]}}_{g,0}(\taut; \gamma_1, \ldots, \gamma_N)
\equiv & F^{S^{[n]}}_{g-1,0}(\taut'; \gamma_1, \ldots, \gamma_N, U) \\
+ & 2 \sum_{\substack{g=g_1 + g_2 \\ \{ 1, \ldots, N \} = A \sqcup B }} F^{S^{[n]}}_{g_1,0}(\taut_1 ; \gamma_A, U_1) F_{g_2}^{S^{[n]}, \std}(\taut_2 ; \gamma_B , U_2 ) \\
- & 2 \sum_{i=1}^{N} F^{S^{[n]}}_{g,0}( \tau^{\ast}(\psi_i) \cdot \taut ; \gamma_1, \ldots, \gamma_{i-1}, U \gamma_i, \gamma_{i+1}, \ldots, \gamma_N),
\end{align*}
where $\psi_i \in H^2(\Mbar_{g,N})$ is the cotangent line class, and
\[
\frac{d}{d \A} F^{S^{[n]}}_{g,0}(\taut; \gamma_1, \ldots, \gamma_N)
\equiv
\sum_{i=1}^{N} F^{S^{[n]}}_{g,0}(\taut; \gamma_1, \ldots, T_{\delta} \gamma_i, \ldots,  \gamma_N).
\]
\end{enumerate}
\end{conj}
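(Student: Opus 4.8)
The plan is to prove Conjecture~\ref{conj:Fiber class} by reducing the fiber-class invariants to a geometry in which the quasi-Jacobi property and the holomorphic anomaly equation are already established, and then transporting this structure back to $S^{[n]}$. The guiding geometric observation is that a curve in a fiber class $dF+rA$ is precisely one contracted by the Lagrangian fibration $S^{[n]}\to\p^n$, i.e.\ a vertical curve whose points move only within the elliptic fibers of $\pi:S\to\p^1$. In particular no section class $B$ enters, and this accounts for both special features of the statement: the index is $0$ (the Beauville--Bogomolov contribution responsible for the index $\ell(n-1)$ in Conjecture~\ref{Conj: Quasi Jacobi property} is proportional to the coefficient of $B$, which here vanishes), and the fourth term $-\frac{1}{\ell}\sum_{a,b}(g^{-1})_{ab}T_{e_a}T_{e_b}$ of the general anomaly equation disappears, since the operators $T_{e_a}$ move in the transverse directions $\{F,W\}^{\perp}$ and do not couple to purely vertical counts.

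First I would express $F^{S^{[n]}}_{g,0}$ through the Pandharipande--Thomas theory of the relative threefold $(S\times\p^1,S_{0,\infty})$, exactly as the paper does for $\widetilde{\Z}^{S^{[n]}}$: by Nesterov's quasimap wall-crossing \cite{N1,N2} together with the evaluation of the wall-crossing term in \cite{QuasiK3}, the vertical Gromov--Witten series of $S^{[n]}$ is identified with a rubber stable-pairs series in fiber classes of $S\times\p^1$. Degenerating the $\p^1$-factor to an elliptic curve $E$ as in \cite{K3xE} then connects these to the reduced invariants of $S\times E$. On the $S\times E$ side the desired structure is known: by \cite{HAE}, and after degeneration of the elliptic curve by \cite{K3xE}, the corresponding reduced series are meromorphic quasi-Jacobi forms of index $0$ and satisfy a holomorphic anomaly equation in all genera and with arbitrarily many markings. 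This is the key input that makes the fiber-class case, unlike the main theorem, plausibly accessible beyond $g=0$ and $N\leq 3$.

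The remaining step is to transport this structure back along the correspondence, matching the operators. On the $S^{[n]}$ side the relevant operations are recorded by the LLV algebra: $U=\act(F\wedge f)$ and $T_{\delta}=\act(\delta\wedge F)$. I would check that under the wall-crossing these correspond to the node-smoothing (the Künneth split $U_1,U_2$ of the class $U$) and the divisor operator $T_{\delta}$ appearing in the $S\times E$ anomaly equation, while the cotangent-line and gluing contributions are preserved because the correspondence is compatible with the forgetful, gluing and stabilization maps $\tau,\iota,\xi$. One then reads off the three surviving terms of the $\frac{d}{dG_2}$-equation together with the $\frac{d}{d\A}$-equation governed by $T_{\delta}$.

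The hard part will be exactly this transport: tracking the anomaly operators through the wall-crossing precisely enough to confirm that the first three terms reproduce themselves with the stated corrections $\taut',\taut_1,\taut_2$, and that no analogue of the fourth term is generated. This demands a careful identification of the class $U\in H^{\ast}((S^{[n]})^2)$ and of $T_{\delta}$ with the geometric degenerations on the relative target, together with control of the constant-term ambiguity: because reduced invariants are undefined at $(d,r)=(0,0)$, every identity must be read in $\BC((p))[[q]]/\BC$, and I would have to verify that the resulting index-$0$ quasi-Jacobi form has its poles concentrated at torsion points $z=a\tau+b$ with $a,b\in\BQ$, as claimed in part~(i). A secondary subtlety is that the diagonal-type corrections on $S^{[n]}$ (analogous to the $\mathbf{G}(p,q)^{n}/(\Theta(p,q)^2\Delta(q))$ term in the two-point function) must be matched exactly, so that the transported series lands in the predicted space rather than merely up to a further modular ambiguity.
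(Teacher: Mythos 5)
Your proposal takes a genuinely different route from the paper, but it rests on two inputs that do not exist, so it has real gaps. First, the structure you plan to transport is not "already established" on the threefold side: \cite{HAE} proves holomorphic anomaly equations for K3 surfaces in \emph{primitive} classes and the Igusa cusp form conjecture for $S\times E$ in classes whose K3-component is primitive, and \cite{K3xE} likewise concerns primitive classes. The classes relevant here are the vertical classes $(dF,n)$, which are imprimitive for $d>1$, and no index-$0$ quasi-Jacobi/HAE statement "in all genera and with arbitrarily many markings" for them appears in those references; the paper itself remarks that even for the K3 surface ($n=1$) the quasi-modularity and anomaly equation in fiber classes "appears to be new." So your key input is, in substance, the statement to be proven. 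Second, the transport mechanism cannot carry general tautological insertions: Nesterov's wall-crossing and the GW/PT correspondence (Theorem~\ref{thm:GW/Hilb}, Remark~\ref{rmk:GW/Hilb imprimitive}) are formulated for a \emph{fixed} stable curve $(C,z_1,\dots,z_N)$, i.e.\ they compute $F^{S^{[n]}}_{g,0}(\taut;\dots)$ only when $\taut$ is a multiple of the point class of $\Mbar_{g,N}$. For general $\taut\in R^{\ast}(\Mbar_{g,N})$ one needs a family version of the correspondence, which the paper explicitly states is not available — this is precisely why condition (ii) of Theorem~\ref{thm:fiber classes} is verified only for $(g,N)=(0,3)$, where $\taut=1$ is forced. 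Your "hard part" (tracking anomaly operators through the wall-crossing) therefore silently assumes this missing family correspondence, and the route cannot reach the conjecture beyond the cases already proven.

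For comparison, the paper's actual mechanism (Section~\ref{sec:fiber case}) is arithmetic rather than transport-based: assuming the multiple cover conjecture in the form of Conjecture~\ref{conj mc hilb} plus a vanishing hypothesis, all classes $dF+rA$ with $d\geq 1$ are reduced to $F+rA$, and the fiber-degree-zero classes $rA$ — which your proposal never addresses — are handled by the shift monodromy $e^{-T_{\delta}}$ of Section~\ref{monodromy:shift} together with the involution monodromy of Section~\ref{mon:involution} (Lemmas~\ref{lemma positive} and~\ref{lemma:zero}). This yields the closed formula \eqref{multline:sum Fg0} expressing $F^{S^{[n]}}_{g,0}$ through derivatives of the functions $\A_{b+1}\in\MQJac_{0,b+1}$, and both parts of the conjecture then follow from Theorem~\ref{thm:2132}: $\frac{d}{dG_2}\A_n=0$ and $\frac{d}{d\A}\A_n=n\A_{n-1}$, with the poles at torsion points coming from the substitution $p\mapsto p^{r}$. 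For $(g,N)=(0,3)$ the conjectured $\frac{d}{dG_2}$-equation degenerates to $\frac{d}{dG_2}F_{0,0}\equiv 0$, which is why the explicit formula suffices; the GW/Hilb correspondence enters only to verify the vanishing hypothesis (via Lemma~\ref{lemma:K3}) and, optionally, the $\frac{d}{d\A}$-equation. If you wish to salvage your approach, the honest assessment is that it could reprove the $(0,3)$ case along the lines of Section~\ref{sec:HAE primitive case}, but for the full conjecture it replaces one open problem with two: the family GW/Hilb correspondence and the vertical-class structure on the threefold side.
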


\begin{thm} \label{thm:fiber classes intro}
Conjecture~\ref{conj:Fiber class} holds in the following cases:
\begin{enumerate}
\item[(i)] For the K3 surface $S$ (i.e. if $n=1$) and for all $g,N$
\item[(ii)] For all Hilbert schemes $S^{[n]}$ (that is for arbitrary $n$), if $(g,N)=(0,3)$.
\end{enumerate}
\end{thm}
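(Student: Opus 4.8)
The plan is to deduce the fiber sector $F^{S^{[n]}}_{g,0}$, whose curve classes are $dF+rA$, from the section sector (classes $\ell(B+F)+dF+rA$ with $\ell\ge 1$), where the quasi-Jacobi property and holomorphic anomaly equation are already available: for $n=1$ and all $g,N$ from the quoted theorem of \cite{MPT,HAE,BB}, and for arbitrary $n$ with $g=0$, $N\le 3$ from Theorem~\ref{thm:Main result}. By the deformation invariance built into the $F_{g,\ell}$, it suffices to work with the fixed elliptic K3 surface $\pi\colon S\to\p^1$. I would degenerate the base, $\p^1\rightsquigarrow\p^1\cup_{\mathrm{pt}}\p^1$, so that $S$ acquires the semistable degeneration $S\rightsquigarrow R_1\cup_E R_2$ into two rational elliptic surfaces glued along a smooth fiber $E$, and pass to the induced degeneration of the relative Hilbert schemes. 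The reduced degeneration formula expresses every invariant of $S^{[n]}$ as a sum of relative invariants of $R_1^{[\bullet]}$ and $R_2^{[\bullet]}$, in which the section degree over $\p^1$ is additive; the fiber-class relative invariants of the $R_i^{[\bullet]}$ are the objects common to both the section and the fiber sectors of $S^{[n]}$.

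\textbf{Part (i).} For $n=1$ a fiber-class curve is supported in the fibers of $\pi$, hence under the degeneration in the fibers of $R_1$ or $R_2$; the fiber-class invariants of a rational elliptic surface are known and assemble into quasimodular forms in $q$, and since $r=0$ when $n=1$ there is no $p$-variable, which yields the index-$0$ quasi-Jacobi property. The $\tfrac{d}{d\A}$-equation is then vacuous (there is no exceptional class $\delta$ and no $p$), so only the $\tfrac{d}{dG_2}$-equation carries content; its genus-reduction, splitting and $\psi$-class terms are geometric and insensitive to the section degree, so the three-term equation of Conjecture~\ref{conj:Fiber class} follows by combining the quasimodular structure just obtained with the standard boundary identities of \cite{HAE,BB}.

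\textbf{Part (ii).} For general $n$ and $(g,N)=(0,3)$ the degeneration couples the section and fiber sectors of $S^{[n]}$ through the common fiber-class relative invariants of $R_1^{[\bullet]},R_2^{[\bullet]}$. The section-sector invariants $F^{S^{[n]}}_{0,1}$ are quasi-Jacobi and satisfy the four-term anomaly equation by Theorem~\ref{thm:Main result}; together with the section-sector invariants of the $R_i^{[\bullet]}$, which serve as the rigid input, they determine these common fiber-class invariants, and feeding the latter into the degeneration formula for the fiber sector of $S^{[n]}$ produces $F^{S^{[n]}}_{0,0}$ with its quasi-Jacobi property and anomaly equation. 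The LLV operators $U,T_\alpha,T_\delta$ entering both equations act through the $H^2(S)$-factors preserved under the degeneration, so the $U$-insertion, the splitting term and the $\tfrac{d}{d\A}=T_\delta$ equation are compatible with the gluing and match term by term.

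\textbf{Main obstacle.} The quasi-Jacobi property is the soft part; the hard part is pinning down the exact shape of the $\tfrac{d}{dG_2}$-equation, namely the \emph{absence} of the fourth term $-\tfrac{1}{\ell}\sum_{a,b}(g^{-1})_{ab}T_{e_a}T_{e_b}F^{S^{[n]}}_{g,\ell}$. In the section sector this term is forced by the weight discrepancy between Conjectures~\ref{MCConjecture} and~\ref{Conj: Quasi Jacobi property}, equivalently by the $\Delta(q)^{-\ell}$ prefactor and the resulting $\Gamma_0(\ell)$-behaviour, as explained in \cite[Sec.7.3]{OPix2}. The fiber series carries no such prefactor and has index $0$, so this contribution must be absorbed entirely by the rigid $R_i$ section factors and cannot leak into $F^{S^{[n]}}_{0,0}$; proving this cancellation, while keeping track of the constant-term ambiguity (since $F^{S^{[n]}}_{g,0}$ is defined only in $\BC((p))[[q]]/\BC$ and both equations are asserted modulo constants), is where the real work lies. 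A secondary difficulty is controlling the degeneration of the Hilbert scheme $S^{[n]}$ and matching the relative insertions, for which I would rely on the relative and wall-crossing framework of \cite{QuasiK3}.
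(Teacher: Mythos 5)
Your proposal takes a different route from the paper, and it has genuine gaps that I do not see how to close. The central missing tool is the degeneration formula you want to use: you degenerate the surface $S \rightsquigarrow R_1 \cup_E R_2$ and then "pass to the induced degeneration of the relative Hilbert schemes," but no such degeneration formula exists for the (reduced) Gromov--Witten theory of $S^{[n]}$. When $S$ degenerates, the central fiber of the family of Hilbert schemes is not a normal crossings union of products $R_1^{[k]} \times R_2^{[n-k]}$; one would need Li--Wu-type good degenerations of Hilbert schemes, a relative GW theory of $R_i^{[n]}$ along a highly singular incidence divisor, and control of the reduced class through all of this --- none of which is available, and none of which is supplied by the framework of \cite{QuasiK3} (which concerns quasimap wall-crossing for a \emph{fixed} surface, not degenerations of the surface). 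The paper deliberately avoids this: it never degenerates $S$; the only degenerations used are of the curve factor $C$ in $S \times C$. A second gap is logical: in part (ii) you propose to "determine" the unknown fiber-class relative invariants of $R_i^{[\bullet]}$ from the section-sector series of $S^{[n]}$ and then feed them back into the degeneration formula. This amounts to inverting a degeneration formula (a large sum over splitting types), which is not possible in general and is nowhere justified.

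What your proposal misses is that the fiber-class case is solved by a completely different, and in fact more elementary, mechanism: the multiple cover formula applied to fiber classes. Since Conjecture~\ref{MCConjecture} is known for $(g,N)=(0,3)$ by \cite{QuasiK3} (and for $n=1$ in fiber classes by \cite{BB}), every invariant in class $dF+rA$ is expressed in terms of the primitive invariants in classes $F+rA$; combining this with the two monodromy constraints (the involution $A \mapsto -A$ of Section~\ref{mon:involution} and the shift $e^{\lambda T_\delta}$ of Section~\ref{monodromy:shift}, the latter requiring the vanishing statement verified via GW/Hilb and Lemma~\ref{lemma:K3}) yields the closed formula \eqref{multline:sum Fg0}: modulo constants, $F_{g,0}$ is an explicit linear combination of Eisenstein-type series and the functions $\bigl(p\frac{d}{dp}\bigr)^a \A_{b+1}(p^r,q)$. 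The holomorphic anomaly equations then follow for free from the known identities $\frac{d}{dG_2}\A_n = 0$ and $\frac{d}{d\A}\A_n = n\A_{n-1}$ of Theorem~\ref{thm:2132} (for $(0,3)$ one has $a=0$, so $\frac{d}{dG_2}F_{0,0} \equiv 0$ directly); in particular the absence of the $T_{e_a}T_{e_b}$ term, which you flag as the "main obstacle" requiring a delicate cancellation, is automatic in this approach and needs no argument. For part (i) the paper likewise does not pass to rational elliptic surfaces: when some insertion contains a factor $F$, it restricts to a smooth fiber $E$ and reduces to the ordinary GW theory of the elliptic curve (Okounkov--Pandharipande plus \cite{HAE}); otherwise tautological manipulations reduce to $(0,3)$, checked by the explicit evaluation $F_{0,0}^S(1;W,W,W) = \mathrm{cst} + 24 G_4(q)$.
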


We refer to Theorem~\ref{thm:fiber classes} for the precise form which the quasi-Jacobi forms described in (i) have.
The multiple cover conjecture (Conjecture~\ref{MCConjecture}) was proven for the K3 surface $S$ in fiber classes $dF$
in \cite{BB}. The observation that the corresponding generating series
is quasi-modular and satisfies a holomorphic anomaly equation
appears to be new (but follows easily from the known methods).
The case of the Hilbert scheme of points also follows from the multiple cover conjecture,
together with some subtle vanishing arguments.

Deformation invariance and similar methods as in our proof should show that for any
Lagrangian fibration $\pi: X \to \p^n$ of a $K3^{[n]}$-hyperk\"ahler with a section,
the generating series of Gromov-Witten invariants in fiber classes is a (lattice index) quasi-Jacobi form
and satisfies a holomorphic anomaly equation.
This raises the following question:

\begin{question}
Consider any Lagrangian fibration $X \to B$ with section of a holomorphic-symplectic variety $X$.
Are the generating series of Gromov-Witten invariants in fiber classes quasi-Jacobi forms,
and do they satisfy a holomorphic anomaly equation?
\end{question}

The answer is very likely 'yes'. However, more interestingly we can also ask this for cases where $X$ is quasi-projective hyperk\"ahler.
A prototypical example to consider is the Hitchin map $\CM_{C,n} \to \oplus_{i} H^0(C,K_C^i)$
from the moduli space of rank $n$ Higgs bundles on a curve $C$.
Evidence for a positive answer to the question will be given in the genus $1$ case (more precisely, for the Hilbert scheme of points on $E \times \BC$) in \cite{OP_ExC}.

\subsection{Strategy of the proof}
Hilbert schemes of points on K3 surfaces lie in the intersections of two very special classes of varieties:
(i) (irreducible) hyperk\"ahler varieties, and (ii) Hilbert schemes of points on surfaces.
The geometry of (i) and (ii) will each imply a modular constraint on the generating series of Gromov-Witten invariants.
We will show that these two constraints are precisely the two modular transformation equations that a Jacobi form has to satisfy.

From hyperk\"ahler geometry, we use the global Torelli theorem \cite{Ver, HuyTor} and the description of the monodromy in \cite{Markman}.
The locus parametrizing Hilbert schemes of points $S^{[n]}$ on K3 surfaces
is a divisor in the moduli space of all hyperk\"ahler varieties of $K3^{[n]}$-type.
In particular, there are deformations of $S^{[n]}$ 
which do not arise from deformations of the underlying K3 surface $S$ (these deformation may be thought of as deforming the K3 surface $S$ in a non-commutative way).
Utilizing these extra deformations yields precisely one of the transformation properties that we need. 

The other ingredient follows from the Hilbert scheme side.
Given a surface $S$ there is a correspondence between three different counting theories:
\begin{enumerate}
\item[(i)] Quantum Cohomology (i.e. $(g,N)=(0,3)$ Gromov-Witten theory) of $S^{[n]}$,
\item[(ii)] Pandharipande-Thomas theory of the relative threefold $(S \times \p^1, S_{0,1,\infty})$,
\item[(iii)] Gromov-Witten theory of the relative threefold $(S \times \p^1, S_{0,1,\infty})$.
\end{enumerate}
This correspondence is often represented in the triangle:
\begin{center}
\begin{tikzpicture}
 \draw[very thick]
   (0,0) node[below left, align = center]{Gromov-Witten theory\\of $S \times \p^1$}
-- (1,1.732) node[above, align = center]{Quantum cohomology\\of $\Hilb(S)$}
-- (2,0) node[below right, align = center]{Pandharipande-Thomas theory\\of $S \times \p^1$}
-- (0,0);
\end{tikzpicture}
\end{center}
The GW/PT correspondence (meaning the correspondence between (ii) and (iii)) was proposed in \cite{MNOP1, MNOP2}
and was proven since then in many instances in \cite{MOOP, PPDC, PaPix_GWPT}. 
For $K3 \times \p^1$ it was recently established in \cite{MarkedRelative} for curve classes which are {\em primitive}
over the surface.
The Hilb/PT correspondence (between (i) and (ii)) was recently established in full generality by Nesterov \cite{N1}.
For $\BC^2$ and resolutions of $A_n$ singularities the triangle of correspondences was worked out previously in \cite{OkPandHilbC2, BP, OPLocal, MO, M, MO2,Liu}.

In the case of K3 surfaces the above correspondences take the simplest form: they are straight equalities, without wallcrossing corrections, see Theorem~\ref{thm:rel HAE} and \cite{N2}.
By expressing invariants of the Hilbert schemes in terms of invariants of $S \times \p^1$ and then applying the product formula in Gromov-Witten theory
we hence have expressed the Gromov-Witten invariants of the Hilbert scheme in terms of those of the K3 surface.
This allows us to lift modular properties which are known for K3 surfaces
to the Hilbert scheme of points.
Altogether, this provides precisely the other half of the modularity that we were missing.

This leads to the proof of Theorem~\ref{thm:Main result} for primitive classes $(\ell=1)$.
To deduce the arbitrary case we use the proven case of the multiple cover conjecture \cite{QuasiK3}
and check the compatibility of our conjectures under the formal Hecke operator.
Except for working out the required compatibility on quasi-Jacobi forms this last step is not difficult.

\subsection{History}
The Gromov-Witten theory of the Hilbert schemes of points of K3 surfaces was first studied
by the author in his PhD thesis in \cite{PhD}.
Many ideas behind the current work were already anticipated then.
For example, the potential role of the monodromy was discussed in \cite[Sec.6.3]{PhD},
and the quasi-Jacobi form property was conjectured in a simple case in \cite[Sec.5.1.3]{PhD}.
Interestingly, the simplest evaluation on the Hilbert scheme from a weight point of view, given in \eqref{FF evaluation},
is precisely also the case where the moduli space of stable maps is the simplest to describe, and indeed this case was the first to be computed back then.


\subsection{Plan of the paper}
In Section~\ref{sec:quasi-Jacobi forms} we review the definition of quasi-Jacobi forms
and prove basic properties regarding their $z$-expansions, their anomaly operators, and how they interact with Hecke operators.
In Section~\ref{sec:cohomology} we introduce the LLV algebra on the cohomology of the Hilbert scheme and then describe explicitly the two monodromy operators that we need for constraints of the Gromov-Witten generating series (see Sections~\ref{monodromy:involution} and~\ref{monodromy:involution}).
In Section~\ref{sec:constraints from monodromy} we use these two monodromies and
obtain our first structure result for the generating series of the Hilbert scheme in Proposition~\ref{prop:constrains monodromy},
essentially proving the elliptic transformation law.

Then we turn to the part on GW/PT/Hilb correspondences:
In Section~\ref{sec:Rel GW theory} we discuss first several basic structures in relative Gromov-Witten theory. The main new technical result here is a formula for the restriction of relative Gromov-Witten classes to the non-separating boundary divisor in the moduli space of curves, which is of independent interest.
In Section~\ref{sec:Rel GW of K3xC} we specialize to $(K3 \times C, K3_z)$ for a curve $C$, state the GW/Hilb correspondence (Theorem~\ref{thm:GW/Hilb}), 
the reduced degeneration formula, and make some preliminary explicit computations of invariants.
The goal of Section~\ref{sec:HAE on K3} is to use the product formula and results about the K3 surface to show that the Gromov-Witten invariants of $(K3 \times C, K3_z)$ are quasi-modular forms and satisfy a holomorphic anomaly equation (Theorem~\ref{thm:rel HAE}). This is our second main structure result.

Section~\ref{sec:HAE primitive case} is the heart of the paper: Here we combine the two structure results we obtained before (Proposition~\ref{prop:constrains monodromy} and Theorem~\ref{thm:rel HAE}) and 
match the holomorphic anomaly equation on the Hilbert scheme
with the holomorphic anomaly equation for $(K3 \times C, K3_z)$
under the GW/Hilb correspondence.
This proves Theorem~\ref{thm:Main result} in case $\ell=1$.
The case $\ell>1$ follows then in Section~\ref{sec:HAE imprimitive case} by a formal argument using Hecke operators.
Section~\ref{sec:fiber case}
deals with the fiber classes proving Theorem~\ref{thm:fiber classes intro} by a combination of the GW/Hilb correspondence and known cases of the multiple cover conjecture.
Section~\ref{sec:applications} discusses the applications to the $2$-point function and the CHL Calabi-Yau threefolds.
%

\subsection{Conventions} \label{subsec:Conventions}
Let $X$ be a smooth projective variety.
Given a cohomology class $\gamma \in H^k(X)$ we let
$\deg(\gamma) = k/2$ denote its complex degree.
We will use the identification
$H^{\ast}(X \times X) \cong \End H^{\ast}(X)$
which is given by sending 
a class $\Gamma \in H^{\ast}(X \times X)$ to the operator
\[ \Gamma : H^{\ast}(X) \to H^{\ast}(X), \quad \gamma \mapsto \pi_{2 \ast}(\pi_1^{\ast}(\gamma) \cdot \Gamma), \]
where $\pi_1, \pi_2$ are the projections of $X^2$ to the factors.
Given a function $Z : H^{\ast}(X) \to \BQ$ we will often write
$Z(\Gamma_1) \cdot Z(\Gamma_2)$
and say that $\Gamma_1, \Gamma_2$ stands for summing over the K\"unneth decomposition of the class $\Gamma \in H^{\ast}(X \times X)$.
By this we mean
\[ Z(\Gamma_1) \cdot Z(\Gamma_2) := \sum_{i} Z( \phi_i ) Z( \phi_i^{\vee} ) \]
where $\Gamma = \sum_i \phi_i \otimes \phi_i^{\vee} \in H^{\ast}(X \times X)$ is a K\"unneth decomposition.
A {\emph curve class} on $X$ is any homology class $\beta \in H_2(X,\BZ)$. It is effective if there exists a non-empty algebraic curve $C \subset X$
with $[C]=\beta$. In particular, any effective class $\beta$ is non-zero. An effective class $\beta$ is primitive if it is not divisible in $H_2(X,\BZ)$.

\subsection{Acknowledgements}
Its a pleasure to thank Jim Bryan, Davesh Maulik, Denis Nesterov, and Rahul Pandharipande for discussions related to this work.
I also thank Johannes Schmitt for helpful discussions about how to restrict relative Gromov-Witten classes to the boundary,
and for providing the technical result stated in Proposition~\ref{prop:Schmitt}.

The author was funded by the Deutsche Forschungsgemeinschaft (DFG) - OB 512/1-1, and by the starting grant 'Correspondences in enumerative geometry: Hilbert schemes, K3 surfaces and modular forms', No 101041491
 of the European Research Council.

\section{Quasi-Jacobi forms} \label{sec:quasi-Jacobi forms}
\subsection{Overview}
Jacobi forms are two-variable generalizations of classical modular forms.
Quasi-Jacobi forms are constant terms of almost-holomorphic Jacobi forms.
We introduce here the basic facts we need on quasi-Jacobi forms and refer to \cite{Lib}, \cite[Sec.1]{OPix2} and \cite{vIOP} for more detailed discussions.
The topics we cover are the generators of the ring of quasi-Jacobi forms, differential and anomaly operators, and the Fourier and Taylor expansion of quasi-Jacobi forms.
Conversely, we give criteria on two-variable generating series to be Taylor or Fourier expansions of quasi-Jacobi forms.
In Section~\ref{subsec:Hecke operators} 
we discuss Hecke operators on quasi-Jacobi forms, 
and in Section~\ref{subsec:Hecke operators wrong} we consider their action on forms of the wrong weight.
In Section~\ref{subsec:index 0 meromorphic JAcobi forms} we discuss a classical series of meromorphic quasi-Jacobi forms which will later appear for fiber classes of Lagrangian fibrations in Section~\ref{sec:fiber case}.

\subsection{Definition}
Let $\BH = \{ \tau \in \BC : \mathrm{Im}(\tau) > 0 \}$ be the upper half plane and let $q = e^{2 \pi i \tau}$.
Let also $x \in \BC$ and $p=e^{2 \pi i x}$. We will also frequently use the variable
\[ z = 2 \pi i x.\]
We often write $f(p)$ or $f(z)$ for a function $f(x)$ under the above variable change.
Consider the real-analytic functions
\[ \nu = \frac{1}{8 \pi \Im(\tau)}, \quad \alpha = \frac{\Im(x)}{\Im(\tau)}. \]
An almost holomorphic function on~$\BC \times \BH$ is a function of the form
\begin{equation} \Phi = \sum_{i, j \geq 0} \phi_{i,j}(x,\tau) \nu^i \alpha^j \label{ahm function} \end{equation}
such that each of the finitely many non-zero functions $\phi_{i,j}$ is holomorphic
and admits a Fourier expansion of the form~$\sum_{n \geq 0} \sum_{r \in \BZ} c(n,r)q^n p^r$ in the region~$|q|<1$.

Consider a congruence subgroup
\[ \Gamma \subset \SL_2(\BZ) \]
and write $e(x) = e^{2\pi i x}$ for $x \in \BC$.
\begin{defn}
An \emph{almost holomorphic weak Jacobi form} of weight $k$ and index $m$
for the group $\Gamma$ is a function $\Phi(x,\tau) : \BC \times \BH \to \BC$
which (i) satisfies the transformation laws
\begin{equation} \label{TRANSFORMATIONLAWJACOBI}
\begin{aligned}
\Phi\left( \frac{x}{c \tau + d}, \frac{a \tau + b}{c \tau + d} \right)
& = (c \tau + d)^k e\left( \frac{c m x^2}{c \tau + d} \right) \Phi(x,\tau) \\
\Phi\left( x + \lambda \tau + \mu, \tau \right)
& = e\left( - m \lambda^2 \tau - 2 \lambda m x \right) \Phi(x,\tau)
\end{aligned}
\end{equation}
for all $\binom{a\ b}{c\ d} \in \Gamma$ and $\lambda, \mu \in \BZ$,
and (ii) such that
\[
(c \tau + d)^{-k} e\left( - \frac{c m x^2}{c \tau + d} \right) \Phi\left( \frac{x}{c \tau + d}, \frac{a \tau + b}{c \tau + d} \right)
\]
is an almost-holomorphic function for all $\binom{a\ b}{c\ d} \in \SL_2(\BZ)$.
\end{defn}

\begin{rmk}
By taking $\binom{a\ b}{c\ d}$ to be the identity in (ii), we see that any almost holomorphic weak Jacobi form 
is an almost holomorphic function, and hence has an expansion \eqref{ahm function}.
Condition(i) implies that (ii) only needs to be checked for a set of representatives of $\Gamma \backslash \SL_2(\BZ)$.
In particular, if $\Gamma = \SL_2(\BZ)$ the condition (ii) simply says that $\Phi$ is an almost-holomorphic function.
\end{rmk}
An almost-holomorphic weak Jacobi form $\Phi$, which is as a function $\Phi : \BC \times \BH \to \BC$ holomorphic, is called a {\em weak Jacobi form}.
More generally, we can consider the holomorphic part of an almost-holomorphic weak Jacobi form:

\begin{defn}
A \emph{quasi-Jacobi form} of weight $k$ and index $m$ for $\Gamma$ is a function $\phi(x,\tau)$ on $\BC \times \BH$
such that there exists an almost holomorphic weak Jacobi form
$\sum_{i,j} \phi_{i,j} \nu^i \alpha^j$ of weight $k$ and index $m$
with $\phi_{0,0} = \phi$.
\end{defn}

We let $\AHJ_{k,m}(\Gamma)$ (resp. $\QJ_{k,m}(\Gamma)$, resp $\Jac_{k,m}(\Gamma)$) be the vector space of almost holomorphic weak (resp. quasi-, resp. weak) Jacobi forms
of weight $k$ and index $m$ for the group $\Gamma$.
We write
\[
\QJac(\Gamma) = \bigoplus_{m \geq 0} \bigoplus_{k \in \BZ} \QJac(\Gamma)_{k,m}
\]
for the bigraded $\BC$-algebra of quasi-Jacobi forms,
and similar for $\AHJ(\Gamma)$ and $\Jac(\Gamma)$.

\begin{lemma} \label{lemma:constant term maps}
The constant term map
\[ \AHJ(\Gamma)_{k,m} \to \QJ(\Gamma)_{k,m}, \quad \sum_{i,j} \phi_{i, j} \nu^{i} \alpha^j \mapsto \phi_{0,0} \]
is well-defined and an isomorphism.
\end{lemma}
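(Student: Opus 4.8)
The plan is to treat the three assertions --- well-definedness, surjectivity, and injectivity --- separately, with essentially all of the work concentrated in injectivity. Throughout I will use the elementary identities $\bar\partial_x \nu = 0$, $\bar\partial_x \alpha = 4\pi\mathrm{i}\,\nu$, $\bar\partial_\tau \nu = -4\pi\mathrm{i}\,\nu^2$, and $\bar\partial_\tau \alpha = -4\pi\mathrm{i}\,\nu\alpha$, where $\bar\partial_\tau = \partial/\partial\bar\tau$ and $\bar\partial_x = \partial/\partial\bar x$; these follow at once from $\nu = \mathrm{i}/(4\pi(\tau-\bar\tau))$ and $\alpha = (x-\bar x)/(\tau-\bar\tau)$.

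For well-definedness it is enough to show that the monomials $\nu^i\alpha^j$ are linearly independent over the ring of holomorphic functions admitting a Fourier expansion, so that the coefficient $\phi_{0,0}$ is determined by $\Phi$ alone; surjectivity is then immediate, because by definition $\QJ(\Gamma)_{k,m}$ is precisely the set of functions arising as such a $\phi_{0,0}$, and $\BC$-linearity is clear. To prove the independence, suppose $\sum_{i,j}\phi_{i,j}\nu^i\alpha^j \equiv 0$ with each $\phi_{i,j}$ holomorphic. First freeze $\tau$: then $\nu$ is a nonzero constant and $\bar\partial_x\alpha = 4\pi\mathrm{i}\,\nu$ is a nonzero constant, so grouping as $\sum_j H_j(x)\alpha^j$ with $H_j = \sum_i \phi_{i,j}\nu^i$ and applying $\bar\partial_x$ repeatedly to read off the top power of $\alpha$ forces every $H_j \equiv 0$, hence $\sum_i \phi_{i,j}(\,\cdot\,,\tau)\nu^i \equiv 0$ for each $j$ and each $\tau$. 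Now let $\tau$ vary again and apply $\bar\partial_\tau$ to this relation: using $\bar\partial_\tau\nu = -4\pi\mathrm{i}\,\nu^2$ and dividing by $-4\pi\mathrm{i}\,\nu$ gives $\sum_i i\,\phi_{i,j}\nu^i = 0$, and iterating produces $\sum_i i^k \phi_{i,j}\nu^i = 0$ for all $k \geq 0$. A Vandermonde argument in the distinct integers $i$ then forces all $\phi_{i,j} = 0$.

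The essential point is injectivity: an almost holomorphic weak Jacobi form $\Phi = \sum_{i,j}\phi_{i,j}\nu^i\alpha^j$ with $\phi_{0,0} = 0$ must vanish. Here the transformation law \eqref{TRANSFORMATIONLAWJACOBI} is indispensable, since without it a nonzero purely non-holomorphic function can have vanishing constant term. The strategy is to show that each coefficient $\phi_{i,j}$ with $(i,j)\neq(0,0)$ is a fixed universal differential operator applied to $\phi_{0,0}$, so that $\phi_{0,0}=0$ propagates to $\Phi = 0$. Concretely, I would substitute the expansion into both laws in \eqref{TRANSFORMATIONLAWJACOBI} and exploit the \emph{inhomogeneous} transformation behaviour of $\nu$ and $\alpha$ themselves: a direct computation gives $\nu \mapsto (c\tau+d)^2\nu - \tfrac{\mathrm{i}\,c(c\tau+d)}{4\pi}$ under $\binom{a\ b}{c\ d}$, together with a companion affine-linear law for $\alpha$ whose inhomogeneous term is proportional to the elliptic variable. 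Matching the powers of $(c\tau+d)$ and of these inhomogeneous terms on the two sides yields a recursion expressing every $\phi_{i,j}$ with $(i,j)\neq(0,0)$ through $\tau$- and $x$-derivatives of the lower coefficients, and ultimately through $\phi_{0,0}$ alone; vanishing of $\phi_{0,0}$ then forces $\Phi = 0$.

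Equivalently, and more structurally, I would package this recursion into the two Maass-type lowering operators $\mathsf{L}_\tau$ and $\mathsf{L}_x$ --- the operator $(\tau-\bar\tau)\bar\partial_x$, which sends $\phi_{i,j}\nu^i\alpha^j$ to $-j\,\phi_{i,j}\nu^i\alpha^{j-1}$, and the operator $\bar\partial_\tau$ corrected by a term in the elliptic variable (the index, or ``heat'', correction) which strictly lowers the $\nu$-degree. These are designed to preserve $\AHJ(\Gamma)$ while dropping the weight (by $1$, resp. $2$) and to act on each nonzero homogeneous component by a nonzero scalar. Injectivity then follows by induction on the total $(\nu,\alpha)$-degree of $\Phi$, tracking how the constant term transforms under $\mathsf{L}_\tau$ and $\mathsf{L}_x$. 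The main obstacle is exactly the verification that these operators, with the correct index correction, stay inside $\AHJ(\Gamma)$ and act invertibly off the holomorphic part; the naive operator $(\tau-\bar\tau)^2\bar\partial_\tau$ fails on the components with $i=0$, which is precisely what the index correction repairs. Once this structural input is in place, the remaining steps are routine bookkeeping.
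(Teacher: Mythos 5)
Your treatment of well-definedness (linear independence of the $\nu^i\alpha^j$ over holomorphic functions, via iterated $\bar\partial$ and Vandermonde) and of surjectivity (immediate from the definition of $\QJ$) is correct; note that the paper itself gives no argument at all here, but simply cites \cite{Lib}. The genuine gap is in injectivity, in both variants you propose. In the first variant, the recursion you describe is not what coefficient matching produces. For a \emph{fixed} $\gamma\in\Gamma$, linear independence only licenses comparing coefficients of $\nu^i\alpha^j$, and using $\nu\mapsto(c\tau+d)^2\nu-\tfrac{\mathrm{i}c(c\tau+d)}{4\pi}$, $\alpha\mapsto(c\tau+d)\alpha-cx$ this gives
\[
(c\tau+d)^k e\!\left(\tfrac{cmx^2}{c\tau+d}\right)\phi_{i,j}(x,\tau)
=\sum_{I\ge i,\,J\ge j}\binom{I}{i}\binom{J}{j}\,
\phi_{I,J}\!\left(\tfrac{x}{c\tau+d},\tfrac{a\tau+b}{c\tau+d}\right)
(c\tau+d)^{2i+j}\left(-\tfrac{\mathrm{i}c(c\tau+d)}{4\pi}\right)^{I-i}(-cx)^{J-j},
\]
which expresses each $\phi_{i,j}$ through the \emph{higher} coefficients $\phi_{I,J}$, $(I,J)\ge(i,j)$, at transformed arguments: the triangular system runs opposite to the direction you assert, and no $\tau$- or $x$-derivatives of coefficients appear anywhere. ``Matching powers of $(c\tau+d)$'' is not a legitimate operation for fixed $\gamma$, since $c\tau+d$ is then just a holomorphic function of $\tau$. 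To invert the triangle one must vary $\gamma$ over infinitely many elements of $\Gamma$ (a Vandermonde argument in the ratios $a/c$, which a congruence subgroup does supply), and even then the constant-term relation yields only \emph{one} equation per anti-diagonal $\{I+J=D\}$; one additionally needs the elliptic law (Vandermonde in $\lambda\in\BZ$) to kill the entire row $\{I=0\}$, and then the comparisons at $(i,j)$ with $i+j=D-1$ to chain each entry of an anti-diagonal, up to powers of $x$ and nonzero constants, to $\phi_{0,D}=0$. These steps are the mathematical content of the lemma, not bookkeeping, and they are absent from your sketch.

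The second, ``structural'' variant is circular. The lowering operators do exist and preserve $\AHJ(\Gamma)$: explicitly $\mathsf{L}_x=(\tau-\bar\tau)\bar\partial_x$ lowers the weight by $1$ and acts as $-\partial/\partial\alpha$, while $D=(\tau-\bar\tau)^2\bar\partial_\tau+(\tau-\bar\tau)(x-\bar x)\bar\partial_x$ lowers the weight by $2$ and acts as $\tfrac{\mathrm{i}}{4\pi}\,\partial/\partial\nu$; in particular, contrary to your expectation, no correction involving the index $m$ is needed --- the required correction is the $\bar\partial_x$ term, not a ``heat'' term. But your induction on the total $(\nu,\alpha)$-degree cannot close: to apply the inductive hypothesis to $D\Phi$ and $\mathsf{L}_x\Phi$ you must know that \emph{their} constant terms vanish, and these constant terms are $\tfrac{\mathrm{i}}{4\pi}\phi_{1,0}$ and $-\phi_{0,1}$ --- exactly the vanishing you are trying to prove. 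The missing input is precisely the assertion that the constant term determines the higher coefficients, i.e.\ the lemma itself (equivalently, the well-definedness of the anomaly operators on $\QJ$). In the standard treatment this is obtained differently: one proves by downward induction on depth, using the transformation laws, that $\AHJ(\Gamma)=\Jac(\Gamma)[\,G_2+\nu,\ \A+\alpha\,]$ (the top coefficients of an almost holomorphic form are weak Jacobi forms), and then injectivity reduces to the algebraic independence of $G_2$ and $\A$ over $\Jac(\Gamma)$, i.e.\ the free polynomial structure of Lemma~\ref{lemma:G2 A independent} --- an input of a different nature that your sketch never supplies.
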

\begin{proof}
This is proven in \cite{Lib}.
\end{proof}

A quasi-modular form of weight $k$ for the congruence subgroup $\Gamma$
is a quasi-Jacobi form of weight $k$ and index $0$ for $\Gamma$. 
The algebra of quasi-modular forms is denoted by
\[ \QMod(\Gamma) = \bigoplus_k \QMod(\Gamma)_k, \quad \QMod(\Gamma)_k = \QJac(\Gamma)_{k,0}. \]

\begin{rmk}
(i) If $\Gamma$ is the full modular group $\SL_2(\BZ)$, we will usually omit $\Gamma$ from our notation, e.g.
\[ \QJac = \QJac(\SL_2(\BZ)). \]
(ii) In what follows, we will often identify a quasi-Jacobi form $f(x,\tau) \in \QJac_{k,m}$ with its power series in $p,q$. We will also often write $f(p,q)$ instead of $f(x,\tau)$.
\end{rmk}

\subsection{Presentation by generators: Quasi-modular forms}
For all even~$k > 0$ consider the Eisenstein series
\[ G_k(\tau) = - \frac{B_k}{2 \cdot k} + \sum_{n \geq 1} \sum_{d|n} d^{k-1} q^n. \]
Set also $G_k = 0$ for all odd $k>0$.
We have that $G_k$ is a modular form of weight $k$ for $k>2$, and $G_2$ is quasi-modular.
By \cite{KZ,BO} the algebra of quasi-modular forms is a free polynomial ring in $G_2$
over $\Mod(\Gamma)$, i.e. the ring of modular forms for the group $\Gamma$:
\[ \QMod(\Gamma) = \Mod(\Gamma)[G_2]. \]
For the full modular group $\Gamma=\SL_2(\BZ)$ we have
\[ \QMod = \BC[G_2, G_4, G_6]. \]

\subsection{Presentation by generators: Quasi-Jacobi forms}
Consider the odd (renormalized) Jacobi theta function\footnote{We have 
$\Theta(x,\tau) = \vartheta_1(x,\tau) / \eta^3(\tau)$ where
\[ \vartheta_1(x,\tau) = \sum_{\nu\in \mathbb{Z}+\frac{1}{2}} (-1)^{\lfloor \nu \rfloor} p^\nu q^{\nu^2/2} \]
is the odd Jacobi theta function,
i.e. the unique section on the elliptic curve~$\BC_{x}/(\BZ+\tau \BZ)$ which vanishes at the origin,
and $\eta(\tau) = q^{1/24} \prod_{n \geq 1} (1-q^n)$ is the Dedekind eta function.}
\[
\Theta(x,\tau) 
=  (p^{1/2}-p^{-1/2})\prod_{m\geq 1} \frac{(1-pq^m)(1-p^{-1}q^m)}{(1-q^m)^{2}}.
\]
Consider the derivative operator
$p \frac{d}{dp} = \frac{1}{2\pi i} \frac{d}{dx} = \frac{d}{dz}$
and consider also the series
\begin{align*}
\A(x,\tau) = \frac{p \frac{d}{dp} \Theta(x,\tau)}{\Theta(x,\tau)}
& = - \frac{1}{2} - \sum_{m \neq 0} \frac{p^m}{1-q^m}.
\end{align*}

By the same argument as in \cite{KZ,BO} we have
that $G_2$ and $\A$ are free generators:
\begin{lemma} \label{lemma:G2 A independent}
$\QJac(\Gamma) \subset \Jac(\Gamma)[ G_2, \A ]$.
\end{lemma}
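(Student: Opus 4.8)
The plan is to deduce the inclusion from the corresponding structural statement about almost holomorphic forms, where the almost holomorphic completions of $G_2$ and $\A$ play the role of the extra polynomial generators. This is the Kaneko--Zagier argument for quasi-modular forms, carried out now in the two real-analytic variables $\nu$ and $\alpha$ simultaneously.

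First I would pass to the almost holomorphic side. Taking the $\nu^0\alpha^0$-coefficient is an algebra homomorphism, since the constant term of a product $\Phi\Psi=(\sum_{i,j}\phi_{ij}\nu^i\alpha^j)(\sum_{k,l}\psi_{kl}\nu^k\alpha^l)$ is $\phi_{00}\psi_{00}$; combined with Lemma~\ref{lemma:constant term maps} this upgrades the constant term map $\AHJ(\Gamma)\to\QJ(\Gamma)$ to an isomorphism of bigraded algebras. It therefore suffices to prove the completed statement
\[ \AHJ(\Gamma)=\Jac(\Gamma)[\widehat{G_2},\widehat{\A}], \]
where $\widehat{G_2}=G_2+c_1\nu$ and $\widehat{\A}=\A+c_2\alpha$ (for nonzero constants $c_1,c_2$) are the canonical completions, of weight $2$ and $1$ and index $0$ respectively, supplied by the references in which $G_2$ and $\A$ are introduced. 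Applying the constant term isomorphism, which sends $\widehat{G_2}\mapsto G_2$, $\widehat{\A}\mapsto\A$ and fixes weak Jacobi forms, then yields the lemma.

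For the completed statement I would induct on the total degree $D=\max\{\,i+j:\phi_{ij}\neq 0\,\}$ of $\Phi=\sum_{i,j}\phi_{ij}\nu^i\alpha^j\in\AHJ_{k,m}(\Gamma)$. The base case $D=0$ is exactly the statement that a holomorphic almost holomorphic weak Jacobi form is a weak Jacobi form. For the inductive step the key input is how $\nu$ and $\alpha$ transform under $\SL_2(\BZ)$: a direct computation from $\Im(\gamma\tau)=\Im(\tau)/|c\tau+d|^2$ gives
\[ \nu(\gamma\cdot)=(c\tau+d)^2\nu-\tfrac{ic(c\tau+d)}{4\pi},\qquad \alpha(\gamma\cdot)=(c\tau+d)\alpha-cx, \]
while under the translations $x\mapsto x+\lambda\tau+\mu$ one has $\nu\mapsto\nu$ and $\alpha\mapsto\alpha+\lambda$. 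Matching the coefficient of the top monomial $\nu^s\alpha^t$ with $s+t=D$ on both sides of the transformation law for $\Phi$ forces each such $\phi_{st}$ to satisfy the weak Jacobi transformation law of weight $k-2s-t$ and index $m$; being holomorphic, it is a genuine element of $\Jac_{k-2s-t,m}(\Gamma)$. Since $\widehat{G_2}^{\,s}\widehat{\A}^{\,t}$ has leading term $c_1^s c_2^t\,\nu^s\alpha^t$, subtracting $\sum_{s+t=D}c_1^{-s}c_2^{-t}\phi_{st}\,\widehat{G_2}^{\,s}\widehat{\A}^{\,t}$ kills every degree-$D$ monomial and introduces only terms of degree $<D$, lowering $D$ and closing the induction.

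The main obstacle is the bookkeeping in this inductive step: one must check that the only source of $\nu^s\alpha^t$ at top total degree is the product of the leading terms of $\nu(\gamma\cdot)^s$ and $\alpha(\gamma\cdot)^t$, the correction pieces $-\tfrac{ic(c\tau+d)}{4\pi}$ and $-cx$ strictly lowering the $\nu$- or $\alpha$-degree, so that the induced transformation law for $\phi_{st}$ is clean, and that the subtraction never resurrects a degree-$D$ term. The very same leading-symbol computation shows that the map sending $\Phi$ to its top-degree coefficients is injective on each graded piece, which gives the algebraic independence of $G_2$ and $\A$ over $\Jac(\Gamma)$; this freeness is what the surrounding text asserts and is exactly what makes the anomaly operators $\tfrac{d}{dG_2}$ and $\tfrac{d}{d\A}$ well defined.
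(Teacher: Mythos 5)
Your skeleton is the same Kaneko--Zagier induction that the paper itself invokes by citing \cite{KZ,BO}, and most of the computational ingredients are correct: the transformation laws for $\nu$ and $\alpha$, the fact that matching the top monomials $\nu^s\alpha^t$ forces the top coefficients $\phi_{st}$ to transform like weak Jacobi forms of weight $k-2s-t$ and index $m$, and the multiplicativity of the constant-term map. However, there is a genuine gap in the inductive step, located exactly at the point where the Jacobi case differs from the quasi-modular case. Unlike $\widehat{G_2}=G_2+\nu$, the completed generator $\widehat{\A}=\A+c_2\alpha$ is \emph{not} an almost holomorphic weak Jacobi form: $\A = p\frac{d}{dp}\log\Theta$ has simple poles at every lattice point $x\in\BZ\tau+\BZ$, the zero locus of $\Theta$. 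Consequently the asserted identity $\AHJ(\Gamma)=\Jac(\Gamma)[\widehat{G_2},\widehat{\A}]$ cannot hold as stated (the right-hand side contains meromorphic functions, e.g.\ $\widehat{\A}$ itself), and, more importantly, the remainder $\Phi-\sum_{s+t=D}c_1^{-s}c_2^{-t}\phi_{st}\widehat{G_2}^{s}\widehat{\A}^{t}$ need not lie in $\AHJ_{k,m}(\Gamma)$: its coefficients acquire the poles of the powers of $\A$ multiplied by the weak Jacobi forms $\phi_{st}$, which in general do not vanish at lattice points. The induction hypothesis therefore cannot be applied to the remainder, and what the argument honestly proves is only the weaker inclusion of $\QJac(\Gamma)$ into the polynomial ring in $G_2,\A$ over \emph{meromorphic} Jacobi forms with poles at lattice points, whereas the lemma asserts coefficients in $\Jac(\Gamma)$, i.e.\ holomorphic ones.

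Note that the missing statement --- that the top graded piece $\sum_{s+t=D}c_1^{-s}c_2^{-t}\phi_{st}G_2^s\A^t$, and hence the remainder, is again holomorphic --- is essentially the content of the lemma, so as written the step is circular. One way to close the gap: run your induction in the larger ring of almost meromorphic forms (poles at lattice points) to get $f=\sum_{s,t}h_{st}G_2^s\A^t$ with $h_{st}$ meromorphic Jacobi forms; since $\widehat{G_2},\widehat{\A}$ and the $h_{st}$ transform without anomaly, $\sum_{s,t}h_{st}\widehat{G_2}^s\widehat{\A}^t$ is the almost holomorphic completion of $f$ (using injectivity of the constant-term map in the almost meromorphic category), and because $f\in\QJac(\Gamma)$ all coefficients of this completion are holomorphic by the definition of $\AHJ(\Gamma)$; a downward induction on $(s,t)$, starting from the maximal pairs where the $\nu^s\alpha^t$-coefficient is a nonzero constant multiple of $h_{st}$, then recovers holomorphy of every $h_{st}$, together with the required Fourier expansion conditions. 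Alternatively one can argue through the structure of $\MQJac$ as in \cite{vIOP}. Your final remark on freeness is unaffected, since algebraic independence of $G_2,\A$ over $\Jac(\Gamma)$ only uses the independence of $\nu,\alpha$ over meromorphic functions.
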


As in the case of quasi-modular forms,
for the full modular group,
the algebra of quasi-Jacobi forms
can be embedded in a polynomial algebra.
Consider the classical Weierstra{\ss} elliptic function
\[
\wp(x,\tau) = \frac{1}{12} + \frac{p}{(1-p)^2} + \sum_{d \geq 1} \sum_{k | d} k (p^k - 2 + p^{-k}) q^{d}.
\]
We write~$\wp'(x,\tau) = p \frac{d}{dp} \wp(x,\tau)$ for its derivative with respect to the first variable.
Consider the polynomial algebra
\[ \MQJac = \BC[\Theta, \A, G_2, \wp, \wp', G_4]. \]

\begin{prop}[{\cite{vIOP}}]
$\MQJac$ is a free polynomial ring on its generators,
and~$\QJac$ is equal to the subring of all polynomials which define holomorphic functions~$\BC \times \BH \to \BH$.
\end{prop}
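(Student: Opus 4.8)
The plan is to prove the two assertions — freeness of $\MQJac$ and the identification of $\QJac$ with its holomorphic part — by reducing everything, via the bigrading and the already-established behaviour of $G_2$ and $\A$, to the classical theory of meromorphic Jacobi forms of index zero.

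First I would establish algebraic independence of the six generators. Since $\MQJac$ is bigraded by weight and index, a polynomial relation must hold separately in each bidegree. The generator $\Theta$ is the only one of positive index (index $\tfrac12$), so writing a relation as $\sum_a \Theta^a P_a = 0$ with $P_a$ a polynomial in $\A, G_2, \wp, \wp', G_4$, index-homogeneity forces $\Theta^a P_a = 0$ for each $a$; as $\Theta \not\equiv 0$ this reduces the problem to the five index-zero generators. Next, Lemma~\ref{lemma:G2 A independent} (in its meromorphic form) says that $G_2$ and $\A$ are algebraically independent over the field of meromorphic Jacobi forms; since $\wp, \wp', G_4$ are genuine (non-quasi) meromorphic Jacobi forms, a relation $P(\A, G_2, \wp, \wp', G_4)=0$ read as a polynomial in $\A, G_2$ must have vanishing coefficients, reducing to the independence of $\wp, \wp', G_4$. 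For the latter I would argue by transcendence degree: $G_4, G_6$ are algebraically independent (as $\Mod = \BC[G_4,G_6]$ is free), $\wp$ is non-constant in $x$ while $G_4, G_6$ are constant in $x$ and hence $\wp$ is transcendental over $\BC(G_4,G_6)$, and $\wp'$ is algebraic over $\BC(G_4,G_6,\wp)$ by the Weierstrass equation $(\wp')^2 = 4\wp^3 - g_2\wp - g_3$ with $g_2 \propto G_4$ and $g_3 \propto G_6$; thus the transcendence degree of $\BC(\wp,\wp',G_4,G_6)$ over $\BC$ equals $3$. Solving the Weierstrass equation linearly for $G_6$ shows $\BC(\wp,\wp',G_4) = \BC(\wp,\wp',G_4,G_6)$, so the three elements $\wp, \wp', G_4$ generate a field of transcendence degree $3$ and are therefore algebraically independent. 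This gives freeness of $\MQJac$.

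For the second assertion I would show that $\MQJac$ is exactly the ring of meromorphic quasi-Jacobi forms for $\SL_2(\BZ)$ with poles only at lattice points, and that among these the holomorphic ones are precisely the quasi-Jacobi forms. The inclusion of $\MQJac$ in the meromorphic quasi-Jacobi forms is clear, since each generator is such a form and the class is a ring. For the reverse (generation), given a meromorphic quasi-Jacobi form $f$ I would first apply Lemma~\ref{lemma:G2 A independent} to write $f$ as a polynomial in $G_2$ and $\A$ with coefficients meromorphic Jacobi forms, reducing to generation of meromorphic Jacobi forms by $\Theta, \wp, \wp', G_4$. Dividing by a suitable power $\Theta^{2m}$ lowers the index to $0$ (introducing at worst poles at lattice points, which are allowed), so it remains to recall the classical fact that an index-zero meromorphic Jacobi form for $\SL_2(\BZ)$ is, for each fixed $\tau$, an elliptic function of $x$ with poles only at lattice points, hence a polynomial in $\wp$ and $\wp'$ whose coefficients are modular forms; since $\Mod = \BC[G_4,G_6]$ and $G_6 \in \BC[\wp,\wp',G_4]$, this exhibits it as an element of $\BC[\wp,\wp',G_4]$. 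Finally, a meromorphic quasi-Jacobi form admits an almost-holomorphic meromorphic completion, obtained by replacing $G_2, \A$ with their completions $\widehat{G_2}, \widehat{\A}$; if the form is holomorphic, this completion is an almost-holomorphic weak Jacobi form and, by the constant-term isomorphism of Lemma~\ref{lemma:constant term maps}, the form is its holomorphic part and hence a quasi-Jacobi form. Conversely every quasi-Jacobi form is holomorphic by definition and lies in $\MQJac$ by the generation step. This identifies $\QJac$ with the holomorphic subring of $\MQJac$.

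I expect the main obstacle to be the generation step — establishing that the six functions actually span all meromorphic quasi-Jacobi forms. This rests on two structural inputs that must be matched carefully with respect to the allowed pole locus: the reduction of the quasi-part via $G_2, \A$ (the meromorphic form of Lemma~\ref{lemma:G2 A independent}) and the classical description of index-zero meromorphic Jacobi forms as $\wp, \wp'$-polynomials with modular coefficients. In particular one must verify that lowering the index by dividing by $\Theta^{2m}$ and eliminating $G_6$ via the Weierstrass relation never forces poles outside the lattice, so that holomorphy is preserved throughout the reduction. By contrast, the algebraic independence is comparatively formal once the bigrading and the transcendence-degree computation are in place.
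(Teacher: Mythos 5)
The paper gives no proof of this proposition at all---it is imported wholesale from \cite{vIOP}---so your attempt can only be judged on its own merits rather than against an in-paper argument. Judged that way, the architecture is right and most steps are sound: stripping off $\Theta$ by index-homogeneity, invoking the freeness of $G_2,\A$ over Jacobi forms (the meromorphic form of Lemma~\ref{lemma:G2 A independent}), and the transcendence-degree count for $\wp,\wp',G_4$ with $G_6$ eliminated via the Weierstrass relation is a correct proof of algebraic independence, modulo one fact you use silently: that nonzero forms of distinct weight and index are linearly independent \emph{as functions}. That is what licenses ``a polynomial relation must hold separately in each bidegree,'' and it is not formal---it needs the transformation laws and a Vandermonde-type argument over the Jacobi group. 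Likewise the generation step $\QJac\subset\MQJac$ (reduce the quasi-part by Lemma~\ref{lemma:G2 A independent}, divide by $\Theta^{2m}$, then use that an index-$0$ meromorphic Jacobi form with lattice poles lies in $\Mod[\wp,\wp']$) is the standard route and is fine, granting the routine verification that the $\wp,\wp'$-coefficients are holomorphic, including at the cusp.

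The genuine gap is the final step, where you pass from ``$P\in\MQJac$ is holomorphic'' to ``$P\in\QJac$.'' You assert that the completion obtained by $G_2\mapsto\widehat{G_2}$, $\A\mapsto\widehat{\A}$ ``is an almost-holomorphic weak Jacobi form,'' but this is exactly the nontrivial content of the second assertion, not something that comes for free. The $\nu^i\alpha^j$-coefficients of that completion are (up to constants) the \emph{formal} derivatives $\bigl(\tfrac{d}{dG_2}\bigr)^i\bigl(\tfrac{d}{d\A}\bigr)^jP$, and holomorphy of $P$ does not formally imply holomorphy of these: holomorphy of a polynomial in meromorphic functions is a cancellation condition among its monomials, and formal differentiation destroys cancellations in general. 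What rescues the claim is equivariance: the transformation laws of Lemma~\ref{lemma:QJac transformation laws} hold on all of $\MQJac$ (the paper's remark that these formal identities are insensitive to holomorphicity), so for instance $\exp\bigl(-\lambda\tfrac{d}{d\A}\bigr)P$ equals, up to an invertible automorphy factor (and a character when odd powers of $\Theta$ occur), the translate $P(x+\lambda\tau+\mu,\tau)$, which is holomorphic for every $\lambda\in\BZ$; a Vandermonde argument in $\lambda$ then gives holomorphy of every $\bigl(\tfrac{d}{d\A}\bigr)^jP$, and a similar argument using matrices $\binom{1\ 0}{c\ 1}$, $c\in\BZ$, together with an induction, handles the mixed $\tfrac{d}{dG_2}$-derivatives. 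Without this (or some equivalent pole-order induction) your proof of ``holomorphic $\Rightarrow$ quasi-Jacobi'' is incomplete. A smaller point in the same direction: with the paper's literal definition of $\QJac$ (no character), $\Theta$ itself is holomorphic but is \emph{not} a quasi-Jacobi form, since it picks up $(-1)^{\mu}$ under $x\mapsto x+\mu$; the proposition is only true in the conventions of \cite{vIOP}, which allow half-integral index and characters, and your completion argument should be phrased in those conventions throughout.
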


The generators of $\MQJac$ are quasi-Jacobi forms (with poles and character \cite{vIOP})
of weight and index given in the following table. The algebra $\QJac$ is a graded subring of $\MQJac$.
$$\begin{array}{lll} 
\text{Generator} & \text{Weight} & \text{Index} \\\hline
\Theta & -1 & 1/2 \\
\A & 1  & 0\\
G_2 & 2 & 0\\
\wp & 2 & 0\\
\wp' & 3 & 0\\
G_4 & 4 & 0.
\end{array}$$
\begin{rmk}
By the well-known equation
\[
\wp'(x)^2 - 4 \wp(x)^3 + 20 \wp(x) G_4(\tau) + \frac{7}{3} G_6(\tau) = 0
\]
the generator $G_6$ is not needed as a generator of $\MQJac$.
\end{rmk}

\subsection{Differential and anomaly operators}
As explained in \cite[Sec.2]{OPix2}
the algebra $\QJac(\Gamma)$ is closed under 
the derivative operators
\[
D_{\tau} = \frac{1}{2\pi i} \frac{d}{d \tau} = q \frac{d}{dq}, \quad D_x = \frac{1}{2 \pi i} \frac{d}{dx} = \frac{d}{dz} = p \frac{d}{dp}.
\]
More precisely, these operators act by:
\[ D_{\tau} : \QJac_{k,m}(\Gamma) \to \QJac_{k+2, m}(\Gamma), \quad
D_x : \QJac_{k,m}(\Gamma) \to \QJac_{k+1,m}(\Gamma). \]

Similarly, we have {\em anomaly operators}.
These can be defined most directly as follows.
By Lemma~\ref{lemma:G2 A independent} every quasi-Jacobi form $f(x,\tau)$
can be uniquely written as a polynomial in $A$ and $G_2$ with coefficients weak Jacobi-forms.
We hence can take the formal derivative at these generators,
giving functions $\frac{d}{dG_2} f$ and $\frac{d}{d\A} f$.
If $F = \sum_{i,j} f_{i,j} \nu^i \alpha^j$ is the almost-holomorphic function with $f_{0,0} = f$,
then by \cite[Sec.2]{OPix2} one has 
\[ \frac{d}{dG_2} f = f_{1,0}, \quad \frac{d}{d \A} f = f_{0,1}. \]
This can be used to show that $\frac{d}{dG_2}$ and $\frac{d}{d \A}$ preserves the algebra of quasi-Jacobi forms,
more precisely:
\begin{lemma}[{\cite[Sec.2]{OPix2}}]
The formal derivation with respect to $\A$ and $G_2$ defines operators
\[
\frac{d}{dG_2} : \QJac_{k,m}(\Gamma) \to \QJac_{k-2, m}(\Gamma), \quad
\frac{d}{d\A} : \QJac_{k,m}(\Gamma) \to \QJac_{k-1,m}(\Gamma).
\]
We have the commutative diagrams:
\[
\begin{tikzcd}
\QJ_{k,m} \ar{d}[swap]{\frac{d}{dG_2}} & \ar{l}[swap]{\cong} \AHJ_{k,m} \ar{d}{\frac{d}{d\nu}} & & \QJ_{k,m} \ar{d}[swap]{\frac{d}{d \A}} & \ar{l}[swap]{\cong} \AHJ_{k,m} \ar{d}{\frac{d}{d \alpha}} \\
\QJ_{k-2,m} & \ar{l}[swap]{\cong} \AHJ_{k-2,m} & & \QJ_{k-1,m} & \ar{l}[swap]{\cong} \AHJ_{k-1,m}
\end{tikzcd}
\]
where the horizontal maps are the 'constant term' maps of Lemma~\ref{lemma:constant term maps}.
\end{lemma}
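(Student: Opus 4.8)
The plan is to transport the whole statement to the almost-holomorphic side via the constant-term isomorphism of Lemma~\ref{lemma:constant term maps}, where the operators $\frac{d}{dG_2}$ and $\frac{d}{d\A}$ become ordinary partial derivatives in the non-holomorphic variables $\nu$ and $\alpha$. Concretely, I would first exhibit the almost-holomorphic completions of the two free generators, namely $\widehat{G}_2 = G_2 + \nu \in \AHJ_{2,0}(\Gamma)$ and $\widehat{\A} = \A + \alpha \in \AHJ_{1,0}(\Gamma)$ (in the normalization of the references), and check that these are genuine almost-holomorphic weak Jacobi forms: the quasi-modular anomaly of $G_2$ under $\SL_2(\BZ)$ is cancelled by the transformation of $\nu = \frac{1}{8\pi\Im(\tau)}$, and the elliptic anomaly of $\A$ under $x \mapsto x + \lambda\tau + \mu$ is cancelled by $\alpha = \Im(x)/\Im(\tau) \mapsto \alpha + \lambda$.

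Granting this, the next step upgrades Lemma~\ref{lemma:G2 A independent} to its almost-holomorphic counterpart. The constant-term map is an isomorphism of bigraded algebras (evaluation at $\nu=\alpha=0$ is an algebra homomorphism), whose inverse sends $G_2 \mapsto \widehat{G}_2$, $\A \mapsto \widehat{\A}$ and fixes weak Jacobi forms. Transporting the presentation $\QJac(\Gamma) = \Jac(\Gamma)[G_2,\A]$ of Lemma~\ref{lemma:G2 A independent} through this isomorphism yields the identification
\[ \AHJ(\Gamma) = \Jac(\Gamma)[\widehat{G}_2, \widehat{\A}], \]
a free graded polynomial ring in which $\widehat{G}_2$ has weight $2$, $\widehat{\A}$ has weight $1$, and both have index $0$. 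On such a ring the formal partials $\partial/\partial \widehat{G}_2$ and $\partial/\partial \widehat{\A}$ are well-defined graded derivations lowering weight by $2$ and $1$ respectively and preserving the index. Since $\widehat{G}_2 = G_2 + \nu$ and $\widehat{\A} = \A + \alpha$ are affine in $\nu,\alpha$ over the remaining holomorphic data, the chain rule identifies these partials with the coefficient-extraction operators $\frac{d}{d\nu}$ and $\frac{d}{d\alpha}$ of the diagrams; in particular this shows $\frac{d}{d\nu} : \AHJ_{k,m} \to \AHJ_{k-2,m}$ and $\frac{d}{d\alpha} : \AHJ_{k,m} \to \AHJ_{k-1,m}$, so the right-hand verticals are defined.

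It then remains to transport back and check commutativity. Writing a quasi-Jacobi form as $f = \sum_{i,j} \phi_{ij} G_2^i \A^j$ with $\phi_{ij} \in \Jac(\Gamma)$, its completion is $\widehat{f} = \sum_{i,j} \phi_{ij} \widehat{G}_2^i \widehat{\A}^j$, and a direct comparison gives
\[ \frac{d}{d\nu} \widehat{f} \Big|_{\nu=\alpha=0} = \sum_{i,j} i\, \phi_{ij} G_2^{i-1} \A^j = \frac{d}{dG_2} f, \]
and likewise $\frac{d}{d\alpha}\widehat{f}|_{\nu=\alpha=0} = \frac{d}{d\A} f$; these are precisely the identities $f_{1,0} = \frac{d}{dG_2}f$ and $f_{0,1} = \frac{d}{d\A}f$ recorded before the statement. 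Combined with the previous paragraph, this shows that $\frac{d}{dG_2}$ and $\frac{d}{d\A}$ map $\QJ_{k,m}(\Gamma)$ into $\QJ_{k-2,m}(\Gamma)$ and $\QJ_{k-1,m}(\Gamma)$, and that both squares commute, since each composite around a square is the constant term of the corresponding $\nu$- or $\alpha$-derivative of the completion.

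The main obstacle is the very first step: verifying that $\widehat{G}_2$ and $\widehat{\A}$ genuinely satisfy the transformation laws \eqref{TRANSFORMATIONLAWJACOBI} with weights $2$ and $1$. This is a computation with the $\SL_2(\BZ)$- and lattice-behaviour of $\nu$ and $\alpha$ --- one finds $\nu \mapsto \nu |c\tau+d|^2$, whose non-holomorphic cross term cancels the classical quasi-modular anomaly of $G_2$, and analogously for $\A$ and $\alpha$ --- but it is entirely classical and can be cited from \cite{Lib, OPix2}. Everything downstream is formal manipulation inside the graded polynomial ring together with the constant-term isomorphism, so no further analytic input is needed.
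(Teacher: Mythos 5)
Your route is the same one the paper (and its source \cite[Sec.2]{OPix2}) takes: complete the generators, identify $\tfrac{d}{dG_2}$ and $\tfrac{d}{d\A}$ with extraction of the $\nu$- and $\alpha$-coefficients, and transport back along the constant-term isomorphism; this is exactly how one obtains the identities $f_{1,0}=\tfrac{d}{dG_2}f$ and $f_{0,1}=\tfrac{d}{d\A}f$ quoted before the lemma. However, two of your intermediate claims are false as stated under the paper's definitions, and as written they carry the argument. First, Lemma~\ref{lemma:G2 A independent} asserts an inclusion $\QJac(\Gamma)\subset\Jac(\Gamma)[G_2,\A]$, not the equality $\QJac(\Gamma)=\Jac(\Gamma)[G_2,\A]$ that you ``transport'': the inclusion is proper, since $\A=p\frac{d}{dp}\log\Theta$ has poles at the lattice points $x\in\BZ+\tau\BZ$ (the zeros of $\Theta$) and therefore is \emph{not} a quasi-Jacobi form in the paper's sense (compare the discussion of $\MQJac$ versus $\QJac$, where $\QJac$ is singled out as the subring of polynomials defining holomorphic functions). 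For the same reason your very first step fails: $\widehat{\A}=\A+\alpha$ satisfies the weight-one, index-zero transformation laws, but its coefficient functions are not holomorphic, so $\widehat{\A}\notin\AHJ_{1,0}(\Gamma)$; consequently the claimed identification $\AHJ(\Gamma)=\Jac(\Gamma)[\widehat{G}_2,\widehat{\A}]$ is also false (only a proper inclusion holds). Note that $\widehat{G}_2=G_2+\nu$ is unproblematic: it is the genuine almost-holomorphic Eisenstein series in $\AHJ_{2,0}$.

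The repair is routine and preserves your structure, but it must be made. Work inside the free polynomial ring $\Jac(\Gamma)[\widehat{G}_2,\widehat{\A}]$ regarded as a ring of \emph{almost-meromorphic} functions satisfying the transformation laws. Given $f\in\QJac_{k,m}(\Gamma)$ with completion $F\in\AHJ_{k,m}(\Gamma)$, write $f=\sum_{i,j}\phi_{ij}G_2^i\A^j$ and form $\widetilde{F}=\sum_{i,j}\phi_{ij}\widehat{G}_2^i\widehat{\A}^j$; this has the correct transformation laws and constant term $f$, and to conclude $\widetilde{F}=F$ you need injectivity of the constant-term map on almost-meromorphic forms satisfying the transformation laws (this is in \cite{Lib}, but it is a strictly stronger statement than Lemma~\ref{lemma:constant term maps}, which only covers holomorphic coefficients, so it must be invoked explicitly). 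Once $\widetilde{F}=F$ is known, your chain-rule identification of $\partial/\partial\widehat{G}_2$ and $\partial/\partial\widehat{\A}$ with $\tfrac{d}{d\nu}$ and $\tfrac{d}{d\alpha}$ goes through verbatim; the transformation laws of $\tfrac{d}{d\nu}F$ and $\tfrac{d}{d\alpha}F$ in weights $k-2$ and $k-1$ follow from the grading of the polynomial ring, while the holomorphy of their coefficients follows because coefficient extraction manifestly preserves the holomorphy of the coefficients of $F$. This yields both vertical maps $\AHJ_{k,m}\to\AHJ_{k-2,m}$, $\AHJ_{k,m}\to\AHJ_{k-1,m}$ and the commutativity of the two squares, hence the lemma.
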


Let~$\mathrm{wt}$ and~$\mathrm{ind}$ be the operators which act on~$\QJac_{k,m}(\Gamma)$ by multiplication by the weight~$k$ and the index~$m$ respectively.
By \cite[(12)]{OPix2} we have the commutation relations:
\begin{equation} \label{eq:comm relations 1}
\begin{alignedat}{2}
\left[ \frac{d}{dG_2}, D_{\tau} \right] & = -2 \mathrm{wt}, & \qquad \qquad \left[\frac{d}{d \A}, D_x \right] & = 2 \mathrm{ind} \\
\left[\frac{d}{dG_2}, D_x \right] & = -2\frac{d}{dA}, &\qquad  \left[\frac{d}{dA}, D_{\tau}\right] & = D_x.
\end{alignedat}
\end{equation}

\begin{rmk}
These commutation relations are proven by checking them for almost-holomorphic Jacobi forms,
where they follow by a straightforward computation of commutators between derivative operators and operators of multiplication by variables.
In particular, the argument is not sensitive to the precise holomorphicity conditions we put on Jacobi forms,
for example the commutation relations \eqref{eq:comm relations 1} hold also for $\MQJac$ or any other ring of meromorphic quasi-Jacobi forms.
\end{rmk}

As explained in \cite{OPix2} knowing the holomorphic-anomaly equations of a quasi-Jacobi form
is equivalent to knowing their transformation properties under the Jacobi group.
Concretely, we have the following:
\begin{lemma}[\cite{OPix2}] \label{lemma:QJac transformation laws}
For any~$\phi(x,\tau) \in \QJac_{k,m}(\Gamma)$ we have
\begin{gather*}
\phi\left( \frac{x}{c \tau + d}, \frac{a \tau + b}{c \tau + d} \right) 
= (c \tau + d)^{k} e\left( \frac{c m x^2}{c \tau + d} \right)
\exp\left( - \frac{c \frac{d}{dG_2}}{4 \pi i (c \tau + d)}  + \frac{c x \frac{d}{d \A}}{c \tau+d} \right) \phi(x,\tau) \\
\phi(x + \lambda \tau + \mu, \tau)
= e\left( -m \lambda^2 \tau - 2 \lambda m x \right) \exp\left( - \lambda \frac{d}{d \A} \right) \phi(x,\tau).
\end{gather*}
for all $\binom{a\ b}{c\ d} \in \Gamma$ and $\lambda, \mu \in \BZ$.
\end{lemma}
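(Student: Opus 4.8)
The plan is to derive the two transformation laws in Lemma~\ref{lemma:QJac transformation laws} directly from the structure theory already assembled in the excerpt, namely Lemma~\ref{lemma:constant term maps} (the constant-term isomorphism $\AHJ_{k,m}(\Gamma) \cong \QJ_{k,m}(\Gamma)$) together with the identification $\frac{d}{dG_2}\phi = f_{1,0}$ and $\frac{d}{d\A}\phi = f_{0,1}$. The strategy is to transport the question from quasi-Jacobi forms to their almost-holomorphic lifts, where the group $\SL_2(\BZ)$ acts with an \emph{honest} automorphy factor, and then read off the holomorphic part.

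\medskip

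First I would lift $\phi$ to its almost-holomorphic companion $\Phi = \sum_{i,j} f_{i,j}\,\nu^i \alpha^j \in \AHJ_{k,m}(\Gamma)$, which exists and is unique by Lemma~\ref{lemma:constant term maps}. For $\Phi$ the transformation law is exactly \eqref{TRANSFORMATIONLAWJACOBI}, with no anomaly correction: the whole point of the almost-holomorphic completion is that it restores genuine modularity. The next step is to compute how the real-analytic functions $\nu = \tfrac{1}{8\pi\Im(\tau)}$ and $\alpha = \tfrac{\Im(x)}{\Im(\tau)}$ transform under $\binom{a\,b}{c\,d} \in \Gamma$ and under $x \mapsto x + \lambda\tau + \mu$. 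A direct computation using $\Im\!\left(\tfrac{a\tau+b}{c\tau+d}\right) = \tfrac{\Im(\tau)}{|c\tau+d|^2}$ and $\Im\!\left(\tfrac{x}{c\tau+d}\right) = \Im(x)|c\tau+d|^{-2} \Re(\cdots) - \ldots$ shows that $\nu$ and $\alpha$ transform affinely: schematically $\nu \mapsto |c\tau+d|^2 \nu + (\text{term linear in } c)$ and similarly for $\alpha$, so that the shifts are precisely the $\tfrac{-c}{4\pi i (c\tau+d)}$ and $\tfrac{cx}{c\tau+d}$ (resp.\ $-\lambda$) appearing in the exponentials.

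\medskip

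The key algebraic point is then the following: since $\Phi$ is a \emph{polynomial} in $\nu$ and $\alpha$, substituting the affine transformation laws for $\nu, \alpha$ and re-expanding is exactly a finite Taylor expansion, which can be repackaged as the action of the exponential differential operators $\exp\!\left(-\tfrac{c}{4\pi i(c\tau+d)}\frac{d}{d\nu} + \tfrac{cx}{c\tau+d}\frac{d}{d\alpha}\right)$ and $\exp\!\left(-\lambda\frac{d}{d\alpha}\right)$ acting on $\Phi$. Here $\frac{d}{d\nu}$ and $\frac{d}{d\alpha}$ mean formal differentiation in the polynomial variables. Taking the constant term (i.e.\ setting $\nu=\alpha=0$) and invoking the commuting diagrams of the preceding lemma, which identify $\frac{d}{d\nu}$ with $\frac{d}{dG_2}$ and $\frac{d}{d\alpha}$ with $\frac{d}{d\A}$ on constant terms, converts these into the stated anomaly operators and yields the formulas for $\phi = f_{0,0}$.

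\medskip

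I expect the main obstacle to be purely bookkeeping: verifying that the affine transformation of $(\nu, \alpha)$ produces \emph{exactly} the coefficients in the exponentials (including the factor $4\pi i$ and the sign conventions), and checking that the formal exponential operator correctly reproduces the multinomial re-expansion of the polynomial $\Phi$ in the shifted variables. One subtlety worth isolating is that $\frac{d}{d\nu}$ and $\frac{d}{d\alpha}$ do not commute with the automorphy-factor prefactor $e\!\left(\tfrac{cmx^2}{c\tau+d}\right)$ in a naive way, so one must be careful to apply the differential operators to $\Phi$ itself (the part that is polynomial in $\nu,\alpha$) and not to the scalar prefactor; this is precisely why the prefactor $(c\tau+d)^k e(\cdots)$ sits \emph{outside} the exponential in the final formula. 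Since all of this is a finite, explicit manipulation with no convergence or existence issues, the argument is essentially a reference to \cite{OPix2}, and I would present it as such, spelling out only the transformation of $\nu$ and $\alpha$ as the one genuinely new computation.
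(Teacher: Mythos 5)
Your proposal is correct and is essentially the intended argument: the paper gives no proof of its own (it cites \cite{OPix2}), and your route — lift $\phi$ to its almost-holomorphic completion via Lemma~\ref{lemma:constant term maps}, use the genuine transformation law \eqref{TRANSFORMATIONLAWJACOBI} there, transform $\nu$ and $\alpha$ affinely, and extract constant terms using the identifications $f_{1,0}=\frac{d}{dG_2}\phi$, $f_{0,1}=\frac{d}{d\A}\phi$ — is exactly the argument of that reference. The affine transformation laws you need, $\nu(\tfrac{a\tau+b}{c\tau+d})=(c\tau+d)^2\nu(\tau)+\tfrac{c(c\tau+d)}{4\pi i}$ and $\alpha(\tfrac{x}{c\tau+d},\tfrac{a\tau+b}{c\tau+d})=(c\tau+d)\alpha(x,\tau)-cx$, even appear verbatim in the paper's proof of Proposition~\ref{prop:Hecke quasi Jacobi} (take $\ell=1$ there), confirming that your one "genuinely new computation" is consistent with the paper's conventions.
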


\subsection{Elliptic transformation law}
Recall from Lemma~\ref{lemma:QJac transformation laws} the elliptic transformation law of quasi-Jacobi forms:
\begin{lemma} \label{elliptic transformation}
For any $f(p,q) \in \QJac_{k,m}$ and $\lambda \in \BZ$ we have
\[ f(pq^{\lambda},q) = q^{-\lambda^2 m} p^{-2 \lambda m} e^{- \lambda \frac{d}{d A} } f(p,q). \]
\end{lemma}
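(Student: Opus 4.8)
The plan is to derive this as a direct specialization of the elliptic transformation law in Lemma~\ref{lemma:QJac transformation laws}, rewritten in the Fourier variables $p = e^{2\pi i x}$ and $q = e^{2\pi i \tau}$. No new input is needed beyond that lemma; everything is a bookkeeping change of coordinates, and since the integer translations $x \mapsto x + \lambda\tau + \mu$ do not depend on the choice of congruence subgroup, working with the full modular group costs nothing.

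Concretely, I would start from the second displayed transformation in Lemma~\ref{lemma:QJac transformation laws}, applied to $\phi = f \in \QJac_{k,m}$,
\[ \phi(x + \lambda \tau + \mu, \tau) = e\left( -m \lambda^2 \tau - 2\lambda m x \right) \exp\left( -\lambda \tfrac{d}{d\A} \right) \phi(x,\tau), \]
and specialize to $\mu = 0$. Since $\mu \in \BZ$ only shifts $x$ by an integer and $p = e^{2\pi i x}$ is invariant under $x \mapsto x + \mu$, this loses no information at the level of the Fourier expansion. Under $x \mapsto x + \lambda\tau$ one has $p \mapsto e^{2\pi i (x + \lambda\tau)} = p\, q^{\lambda}$, so the left-hand side becomes $f(p\, q^{\lambda}, q)$.

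For the scalar prefactor I would expand $e(w) = e^{2\pi i w}$ and separate the two terms: $e(-m\lambda^2\tau) = (e^{2\pi i \tau})^{-m\lambda^2} = q^{-\lambda^2 m}$ and $e(-2\lambda m x) = (e^{2\pi i x})^{-2\lambda m} = p^{-2\lambda m}$. The anomaly operator $\frac{d}{d\A}$ is defined purely as the formal derivative in the generator $\A$ (via Lemma~\ref{lemma:G2 A independent}), hence is unchanged by the passage to $(p,q)$, and $\exp(-\lambda \tfrac{d}{d\A})$ carries over verbatim. Assembling the three factors yields exactly $f(p\, q^{\lambda}, q) = q^{-\lambda^2 m} p^{-2\lambda m} e^{-\lambda \frac{d}{d\A}} f(p,q)$.

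There is essentially no hard step here; the statement is a notational recasting of a lemma already in hand. The only points requiring a moment's care are (i) checking that dropping $\mu$ is harmless, which follows from the integrality of $\mu$, and (ii) confirming that $\frac{d}{d\A}$ has the same meaning in both coordinate systems, which is immediate from its definition as a formal derivative with respect to the free generator $\A$. I expect the entire argument to collapse to a single line once the variable dictionary has been recorded.
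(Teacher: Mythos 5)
Your proposal is correct and matches the paper's intent exactly: the paper states this lemma as a direct recall of the elliptic transformation law in Lemma~\ref{lemma:QJac transformation laws}, offering no separate proof precisely because the argument reduces to the variable dictionary $p = e^{2\pi i x}$, $q = e^{2\pi i \tau}$ that you spell out. Your two care points (integrality of $\mu$ and invariance of $\frac{d}{d\A}$ under the change of variables) are the right ones, and the rest is the same one-line bookkeeping the paper takes for granted.
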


In particular, if we are given $f(p,q) \in \QJac_{k,m}$ such that $\frac{d}{dA} f = 0$, and
we let
\[ f(p,q) = \sum_{d \geq 0} \sum_{k \in \BZ} c(d,k) q^d p^k \] 
be its Fourier-expansion, then we have
\[
c(d-\lambda k + m \lambda^2, k-2 \lambda m) = c(d,k).
\]
Moreover, since $f(p^{-1}, q) = (-1)^{k} f(p,q)$ where $k$ is the weight of $f$, we have
\[ c(d,k) = (-1)^k c(d,-k). \]

We prove the following two useful lemmas, which serve as a partial converse.
\begin{lemma} \label{lemma:elliptic to Jac 1}
Let $m \geq 0$ and let $f(p,q) = \sum_{n \geq 0} \sum_{k \in \BZ} c(d,k) q^d p^k$ be a formal power series such that the following holds for all $d,k$ and $\lambda \in \BZ$:
\begin{gather}
\label{c1} c(d-\lambda k + m \lambda^2, k-2 \lambda m) = c(d,k), \\
c(d,k) = c(d,-k).
\end{gather}
Then there exists power series $f_i(q) \in \BC[[q]$ such that
\[ f(p,q) = \Theta^{2m}(p,q) \sum_{i=0}^{m} f_i(q) \wp(p,q)^{m-i}. \] 
\end{lemma}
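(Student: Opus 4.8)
The plan is to recognize the two hypotheses as the elliptic transformation law and the evenness of an index-$m$ Jacobi-type object, and then to reduce to the classical structure theory of elliptic functions by dividing out $\Theta^{2m}$.

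First I would translate the hypotheses into the variable $x$, where $p=e^{2\pi i x}$ and $q=e^{2\pi i \tau}$. Tracking Fourier coefficients exactly as in Lemma~\ref{elliptic transformation}, condition \eqref{c1} is equivalent to the quasi-periodicity
\[ f(x + \lambda\tau + \mu, \tau) = e\!\left(-m\lambda^2\tau - 2\lambda m x\right) f(x,\tau), \qquad \lambda, \mu \in \BZ; \]
periodicity in $\mu$ is automatic because $f$ is a series in $p$, and the factor for $x \mapsto x+\tau$ is exactly the index-$m$ automorphy factor, which is the same one satisfied by $\Theta^{2m}$ (using $\Theta(x+\tau,\tau) = -q^{-1/2}p^{-1}\Theta(x,\tau)$ and $(-1)^{2m}=1$). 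The symmetry hypothesis $c(d,k)=c(d,-k)$ says $f(-x,\tau)=f(x,\tau)$, while $\Theta$ is odd.

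Next I would set $g := f/\Theta^{2m}$. By the previous step $g$ is invariant under $x \mapsto x+1$ and $x \mapsto x+\tau$, so for each fixed $\tau \in \BH$ it descends to a meromorphic function on $E_\tau = \BC/(\BZ+\tau\BZ)$, and it is even. Since $\Theta(\cdot,\tau)$ has a simple zero at the origin and no other zeros on $E_\tau$, the function $g$ has a pole only at the origin, of order at most $2m$. The classical fact that the even elliptic functions on $E_\tau$ with a single pole at the origin form the polynomial ring $\BC[\wp]$, with $\wp^j$ of pole order $2j$, then forces $g$ to be a polynomial of degree at most $m$ in $\wp(\cdot,\tau)$. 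Writing $g = \sum_{i=0}^m f_i(\tau)\,\wp^{m-i}$ and comparing Laurent expansions at the origin — where $1,\wp,\ldots,\wp^m$ have leading terms $1, z^{-2}, \ldots, z^{-2m}$, triangular with unit diagonal — determines the $f_i$ as power series in $q$, giving the claimed identity.

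The hard part will be the polar analysis, namely ensuring that $g$ has its only pole at the origin and that $f$ may legitimately be treated as a function rather than a formal series. Both rest on the finiteness of the Fourier support in $k$ at each fixed order in $q$, which I would extract from \eqref{c1}: the substitution $(d,k) \mapsto (d - \lambda k + m\lambda^2, k - 2\lambda m)$ preserves the discriminant $4md - k^2$ and changes $k$ by $2\lambda m$, so $c(d,k)$ depends only on $4md-k^2$ and on $k \bmod 2m$. Fixing $d = d_0$ with $m>0$, for all sufficiently large $|k|$ the two real roots of $m\lambda^2 - \lambda k + d_0$ differ by more than $1$, so some integer $\lambda$ makes this quadratic negative; \eqref{c1} then equates $c(d_0,k)$ to a coefficient with negative first argument, which vanishes. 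Hence the $p$-support at order $q^{d_0}$ is finite, which both bounds the pole order of $g$ and (e.g.\ through the finitely many components of the theta decomposition of $f$, each a power series in $q$) lets one work with honest functions on $E_\tau$ and apply Liouville's theorem in the elliptic-function step. Finally, the degenerate case $m=0$ is immediate: there \eqref{c1} forces $c(d,k)=0$ for $k \neq 0$, so $f = f_0(q)$ and the formula holds trivially.
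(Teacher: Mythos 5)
Your identification of the hypotheses with the index-$m$ elliptic transformation law and evenness is correct, and your finite-support argument (invariance of the discriminant $4md-k^2$ under $(d,k)\mapsto(d-\lambda k+m\lambda^2,\,k-2\lambda m)$, plus choosing $\lambda$ so that $m\lambda^2-\lambda k+d_0<0$ for $|k|$ large) is a genuinely nice and correct observation: it shows each $q$-coefficient of $f$ is a symmetric Laurent polynomial in $p$. However, there is a real gap at the core step. The lemma concerns an arbitrary \emph{formal} power series, and nothing in the hypotheses yields convergence: the conditions only force $c(d,k)=C(4md-k^2,\,k \bmod 2m)$ with a support restriction, and the values $C(D,\mu)$ are otherwise arbitrary — they may grow like $\exp(\exp D)$, in which case $f(p,q)$ diverges for every $q\neq 0$. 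For such $f$ there is no meromorphic function on $E_\tau$ to which Liouville's theorem or the classification of even elliptic functions can be applied, yet the conclusion of the lemma must still hold (with divergent $f_i(q)$). The finiteness of the $p$-support at each order in $q$ only places $f$ in $\BC[p,p^{-1}][[q]]$; it does not make $f$ a function. Likewise the theta decomposition $f=\sum_\mu h_\mu\vartheta_{m,\mu}$ has honest theta functions but formal, possibly divergent, coefficients $h_\mu$, so it cannot be used to ``work with honest functions on $E_\tau$'' as you propose. Note also that your pole bound for $g=f/\Theta^{2m}$ at the origin is itself an analytic statement (holomorphy of $f$ divided by a $2m$-fold zero), so it too collapses without convergence.

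This is exactly why the paper's proof stays purely formal: it expands each $q$-coefficient in the basis $(p^{1/2}-p^{-1/2})^{2\ell}$ of symmetric Laurent polynomials, constructs the $f_i(q)$ inductively using the triangular leading behaviour $\Theta^{2m}\wp^{m-i}=(p^{1/2}-p^{-1/2})^{2i}+\sum_{j>i}\alpha_j(p^{1/2}-p^{-1/2})^{2j}+O(q)$, so that the difference $F=f-\Theta^{2m}\sum_i f_i\wp^{m-i}$ has, in every $q$-order, only terms $(p^{1/2}-p^{-1/2})^{2\ell}$ with $\ell>m$; then \eqref{c1} with $\lambda=1$, applied to the minimal $d$ with $F_d\neq 0$, transports a nonzero coefficient to a strictly smaller (possibly negative) $q$-exponent, a contradiction. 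Your outline can be repaired, but only by replacing the Liouville step with an algebraic one — for instance, proving that $\Theta^{2m}\wp^{m-i}$, $i=0,\dots,m$, form a $\BC[[q]]$-basis of the module of formal solutions of \eqref{c1} together with evenness — and establishing that is essentially the paper's triangularity-plus-minimal-degree argument over again.
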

\begin{proof}
A similar argument has appeared in \cite[Sec.4.2]{FM1}
but we recall it here for completeness.
The vector space of Laurent polynomials $g(p)$ such that $g(p^{-1}) = g(p)$
has a basis given by the set of polynomials
\[ (p^{1/2} - p^{-1/2})^{2k}, \quad k \geq 0. \]
Moreover, by the expansions of $\Theta$ and $\wp$
for every $i \in \{0, \ldots, m\}$ there exists $\alpha_j$ (all zero except for finitely many) such that:
\[
\wp(p,q)^{m-i} \Theta(p,q)^{2m} = (p^{1/2} - p^{-1/2})^{2i} + 
\sum_{j>i} \alpha_j (p^{1/2} - p^{-1/2})^{2j} + O(q)
\]
By an induction argument we can hence find $f_i(q) \in \BC[[q]]$ such that the function
\[ F(p,q) := f(p,q) - \Theta^{2m}(p,q) \sum_{i=0}^{m} f_i(q) \wp(p,q)^{m-i} \]
has the following property: for all $d \geq 0$ the $q^d$ coefficient of $F$ satisfies
\begin{equation} F_d(p) := [ F(p,q) ]_{q^d} = \sum_{\ell > m} b_{d,\ell} (p^{1/2}-p^{-1/2})^{2 \ell}. \label{aaaedfdfsd} \end{equation}

Let $a(d,k)$ be the coefficient of
$q^d p^k$ in $F(p,q)$.
Since $\Theta^{2m} \wp^{m-i}$ is a (quasi-) Jacobi
form of index $m$, its Fourier-coefficients satisfy \eqref{c1}.
Moreover, if the Fourier coefficients of a power series $h(p,q)$ satisfy \eqref{c1}, then the same holds for the Fourier coefficients of $h(p,q) \cdot r(q)$ for any power series in $q$.
This implies that we have
\begin{equation} 
a(d,k) = a(d-\lambda k + m \lambda^2, k-2 \lambda m)
\label{Masfsd2} \end{equation}
for all $d,k, \lambda \in \BZ$.
Assume $F(p,q)$ is non-zero and let
$d$ be the smallest integer such that
$F_d(p)$ is non-zero.
Since the sum in \eqref{aaaedfdfsd}
starts at $\ell=m+1$ we have
\[ a(d,k) \neq 0 \]
for some $k \geq m+1 \geq 0$.
But then by \eqref{Masfsd2} with $\lambda=1$
we obtain
\[ a(d,k) = a(d-k+m, k-2m) \neq 0 \,. \]
Since $d-k+m < d$ this contradicts the choice of $d$.
\end{proof}

\begin{lemma} \label{lemma:elliptic to Jac 2}
Let $m \geq 0$ and let $f(p,q) = \sum_{n \geq 0} \sum_{k \in \BZ} c(d,k) q^d p^k$ be a formal power series such that the following holds for all $d,k$ and $\lambda \in \BZ$:
\begin{gather*}
c(d-\lambda k + m \lambda^2, k-2 \lambda m) = c(d,k), \\
c(d,k) = -c(d,-k).
\end{gather*}
Then there exists power series $f_i(q) \in \BC[[q]$ such that
\[ f(p,q) = \Theta^{2m}(p,q) \wp'(p,q) \sum_{i=2}^{m} f_i(q) \wp(p,q)^{m-i}. \] 
\end{lemma}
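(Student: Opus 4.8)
The plan is to mirror the proof of Lemma~\ref{lemma:elliptic to Jac 1}, adapting it to the \emph{antisymmetric} case $c(d,k) = -c(d,-k)$. The key structural difference is that antisymmetric Laurent polynomials require an odd building block, which is supplied by the factor $\wp'(p,q)$: indeed $\wp'(p^{-1},q) = -\wp'(p,q)$, so $\wp' \cdot \Theta^{2m} \wp^{m-i}$ has the correct symmetry. First I would record the linear-algebra input: the space of Laurent polynomials $g(p)$ satisfying $g(p^{-1}) = -g(p)$ has a basis given by $(p^{1/2}-p^{-1/2})^{2k+1}(p^{1/2}+p^{-1/2})$ for $k \geq 0$, or more usefully for our purposes, by the products $\wp'(p,q)^{(0)} \cdot (p^{1/2}-p^{-1/2})^{2\ell}$ where the leading term of $\wp'$ at $q^0$ furnishes the single odd factor. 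The essential point is that, expanding at $q=0$, for each $i \in \{2, \ldots, m\}$ the function $\wp' \Theta^{2m} \wp^{m-i}$ has a $q^0$-coefficient whose lowest-order term in the odd basis is a fixed element starting at a controlled power of $(p^{1/2}-p^{-1/2})$, with the crucial feature that the minimal exponent is $\geq 5$ (reflecting that $\wp'$ contributes a simple zero structure forcing the triple-zero order, hence the sum starting at $i=2$ rather than $i=0$).

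Next, by the same induction as in Lemma~\ref{lemma:elliptic to Jac 1}, I would successively choose $f_i(q) \in \BC[[q]]$ to cancel the lowest terms of $f(p,q)$ in the odd Laurent basis, so that the difference
\[ F(p,q) := f(p,q) - \Theta^{2m}(p,q)\, \wp'(p,q) \sum_{i=2}^{m} f_i(q)\, \wp(p,q)^{m-i} \]
has all its $q^d$-coefficients $F_d(p)$ supported on odd basis elements of exponent strictly exceeding the maximal reachable index, i.e. $F_d(p) = \sum_{\ell > \ell_0} b_{d,\ell}\,(\text{odd basis element})_\ell$ for an appropriate threshold $\ell_0$. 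Since each subtracted term is a (meromorphic) quasi-Jacobi form of index $m$, its Fourier coefficients satisfy the elliptic relation \eqref{c1}; and multiplying by a power series $r(q)$ in $q$ alone preserves \eqref{c1}. Hence the coefficients $a(d,k)$ of $F$ again satisfy $a(d,k) = a(d-\lambda k + m\lambda^2, k - 2\lambda m)$ for all $\lambda \in \BZ$.

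Finally I would run the minimality argument verbatim: assuming $F \neq 0$, pick the smallest $d$ with $F_d \neq 0$; then $a(d,k) \neq 0$ for some $k$ exceeding the threshold, and applying the elliptic relation with $\lambda = 1$ produces a nonzero coefficient at index $d - k + m < d$, contradicting minimality. This forces $F = 0$ and gives the claimed decomposition.

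The main obstacle I anticipate is pinning down the exact leading-term behaviour that justifies the summation range starting at $i=2$: one must verify that $\wp'(p,q)\Theta^{2m}(p,q)\wp(p,q)^{m-i}$, expanded in the odd Laurent basis at $q=0$, realizes precisely the basis elements of exponents $5, 7, \ldots, 2m+1$ (so that $i$ ranges over $2, \ldots, m$ and the lowest three odd slots are \emph{not} hit), and that these leading terms are triangular with respect to the basis ordering so the induction can invert them. This is where the choice of $\wp'$ as the odd factor — rather than, say, $\Theta$ itself — is forced, since $\wp'$ supplies exactly the vanishing order needed to match the antisymmetric analogue of \eqref{aaaedfdfsd}. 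Once this triangularity is confirmed, the remainder is identical to the symmetric case and requires no further work.
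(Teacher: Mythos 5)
Your overall strategy is the paper's: the same odd basis (your $(p^{1/2}-p^{-1/2})^{2k+1}(p^{1/2}+p^{-1/2})$ equals the paper's $(p-p^{-1})(p^{1/2}-p^{-1/2})^{2k}$), the same triangular induction choosing the $f_i(q)$, and the same minimality argument via the elliptic relation. But the step you yourself single out as the crux --- and defer --- is anticipated incorrectly, in a way that would break the proof if it were true. At $p=1$ (i.e.\ $z=0$) the function $\wp'$ has a \emph{triple pole}, not a zero: $\wp' = -2/z^3 + O(z)$. Hence $\Theta^{2m}\wp'\wp^{m-i}$ vanishes at $p=1$ to order $2m-3-2(m-i)=2i-3$, and its $q^0$-coefficient is $\pm(p-p^{-1})\bigl((p^{1/2}-p^{-1/2})^{2i-4}+\text{higher}\bigr)$. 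So as $i$ runs over $2,\ldots,m$ these products realize precisely the \emph{lowest} slots $(p-p^{-1})(p^{1/2}-p^{-1/2})^{2\ell}$, $\ell=0,\ldots,m-2$, leaving the remainder $F$ supported on slots $\ell \geq m-1$; they do not realize ``exponents $5,7,\ldots,2m+1$'' while skipping the bottom ones, as you claim. This matters: if the bottom slots really were unreachable, the components of $f$ lying there could never be cancelled, $F_d$ could then have top exponent $k \leq 2$, and for $m \geq 3$ the relation \eqref{Masfsd2} with $\lambda=1$ would send $d$ to $d-k+m>d$, yielding no contradiction --- the argument would collapse. (The sum starts at $i=2$ only because for $i<2$ the product $\Theta^{2m}\wp'\wp^{m-i}$ has a pole at $p=1$ and hence cannot appear as a summand at all, not because low slots are skipped.)

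There is a second gap: the endgame is not ``identical to the symmetric case.'' With the correct threshold, $F$ is supported on slots $\ell \geq m-1$, whose top monomial is $p^{\ell+1}$, so the minimal-$d$ coefficient is nonzero only for some $k \geq m$ --- not $k \geq m+1$ as in Lemma~\ref{lemma:elliptic to Jac 1}. For $k>m$ your argument applies, but in the boundary case $k=m$ the relation \eqref{Masfsd2} with $\lambda=1$ gives $a(d,m)=a(d,-m)$ at the \emph{same} $d$, and minimality says nothing. One must combine this with the antisymmetry $a(d,m)=-a(d,-m)$ to force $a(d,m)=0$. This boundary case is exactly where the hypothesis $c(d,k)=-c(d,-k)$ enters beyond fixing the parity of the basis, and it is the one place where the proof of this lemma genuinely differs from the proof of Lemma~\ref{lemma:elliptic to Jac 1}; the paper's proof treats it explicitly in its final lines.
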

\begin{proof}
The vector space of Laurent polynomials $g(p)$ such that $g(p^{-1}) = g(p)$
has the basis
\[ (p-p^{-1}) (p^{1/2} - p^{-1/2})^{2k}, \quad k \geq 0. \]
Moreover, for $i \leq m$ we have the expansions:
\[
\Theta^{2m} \cdot \wp' \cdot \wp^{m-i}  = 
(p-p^{-1}) \left( (p^{1/2} - p^{-1/2})^{2i-4} + 
\sum_{j>i-2} \alpha_j (p^{1/2} - p^{-1/2})^{2j} \right) + O(q)
\]
for some $\alpha_j$, of which all but finitely many are zero.
By induction we conclude that there exists
$f_i(q)$ such that
\[ F(p,q) = f(p,q) - \Theta^{2m}(p,q) \wp'(p,q) \sum_{i=2}^{m} f_i(q) \wp(p,q)^{m-i} \]
for all $d \geq 0$ satisfies
\begin{equation} F_d(p) := [ F(p,q) ]_{q^d} = (p-p^{-1}) \sum_{\ell > m-2} b_{d,\ell} (p^{1/2}-p^{-1/2})^{2 \ell}. \label{aaaedfdfsd2} \end{equation}

We argue now as before:
Let $a(d,k)$ be the coefficient of
$q^d p^k$ in $F(p,q)$.
We then still have that \eqref{Masfsd2}
as well as
\[ a(d,k) = -a(d,-k). \]
Assume $F(p,q)$ is non-zero and let
$d$ be the smallest integer such that
$F_d(p)$ is non-zero.
Since the sum in \eqref{aaaedfdfsd2}
starts at $\ell=m-1$ we have
$a(d,k) \neq 0$
for some $k \geq m \geq 0$.
But then by \eqref{Masfsd2} with $\lambda=1$
we obtain
\[ a(d,k) = a(d-k+m, k-2m) \neq 0 \,. \]
If $k>m$ this yields a contradiction as before,
and if $k=m$ then we obtain $a(d,k)=a(d,-k)$,
but since we also have $a(d,k) = -a(d,k)$ this gives
the contradiction $a(d,k)=0$.
\end{proof}

\subsection{The expansion in $z$}
Recall that we have set $z=2 \pi i x$ where $x \in \BC$ is the elliptic parameter.
To stress the dependence on $z$, we usually write $f(z)$ for a function $f(x)$ under this variable change.
We study here the $z$-expansions of quasi-Jacobi forms
for the full modular group $\SL_2(\BZ)$.
For that purpose recall the well-known expansion of the generators of $\MQJac$ in $z$,
see e.g. \cite{vIOP}:
\begin{gather*}
\Theta(z) =  z \exp\left(-2\sum_{k\geq 1} G_{k}(\tau) \frac{z^{k}}{k!}\right) \\
\A(z) = \frac{1}{z} - 2 \sum_{k \geq 1} G_{k}(\tau) \frac{z^{k-1}}{(k-1)!} \\
\wp(z) = \frac{1}{z^2} + 2 \sum_{k \geq 4} G_{k}(\tau) \frac{z^{k-2}}{(k-2)!} 
\end{gather*}
Consider the operator that takes the formal derivative with respect ot $G_2$ factorwise,
\[ \left( \frac{d}{dG_2} \right)_z : \QMod((z)) \to \QMod((z)), \]
that is
for $f = \sum_{r} f_r(\tau) z^r$ with $f_r \in \QMod$, we let
\[ \left( \frac{d}{dG_2} \right)_z f = \sum_{r} \frac{d f_r}{d G_2} z^r. \]
Consider the decomposition of $\MQJac$ according to weight $k$ and index $m$,
\[ \MQJac = \bigoplus_{k,m} \MQJac_{k,m}. \]
Then the following is immediate from the expansions above:
\begin{lemma} \label{lemma:z^r coefficient}
The coefficient of $z^r$ 
of any series $f \in \MQJac_{k,m}$ is a quasi-modular form of weight $r+k$.
Moreover, we have
\begin{equation} \left( \frac{d}{dG_2} \right)_z f = \frac{d}{dG_2} f  - 2 z \frac{d}{dA} f -2 z^2 m f.  \label{fiberwise ddg2} \end{equation}
\end{lemma}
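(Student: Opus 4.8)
**The plan is to prove Lemma~\ref{lemma:z^r coefficient} directly from the explicit $z$-expansions of the generators of $\MQJac$ listed just above the statement.**

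First I would establish the weight claim. Recall that $\MQJac = \BC[\Theta, \A, G_2, \wp, \wp', G_4]$ is a free polynomial ring, with generators of weights $-1, 1, 2, 2, 3, 4$ and indices $1/2, 0, 0, 0, 0, 0$ respectively. Every $f \in \MQJac_{k,m}$ is a polynomial in these generators, homogeneous of total weight $k$ and total index $m$. The key observation is that each generator's $z$-expansion has a uniform structure: the coefficient of $z^r$ in the expansion of a generator of weight $w$ is a quasi-modular form of weight $w + r$. Indeed, from the displayed expansions, $\Theta(z) = z \cdot (\text{series in } G_k z^k)$, and since $G_k$ has weight $k$, the coefficient of $z^r$ in $\Theta$ has weight $r - 1 = r + \mathrm{wt}(\Theta)$; one checks the same pattern for $\A, \wp$, and it holds trivially for the weight-$w$ modular-form generators $G_2, G_4$ and for $\wp' = D_x \wp$. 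Since multiplication of $z$-series adds both the $z$-degree shift and the weight in a compatible way, this ``weight $= z$-degree $+$ modular weight'' bookkeeping is preserved under products. Hence for a monomial of total weight $k$, the coefficient of $z^r$ is quasi-modular of weight $r + k$, and the same holds for $f$ by linearity.

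Next I would prove the formula~\eqref{fiberwise ddg2}. The idea is to compare the two ways $\frac{d}{dG_2}$ can act: the genuine anomaly operator $\frac{d}{dG_2}$ on $\MQJac$, versus the operator $\left(\frac{d}{dG_2}\right)_z$ that differentiates the $z$-expansion coefficient-by-coefficient treating $G_2$ as formally independent. The subtlety is that in the $z$-expansion the variable $G_2$ enters both directly (through the $G_2$ generator) and implicitly through the $z$-coefficients of $\Theta, \A, \wp$, which involve $G_2$ via the $k=2$ term of their expansions. So $\left(\frac{d}{dG_2}\right)_z$ sees the implicit dependence of the expansion coefficients on $G_2$, whereas $\frac{d}{dG_2}$ as the formal derivative on $\MQJac$ does not see the $G_2$ hidden inside the $z$-expansions of the \emph{other} generators. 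I would compute $\left(\frac{d}{dG_2}\right)_z$ on each generator's $z$-expansion and subtract $\frac{d}{dG_2}$. The difference measures precisely the $G_2$-dependence carried by the $z^1$ and $z^2$ terms in the theta/Weierstrass expansions; from the expansions one reads off $\partial_{G_2}\Theta(z) = -2z\,\Theta(z)\cdot(\text{leading term}) + \cdots$ and similarly for $\A, \wp$, producing exactly the operators $-2z\frac{d}{d\A}$ (the shift-by-one-in-weight piece) and $-2z^2 m$ (the index piece).

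The main obstacle will be verifying that the combination $-2z\frac{d}{d\A} - 2z^2 m$ is exactly what accounts for the implicit $G_2$-dependence, for \emph{products} of generators and not merely for a single generator. I would handle this by checking that both sides of~\eqref{fiberwise ddg2} act as derivations (with respect to the product structure) modulo the index-counting term, or equivalently by verifying the identity on generators and confirming it propagates through the Leibniz rule. The cleanest route is probably to note that the commutation relations~\eqref{eq:comm relations 1} already encode how $\frac{d}{dG_2}$ interacts with $D_x$ (namely $[\frac{d}{dG_2}, D_x] = -2\frac{d}{d\A}$), and that the formal $z$-differentiation $\left(\frac{d}{dG_2}\right)_z$ differs from $\frac{d}{dG_2}$ precisely by the terms generated by moving $\frac{d}{dG_2}$ past the implicit powers of $z$; tracking this through the weight- and index-grading then yields the stated correction terms $-2z\frac{d}{d\A}f$ and $-2z^2 m f$. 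Once the identity is confirmed on generators and shown to respect products, the lemma follows.
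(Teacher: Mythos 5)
Your proposal is correct and follows essentially the same route as the paper (which simply declares the lemma ``immediate from the expansions above''): read off the weight bookkeeping and the $G_2$-dependence from the displayed $z$-expansions of the generators, and propagate to all of $\MQJac$ via the Leibniz rule, using that both sides of \eqref{fiberwise ddg2} are derivations because the index is additive under products. One small correction: the expansion of $\wp$ involves only $G_k$ with $k \geq 4$, so $\wp$ --- unlike $\Theta$ (which gives $\partial_{G_2}\Theta = -z^2\Theta$, hence the $-2z^2 m$ term) and $\A$ (which gives $\partial_{G_2}\A = -2z$, hence the $-2z\frac{d}{d\A}$ term) --- carries no implicit $G_2$-dependence and should not be listed among the generators producing correction terms; both sides of \eqref{fiberwise ddg2} vanish on $\wp$, so this slip does not affect the argument.
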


We prove the following partial converses:
\begin{lemma} \label{lemma:z expansion 1}
Let $f_i(q) \in \BC[[q]]$ be power series, such that every $z^r$-coefficient
of
\[ f(p,q) = \Theta^{2m}(p,q) \sum_{i=0}^{m} f_i(q) \wp(p,q)^{m-i}. \]
is a quasi-modular form of weight $z^{r+s}$. Then
every $f_i(q)$ is quasi-modular of weight $s+2i$.
\end{lemma}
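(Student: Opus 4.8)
The plan is to exploit the triangular structure of the $z$-expansions of the building blocks $\Theta^{2m}\wp^{m-i}$ and to peel off the coefficients $f_i$ one at a time, by induction on $i$ (equivalently, on the order of vanishing in $z$).

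First I would record the leading $z$-behaviour of each summand. From the stated expansions $\Theta(z)=z\exp(-2\sum_{k\ge 1}G_k z^k/k!)$ and $\wp(z)=z^{-2}+2\sum_{k\ge 4}G_k z^{k-2}/(k-2)!$ one reads off
\[ \Theta(z)^{2m}=z^{2m}\bigl(1+O(z^2)\bigr), \qquad \wp(z)^{m-i}=z^{-2(m-i)}\bigl(1+O(z^4)\bigr), \]
and therefore
\[ \Theta^{2m}\wp^{m-i}=z^{2i}\bigl(1+O(z^2)\bigr). \]
Thus $\Theta^{2m}\wp^{m-i}$ vanishes to order exactly $2i$ in $z$ with leading coefficient $1$. (Since all odd Eisenstein series vanish, $\Theta^{2m}$ and $\wp$ are even in $z$, so only even powers of $z$ occur throughout, and there is nothing to check at odd order.) Moreover $\Theta^{2m}\wp^{m-i}$ is a meromorphic quasi-Jacobi form of index $m$ and weight $-2m+2(m-i)=-2i$, so by Lemma~\ref{lemma:z^r coefficient} its $z^{2j}$-coefficient is a quasi-modular form of weight $2j-2i$.

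With this in hand I would argue by induction on $i=0,1,\dots,m$ that $f_i$ is quasi-modular of weight $s+2i$. Extracting the $z^{2i}$-coefficient of $f$ and using that $\Theta^{2m}\wp^{m-j}$ starts in degree $z^{2j}$ gives
\[ [f]_{z^{2i}}=\sum_{j=0}^{i} f_j\cdot\bigl[\Theta^{2m}\wp^{m-j}\bigr]_{z^{2i}}, \]
where the $j=i$ term equals $f_i$ because the leading coefficient is $1$. By hypothesis $[f]_{z^{2i}}$ is quasi-modular of weight $s+2i$. For $j<i$ the inductive hypothesis gives $f_j\in\QMod_{s+2j}$, while $\bigl[\Theta^{2m}\wp^{m-j}\bigr]_{z^{2i}}\in\QMod_{2i-2j}$ by the weight computation above; since $\QMod$ is a graded ring, each product $f_j\cdot\bigl[\Theta^{2m}\wp^{m-j}\bigr]_{z^{2i}}$ lies in $\QMod_{s+2i}$. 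Solving
\[ f_i=[f]_{z^{2i}}-\sum_{j=0}^{i-1} f_j\cdot\bigl[\Theta^{2m}\wp^{m-j}\bigr]_{z^{2i}} \]
then exhibits $f_i$ as a $\BC$-linear combination of quasi-modular forms of weight $s+2i$, hence $f_i\in\QMod_{s+2i}$. The base case $i=0$ is simply $[f]_{z^0}=f_0$, which is quasi-modular of weight $s$ by hypothesis.

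I do not expect a genuine obstacle here: once the exact order of vanishing and the unit leading coefficients are in place, the system $\{[f]_{z^{2i}}\}_{i=0}^{m}$ is triangular in $\{f_i\}$ and the induction closes immediately. The only point that requires care is the weight bookkeeping, namely that the weight drop $-2i$ of $\Theta^{2m}\wp^{m-i}$ exactly compensates the weight gain $2i$ produced by extracting the $z^{2i}$-coefficient via Lemma~\ref{lemma:z^r coefficient}, so that all terms contributing to $f_i$ are homogeneous of the common weight $s+2i$ and their difference is again quasi-modular. No use of the higher coefficients $[f]_{z^r}$ with $r>2m$ is needed.
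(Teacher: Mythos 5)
Your proposal is correct and takes essentially the same route as the paper: the paper's (very terse) proof consists precisely of the observation $\Theta^{2m}\wp^{m-i} = z^{2i} + O(z^{2i+2})$ followed by inverting the resulting triangular system, expressing each $f_i$ as a linear combination of $z^r$-coefficients of $f$ with quasi-modular coefficients of the correct weight. Your write-up just makes the induction and the weight bookkeeping (via Lemma~\ref{lemma:z^r coefficient}) explicit; no gap.
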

\begin{proof}
We have $\Theta^{2m} \wp^{m-i} = z^{2i} + O(z^{2i+2})$
so we can write $f_i(q)$ as a linear combination of the $z^r$-coefficients of $f(p,q)$ with coefficients quasi-modular forms (of the correct weight).
\end{proof}

\begin{lemma} \label{lemma:z expansion 2}
Let $f_i(q) \in \BC[[q]]$ be power series, such that every $z^r$-coefficient
of
\[ f(p,q) = \Theta^{2m}(p,q) \wp'(p,q) \sum_{i=2}^{m} f_i(q) \wp(p,q)^{m-i}. \] 
is a quasi-modular form of weight $z^{r+s}$. Then
every $f_i(q)$ quasi-modular of weight $s+2i-3$.
\end{lemma}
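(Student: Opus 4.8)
The plan is to follow the proof of Lemma~\ref{lemma:z expansion 1} verbatim, replacing the \emph{even} leading powers there by the \emph{odd} ones produced by the extra factor $\wp'$. Write
\[ g_i := \Theta^{2m}\wp'\wp^{m-i}, \qquad i = 2,\dots,m, \]
so that $f = \sum_{i=2}^{m} f_i(q)\, g_i$. The whole argument rests on the observation that the $g_i$ form a triangular system with respect to the order of vanishing in $z$, and can therefore be inverted with quasi-modular coefficients of controlled weight.

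First I would record the leading $z$-behaviour from the expansions recalled before Lemma~\ref{lemma:z^r coefficient}: since $\Theta(z) = z + O(z^3)$ and $\wp(z) = z^{-2}+O(z^2)$, one has $\wp'(z) = -2z^{-3}+O(z)$, and multiplying gives
\[ g_i = -2\, z^{2i-3} + O(z^{2i-1}). \]
Thus the leading exponents $2i-3$ run through the distinct odd integers $1,3,\dots,2m-3$. Next I would track weights: as a meromorphic quasi-Jacobi form $g_i \in \MQJac_{3-2i,\,m}$, using that $\Theta,\wp',\wp$ have weights $-1,3,2$ and indices $1/2,0,0$; hence by Lemma~\ref{lemma:z^r coefficient} the coefficient $[g_i]_{z^r}$ is quasi-modular of weight $r+3-2i$.

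Then I would induct on $i$. For $r=1$ only $i=2$ contributes (all other $g_i$ are $O(z^3)$), so $[f]_{z^1} = -2 f_2$; since the left side is quasi-modular of weight $s+1$ by hypothesis, $f_2$ is quasi-modular of weight $s+1 = s+2\cdot 2-3$. Assuming $f_2,\dots,f_{j-1}$ are already known to be quasi-modular of the respective weights $s+2i-3$, the coefficient of $z^{2j-3}$ is
\[ [f]_{z^{2j-3}} = -2 f_j + \sum_{i<j} f_i\,[g_i]_{z^{2j-3}}, \]
as the terms $i>j$ vanish to order $>2j-3$. Each summand on the right has weight $(s+2i-3) + 2(j-i) = s+2j-3$, and the left side has weight $s+2j-3$ by hypothesis, so $f_j$ is quasi-modular of weight $s+2j-3$, finishing the induction.

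I expect no serious obstacle; the only point demanding care is the weight bookkeeping, namely verifying that the inversion coefficients $[g_i]_{z^{2j-3}}$ are quasi-modular of \emph{exactly} weight $2(j-i)$, so that the conclusion emerges with the stated weight $s+2i-3$. This is precisely where the shift by $3$ caused by the odd factor $\wp'$ (in contrast to Lemma~\ref{lemma:z expansion 1}) enters, and it is the one place where a degree slip would break the argument.
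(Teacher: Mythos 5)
Your proposal is correct and is essentially the argument the paper intends: the paper proves Lemma~\ref{lemma:z expansion 1} by noting the triangular leading behaviour $\Theta^{2m}\wp^{m-i} = z^{2i} + O(z^{2i+2})$ and inverting with quasi-modular coefficients, and its proof of Lemma~\ref{lemma:z expansion 2} is just ``Similarly,'' i.e.\ the same inversion with the odd leading terms $\Theta^{2m}\wp'\wp^{m-i} = -2z^{2i-3}+O(z^{2i-1})$ that you use. Your weight bookkeeping (via $g_i \in \MQJac_{3-2i,\,m}$ and Lemma~\ref{lemma:z^r coefficient}) matches what the paper leaves implicit.
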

\begin{proof}
Similarly.
\end{proof}

\subsection{Hecke operators} \label{subsec:Hecke operators}
Let $m \geq 1$ and recall that
the $m$-th Hecke operator acts on Jacobi forms $\phi(x,\tau)$ of weight $k$ and index $m$
by
\begin{equation} (T_{(k,m),\ell} f)(x,\tau) =
\ell^{k-1} \sum_{A = \binom{a\ b}{c\ d} \in \SL_2(\BZ) \backslash M_{\ell}} (c \tau + d)^{-k} e\left( m \ell \frac{-c x^2}{ c\tau + d } \right) f\left( \frac{ \ell x}{c \tau+d}, \frac{a \tau+b}{c \tau+d} \right) \label{Hecke operator} \end{equation}
where $A$ runs over a set of representatives of the $\SL_2(\BZ)$-left cosets of
the set 
\[ M_{\ell} = \left\{ \begin{pmatrix} a & b \\ c & d \end{pmatrix} \middle| a,b,c,d \in \BZ, \ ad-bc = \ell \right\}. \]
As shown in \cite[I.4]{EZ}, the action of $T_{(k,m),\ell}$
is well-defined (i.e. independent of a set of representatives) and defines an operator\footnote{We only require Hecke operators
for the full modular group, so we restrict to $\Gamma = \SL_2(\BZ)$ here, i.e. omit $\Gamma$ from the notation.
This section generalizes also to arbitrary congruence subgroups.}
\[
T_{(k,m),\ell} : \Jac_{k,m} \to \Jac_{k,m \ell}.
\]

Since the argument in \cite{EZ} only involves the compatibilities of the slash-operators of the Jacobi forms
the proof carries over identically to almost holomorphic weak Jacobi forms.
Hence using formula \eqref{Hecke operator} we also obtain a well-defined operator:
\[ T_{(k,m),\ell} : \AHJ_{k,m} \to \AHJ_{k,m \ell^2}, \quad F \mapsto T_{(k,m),\ell} F. \]

Transporting to quasi-Jacobi forms using the 'constant term' map of Lemma~\ref{lemma:constant term maps} 
we hence obtain a Hecke operator on quasi-Jacobi forms
\[
T_{(k,m),\ell} : \QJac_{k,m} \to \QJac_{k,m \ell},
\]
defined by the commutativity of the diagram
\[
\begin{tikzcd}
\QJ_{k,m} \ar{d}[swap]{T_{(k,m),\ell}} & \ar{l}[swap]{\cong} \AHJ_{k,m} \ar{d}{T_{(k,m),\ell}} \\
\QJ_{k,m \ell} & \ar{l}[swap]{\cong} \AHJ_{k,m \ell}
\end{tikzcd}.
\]

The Hecke operator on quasi-Jacobi forms satisfies the following:
\begin{prop} \label{prop:Hecke quasi Jacobi}
If $f = \sum_{n,r} c(n,r) q^n p^r$ is the Fourier-expansion of a quasi-Jacobi form of weight $k$ and index $m$, then
\begin{equation} \label{Hecke formula}
T_{(k,m),\ell} f =
\sum_{n,r} \left( \sum_{a|(\ell,n,r)} a^{k-1} c\left( \frac{\ell n}{a^2}, \frac{r}{a} \right) \right) q^n p^r.
\end{equation}
Moreover,
\begin{equation} \label{Hecke rel1}
\begin{gathered}
\frac{d}{dG_2} T_{k,\ell} f = \ell \, T_{k-2,m} \frac{d}{dG_2} f \\
\frac{d}{d \A} T_{k,\ell} f = \ell \, T_{k-1,m} \frac{d}{d\A} f.
\end{gathered}
\end{equation}
where we write 
$T_{k,\ell} := T_{(k,m),\ell}$
since $T_{(k,m),\ell}$ does not depend on $m$.
\end{prop}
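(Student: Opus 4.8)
The plan is to transport the entire statement to the space $\AHJ_{k,m}$ of almost holomorphic weak Jacobi forms via the constant term isomorphism of Lemma~\ref{lemma:constant term maps}. Under this isomorphism the quasi-Jacobi Hecke operator $T_{(k,m),\ell}$ is by definition the operator \eqref{Hecke operator}, while the anomaly operators $\frac{d}{dG_2}$ and $\frac{d}{d\A}$ correspond to the formal derivatives $\frac{d}{d\nu}$ and $\frac{d}{d\alpha}$ (by the commutative diagrams relating these operators established before \eqref{eq:comm relations 1}). Thus all three assertions reduce to identities among the operators $T_{(k,m),\ell}$, $\frac{d}{d\nu}$, $\frac{d}{d\alpha}$ on $\AHJ$, and the single computational input is an explicit evaluation of \eqref{Hecke operator} on a convenient set of coset representatives.

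First I would choose the standard upper triangular representatives $\binom{a\ b}{0\ d}$, $ad=\ell$, $0\le b<d$, for $\SL_2(\BZ)\backslash M_\ell$. For these $c=0$, so the exponential factor in \eqref{Hecke operator} is trivial and $c\tau+d=d$, yielding for any $\Phi\in\AHJ_{k,m}$ the closed form
\[ T_{(k,m),\ell}\Phi = \ell^{k-1}\sum_{\substack{ad=\ell\\ 0\le b<d}} d^{-k}\,\Phi\!\left(ax,\tfrac{a\tau+b}{d}\right). \]
This identity holds at the level of $\AHJ$, not merely on constant terms, since any valid choice of representatives computes the full operator.

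For the Fourier formula I would take the constant term of this expression. With $f=\phi_{0,0}$ and $f=\sum_{n,r}c(n,r)q^np^r$, the sum over $b$ contributes $\sum_{b=0}^{d-1}e(nb/d)=d$ when $d\mid n$ and $0$ otherwise; writing $n=dn'$ and collecting the coefficient of $q^Np^R$ with $N=an'$, $R=ar$ reproduces \eqref{Hecke formula}. This is the classical Eichler--Zagier computation and is purely formal, since only $\phi_{0,0}$ enters the constant term. For the commutation relations \eqref{Hecke rel1} I would instead expand in $\nu$ and $\alpha$. Under $x\mapsto ax$, $\tau\mapsto(a\tau+b)/d$ one has $\Im\big((a\tau+b)/d\big)=(a/d)\Im\tau$ and $\Im(ax)=a\,\Im x$, hence $\nu\mapsto(d/a)\nu$ and $\alpha\mapsto d\,\alpha$; substituting $\Phi=\sum_{i,j}\phi_{i,j}\nu^i\alpha^j$ turns the closed form above into its canonical almost holomorphic expansion with coefficient functions $(d/a)^i d^j\,\phi_{i,j}(ax,(a\tau+b)/d)$. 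Applying $\frac{d}{d\nu}$ and $\frac{d}{d\alpha}$ termwise and comparing with $T_{(k-2,m),\ell}\big(\frac{d}{d\nu}\Phi\big)$ and $T_{(k-1,m),\ell}\big(\frac{d}{d\alpha}\Phi\big)$, every surviving term differs only by an overall scalar: for $\frac{d}{d\nu}$ the extra factor is $\ell^2 d^{-2}(d/a)=\ell^2/(ad)=\ell$, and for $\frac{d}{d\alpha}$ it is $\ell^{(k-1)-(k-2)}=\ell$ with all powers of $d$ cancelling. Taking constant terms and re-reading through Lemma~\ref{lemma:constant term maps} gives \eqref{Hecke rel1}.

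The computations themselves are bookkeeping in the exponents of $a$ and $d$ once the closed form is in hand; the step that genuinely needs care is the justification that $\frac{d}{d\nu}$ and $\frac{d}{d\alpha}$ may be applied termwise to the pulled back expansion. This relies on the observation that each $\phi_{i,j}(ax,(a\tau+b)/d)$ is again holomorphic, so the displayed expansion is the canonical almost holomorphic representation of $T_{(k,m),\ell}\Phi$ to which the formal derivatives refer by definition, together with the uniqueness of that representation. A secondary point to check is the consistent tracking of the weight drop ($k\to k-2$ and $k\to k-1$) in the prefactor $\ell^{k-1}$, which is exactly what produces the clean factor $\ell$ rather than a power of $\ell$ depending on $a,d$.
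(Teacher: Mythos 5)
Your proof is correct and follows essentially the same route as the paper: both evaluate $T_{(k,m),\ell}$ on the upper-triangular representatives $\binom{a\ b}{0\ d}$ of $\SL_2(\BZ)\backslash M_\ell$, use $\nu \mapsto (d/a)\nu$, $\alpha \mapsto d\,\alpha$ to expand the transformed almost holomorphic form, and read off the $\nu^0\alpha^0$, $\nu^1\alpha^0$, $\nu^0\alpha^1$ coefficients to get \eqref{Hecke formula} and \eqref{Hecke rel1}. The only cosmetic difference is that the paper first records the transformation of $\nu,\alpha$ under general elements of $M_\ell$ before specializing, while you pass to $c=0$ immediately; the scalar bookkeeping ($\ell^2/(ad)=\ell$ for $d/dG_2$, cancellation of $d$-powers for $d/d\A$) agrees with the paper's computation.
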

\begin{proof}
For $\binom{a\ b}{c\ d} \in M_{\ell}$ we have the transformation properties:
\begin{align*}
\nu\left( \frac{a \tau+b}{c \tau+d} \right)
& =
\frac{1}{\ell} \nu(\tau) | c \tau + d |^2 \\
& = \frac{1}{\ell} \left[ (c \tau+d)^2 \nu(\tau) + \frac{c (c \tau+d)}{4 \pi i} \right] \\
\alpha\left( \frac{ \ell x}{c \tau+d}, \frac{a \tau+b}{c \tau+d} \right) 
& =
( c \tau + d) \cdot \alpha(x,\tau) - c x
\end{align*}
Consider the weight $k$ index $m$ almost holomorphic weak Jacobi form
\[ F = \sum_{i,j} f_{i,j} \nu^i \alpha^j \]
with $f_{0,0} = f$.
With $J = c \tau + d$ and $\tilde{c} = c / 4 \pi i$ we obtain
\begin{multline} \label{Hecke1}
(T_{(k,m),\ell} F)(x,\tau)\\
= \ell^{k-1} 
\sum_{A,r,s} J^{-k} e\left( m \ell \frac{-c x^2}{ c\tau + d } \right) f_{r,s}\left( \frac{ \ell x}{c \tau+d}, \frac{a \tau+b}{c \tau+d} \right) \left( \frac{J \cdot (J \nu + \tilde{c})}{\ell} \right)^r 
( J \alpha - c z)^s.
\end{multline}
We specialize $A$ now to run over the set of representatives of $\SL_2(\BZ) \backslash M_{\ell}$
given by
\[ \begin{pmatrix} a & b \\ 0 & d \end{pmatrix}, \quad \ell = a \cdot d, \quad b = 0, \ldots d-1. \]
Then \eqref{Hecke1} becomes:
\[ (T_{(k,m),\ell} F)(x,\tau)
= \ell^{k-1}
\sum_{r,s \geq 0} \nu^r \alpha^s \left[ \frac{1}{\ell^r} \sum_{\substack{\ell = a \cdot d \\ b=0,\ldots, d-1}}  d^{-k+2r+s} f_{r,s}\left( az, \frac{a \tau+b}{d} \right) \right].
\]

Taking the $\nu^0 \alpha^0$ coefficient and inserting $f = \sum_{n,r} c(n,r) q^n p^r$ yields
\begin{align*}
T_{(k,m),\ell} f
& = 
\mathrm{Coeff}_{\nu^0 \alpha^0}\left( T_{(k,m),\ell} F \right)\\
& = 
\ell^{k-1} \sum_{\ell = a \cdot d} d^{-k} \sum_{b=0}^{d-1} f(az,  (a \tau+b)/d) \\
& = \sum_{\ell = a \cdot d} a^{k-1} \sum_{\substack{n,r \\ n \equiv 0 \text{ mod } d}} c(n,r) p^{ar} q^{na/d}.
\end{align*}
This gives the first claim.
The compatibility with the anomaly operators follows from
\begin{gather*}
\frac{d}{dG_2} T_{k,\ell} f = \mathrm{Coeff}_{\nu^1 \alpha^0}\left( T_{(k,m),\ell} F \right) 
= \ell \sum_{\ell = a \cdot d} a^{k-3} \sum_{\substack{n,r \\  n \equiv 0 \text{ mod } d}} c'(n,r) p^{ar} q^{n a/d} \\ 
\frac{d}{d \A} T_{k,\ell} f = \mathrm{Coeff}_{\nu^0 \alpha^1}\left( T_{(k,m),\ell} F \right)
= \ell \sum_{\ell = a \cdot d} a^{k-2} \sum_{\substack{n,r \\  n \equiv 0 \text{ mod } d}} c''(n,r) p^{ar} q^{n a/d}
\end{gather*}
where $c', c''$ are the Fourier coefficients of $f_{1,0}$ and $f_{0,1}$ respectively.
%
\end{proof}

By a straightforward computation using \eqref{Hecke formula}
one finds that for $f \in \QJac_{k,m}$:
\begin{equation} \label{Hecke rel2}
\begin{gathered}
T_{k+2,\ell} D_{\tau} f = \ell D_{\tau} T_{k,\ell} f \\
T_{k+1,\ell} D_z f = D_z T_{k,\ell} f
\end{gathered}
\end{equation}
Then equations \eqref{Hecke rel1} and \eqref{Hecke rel2} are compatible with the commutation relations \eqref{eq:comm relations 1}.

\subsection{Wrong-weight Hecke operators} \label{subsec:Hecke operators wrong}
For a formal power series $f = \sum_{d,r} c(d,r) q^d p^r$ 
we can define formally the $\ell$-th Hecke operator of weight $k$ by
\begin{equation} T_{k,\ell} f =
\sum_{n,r} \left( \sum_{a|(\ell,n,r)} a^{k-1} c\left( \frac{\ell n}{a^2}, \frac{r}{a} \right) \right) q^n p^r. \label{Hecke formal} \end{equation}

In Proposition~\ref{prop:Hecke quasi Jacobi} we have seen that
$T_{k,\ell}$ defines an operator
\[ T_{k,\ell} : \QJac_{k,m} \to \QJac_{k, m \ell}. \]
More generally, we can ask what happens if we apply $T_{k,\ell}$ to quasi-Jacobi forms $f$ of a weight $k'$ different from $k$?
This is answered by the following proposition:

Consider the congruence subgroup
\[ \Gamma_0(\ell) = \left\{ \begin{pmatrix} a & b \\ c & d \end{pmatrix} \in \SL_2(\BZ) \middle| c \equiv 0 \text{ mod } \ell \right\}. \]

\begin{prop} \label{prop:Hecke wrong weight}
For any $k,k',m$ the $\ell$-th formal Hecke operator defines a morphism
\[ T_{k,\ell} : \QJac_{k',m} \to \QJac_{k', m \ell}( \Gamma_0(\ell) ). \]
Moreover, for any $f \in \QJac_{k',m}(\SL_2(\BZ))$ we have
\begin{equation} \label{hecke relations wrong weight}
\begin{gathered}
\frac{d}{dG_2} T_{k,\ell} f = \ell \, T_{k-2,m} \frac{d}{dG_2} f \\
\frac{d}{d \A} T_{k,\ell} f = \ell \, T_{k-1,m} \frac{d}{d\A} f.
\end{gathered}
\end{equation}
\end{prop}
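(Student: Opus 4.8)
The plan is to reduce everything to the correct-weight computation already carried out in the proof of Proposition~\ref{prop:Hecke quasi Jacobi}, isolating the single place where the weight mismatch $k \neq k'$ matters. First I would lift $f$ to an almost holomorphic weak Jacobi form $F \in \AHJ_{k',m}(\SL_2(\BZ))$ with $\nu^0\alpha^0$-coefficient equal to $f$, using the isomorphism of Lemma~\ref{lemma:constant term maps}. The crucial observation is that the function $G := T_{(k,m),\ell}F$ obtained by feeding $F$ into the weight-$k$ formula~\eqref{Hecke operator} is a finite sum of slashes of $F$, hence again almost holomorphic, and that the computation of its $\nu^0\alpha^0$-coefficient in the proof of Proposition~\ref{prop:Hecke quasi Jacobi} never used $k=k'$: it only used the Fourier expansion of $F$ and the substitution behaviour of $\nu,\alpha$ under the representatives $\binom{a\ b}{0\ d}$. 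Thus $[G]_{\nu^0\alpha^0}$ equals the formal series $T_{k,\ell}f$ of~\eqref{Hecke formal}. If I can show $G \in \AHJ_{k',m\ell}(\Gamma_0(\ell))$, then the constant-term isomorphism for $\Gamma_0(\ell)$ gives $T_{k,\ell}f \in \QJac_{k',m\ell}(\Gamma_0(\ell))$; moreover the very same extraction of the $\nu^1\alpha^0$- and $\nu^0\alpha^1$-coefficients performed in that proof, being likewise insensitive to $k=k'$, yields the anomaly relations~\eqref{hecke relations wrong weight} for free.

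It remains to prove $G \in \AHJ_{k',m\ell}(\Gamma_0(\ell))$. Specializing to the representatives $\binom{a\ b}{0\ d}$ ($ad=\ell$, $0 \le b < d$), a direct comparison of the prefactors $\ell^{k-1}d^{-k}=a^{k-1}d^{-1}$ and $\ell^{k'-1}d^{-k'}=a^{k'-1}d^{-1}$ shows that $G$ is precisely the weight-$k'$, index-$m$ Hecke-type average of $F$ in which the coset attached to the factorization $\ell=ad$ is weighted by the extra factor $a^{k-k'}$; writing the weight-$k'$ slash as $|_{k',m}$, this reads
\[ G = \sum_{M \in \SL_2(\BZ)\backslash M_\ell} a(M)^{k-k'}\, F|_{k',m}M, \]
where $a(M) := \gcd(\text{first column of }M)$ is the top-left entry of the Hermite normal form of the coset $\SL_2(\BZ)M$, and is a left-coset invariant since integer row operations preserve the gcd of a column. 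Almost holomorphicity of $G$ and of all its $\SL_2(\BZ)$-translates, as well as the weak (i.e.\ $n \ge 0$) Fourier support, are automatic because $G$ is a finite sum of slashes of $F$. The index-$m\ell$ elliptic transformation law is verified exactly as in the correct-weight case: computing $F\big(a(x+\lambda\tau+\mu),(a\tau+b)/d\big)$ by the index-$m$ elliptic law of $F$ and summing over $b$, the $b$-dependent phase that appears only shifts the residue condition by a multiple of $d$ and hence leaves each type-$(a,d)$ summand unchanged, so each term transforms with the index-$m\ell$ factor; since the reweighting $a^{k-k'}$ is constant on each type it does not interfere.

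The heart of the argument, and the point that forces the congruence subgroup, is the modular transformation under $\Gamma_0(\ell)$. For any $\gamma \in \SL_2(\BZ)$ right multiplication permutes the left cosets $\SL_2(\BZ)\backslash M_\ell$, and $F|_{k',m}$ is invariant under $\SL_2(\BZ)$, so after reindexing $G|_{k',m}\gamma = \sum_M a(M\gamma^{-1})^{k-k'}\, F|_{k',m}M$; thus $G$ is $\gamma$-invariant precisely when $\gamma$ preserves the type $a(\,\cdot\,)$. Here I would prove the key fact that $\Gamma_0(\ell)$ preserves the type: since $a(M) \mid \det M = \ell$ and $\ell$ divides the lower-left entry of any $\gamma \in \Gamma_0(\ell)$, the integer $a(M)$ divides both entries of the first column of $M\gamma$, whence $a(M) \mid a(M\gamma)$; applying the same to $\gamma^{-1} \in \Gamma_0(\ell)$ gives equality $a(M\gamma) = a(M)$. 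Consequently $G|_{k',m}\gamma = G$ for all $\gamma \in \Gamma_0(\ell)$, which together with the previous paragraph shows $G \in \AHJ_{k',m\ell}(\Gamma_0(\ell))$ and completes the proof. The main obstacle is exactly this type-preservation fact: for a general $\gamma \in \SL_2(\BZ)$ the gcd of the first column is not preserved, the reweighting $a^{k-k'}$ is no longer carried consistently through the coset permutation, and (unless $k=k'$, in which case the reweighting is trivial) one cannot descend below $\Gamma_0(\ell)$.
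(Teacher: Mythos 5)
Your proposal is correct, but it takes a genuinely different route from the paper. The paper never touches the coset sum directly for the wrong weight: it first writes the formal operator as $T_{k,\ell} = \sum_{a\cdot b=\ell} a^{k-1} B_a U_b$ (scaling and coefficient-extraction operators), then uses a M\"obius/Dirichlet-convolution inversion to obtain $T_{k,\ell} = \sum_{e\cdot d=\ell} c_{k,k'}(e)\, B_e\, T_{k',d}$, so that everything reduces to two previously established facts: the correct-weight Hecke operator preserves $\QJac$ (Proposition~\ref{prop:Hecke quasi Jacobi}) and the scaling operator $B_e$ maps $\QJac_{k',m}$ into $\QJac_{k',me}(\Gamma_0(e))$ (Lemma~\ref{lemma:Hecke shift}); the congruence subgroup enters only through $B_e$, and the anomaly relations follow from the identity $c_{k,k'}(e)=e^2c_{k-2,k'-2}(e)$. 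You instead stay at the level of the almost-holomorphic lift and reinterpret the wrong-weight sum as the weight-$k'$ coset sum reweighted by the left-coset invariant $a(M)^{k-k'}$ (your identity $\ell^{k-1}d^{-k}=a^{k-k'}\ell^{k'-1}d^{-k'}$ on Hermite representatives is exactly right, and is in effect the analytic counterpart of the paper's decomposition $\sum_a a^{k-1}B_aU_b$), and you then prove $\Gamma_0(\ell)$-invariance directly via the type-preservation lemma $a(M\gamma)=a(M)$ for $\gamma\in\Gamma_0(\ell)$, whose divisibility argument is correct since $a(M)\mid\ell$. What your approach buys is a self-contained proof that explains conceptually why $\Gamma_0(\ell)$ is forced: it is precisely the subgroup preserving the type invariant on $\SL_2(\BZ)\backslash M_\ell$. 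What the paper's approach buys is reusability: the factorization through $B_e$ and $T_{k',d}$ extends with no extra work to the meromorphic setting of Proposition~\ref{prop:Hecke wrong weight with poles}, where the pole order is controlled by $B_N(1/\Delta)\in\frac{1}{\Delta^N}\Mod_{12(N-1)}(\Gamma_0(N))$; with your method that statement would need a separate argument. One caveat: your claim that almost holomorphicity of all $\SL_2(\BZ)$-translates of $G$ is ``automatic'' glosses over the fact that individual slashes $F|_{k',m}\binom{a\ b}{0\ d}$ have Fourier expansions in fractional powers of $q$, and for $\sigma\notin\Gamma_0(\ell)$ the reweighting scrambles the types so the fractional powers need not cancel; this is really a statement about holomorphy at the other cusps. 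But the paper's own Lemma~\ref{lemma:Hecke shift} verifies only the transformation laws and is silent on the same point, so your treatment is at the same level of precision as the paper's.
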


For the proof we will decompose the 'wrong-weight Hecke operator'
into ordinary Hecke operators and the scaling operators
$B_N$ for $N \geq 1$ defined on functions $f: \BC \times \BH \to \BC$ by
\[
(B_N f)(x,\tau) = f(Nx, N \tau).
\]

\begin{lemma} \label{lemma:Hecke shift}
If $f \in \QJac_{k,m}$ then $B_N f \in \QJac_{k, mN}(\Gamma_0(N))$,
and moreover
\begin{gather*}
\frac{d}{dG_2} B_N f = \frac{1}{N} \, B_N \frac{d}{dG_2} f \\
\frac{d}{d \A} B_N f = \frac{1}{N} \, B_N \frac{d}{d\A} f.
\end{gather*}
\end{lemma}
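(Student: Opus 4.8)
The operator $B_N$ is the substitution $(x,\tau)\mapsto (Nx,N\tau)$, i.e. the action of $\left(\begin{smallmatrix} N & 0\\ 0 & 1\end{smallmatrix}\right)$, and the plan is to convert every statement about the $\Gamma_0(N)$-action on $(x,\tau)$ into one about the $\SL_2(\BZ)$-action on the rescaled pair $(Nx,N\tau)$, where the hypotheses on $f$ are available. Fix $\gamma=\left(\begin{smallmatrix} a & b\\ c & d\end{smallmatrix}\right)\in\Gamma_0(N)$ and write $c=Nc'$ with $c'\in\BZ$. With $\sigma=N\tau$ and $y=Nx$ one has $c\tau+d=c'\sigma+d$ and
\[
\frac{N(a\tau+b)}{c\tau+d}=\frac{a\sigma+Nb}{c'\sigma+d},\qquad \gamma':=\begin{pmatrix} a & Nb\\ c' & d\end{pmatrix},\quad \det\gamma'=ad-Nbc'=ad-bc=1,
\]
so $\gamma'\in\SL_2(\BZ)$ and
\[
(B_Nf)\!\left(\frac{x}{c\tau+d},\frac{a\tau+b}{c\tau+d}\right)=f\!\left(\frac{y}{c'\sigma+d},\frac{a\sigma+Nb}{c'\sigma+d}\right).
\]
The congruence $c\equiv0$ mod $N$ is exactly what makes $\gamma'$ integral; this is the structural reason the level drops to $\Gamma_0(N)$.

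\textbf{Membership.}
I would first prove $B_Nf\in\QJac_{k,mN}(\Gamma_0(N))$. The elliptic law is immediate from $x+\lambda\tau+\mu\mapsto Nx+\lambda(N\tau)+N\mu$: the index-$m$ quasi-periodicity of $f$ becomes the index-$mN$ quasi-periodicity of $B_Nf$. For the modular variable I apply Lemma~\ref{lemma:QJac transformation laws} to $f$ at $(y,\sigma)=(Nx,N\tau)$ and use the identity above: since $c'\sigma+d=c\tau+d$, the factor $(c'\sigma+d)^k=(c\tau+d)^k$ gives weight $k$, while $y^2=N^2x^2$ and $c'=c/N$ combine to $e\!\left(\tfrac{c'my^2}{c'\sigma+d}\right)=e\!\left(\tfrac{c\,(mN)\,x^2}{c\tau+d}\right)$, which is the index-$mN$ factor. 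Condition (ii) of the definition holds because $B_N$ sends an almost-holomorphic function to one of the same shape \eqref{ahm function}, the real-analytic quantities $\nu,\alpha$ being carried to real-analytic functions of $(x,\tau)$ under the rescaling.

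\textbf{Anomaly relations.}
The two identities I would deduce from the fact that the anomaly operators are recorded inside the transformation law. Expanding the right-hand side above by Lemma~\ref{lemma:QJac transformation laws} expresses it through $f$, $\tfrac{d}{dG_2}f$ and $\tfrac{d}{d\A}f$ evaluated at $(Nx,N\tau)$, with coefficients built from $c',\sigma,y$. Using that the anomaly operators commute with the rescaling, $\big(\tfrac{d}{dG_2}f\big)(Nx,N\tau)=B_N\tfrac{d}{dG_2}f$ and similarly for $\tfrac{d}{d\A}$, I rewrite these coefficients through $c'\sigma+d=c\tau+d$ and $c'=c/N$ and compare with the canonical transformation law of the weight-$k$ index-$mN$ form $B_Nf$ for $\Gamma_0(N)$. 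Matching the coefficient of each anomaly operator over a generating set of $\Gamma_0(N)$ then yields the two relations of the lemma, the factor $\tfrac1N$ being produced by the $N$-scaling of $\tau$ (which scales $\nu$ by $\tfrac1N$).

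\textbf{Main obstacle.}
The delicate point is this last matching step: one must track precisely how $\tfrac{d}{dG_2}$ and $\tfrac{d}{d\A}$ interact simultaneously with the rescaling $\sigma=N\tau$ and with the change of index $m\rightsquigarrow mN$, and verify that the comparison is consistent not for a single $\gamma$ but across a full generating set of $\Gamma_0(N)$. Everything else reduces mechanically to the correspondence $\gamma\leftrightarrow\gamma'$ fixed at the outset.
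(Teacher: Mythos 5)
Your membership argument is correct and is exactly the paper's: for $\gamma=\binom{a\ b}{c\ d}\in\Gamma_0(N)$ one rewrites the action on $(x,\tau)$ as the action of $\gamma'=\binom{a\ \ Nb}{c/N\ \ d}\in\SL_2(\BZ)$ on $(Nx,N\tau)$, and the elliptic shift $x\mapsto x+\lambda\tau+\mu$ becomes the integral shift $y\mapsto y+\lambda\sigma+N\mu$, producing index $mN$. For the anomaly relations, however, you take a genuinely different route from the paper. The paper never matches transformation laws: it passes to the almost-holomorphic completion $F=\sum_{i,j}f_{i,j}\nu^i\alpha^j$, observes that under $(x,\tau)\mapsto(Nx,N\tau)$ one has $\nu(N\tau)=\nu(\tau)/N$ while $\alpha(Nx,N\tau)=\alpha(x,\tau)$, so that $B_NF=\sum_{i,j}N^{-i}(B_Nf_{i,j})\,\nu^i\alpha^j$ is the completion of $B_Nf$, and then simply reads off the anomaly derivatives as the $\nu^1\alpha^0$- and $\nu^0\alpha^1$-coefficients. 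This is shorter and entirely sidesteps the ``matching across a generating set'' step that you yourself flag as the main obstacle (and which, for a single $\gamma$ with $c\neq 0$, requires a separate argument that the transformation law determines the anomaly derivatives).

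There is also a genuine error in your factor bookkeeping, and it is not cosmetic. You attribute both factors $\tfrac1N$ to the scaling $\nu\mapsto\nu/N$. That is correct only for $\tfrac{d}{dG_2}$, which is paired with $\nu$. The operator $\tfrac{d}{d\A}$ is paired with $\alpha=\Im(x)/\Im(\tau)$, which is \emph{invariant} under $(x,\tau)\mapsto(Nx,N\tau)$; equivalently, in your own matching the coefficient of $\tfrac{d}{d\A}$ supplied by Lemma~\ref{lemma:QJac transformation laws} is
\[
\frac{c'y}{c'\sigma+d}=\frac{(c/N)(Nx)}{c\tau+d}=\frac{cx}{c\tau+d},
\]
so the two factors of $N$ cancel, while the coefficient of $\tfrac{d}{dG_2}$ is $\tfrac{c'}{4\pi i(c'\sigma+d)}=\tfrac{c}{4\pi i N(c\tau+d)}$, which does pick up $\tfrac1N$. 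Carried out honestly, your method (and the paper's) therefore yields
\[
\frac{d}{dG_2}B_Nf=\frac1N\,B_N\frac{d}{dG_2}f,
\qquad
\frac{d}{d\A}B_Nf=B_N\frac{d}{d\A}f,
\]
i.e.\ the second displayed identity of the Lemma holds \emph{without} the factor $\tfrac1N$; the printed factor is an error in the statement. You can test this on $f=\A\Theta^2\in\QJac_{-1,1}$, whose completion is $(\A+\alpha)\Theta^2$: the $\alpha$-coefficient of $B_Nf$'s completion is $B_N\Theta^2$, not $\tfrac1N B_N\Theta^2$. Moreover the downstream use in Proposition~\ref{prop:Hecke wrong weight} requires the factor-free version: the $\tfrac{d}{d\A}$-relation there needs $c_{k,k'}(e)=e\,c_{k-1,k'-1}(e)$ (one power of $e$), whereas the $\tfrac{d}{dG_2}$-relation uses $c_{k,k'}(e)=e^2c_{k-2,k'-2}(e)$ (two powers); with your claimed $\tfrac1N$ in the $\A$-relation the sum $\sum_{e\cdot d=\ell}d\,c_{k-1,k'-1}(e)B_eT_{k'-1,d}$ appears and is not a multiple of $T_{k-1,\ell}$. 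So the step in which you assert that the stated $\tfrac1N$ in the $\A$-identity can be derived would fail; the correct conclusion to reach (and to flag) is the factor-free relation.
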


\begin{proof}
Let $F(x,\tau)$ be a almost-holomorphic weak Jacobi form of weight $k$ and index $m$.
Set 
\[ \hat{F}(x,\tau) = (B_N F)(x,\tau)= F(Nx, N \tau).\]
Then for $\binom{a\ b}{c\ d} \in \Gamma_0(N)$ and with $c = c' \cdot N$ we have
\begin{align*}
\hat{F}\left( \frac{x}{c \tau + d}, \frac{a \tau + b}{c \tau + d} \right)
& = F\left( \frac{Nx}{c \tau + d}, \frac{a N \tau + N b}{c \tau + d} \right) \\
& = F\left( \frac{Nx}{c' (N \tau) + d}, \frac{a N \tau + N b}{c' (N \tau) + d} \right) \\
& = (c' (N \tau) + d)^k e\left( \frac{m c' (N x)^2}{c' (N\tau) + d} \right) F(Nx, N \tau) \\
& = (c \tau + d)^k e\left( \frac{ (mN) c x^2}{c \tau + d} \right) \hat{F}(x,\tau)
\end{align*}
where we have used that $\binom{a\ Nb}{c'\ d} \in \mathrm{SL}_2(\BZ)$. 
Similarly, one proves that
\[ \hat{F}(x + \lambda \tau, \tau) = e(-m N ( \lambda^2 \tau + 2 \lambda x)) \hat{f}(x,\tau). \]
This shows that $\hat{F} \in \AHJ_{k, mN}(\Gamma_0(N))$,
and by taking the constant coefficient also $B_N f \in \QJac_{k, mN}(\Gamma_0(N))$.
To show the compatibility with the anomaly operators write
\[ F(x,\tau) = \sum_{i,j} f_{i,j} \nu^i \alpha^j. \]
Since we have $\nu(N \tau) = \nu(\tau)/N$, we get
\[
B_N F(x,\tau) = \sum_{i,j} \frac{1}{N^i} f_{i,j}(Nx, N \tau) \nu^i \alpha^j.
\]
Hence if $f=f_{0,0}$ we get
\[
\frac{d}{dG_2} B_N f = \mathrm{Coeff}_{\nu^1 \alpha^0}( B_N F(x,\tau) ) = \frac{1}{N} f_{1,0}(Nx, N_{\tau}) = \frac{1}{N} B_N \frac{d}{dG_2} f.
\]
The case for $\frac{d}{d \A}$ is similar.
\end{proof}

\begin{proof}[Proof of Proposition~\ref{prop:Hecke wrong weight}]
We follow ideas of \cite[Lemma 12]{BB}.
Given a power series
$f = \sum_{d,r} c(d,r) q^d p^r$ define the formal operator
\begin{gather*}
U_b f = \sum_{n,r} c(b \cdot n, r) q^n p^r.
\end{gather*}
A direct calculation starting from \eqref{Hecke formal} shows that
\[ T_{k,\ell} = \sum_{a \cdot b=\ell} a^{k-1} B_a U_b. \]

Recall the M\"obius function
\[ 
\mu(n) = 
\begin{cases} 
(-1)^g & \text{ if } n = p_1 \cdots p_g \text{ for distinct primes } p_i \\
0 & \text{ else }.
\end{cases}
\]
which satisfies $\sum_{d|n, d>0} \mu(d) = \delta_{n1}$.
For $s \in \BZ$ let $\Id_{s}$ be the function $\Id_{s}(a) = a^{s}$.
For functions $g,h$ define the Dirichlet convolution $(g \ast h)(\ell) = \sum_{\ell = a \cdot b} g(a) h(b)$ 
and the pointwise product $(g \cdot h)(a) = g(a) h(a)$.
Both of these are associative operations. We then have
\[ (\mu \cdot \Id_{k'-1}) \ast \Id_{k'-1}(a) = \delta_{a1} \]
and thus
\[ \Big( \Id_{k-1} \ast (\mu \cdot \Id_{k'-1}) \ast \Id_{k'-1}\Big) (a) = \Id_{k-1}. \]
After setting
\[ c_{k,k'}(e) = (\Id_{k-1} \ast (\mu \cdot \Id_{k'-1}))(e) \]
this yields
\begin{align} 
T_{k,\ell} & = \sum_{a \cdot b=\ell} \Big( \Id_{k-1} \ast (\mu \cdot \Id_{k'-1}) \ast \Id_{k'-1}\Big) (a) B_a U_b \notag \\
& = \sum_{a \cdot b=\ell} \sum_{e|a} c_{k,k'}(e) \left( \frac{a}{e} \right)^{k'-1} B_a U_b \notag \\
& = \sum_{e \cdot d=\ell} c_{k,k'}(e) B_{e} \sum_{d = b \cdot b'} (b')^{k'-1} B_{b'} U_b \notag \\
 & = \sum_{e \cdot d=\ell} c_{k,k'}(e) B_{e} T_{k',d} \label{hecke decomp} \\
%
\end{align}
where we used $B_{a} = B_{e} \cdot B_{a/e}$

Given $f \in \QJac_{k',m}$ we have $T_{k',d} f \in \QJac_{k', m d}$ by Proposition~\ref{prop:Hecke quasi Jacobi},
and hence 
\[ B_e T_{k',d} f \in \QJac_{k', m d} \in \QJac_{k', m d e}(\Gamma_0(e)) \]
by Lemma~\ref{lemma:Hecke shift}. 
Since for $e|\ell$ we have
\[ \QJac(\Gamma_0(e)) \subset \QJac(\Gamma_0(\ell)) \]
we obtain that
\[ T_{k,\ell} f = \sum_{e \cdot d=\ell} c_{k,k'}(e) B_{e} T_{k',d} f \in \QJac(\Gamma_0(\ell)). \]

For the second part, observe that
\[
c_{k,k'}(e) = \sum_{a \cdot b=e} a^{k-1} b^{k'-1} \mu(b) = e^2 c_{k-2,k'-2}(e).
\]
Hence by the second parts of Proposition~\ref{prop:Hecke quasi Jacobi} and Lemma~\ref{lemma:Hecke shift} we have
\begin{align*}
\frac{d}{dG_2} T_{k,\ell} f 
& = \sum_{e \cdot d=\ell} c_{k,k'}(e) \frac{d}{e} B_{e} T_{k',d} \frac{d}{d G_2} f \\
& = \ell \sum_{e \cdot d=\ell} c_{k-2,k'-2}(e) B_{e} T_{k',d} \frac{d}{d G_2} f \\
& = T_{k-2,\ell} \frac{d}{dG_2} f.
\end{align*}
\end{proof}

\begin{example}
Recall that $\Mod_2(\Gamma_0(2))$ is $1$-dimensional and is generated by
\[ F_2(\tau) = 1 + 24 \sum_{\substack{d|n \\ d \text{odd}}} d q^n. \]
Hence $\QMod_2(\Gamma_0(2))$ has the basis given by $F_2, G_2$.
One computes that
\begin{align*}
T_{k,2} G_2(\tau) 
& = 2^{k-1} B_2 G_2 + U_2 G_2 \\
& = 2^{k-1} \left( -\frac{1}{48} F_2 + \frac{1}{2} G_2 \right) + \left( \frac{1}{24} F_2 + 2 G_2 \right).
\end{align*}
Hence as predicted by Proposition~\ref{prop:Hecke wrong weight} we get:
\[
\frac{d}{d G_2} T_{k,2} G_2(\tau) = 2(1+2^{k-3}) = 2 \cdot T_{k-2,2}(1).
\]
\qed
\end{example}

In applications below we will consider quasi-Jacobi forms with a pole at $\tau = i \infty$, i.e.
which are of the form 
\[ f(x,\tau) = \frac{\phi(x,\tau)}{\Delta(\tau)^r} \]
for a quasi-Jacobi form $\phi$ and some $m \geq 1$.
Since the argument used to prove Proposition~\ref{prop:Hecke wrong weight}
also works when there are poles,
the results of Proposition~\ref{prop:Hecke wrong weight}
remain valid for these quasi-Jacobi forms as well.
The only modification concerns the order of poles:

\begin{prop} \label{prop:Hecke wrong weight with poles}
For any $k,k',m$ the $\ell$-th formal Hecke operator acts by
\[ T_{k,\ell} : \frac{1}{\Delta(\tau)} \QJac_{k'+12,m} \to \frac{1}{\Delta(\tau)^{\ell}} \QJac_{k'+12 \ell, m \ell}( \Gamma_0(\ell) ). \]
The relations \eqref{hecke relations wrong weight} hold identically.
\end{prop}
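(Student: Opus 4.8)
The plan is to rerun the proof of Proposition~\ref{prop:Hecke wrong weight} essentially verbatim, the only new ingredient being a count of the order of the pole at $q=0$. As the remark preceding the statement already notes, the modular transformation arguments behind Propositions~\ref{prop:Hecke quasi Jacobi} and~\ref{prop:Hecke wrong weight} use only compatibilities of the slash operators together with the formal Fourier manipulations defining $B_a$ and $U_b$; neither is sensitive to holomorphicity at the cusp. Hence for a meromorphic input $f=\phi/\Delta$ with $\phi\in\QJac_{k'+12,m}$ the decomposition \eqref{hecke decomp}, namely $T_{k,\ell}=\sum_{e\cdot d=\ell} c_{k,k'}(e)\,B_e\,T_{k',d}$, still holds, and each summand $B_e T_{k',d} f$ is a meromorphic quasi-Jacobi form of weight $k'$ and index $md\cdot e=m\ell$ for $\Gamma_0(e)\subseteq\Gamma_0(\ell)$, by the (meromorphic extensions of) Proposition~\ref{prop:Hecke quasi Jacobi} and Lemma~\ref{lemma:Hecke shift}. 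Thus $T_{k,\ell} f$ is a meromorphic quasi-Jacobi form of weight $k'$ and index $m\ell$ for $\Gamma_0(\ell)$, holomorphic on $\BH$, and it remains only to control its pole at the cusp.

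For the pole bound I would use the finer decomposition $T_{k,\ell}=\sum_{a\cdot b=\ell} a^{k-1} B_a U_b$ from the proof of Proposition~\ref{prop:Hecke wrong weight}. Writing $f=\sum_{n\geq -1,\,r} c(n,r)q^n p^r$ (the $q$-expansion starts in degree $\geq -1$ since $\phi$ is holomorphic at the cusp and $1/\Delta=q^{-1}(1+\cdots)$), the $q^n$-coefficient of $U_b f$ is $c(bn,r)$. For $b\geq 2$ the pole coefficients $c(-1,r)$ are never reached, as $-1$ is not a multiple of $b$, while the remaining potentially nonzero coefficients have $bn\geq 0$; hence $U_b f$, and therefore $B_a U_b f$, is holomorphic at the cusp. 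The only pole-contributing term is $b=1$, $a=\ell$, namely $\ell^{k-1} B_\ell f$, whose lowest $q$-power is $\ell\cdot(-1)=-\ell$. Consequently $T_{k,\ell} f$ has a pole of order at most $\ell$ at $q=0$. This pole-order bookkeeping is the one genuinely new point, and the main thing to get right.

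Combining the two steps finishes the proof: $\Delta^{\ell}$ is holomorphic and nowhere vanishing on $\BH$ and has a zero of order exactly $\ell$ at the cusp, so $\Delta^{\ell}\cdot T_{k,\ell} f$ is holomorphic both on $\BH$ and at $q=0$, and is therefore a genuine quasi-Jacobi form of weight $k'+12\ell$ and index $m\ell$ for $\Gamma_0(\ell)$; this is exactly the claim $T_{k,\ell} f\in\frac{1}{\Delta^{\ell}}\QJac_{k'+12\ell,\,m\ell}(\Gamma_0(\ell))$. For the anomaly relations \eqref{hecke relations wrong weight}, note that $\Delta$ is a genuine modular form, so $\frac{d}{dG_2}\Delta=\frac{d}{d\A}\Delta=0$; by the Leibniz rule both anomaly operators commute with division by $\Delta^{\ell}$, and the desired identities reduce to the already-established holomorphic case of Proposition~\ref{prop:Hecke wrong weight} applied to the numerator $\Delta^{\ell}\cdot T_{k,\ell}f$.
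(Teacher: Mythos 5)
Your formal decomposition and the pole count at $q=0$ are both correct, but the concluding step has a genuine gap. From ``$T_{k,\ell}f$ is a meromorphic quasi-Jacobi form for $\Gamma_0(\ell)$, holomorphic on $\BH$, with a pole of order at most $\ell$ at $q=0$'' you infer $\Delta^\ell T_{k,\ell}f \in \QJac_{k'+12\ell,m\ell}(\Gamma_0(\ell))$. This inference is invalid: by the definition used in the paper, membership in $\QJac(\Gamma_0(\ell))$ requires condition (ii) in the definition of almost holomorphic weak Jacobi forms, i.e.\ that the slashed function has a Fourier expansion supported in $n\geq 0$ for \emph{every} $\binom{a\ b}{c\ d}\in\SL_2(\BZ)$ --- equivalently, regularity at every cusp of $\Gamma_0(\ell)$, of which there are several once $\ell>1$. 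You only verify this for the identity coset (the cusp $i\infty$), and regularity at $i\infty$ does not propagate to the other cusps: for instance $\Delta(2\tau)/\Delta(\tau)$ is a $\Gamma_0(2)$-modular function, holomorphic on $\BH$ and vanishing at $i\infty$, yet it has a pole at the cusp $0$ (its expansion there is $2^{-12}q^{-1/2}(1+\cdots)$). So the sentence ``it remains only to control its pole at the cusp'' is exactly where the argument breaks down.

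This missing input is the reason the paper's proof has its two-step structure. In the correct-weight case $k=k'$ the transform $T_{k,\ell}f$ is still quasi-Jacobi for the \emph{full} group $\SL_2(\BZ)$, which has a single cusp, so there the pole count at $i\infty$ (your step 2) really does suffice. In the general case the paper then uses $T_{k,\ell}=\sum_{e\cdot d=\ell}c_{k,k'}(e)B_e T_{k',d}$, handles each $T_{k',d}f$ by the correct-weight case, and controls the operators $B_e$ acting on $\frac{1}{\Delta^d}\QJac(\SL_2(\BZ))$ by the cited result of Koblitz, $B_N(1/\Delta)\in\frac{1}{\Delta^N}\Mod_{12(N-1)}(\Gamma_0(N))$: the whole content of that statement is that $\Delta(\tau)^N/\Delta(N\tau)$ is holomorphic at \emph{all} cusps of $\Gamma_0(N)$, which is precisely the global cusp control your argument lacks. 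Some such input is unavoidable. A secondary problem: your treatment of \eqref{hecke relations wrong weight} by ``applying Proposition~\ref{prop:Hecke wrong weight} to the numerator $\Delta^\ell T_{k,\ell}f$'' does not parse, since that proposition constrains the \emph{input} of $T_{k,\ell}$, and $T_{k,\ell}$ does not commute with multiplication by $\Delta^\ell$; the correct (and easy) argument is the one the paper indicates, namely that the coefficient-wise proofs of Proposition~\ref{prop:Hecke quasi Jacobi}, Lemma~\ref{lemma:Hecke shift} and the decomposition \eqref{hecke decomp} are insensitive to a pole at the cusp, so the relations carry over verbatim.
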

\begin{proof}
If $f(x,\tau) = \frac{\phi(x,\tau)}{\Delta(\tau)}$ is a weight $k$ index $m$ quasi-Jacobi for group $\SL_2(\BZ)$,
then the 'correct weight' Hecke transform $T_{k,\ell} f$ is also quasi-Jacobi for the full group $\SL_2(\BZ)$.
The poles of $T_{k,\ell} f$ are located at the single cusp $\tau = i \infty$, and here \eqref{Hecke formal} shows that the pole order is increased by $\ell$.
We hence obtain that $\Delta(\tau)^{\ell} T_{k,\ell} f$ is holomorphic quasi-Jacobi, i.e. lies in $\QJac_{k+12 \ell, m \ell}$.
Hence the claim holds if $k=k'$.
In the general case we use again the decomposition \eqref{hecke decomp},
the fact that $B_N$ is a ring homomorphism, and that for any $N \geq 1$ we have
(see e.g. \cite[Prop.17(a)]{Koblitz})
\[ B_N\left( \frac{1}{\Delta(\tau)} \right) \in \frac{1}{\Delta(\tau)^N} \Mod_{12(N-1)}(\Gamma_0(N)). \qedhere \]
\end{proof}

\subsection{Index 0 meromorphic Jacobi forms}
\label{subsec:index 0 meromorphic JAcobi forms}
Consider the algebra of index $0$ Jacobi forms,
\[
\MQJac_0 := \bigoplus_{k \geq 0} \MQJac_{k,0} = \BC[\A, G_2, \wp, \wp', G_4].
\]
The algebra $\MQJac_0$ is precisely the ring of index $0$ meromorphic Jacobi forms
with poles only at lattice points $x=a \tau +b$ for $a,b \in \BZ$, see \cite{Lib}.

Consider once more the Jacobi theta function
\[
\Theta(z) 
=  (p^{1/2}-p^{-1/2})\prod_{m\geq 1} \frac{(1-pq^m)(1-p^{-1}q^m)}{(1-q^m)^{2}}.
\]
which we view in this section as a function of $z=2\pi i x$ (and drop $\tau$ from notation).
Define functions $\A_n(z,\tau)$ for all $n \in \BZ$ by the expansion:
\begin{equation} \label{function}
\frac{\Theta(z+w)}{\Theta(z) \Theta(w)}
=
\sum_{n \geq 0} \frac{\A_n(z,\tau)}{n!} w^{n-1}.
\end{equation}
In particular $\A_0 = 1$, $\A_1 = \A$.
The function $\frac{\Theta(z+w)}{\Theta(z) \Theta(w)}$
is a meromorphic Jacobi
of lattice index $\begin{pmatrix} 0 & 1/2 \\ 1/2 & 0 \end{pmatrix}$,
which leads to the proof of the following:

\begin{thm}[{\cite{Zagier,Lib}}] \label{thm:2132}
(a) For all $n$ we have $\A_n \in \MQJac_{0,n}$ and
\[ \frac{d}{d G_2} \A_n = 0, \quad \frac{d}{d \A} \A_n = n \A_{n-1}. \]
(b) For all $n \geq 0$, we have the expansion
\[
\A_n(z,\tau) = B_n + \delta_{n,1}\frac{1}{2} \frac{p^{1/2}+p^{-1/2}}{p^{1/2} - p^{-1/2}} - n \sum_{k,d \geq 1} d^{n-1} (p^{k} + (-1)^n p^{-k}) q^{kd} .
\]
where the Bernoulli numbers $B_n$ are defined by $\frac{1}{2} \coth(z/2) = \sum_{n \geq 0} (B_n/n!) z^{n-1}$.
\end{thm}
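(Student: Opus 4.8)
I will treat both assertions as two faces of the single two-variable function
\[ \Phi(z,w,\tau) := \frac{\Theta(z+w)}{\Theta(z)\Theta(w)}, \]
recorded in \eqref{function}. Since $\Theta$ is a weak Jacobi form of weight $-1$ and index $1/2$ (with a theta-multiplier), multiplying the three automorphy factors shows that $\Phi$ is a meromorphic Jacobi form of weight $1$ for $\SL_2(\BZ)$ in the pair of elliptic variables $(z,w)$: the pure $z^2$ and $w^2$ contributions to the index cancel, leaving only the cross term, i.e. the lattice index $\left(\begin{smallmatrix} 0 & 1/2 \\ 1/2 & 0\end{smallmatrix}\right)$, and the three multipliers cancel in the ratio. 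Its only poles in $z$ lie where $\Theta(z)=0$, that is at the lattice points $z\in 2\pi i(\BZ\tau+\BZ)$, since the zero of $\Theta(z+w)$ cancels no pole. Part (a) will be extracted from the modular and elliptic transformation laws of $\Phi$, and part (b) from the explicit Kronecker summation of $\Phi$.

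\textbf{Plan for (b).} I would first establish the closed form
\[ \Phi(z,w,\tau) = \tfrac12\coth\tfrac z2 + \tfrac12\coth\tfrac w2 - \sum_{k,d\geq 1}\bigl(e^{kz+dw}-e^{-kz-dw}\bigr)q^{kd}. \]
For fixed generic $w$, both sides are meromorphic functions of $z$ with the same quasi-periodicity under $z\mapsto z+2\pi i$ and $z\mapsto z+2\pi i\tau$ (read off from the index-$1/2$ elliptic law of $\Theta$), with a single simple pole modulo the lattice at $z=0$ of residue $1$, and no other poles; hence their difference is a holomorphic elliptic function of $z$, thus constant in $z$, and the constant vanishes by the oddness $\Phi(-z,-w)=-\Phi(z,w)$. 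Granting this identity, assertion (b) is a purely formal expansion in $w$: using $\tfrac12\coth\tfrac w2=\sum_{m\geq0}\tfrac{B_m}{m!}w^{m-1}$ and $e^{\pm dw}=\sum_{m}\tfrac{(\pm d)^m}{m!}w^m$, the coefficient of $w^{n-1}$ in $\Phi$ is $\tfrac1{n!}$ times exactly the right-hand side of the claimed formula, since $\tfrac12\coth\tfrac z2=\tfrac12\tfrac{p^{1/2}+p^{-1/2}}{p^{1/2}-p^{-1/2}}$ contributes only at $n=1$ (the $\delta_{n,1}$ term), $\tfrac12\coth\tfrac w2$ produces $B_n$, and the Lambert sum produces $-n\sum_{k,d}d^{n-1}(p^k+(-1)^np^{-k})q^{kd}$ (the sign $(-1)^n$ coming from $(-d)^{n-1}=(-1)^{n-1}d^{n-1}$).

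\textbf{Plan for (a).} Once (b) is known, each $\A_n$ is visibly an index-$0$ meromorphic Jacobi-type function with poles only at lattice points, hence lies in $\MQJac_0$ by the description of that algebra recalled above; its $z$-expansion shows it is of weight $n$, so $\A_n\in\MQJac_{n,0}$. To compute the anomalies I would substitute $z\mapsto z/(c\tau+d)$, $w\mapsto w/(c\tau+d)$ into the weight-$1$ cross-index transformation law of $\Phi$ and expand in $w$. Matching the coefficient of $w^{n-1}$ yields
\[ \A_n\!\left(\tfrac{x}{c\tau+d},\tfrac{a\tau+b}{c\tau+d}\right) = (c\tau+d)^n\sum_{j\geq0}\binom nj\Bigl(\tfrac{cx}{c\tau+d}\Bigr)^{j}\A_{n-j}(x,\tau). \]
Comparing with the forward transformation law of Lemma~\ref{lemma:QJac transformation laws} for a weight-$n$, index-$0$ quasi-Jacobi form (whose $e(cmx^2/(c\tau+d))$ factor is trivial), the complete absence of a term carrying $\tfrac1{c\tau+d}$ unaccompanied by $x$ forces $\tfrac{d}{dG_2}\A_n=0$, while the $x$-linear data forces $\exp\bigl(\tfrac{cx}{c\tau+d}\tfrac{d}{d\A}\bigr)\A_n=\sum_j\binom nj(\tfrac{cx}{c\tau+d})^j\A_{n-j}$, i.e. $\tfrac{d}{d\A}\A_n=n\A_{n-1}$. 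Equivalently, both anomaly statements repackage as $\tfrac{d}{dG_2}\Phi=0$ and $\tfrac{d}{d\A}\Phi=w\Phi$. Expanding the elliptic transformation law of $\Phi$ in the same way re-confirms index $0$ and the $\BZ^2$-periodicity.

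\textbf{Main obstacle.} The only genuinely non-formal input is Kronecker's summation identity for $\Phi$ used in the plan for (b); everything else is bookkeeping in $w$ and a comparison of automorphy factors. The delicate points there are the residue and quasi-period computations that reduce the identity to ``a bounded elliptic function is constant'', and in particular checking the normalization $\Theta(z)=z+O(z^3)$ near $z=0$ so that the simple pole of $\tfrac12\coth\tfrac z2$ matches that of $\Phi$ with residue exactly $1$. A secondary care in (a) is to invoke the converse reading of Lemma~\ref{lemma:QJac transformation laws} only after membership $\A_n\in\MQJac_0$ has been secured from (b), so that the operators appearing are literally the formal derivatives $\tfrac{d}{dG_2}$ and $\tfrac{d}{d\A}$ rather than merely formal symbols in a transformation law.
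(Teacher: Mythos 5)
Your proposal is correct and takes essentially the same route as the paper: the paper's proof of (b) simply cites Zagier for the expansion
$\frac{\Theta(z+w)}{\Theta(z)\Theta(w)} = \tfrac12\bigl(\coth\tfrac w2+\coth\tfrac z2\bigr)-2\sum_{n\geq 1}\sum_{d|n}\sinh\bigl(dw+\tfrac nd z\bigr)q^n$
(your Kronecker identity, followed by the same $w$-expansion you carry out), and for (a) it cites Zagier, Libgober and \cite{Ob_SerreDerivative} for exactly the three inputs you derive by hand, namely the transformation laws of the two-variable function, the membership of the $\A_n$ in $\MQJac$, and the anomaly equations obtained by matching against Lemma~\ref{lemma:QJac transformation laws}. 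The one repair needed is in your elliptic-function step: for generic $w$ the difference of the two sides is not an elliptic function of $z$ but is quasi-periodic with factor $e^{-w}$ under $z\mapsto z+2\pi i\tau$ (a holomorphic section of a nontrivial degree-zero line bundle on the elliptic curve), so it vanishes identically --- and even granting your "constant" reading, $c=e^{-w}c$ forces $c=0$ without the oddness argument.
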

\begin{proof}
The first part follows immediately from the transformation properties given in the theorem of \cite[Sec.3]{Zagier},
but see also \cite{Lib} for why the $\A_n$ lie in $\MQJac$, and \cite[Lemata 5 and 6]{Ob_SerreDerivative} for the holomorphic anomaly equation
(the functions $\A_n$ were called $J_n$ in {\em loc.cit.}).
Part (b) follows from the following expansion proven in \cite[Sec.3]{Zagier}:
\[ \frac{\Theta(z+w)}{\Theta(z) \Theta(w)}
=
\frac{1}{2} \left( \coth \frac{w}{2}  + \coth \frac{z}{2} \right)
-2 \sum_{n=1}^{\infty} \left( \sum_{d|n} \sinh( dw + \frac{n}{d} z ) \right) q^n. \qedhere \] 
\end{proof}

\begin{rmk}[Historical remark] The function \eqref{function}
already centrally appeared in work of Eisenstein on elliptic functions in the 1850's,
see
\cite{Weil} for a historical account,
\end{rmk}

\section{Cohomology and monodromy of the Hilbert scheme} \label{sec:cohomology}
\subsection{Overview}
Let $S$ be a K3 surface and let $S^{[n]}$ be the Hilbert scheme of $n$ points on $S$.
There are two basic structures on the cohomology of the Hilbert scheme.
The first is the Nakajima Heisenberg action (Section~\ref{subsec:Nakajima operators})
which gives a natural additive basis of the cohomology
and allow us to identify the curve classes on the Hilbert scheme (Section~\ref{sec:curve classes}).
The second is the Looijenga-Lunts-Verbitsky (LLV) Lie algebra (Section~\ref{subsec:LLV algebra}) which will appear in the statement of the holomorphic anomaly equations.
In Section~\ref{sec:Weight grading} we use the LLV algebra to define several gradings on the cohomology.
In Section~\ref{subsec:monodromy} we recall work of Markman on how the LLV algebra controls the monodromy.
For us important are two particular monodromy operators, which lead to the elliptic transformation property of the generating series:
These are discussed in detail in
Section~\ref{monodromy:involution} and~\ref{monodromy:shift}.
In particular, we describe how they act on the Nakajima basis.
%
%

\subsection{Nakajima operators} \label{subsec:Nakajima operators}
We follow the work \cite{Nak}, see also \cite{Groj}.
For any~$n,k \in \BN$, consider the closed subscheme:
$$
S^{[n,n+k]} = \left\{(I \supset I') \,\middle|\, I/I' \text{ is supported at a single }x \in S \right\} \subset S^{[n]} \times S^{[n+k]}
$$
endowed with projection maps
\begin{equation}
\label{eqn:diagram zk}
\begin{tikzcd}
& S^{[n,n+k]} \ar[swap]{dl}{p_-} \ar{d}{p_S} \ar{dr}{p_+} & \\
S^{[n]} & S & S^{[n+k]}
\end{tikzcd}
\end{equation}
which remember $I$, $x$, $I'$, respectively. 
For $\alpha \in H^{\ast}(S)$ and $k>0$ we define
the $k$-th Nakajima operator
by letting $S^{[n, n+k]}$ act as a correspondence, that is we define:
\[ \Fq_k(\alpha) : H^{\ast}(S^{[n]}) \to H^{\ast}(S^{[n+k]}) \]
\[ \Fq_k(\alpha) \gamma = p_{+ \ast}( p_{-}^{\ast}(\gamma) \cdot p_S^{\ast}(\alpha) ). \]
Similarly, we can go the other way around and define
$\Fq_{-k}(\alpha) : H^{\ast}(S^{[n+k]}) \to H^{\ast}(S^{[n]})$ by
\[
\Fq_{-k}(\alpha) \gamma = (-1)^k p_{- \ast}( p_{+}^{\ast}(\gamma) \cdot p_S^{\ast}(\alpha) ).
\]
We also set $\fq_0(\gamma) = 0$ for all $\gamma$.

Consider the direct sum
\[ H^{\ast}(\Hilb) = \bigoplus_{n \geq 0} H^{\ast}(S^{[n]}). \]
Because the correspondences above are defined for all $n$, we obtain operators
\[ \Fq_i(\alpha) : H^{\ast}(\Hilb) \to H^{\ast}(\Hilb). \]
By the main result of \cite{Nak} we have the commutation relations of the Heisenberg algebra
\begin{equation}
\label{eqn:heis op}
[\fq_k(\alpha), \fq_l(\beta)] 
=  k ( \alpha, \beta ) \Id_{\Hilb}.
\end{equation}
Moreover, $H^{\ast}(\Hilb)$ is generated by the operators $\Fq_k(\alpha)$ for $k>0$ from the vacuum vector
\[ \1 \in H^{\ast}(S^{[0]}) = \BQ. \]
In particular, the set of classes
\[ \Fq_{\lambda_1}(\gamma_{i_1}) \cdots \Fq_{\lambda_{\ell(\lambda)}}(\gamma_{i_{\ell(\lambda)}}) \1, \]
where $\lambda = (\lambda_j, \gamma_{i_j})$ runs over all partitions of size $n$
weighted by cohomology classes from a fixed basis $\{ \gamma_i \}_{i=1}^{24}$ of $H^{\ast}(S)$,
forms a basis of $H^{\ast}(S^{[n]},\BQ)$.

For homogeneous $\alpha_i \in H^{\ast}(S)$, the degree of a Nakajima cycle is
\begin{equation} \deg\big(   \Fq_{k_1}(\alpha_1) \cdots \Fq_{k_{\ell}}(\alpha_{\ell}) \1 \big) = n-\ell+\sum_{i} \deg(\alpha_i), \label{deg Nakajima cycle} \end{equation}
The {\em length} of a Nakajima cycle is defined to be the number of Nakajima factors:
\begin{equation} l\left( \Fq_{k_1}(\alpha_1) \cdots \Fq_{k_{\ell}}(\alpha_{\ell}) \1 \right) = \ell. \label{length Nakajima} \end{equation}

\subsection{Curve classes} \label{sec:curve classes}
For $n \geq 2$ the fiber of the Hilbert-Chow morphism $S^{[n]} \to \mathrm{Sym}^n(S)$ over a generic point in the discriminant
is isomorphic to $\p^1$ and has (co)homology class
\[ A = \Fq_2(\pt) \Fq_1(\pt)^{n-2} \1 \in H_2(S^{[n]},\BZ), \]
where $\pt \in H^4(S,\BZ)$ is the class of a point.
Similarly, given a class $\beta \in H_2(S, \BZ)$ we have an associated class
on the Hilbert scheme given by 
\[ \beta_{[n]} := \Fq_1(\beta) \Fq_1(\pt)^{n-1} \1 \in H_2(S^{[n]},\BZ). \]
If $\beta$ is the class of a curve $C \subset S$, then $\beta_{[n]}$ is the class of
the curve parametrizing subschemes consisting of $n-1$ distinct fixed points away from $C$ and a single free point on $C$.

By Nakajima's theorem \cite{Nak} (discussed in the last section), we have an isomorphism:
\begin{equation} H_2(S^{[n]},\BZ) \cong H^2(S,\BZ) \oplus \BZ A, \quad \beta_{[n]} + rA \mapsfrom (\beta,r).\label{H_2 iso} \end{equation}
Usually we simply write $\beta+rA$ for the class associated to $(\beta,r)$ on the Hilbert scheme.
If $n \leq 1$, we set $A=0$ and always assume that $r=0$; in case $n=0$ we also assume that $\beta=0$.

\subsection{The Looijenga-Lunts-Verbitsky algebra} \label{subsec:LLV algebra}
Let $X$ be an (irreducible) hyperk\"ahler variety of dimension $2n$.
The lattice $H^2(X,\BZ)$ is equipped with an integral and non-degenerate quadratic form,
called the Beauville-Bogomolov-Fujiki form \cite{Fuji}.
We will also view $H^{\ast}(X,\BZ)$ as a lattice using the Poincar\'e pairing.
Both pairings are extended to the $\BC$-valued cohomology groups by linearity.

The Looijenga-Lunts-Verbitsky Lie algebra of $X$ is defined as follows (see \cite{LL,V}).
For any $a \in H^2(X,\BQ)$ such that $(a,a) \neq 0$, consider the operator on cohomology which takes the cup product with $a$,
\[ e_a : H^{\ast}(X,\BQ) \to H^{\ast}(X,\BQ), x \mapsto a \cup x \]
Let $h$ be the Lefschetz grading operator which acts on $H^{2i}(X,\BZ)$ by multiplication by $(i-n)$.
Then there exists a unique operator 
\[ f_a : H^{\ast}(X,\BZ) \to H^{\ast}(X,\BZ) \]
such that the $\mathfrak{sl}_2$ commutation relations are satisfied:
\[ [e_a, f_a]=h, \quad [h,e_a] = e_a, \quad [h,f_a] = -f_a. \]
The LLV Lie algebra $\Fg(X)$ is defined as the
Lie subalgebra of $\End H^{\ast}(X,\BQ)$ generated by $e_a, f_a,h$ for all $a \in H^2(X,\BQ)$ as above.
By the central result of \cite{V} one has
\[ \Fg(X) = \so( H^2(X,\BQ) \oplus U_{\BQ} ) \]
where $U = \binom{0\ 1}{1\ 0}$ is the hyperbolic plane.

The degree zero part of $\Fg(X)$ decomposes as
\[ \Fg(X)_0 = \mathfrak{so}( H^2(X,\BQ)) \oplus \BQ h. \]
The summand $\mathfrak{so}( H^2(X,\BQ))$ is also called the reduced LLV algebra. 
Base changing to $\BC$ and integrating this yields the {\em LLV representation}
\begin{equation} \rho_{\mathrm{LLV}} : \SO(H^2(X,\BC)) \to \mathrm{GL}(H^{\ast}(X,\BC)). \label{LLV rep} \end{equation}
The LLV representation acts by degree-preserving orthogonal ring isomorphisms \cite[Prop. 4.4(ii)]{LL},
where orthogonal means with respect to the Poincar\'e pairing.

The Hilbert scheme of points $S^{[n]}$ on a K3 surface are irreducible hyperk\"ahler varieties \cite{Beauville}.
The LLV algebra was described here explicitly in the Nakajima basis in \cite{OLLV}.
We recall the explicit formulas, using the conventions of \cite{NOY}.
First recall the isomorphism
\begin{equation} V = H^2(S^{[n]}) \cong H^2(S) \oplus \BQ \cdot \delta \label{V description} \end{equation}
which can be obtained by dualizing \eqref{H_2 iso}.
In particular, $\delta$ is $-\frac{1}{2}$ times the class of the locus of non-reduced subschemes and satisfies $\delta \cdot A = 1$.
Moreover, for $\alpha \in H^2(S,\BQ)$ the associated divisor on the Hilbert scheme is $\frac{1}{(n-1)!}\Fq_1(\alpha) \Fq_1(1)^{n-1} \1$.
The Beauville--Bogomolov--Fujiki form is then the form on $V$
which extends the intersection form on $H^2(S)$ and satisfies
$$
(\delta, \delta) = 2 - 2n, \quad (\delta, A^1(S)) = 0.
$$
The LLV algebra is given by 
\[ \Fg(S^{[n]}) = \wedge^2 ( V \oplus U_{\BQ} ) \]
where the Lie bracket is defined for all $a,b,c,d \in V \oplus U_\BQ$ by
\[ [a \wedge b, c \wedge d] = (a,d) b \wedge c - (a,c) b \wedge d - (b,d) a \wedge c + (b,c) a \wedge d. \]

Consider for all $\alpha \in H^2(S,\BQ)$ the following operators:
\begin{gather}
e_{\alpha} = -\sum_{n > 0} \Fq_{n} \Fq_{-n} ( \Delta_{\ast} \alpha) \nonumber \\
\label{LLV_operators}
\begin{gathered}
e_{\delta} = -\frac{1}{6} \sum_{i+j+k=0} : \Fq_i \Fq_j \Fq_k ( \Delta_{123} ): \\
\widetilde{f}_{\alpha} = -\sum_{n > 0} \frac{1}{n^2} \Fq_{n} \Fq_{-n}( \alpha_1 + \alpha_2 )
\end{gathered} \\
\nonumber
\widetilde{f}_{\delta}
= -\frac{1}{6} \sum_{i+j+k=0} :\Fq_i \Fq_j \Fq_k \left( \frac{1}{k^2}  \Delta_{12} + \frac{1}{j^2} \Delta_{13} + \frac{1}{i^2} \Delta_{23} + \frac{2}{j k} c_1 + \frac{2}{i k} c_2 + \frac{2}{i j} c_3 \right): \,.
\end{gather}
Here $: - :$ is the normal ordered product defined by
$$: \Fq_{i_1} ... \Fq_{i_k}\!: \, = \, \Fq_{i_{\sigma(1)}} ... \Fq_{i_{\sigma(k)}}$$
where $\sigma$ is any permutation such that $i_{\sigma(1)} \geq ... \geq i_{\sigma(k)}$.
We define operators $e_{\alpha}$ and~$\widetilde{f}_\alpha$ for general $\alpha \in V$ by linearity in $\alpha$.
By \cite{Lehn} we have that $e_\alpha$ is precisely the operator of cup product with $\alpha$.
By \cite{OLLV}, if $(\alpha,\alpha) \neq 0$, the multiple~$\widetilde{f}_\alpha / (\alpha,\alpha)$ acts on cohomology as the Lefschetz dual of $e_\alpha$. 
Then, as shown in \cite{OLLV} the assignment
\begin{equation}
\label{eqn:action}
\begin{gathered}
\act: \Fg(S^{[n]}) \rightarrow \End H^{\ast}(S^{[n]}) \\
\forall \alpha \in V: \quad \act(e \wedge \alpha) = e_\alpha, \quad \act(\alpha \wedge f) = \widetilde{f}_\alpha
\end{gathered}
\end{equation}
induces a Lie algebra homomorphism, which is precisely the action of the LLV algebra.
The element~\mbox{$e \wedge f$} acts by
the Lefschetz grading operator
\begin{equation} 
\label{eqn:def h} 
h = \act(e \wedge f) = \sum_{k > 0} \frac{1}{k} \Fq_{k} \Fq_{-k}( \pt_2 - \pt_1 ).
\end{equation}

\subsection{Weight grading} \label{sec:Weight grading}
With the notation of the previous section,
consider vectors $W, F \in H^2(S,\BZ)$ which span a hyperbolic lattice,
that is which have intersection form $\binom{0\ 1}{1\ 0}$.
We associate three operators on $H^{\ast}(S^{[n]})$ to this pair:
\begin{enumerate}
\item[(i)] The Lefschetz dual operator (which will appear in the holomorphic anomaly equation for $\frac{d}{dG_2}$),
\[ U = \tilde{f}_F = \act(F \wedge f) = -\sum_{n > 0} \frac{1}{n^2} \Fq_n \Fq_{-n}( F_1 + F_2 ), \]
\item[(ii)] For any $\alpha \in V$ with $\alpha \perp \{ W, F \}$,
the degree-preserving operator 
\[ T_{\alpha} = [e_{\alpha}, U] = \act( \alpha \wedge F). \]
For the class $\delta \in V$, we have explicitly
\begin{equation} T_{\delta} = \frac{1}{2} \sum_{i+j+k=0} \frac{1}{i} : \Fq_i \Fq_j \Fq_k\big( (F_1 + F_2) \Delta_{23} \big) : . \label{T delta} \end{equation}
%
\item[(iii)] The weight grading operator
\begin{equation} \label{explicit formula}
\Wt = [e_{W}, U] = \act( e \wedge f + W \wedge F )
= \sum_{k > 0} \frac{1}{k} \Fq_{k} \Fq_{-k}( \pt_2 - \pt_1 + W_2 F_1 - W_1 F_2 )
\end{equation}
\end{enumerate}

The action of $x=e \wedge f + W \wedge F$ on $H^2(X,\BQ) \oplus U_{\BQ})$ is semisimple, so $x$ is a semisimple element of the LLV algebra.
Hence $H^{\ast}(S^{[n]})$ decomposes into eigenspaces under $\Wt$.
We can describe the eigenspaces quite explicitly:
We define a weight grading on $H^{\ast}(S)$ by
\[ \wt(\alpha)
=
\begin{cases}
1 & \text{ if } \alpha \in \{ W, \pt \} \\
-1 & \text{ if } \alpha \in \{ F, 1 \} \\
0 & \text{ if } \alpha \in \{ F, W, 1, \pt \}^{\perp}
\end{cases}
\]
This induces a grading of $H^{\ast}(S^{[n]})$ by setting
\begin{equation} \label{wt grading}
\wt(\gamma) = \sum_i \wt(\alpha_i) \quad \text{ for all } \quad \gamma=\prod_{i} \Fq_{k_i}(\alpha_i) \1,
\end{equation}
such that all $\alpha_i$ are $\wt$-homogeneous.
By the explicit formula \eqref{explicit formula}, a direct check shows that
\[ \Wt(\gamma) = \wt(\gamma) \gamma \]
for a $\wt$-homogeneous element $\gamma \in H^{\ast}(S^{[n]})$.

\begin{lemma} \label{lemma:T action}
The action of $\Wt$ on $H^{\ast}(S^{[n]})$ is semi-simple with 
eigenspace decomposition
\[ H^{\ast}(S^{[n]}) = \bigoplus_{\substack{d=-n \\ d \in \BZ}}^{n} V_d, \quad \Wt|_{V_d} = d \cdot \id_{V_d}. \]
The operators $T_{\alpha}$ (for $\alpha \perp \{ W, F\}$) and $U$ act with respect to this grading with weight $-1$ and $-2$ respectively, that is
\[ T_{\alpha} : V_d \to V_{d-1}, \quad U : V_d \to V_{d-2}. \]
\end{lemma}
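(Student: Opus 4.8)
The plan is to reduce the statement to the explicit description of $\Wt$ in the Nakajima basis, together with a short bracket computation inside the Lie algebra $\Fg(S^{[n]}) = \wedge^2(V \oplus U_{\BQ})$. For the first assertion I would use the explicit formula \eqref{explicit formula}: it shows that $\Wt$ acts on every $\wt$-homogeneous Nakajima cycle $\gamma = \prod_i \Fq_{k_i}(\alpha_i)\1$ by the scalar $\wt(\gamma) = \sum_i \wt(\alpha_i)$. Since such cycles, with the $\alpha_i$ ranging over a fixed $\wt$-homogeneous basis of $H^\ast(S)$, form a basis of $H^\ast(S^{[n]})$, the operator $\Wt$ is diagonal in this basis; hence it is semisimple and $H^\ast(S^{[n]}) = \bigoplus_d V_d$, where $V_d$ is the span of the basis vectors of $\wt$-value $d$. (This is the eigenspace decomposition under the semisimple element $x = e \wedge f + W \wedge F$ already recorded before the lemma.)

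To see that the eigenvalues lie in $\{-n, \ldots, n\}$, recall that a basis cycle in $H^\ast(S^{[n]})$ is indexed by a partition $(k_1, \ldots, k_\ell)$ of size $n$, so $\sum_i k_i = n$ with each $k_i \geq 1$ and therefore $\ell \leq n$. As each Nakajima factor contributes $\wt(\alpha_i) \in \{-1, 0, 1\}$, one gets $|\wt(\gamma)| \leq \ell \leq n$.

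For the weight shifts I would exploit that $\act$ is a Lie algebra homomorphism. If $\xi \in \Fg(S^{[n]})$ is an eigenvector of the adjoint action, $[x, \xi] = \lambda \xi$, then for $v \in V_d$ one has $x \cdot (\act(\xi) v) = \act([x,\xi]) v + \act(\xi)(x \cdot v) = (\lambda + d)\, \act(\xi) v$, so $\act(\xi) : V_d \to V_{d+\lambda}$. It remains to compute the adjoint action of $x = e \wedge f + W \wedge F$ on the elements $\alpha \wedge F$ (with $\alpha \perp \{W, F\}$) and $F \wedge f$. Using the bracket formula for $\wedge^2(V \oplus U_{\BQ})$ together with $(W,F) = 1$, $(e,f) = 1$, $V \perp U_{\BQ}$ and $\alpha \perp \{W, F\}$, every cross term drops out and one finds
\[ [x, \alpha \wedge F] = -\,\alpha \wedge F, \qquad [x, F \wedge f] = -2\, F \wedge f. \]
Hence $T_\alpha = \act(\alpha \wedge F)$ lowers the weight by $1$ and $U = \act(F \wedge f)$ lowers it by $2$, as claimed. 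The whole argument is routine; the only point needing care is keeping the orthogonality relations straight---in particular that $e$ and $f$ are orthogonal to all of $V$ and that $\alpha$ is orthogonal to both $W$ and $F$---so that the cross terms in the bracket vanish. I do not expect any genuine obstacle here.
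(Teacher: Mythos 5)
Your proposal is correct and follows essentially the same route as the paper: the semisimplicity and eigenvalue bound come from the explicit formula for $\Wt$ on the Nakajima basis (with $|\wt(\gamma)|\leq \ell \leq n$), and the weight shifts come from the bracket computations $[e\wedge f + W\wedge F,\,\alpha\wedge F]=-\alpha\wedge F$ and $[e\wedge f + W\wedge F,\,F\wedge f]=-2\,F\wedge f$ transported through the Lie algebra homomorphism $\act$, which is exactly the paper's computation of $[\Wt,T_\alpha]=-T_\alpha$ and $[\Wt,U]=-2U$.
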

\begin{proof}
The first claim follows since $\wt(\gamma)$ takes values in $\{ -n, \ldots, n \}$. The second claim follows from
\[ [\Wt,T_{\alpha}] = \act( [e \wedge f + W \wedge F, \alpha \wedge F] )
= \act( F \wedge \alpha ) = -T_{\alpha}. \]
\[ [\Wt,U] = \act( [e \wedge f + W \wedge F, F \wedge f] ) = \act( f \wedge F - F \wedge f ) = -2 U. \qedhere \]
\end{proof}

We have the following weight computation for the class
\[ U \in H^{\ast}(S^{[n]} \times S^{[n]}) \]
associated to the operator $U$ according to the conventions of Section~\ref{subsec:Conventions}.

\begin{lemma} \label{lemma: weight of U}
Consider a K\"unneth decomposition $U = \sum_{i} a_i \otimes b_i \in H^{\ast}(S^{[n]})^{\otimes 2}$ with $a_i, b_i$ homogeneous with respect to $\wt$.
Then for all $i$ we have
\[ \wt(a_i) + \wt(b_i) = -2. \]
\end{lemma}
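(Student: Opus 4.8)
The plan is to recognize the claim as a statement of bihomogeneity for the correspondence class. Introduce on $H^{\ast}(S^{[n]} \times S^{[n]})$ the total weight operator
\[ \Wt^{(2)} := \Wt \otimes 1 + 1 \otimes \Wt, \]
which acts on a bihomogeneous Künneth summand $a_i \otimes b_i$ by multiplication with $\wt(a_i) + \wt(b_i)$. Thus the lemma is equivalent to the single identity $\Wt^{(2)} U = -2 U$, i.e. to the assertion that the class $U$ is a $\Wt^{(2)}$-eigenvector of eigenvalue $-2$. The strategy is to transport the operator relation $[\Wt, U] = -2U$ from Lemma~\ref{lemma:T action} to the associated classes, the crux being to match the commutator with $\Wt$ on the operator side against $\Wt^{(2)}$ on the class side.

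First I would establish that $\Wt$ is skew-adjoint with respect to the Poincaré pairing, i.e. $\langle \Wt x, y \rangle = -\langle x, \Wt y \rangle$. Write $\Wt = h + \act(W \wedge F)$. The Lefschetz operator $h$ is skew-adjoint by a direct degree count: for $x \in H^{2i}$ and $y \in H^{2j}$ with $i+j = 2n$ one has $\langle h x, y \rangle = (i-n)\langle x,y \rangle = -(j-n)\langle x,y\rangle = -\langle x, h y\rangle$. The second summand lies in $\wedge^2 V = \so(H^2(S^{[n]}))$, hence in the \emph{reduced} LLV algebra; since the LLV representation \eqref{LLV rep} acts through orthogonal transformations of the Poincaré pairing (cf. \cite[Prop.~4.4(ii)]{LL}), its Lie algebra acts by skew-adjoint operators, so $\act(W \wedge F)$ is skew-adjoint. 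Note also that $\Wt$ is degree-preserving, so no Koszul signs intervene and the two contributions simply add to give skew-adjointness of $\Wt$.

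Next I would translate the commutator to classes. Under the identification $H^{\ast}(X \times X) \cong \End H^{\ast}(X)$ of Section~\ref{subsec:Conventions}, a class $\sum_i \phi_i \otimes \phi_i^{\vee}$ acts by $\gamma \mapsto \sum_i \langle \gamma, \phi_i \rangle \phi_i^{\vee}$. For an operator $A$ with class $[A] = \sum_i \phi_i \otimes \phi_i^{\vee}$, the composite $\Wt \circ A$ has class $(1 \otimes \Wt)[A]$, while $A \circ \Wt$ has class $-(\Wt \otimes 1)[A]$, the sign arising precisely from the skew-adjointness identity $\langle \Wt \gamma, \phi_i \rangle = -\langle \gamma, \Wt \phi_i \rangle$. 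Subtracting, the class of $[\Wt, A]$ equals $(\Wt \otimes 1 + 1 \otimes \Wt)[A] = \Wt^{(2)}[A]$.

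Applying this with $A = U$ and invoking $[\Wt, U] = -2U$ from Lemma~\ref{lemma:T action}, I obtain $\Wt^{(2)} U = -2U$. Since $\Wt^{(2)}$ is semisimple with the bihomogeneous classes as eigenvectors, every nonzero Künneth summand $a_i \otimes b_i$ of $U$ lies in the eigenspace of eigenvalue $-2$, that is $\wt(a_i) + \wt(b_i) = -2$, as claimed. I expect the only genuinely delicate point to be the skew-adjointness of $\Wt$: it is exactly what turns the operator commutator into the \emph{sum} (rather than the difference) of the two weight operators on the class side, thereby producing the eigenvalue $-2$ instead of $0$.
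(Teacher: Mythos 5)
Your proof is correct and takes essentially the same approach as the paper, whose entire proof is the computation $(\id \otimes \Wt + \Wt \otimes \id)(U) = \Wt \circ U + U \circ \Wt^{t} = \Wt \circ U - U \circ \Wt = [\Wt,U] = -2U$ — precisely your transport of the commutator to the K\"unneth side via skew-adjointness of $\Wt$. The only difference is that you justify the skew-adjointness $\Wt^{t} = -\Wt$ (Lefschetz degree count plus orthogonality of the LLV representation), which the paper uses implicitly.
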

\begin{proof}
This follows from
\begin{align*}
(\id \otimes \Wt + \Wt \otimes \id)(U) & = \Wt \circ U + U \circ \Wt^{t} \\
& = \Wt \circ U - U \circ \Wt \\
& = [\Wt, U] \\
& = -2 U.
\end{align*}
\end{proof}

The weight grading also interacts nicely with cup product:
\begin{lemma} \label{lemma:wt multiplicative}
The product $\gamma_1 \cdots \gamma_k$ 
of any $\wt$-homogeneous classes $\gamma_i$
is again $\wt$-homogeneous, and has weight
\[ \wt(\gamma_1 \cdots \gamma_k) = (k-1) n + \sum_i \wt(\gamma_i). \]
\end{lemma}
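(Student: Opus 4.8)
The plan is to split the weight operator into its two natural pieces and to understand how each interacts with the cup product. By definition $\Wt = \act(e \wedge f + W \wedge F) = h + D$, where $h = \act(e \wedge f)$ is the Lefschetz grading operator, acting on $H^{2i}(S^{[n]})$ by the scalar $i - n = \deg - n$ (see \eqref{eqn:def h}), and $D := \act(W \wedge F)$ lies in the reduced LLV algebra $\wedge^2 V = \so(H^2(S^{[n]}))$. The key structural input is that $D$ is a \emph{derivation} of the cohomology ring: the reduced LLV algebra integrates to the LLV representation of $\SO(H^2(S^{[n]}))$, which acts by degree-preserving ring automorphisms by \cite[Prop.~4.4(ii)]{LL}; differentiating $g(ab) = g(a)\,g(b)$ along the one-parameter subgroup $\exp(t\,W \wedge F)$ at $t=0$ then shows that $\act(W \wedge F)$ satisfies the Leibniz rule. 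Since both $h$ and $D$ preserve cohomological degree, so does $\Wt$.

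First I would reduce to the case where each $\gamma_i$ is in addition $\deg$-homogeneous. As $\Wt$ preserves degree, decomposing $\gamma_i = \sum_d \gamma_i^{(d)}$ into its degree components shows that each $\gamma_i^{(d)}$ is again $\wt$-homogeneous of the same weight $\wt(\gamma_i)$. Expanding $\gamma_1 \cdots \gamma_k$ multilinearly, it then suffices to treat products of classes that are simultaneously $\wt$- and $\deg$-homogeneous, the general case following by summing pieces of equal weight.

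So fix bihomogeneous $\gamma_1, \ldots, \gamma_k$. Applying $h$ to their product, which lies in degree $\sum_i \deg(\gamma_i)$, yields the scalar $\sum_i \deg(\gamma_i) - n$. For $D$ I would combine the Leibniz rule with the observation that on each factor $D = \Wt - h$ acts by the scalar $\wt(\gamma_j) - \deg(\gamma_j) + n$, giving $D(\gamma_1 \cdots \gamma_k) = \big( \sum_j \wt(\gamma_j) - \sum_j \deg(\gamma_j) + kn \big)\,\gamma_1 \cdots \gamma_k$. Adding the two contributions, the degree terms cancel and one is left with the claimed scalar $(k-1)n + \sum_j \wt(\gamma_j)$. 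Conceptually, the shift by $(k-1)n$ is precisely the failure of the grading operator $h$ to be a derivation: one has $h(ab) - h(a)\,b - a\,h(b) = n\,ab$, and summing this anomaly over the $k-1$ ``extra'' factors produces $(k-1)n$, while the genuine derivation $D$ contributes nothing of this kind.

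The only step requiring real care is the derivation property of $D$, which rests on the cited fact that $\SO(H^2(S^{[n]}))$ acts by ring automorphisms; everything else is a comparison of two explicit scalars. I note finally that $S^{[n]}$ has cohomology concentrated in even degree, so no Koszul signs enter the Leibniz rule and the argument is insensitive to the order of the factors.
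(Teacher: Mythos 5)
Your proof is correct and is essentially the paper's own argument: both rest on splitting $\Wt$ into the Lefschetz grading operator and $h_{WF} = \act(W \wedge F)$, with the latter a derivation of the cohomology ring because the LLV representation acts by ring isomorphisms. The paper merely packages this more compactly by observing that $\widetilde{\Wt} = \Wt + n\,\id = (h + n\,\id) + h_{WF}$ is itself a derivation (since $h + n\,\id$ acts by $\deg$, which is additive under cup product), which makes your preliminary reduction to $(\wt,\deg)$-bihomogeneous classes unnecessary.
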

\begin{proof}
The grading operator $\tilde{h} = h+n \id$ is multiplicative, i.e. $\tilde{h}(xy) = \tilde{h}(x) y + x \tilde{h}(y)$.
Moreover, since the LLV representation \eqref{LLV rep} acts by ring isomorphisms,
\[ h_{WF} := \act(W \wedge F) = \frac{d}{dt}|_{t=0} \rho_{\mathrm{LLV}}(e^{t(W \wedge F)}) \]
is multiplicative. Hence
$\widetilde{\Wt} := \Wt + n \id = \tilde{h} + h_{WF}$
is multiplicative.
If we use this to compute $\Wt(\gamma_1 \cdots \gamma_k)$, we obtain the claim.
\end{proof}

\begin{rmk}
For $\gamma \in H^{\ast}(S^{[n]})$,
the modified degree function $\underline{\deg}(\gamma)$ of \cite[Sec.2.6]{ObMC} is related to the weight $\wt(\gamma)$ defined above
by $\underline{\deg}(\gamma) = n + \wt(\gamma)$.
\end{rmk}

\subsection{Monodromy} \label{subsec:monodromy}
\subsubsection{Monodromy group}
Let $X = S^{[n]}$. Let $\Mon(X)$ be the subgroup of $O(H^{\ast}(X,\BZ))$ generated by all monodromy operators,
and let $\Mon^2(X)$ be its image in $O(H^2(X,\BZ))$.
We let
\[ \mon : \Mon(X) \to O(H^{\ast}(X,\BZ)) \]
denote the monodromy representation.

By results of Markman (\cite[Thm.1.3]{MarkmanSurvey} and \cite[Lemma 2.1]{Markman3}) we have that
\begin{equation} \Mon(X) \cong \Mon^2(X) = \widetilde{O}^+(H^2(X,\BZ)) \label{abc2} \end{equation}
where the first isomorphism is the restriction map and
$\widetilde{O}^+(H^2(X,\BZ))$
is the subgroup of $O(H^2(X,\BZ))$ of orientation preserving lattice automorphisms which act by $\pm 1$ on the discriminant.\footnote{Let
$\CC = \{ x \in H^2(X,\BR) | \langle x, x \rangle > 0 \}$ be the positive cone. Then $\CC$ is homotopy equivalent to $S^2$. An automorphism is orientation preserving
if it acts by $+1$ on $H^2(\CC) = \BZ$.}
If $g \in \Mon^2(X)$, we let $\tau(g) \in \{ \pm 1 \}$ be the sign by which $g$ acts on the discriminant lattice. 
This defines a character
\[ \tau : \Mon^2(X) \to \BZ_2. \]

\subsubsection{Zariski closure} \label{subsec:zariski closure}
By \cite[Lemma 4.11]{Markman} if $n \geq 3$ the Zariski closure of the subgroup $\Mon(X) \subset O(H^{\ast}(X,\BC))$ is $O(H^2(X,\BC)) \times \BZ_2$.
The inclusion yields the representation
\begin{equation} \rho : O(H^2(X,\BC)) \times \BZ_2 \to O(H^{\ast}(X,\BC)) \label{repn} \end{equation}
which acts by degree-preserving orthogonal ring isomorphism.
There is a natural embedding
\[ \widetilde{O}^+(H^2(X,\BZ)) \to O(H^2(X,\BC)) \times \BZ_2,\, g \mapsto (g, \tau(g)) \]
under which $\rho$ restricts to the monodromy representation, that is:
\begin{equation} \mon(g) = \rho( g, \tau(g)) \quad \text{for all } g \in \Mon(X). \label{mon relation} \end{equation}
In case $n \in \{ 1 ,2 \}$ the Zariski closure of $\Mon(X)$ is $O(H^2(X,\BC))$. In this case, we define 
the representation \eqref{repn} by projection to $O(H^2(X,\BC))$ followed by the natural inclusion.

The representation $\rho$ is determined by and has the following properties:

\vspace{5pt}
\noindent
\textbf{Property 0.} For any $(g, \tau) \in O(H^2(X,\BC)) \times \BZ_2$ we have 
\[ \rho(g,\tau)|_{H^2(X,\BC)} = g. \]
\textbf{Property 1.} The restriction of $\rho$ to $\SO(H^2(X,\BC)) \times \{ 1 \}$ is the
integrated action of the Looijenga-Lunts-Verbitsky algebra \cite{LL, V},
\[ \rho|_{\SO(H^2(X,\BC)) \times \{ 0 \}} =  \rho_{\mathrm{LLV}}. \]
\textbf{Property 2.} We have 
\[ \rho(1, -1) = D \circ \rho(-\id_{H^2(X,\BC)}, 1), \]
where
$D$ acts on $H^{2i}(X,\BC)$ by multiplication by $(-1)^i$.\\[5pt]
\textbf{Property 3.} 
The action is equivariant with respect to the Nakajima operators:
For any $g \in O(H^2(X,\BC))$ such that $g(\delta) = \delta$, let $\tilde{g} = g|_{H^2(S,\BC))} \oplus \id_{H^0(S,\BZ) \oplus H^4(S,\BZ)}$. Then
\[ \rho(g, 1) \left( \prod_i \Fq_{k_i}(\alpha_i) 1 \right) = \prod_i \Fq_{k_i}( \tilde{g} \alpha_i ) 1. \]

Property 1 follows by \cite[Lemma 4.13]{Markman}.
Property 3 follows since
the Nakajima operator is naturally equivariant 
with respect to the action of the monodromy group $\Mon(S) = O(H^2(S,\BZ))^{+}$ (of deformations of the K3 surfaces), and this group is Zariski dense in $O(H^2(X,\BC))_{\delta}$.
Properties 0 follows by construction. Property 2 is implicit in \cite[Sec.4]{Markman}, compare also with \cite[Sec.1.1.2]{Markman}.

\subsubsection{Example 1: Involution} \label{monodromy:involution}
\label{mon:involution}
Consider the element $g \in \widetilde{O}^+(H^2(X,\BZ))$ given under the isomorphism \eqref{V description} by
\[ g|_{H^2(S,\BZ)} = \id, \quad g(\delta) = -\delta. \]
Indeed, this is orientation preserving (it fixes a slice of the positive cone) and acts by $-1$ on the discriminant lattice.
We want to describe the action of the corresponding monodromy operator of $X$ defined by the isomorphism \eqref{abc2}.


By Property 2 above we have that
\[ \mon(g) = D \circ \rho(-g, 1). \]
Since $-g$ fixes $\delta$, we obtain the equivariance with respect to the Nakajima operators in the sense of Property 3, that is, if we let
\[ \tilde{g} = \id_{H^0 \oplus H^4} \oplus -\id_{H^2(S,\BZ)} \]
then
\[ \rho(-g, 1) \left( \prod_i \Fq_{k_i}(\alpha_i) 1 \right) = \prod_i \Fq_{k_i}( \tilde{g} \alpha_i ) 1. \]
In particular, if all $\alpha_i$ are homogeneous, we see that
\[ \rho(-g, 1)( \Fq_{k_1}(\alpha_1) \cdots \Fq_{k_{\ell}}(\alpha_{\ell}) \1 ) = (-1)^{ \widetilde{\ell} } \Fq_{k_1}(\alpha_1) \cdots \Fq_{k_{\ell}}(\alpha_{\ell}) \1 \]
where $\widetilde{\ell} = | \{ i : \alpha_i \in H^2(S,\BQ) \} |$.
Using \eqref{deg Nakajima cycle} 
we conclude that
\[ \mathrm{mon}(g)\big( \Fq_{k_1}(\alpha_1) \cdots \Fq_{k_{\ell}}(\alpha_{\ell})\1 \big) =
(-1)^{n+\ell} \Fq_{k_1}(\alpha_1) \cdots \Fq_{k_{\ell}}(\alpha_{\ell})\1. \]

\subsubsection{Example 2: Shift}
\label{monodromy:shift}
The element $\delta \wedge F$ acts on $H^2(X,\BZ)$ by
\[
W \mapsto \delta, \quad \delta \mapsto (2n-2)F, \quad F \mapsto 0,
\quad (\delta \wedge F)|_{ \{ W, F, \delta \}^{\perp}} = 0.
\]
Let $T_{\delta} = \act(\delta \wedge F)$ as before and 
for any $\lambda \in \BZ$ consider the operator
\[
e^{\lambda T_{\delta}} : H^{\ast}(X,\BZ) \to H^{\ast}(X,\BZ).
\]

By a direct check
the operator $e^{\lambda (\delta \wedge F)} : H^2(X,\BZ) \to H^2(X,\BZ)$
is an isometry, which is orientation preserving, acts with $+1$ on the discriminant and has determinant $+1$.
By \eqref{abc2} it hence defines a monodromy operator of $X$.
Moreover, by \eqref{mon relation} and Property 1 we have
\[
\mon( e^{\lambda (\delta \wedge F)} ) = \rho_{\mathrm{LLV}}( e^{\lambda (\delta \wedge F)} )
= e^{\lambda T_{\delta}}.
\]
In particular, $e^{\lambda T_{\delta}}$ is a monodromy operator.

The action of $T_{\delta}$ is compatible with the identification of $H^2(X,\BQ)$ and $H_2(X,\BQ)$ under the Beauville-Bogomolov form.
Hence using $\delta = (2-2n) A$ under this identification,
one finds that $T_{\delta}$ acts on $H_2(X,\BZ)$ by
\[ W \mapsto (2-2n)A, \quad A \mapsto -F, \quad F \mapsto 0. \]
We conclude that
\begin{align*}
e^{\lambda T_{\delta}}(W + dF + rA) = W + \left(d-r \lambda + \lambda^2 (n-1) \right) F + (r -2 \lambda (n-1)) A.
\end{align*}

\subsection{Monodromies preserving the Hodge type of a curve class}
The Gromov-Witten invariants of $S^{[n]}$ in an effective curve class $\alpha \in H_2(S^{[n]})$ are invariant under
deformations which preserve the Hodge type of $\alpha$.
Consider two classes $\alpha, \alpha' \in H_2(S^{[n]})$ which are of Hodge type $(2n-1,2n-1)$ and which {\em pair positively with a K\"ahler class}.
If there is a monodromy operator $\varphi \in \Mon(S^{[n]})$ such that $h \alpha = \alpha'$,
then by the global Torreli theorem \cite{Ver, HuyTor}
there exists a monodromy of $S^{[n]}$ which induces $\varphi$ and which preserves the Hodge type of $\alpha$ along the deformation.
In this case we conclude that:
\[
\left\langle \taut ; \gamma_1, \ldots, \gamma_N \right\rangle^{S^{[n]}}_{g,\alpha} = 
\left\langle \taut ; \varphi(\gamma_1), \ldots, \varphi(\gamma_N) \right\rangle^{S^{[n]}}_{g,\varphi(\alpha)}.
\]

\begin{rmk}
The condition that $\alpha$ and $\varphi(\alpha)$ both pair positively with a K\"ahler class is necessary.
For example, the monodromy operator of Section~\ref{monodromy:involution} sends $A$ to $-A$,
but obviously does not preserve the Gromov-Witten invariants (since $-A$ is not effective).
\end{rmk}

\section{Constraints from the monodromy}
\label{sec:constraints from monodromy}

\subsection{Overview}
Let $S$ be an elliptic K3 surface with section $B$ and fiber class $F$ and define the class
\[ W = B+F. \]
Let $n \geq 2$ and consider the generating series
of Gromov-Witten invariants of $S^{[n]}$:
\begin{equation} \label{Fg}
F^{S^{[n]}}_{g}(\taut; \gamma_1, \ldots, \gamma_N) = \sum_{d \geq -1} \sum_{r \in \mathbb{Z}} \left\langle \taut ; \gamma_1, \ldots, \gamma_N \right\rangle^{S^{[n]}}_{g, W+dF+rA} q^d (-p)^r,
\end{equation}
Our goal in this section is to prove the following:

\begin{prop} \label{prop:constrains monodromy}
There exists unique power series $f_{i,j,s}(q) \in \BQ[[q]]$ such that
\[
F^{S^{[n]}}_{g}(\taut; \gamma_1, \ldots, \gamma_N) =
\frac{\Theta(p,q)^{2n-2}}{\Delta(q)}
\sum_{i=0}^{2n} \sum_{j=0}^{n-1} \sum_{s \in \{ 0 ,1 \}} f_{i,j,s}(q) \A(p,q)^i \wp(p,q)^{j} \wp'(p,q)^s
\]
Moreover, we have the following properties:
\begin{enumerate}
\item[(a)] In the ring $\frac{1}{\Delta(q)} \BQ[[q]][ \A, \wp, \wp', \Theta]$ we have
\[
\frac{d}{d\A} F^{S^{[n]}}_g(\taut; \gamma_1, \ldots, \gamma_N)
=
T_{\delta} F^{S^{[n]}}_g(\taut; \gamma_1, \ldots, \gamma_N) \]
where the right hand side is defined as in \eqref{T alpha action}.
\item[(b)] The series $F^{S^{[n]}}_{g}(\taut; \gamma_1, \ldots, \gamma_N)$ is a power series in $q$ with coefficients which are Laurent polynomials in $p$.
\item[(c)] If the $\gamma_i$ are written in the Nakajima basis (of length $l(\gamma_i)$ as defined in \eqref{length Nakajima}), then
\[ 
F^{S^{[n]}}_{g}(\taut; \gamma_1, \ldots, \gamma_N)(p^{-1})
=
(-1)^{N \cdot n + \sum_i l(\gamma_i)}
F^{S^{[n]}}_{g}(\taut; \gamma_1, \ldots, \gamma_N).
\]
\end{enumerate}
\end{prop}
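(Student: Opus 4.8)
Write $m=n-1$, abbreviate $\mathbf{F}(\gamma):=F^{S^{[n]}}_{g}(\taut;\gamma_1,\ldots,\gamma_N)$ (with $F$ reserved for the fiber class), and expand $\mathbf{F}(\gamma)=\sum_{d\geq-1}\sum_{r}c(d,r)\,q^d(-p)^r$ with $c(d,r)=\langle\taut;\gamma_1,\ldots,\gamma_N\rangle^{S^{[n]}}_{g,W+dF+rA}$. The plan is to extract from the two monodromy operators of Section~\ref{subsec:monodromy} exactly the two transformation laws — an elliptic relation and a $p\mapsto p^{-1}$ symmetry — which feed the converse Lemmas~\ref{lemma:elliptic to Jac 1} and~\ref{lemma:elliptic to Jac 2}, and then to read off the explicit shape together with (a), (b), (c).

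\textbf{The two monodromy inputs.} First I would apply the Gromov--Witten invariance under monodromies preserving the Hodge type of the curve class to the shift $\varphi=e^{\lambda T_\delta}$ of Section~\ref{monodromy:shift}, which sends $W+dF+rA\mapsto W+(d-\lambda r+\lambda^2 m)F+(r-2\lambda m)A$. Since $\varphi$ fixes the $W$-leading term, both classes pair positively with a suitable Kähler class, so $c(d,r)=\langle\taut;\varphi\gamma\rangle_{g,\varphi(W+dF+rA)}$; comparing Fourier coefficients and using multilinearity of $\mathbf{F}$ in the insertions (applying $\varphi$ to each slot is $e^{\lambda T_\delta}$ acting on the series as in \eqref{T alpha action}) yields
\[ \mathbf{F}(\gamma)(pq^\lambda,q)=q^{-\lambda^2 m}p^{-2\lambda m}\,e^{-\lambda T_\delta}\mathbf{F}(\gamma)(p,q), \]
which is the elliptic law of Lemma~\ref{elliptic transformation} for an index-$m$ quasi-Jacobi form with $\frac{d}{d\A}$ replaced by $T_\delta$. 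Next I would apply the same principle to the involution $g$ of Section~\ref{monodromy:involution}, which sends $W+dF+rA\mapsto W+dF-rA$. The cautionary remark that $g$ fails for the purely exceptional class $A$ does not apply here: because of the $W$-term, for each fixed $r$ the class $W+dF-rA$ pairs positively with the Kähler class $\mathrm{ample}(S)+2c\delta$ for $c>0$ small enough, so the argument is admissible. Since $\mon(g)$ scales a length-$\ell$ Nakajima cycle by $(-1)^{n+\ell}$, this gives $c(d,-r)=(-1)^{Nn+\sum_i l(\gamma_i)}c(d,r)$, i.e.\ (c); and (b) follows since effectivity bounds $r$ from below while (c) then bounds it from above (alternatively, the reduced invariants vanish as $(W+dF+rA,W+dF+rA)=2d-\tfrac{r^2}{2n-2}\to-\infty$).

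\textbf{Structure and (a), by induction on weight.} By Lemma~\ref{lemma:T action} the operator $T_\delta$ lowers the $\Wt$-grading by one and is nilpotent, so I would induct on the total weight of the insertions, proving simultaneously the asserted shape and $\frac{d}{d\A}\mathbf{F}(\gamma)=\mathbf{F}(T_\delta\gamma)$ (which is (a)). In the base case $T_\delta\gamma=0$ the elliptic law above becomes the untwisted relation on Fourier coefficients; splitting $\mathbf{F}$ into its $p$-even and $p$-odd parts (a splitting compatible with that relation) and applying Lemmas~\ref{lemma:elliptic to Jac 1} and~\ref{lemma:elliptic to Jac 2} to $\Delta\cdot\mathbf{F}$ — which clears the simple pole in $q$ — produces $\frac{\Theta^{2m}}{\Delta}\sum_i f_i\wp^{m-i}$ resp.\ $\frac{\Theta^{2m}}{\Delta}\wp'\sum_i f_i\wp^{m-i}$, with no factor of $\A$. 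For higher weight, $T_\delta\gamma$ has strictly smaller weight, so $\mathbf{F}(T_\delta\gamma)$ already has the asserted form; letting $H$ be its formal $\A$-antiderivative (same shape, $\A$-degree one higher, $\frac{d}{d\A}H=\mathbf{F}(T_\delta\gamma)$), the inductive hypothesis $\frac{d}{d\A}\mathbf{F}(\gamma')=\mathbf{F}(T_\delta\gamma')$ gives the key cancellation $e^{-\lambda T_\delta}\mathbf{F}(\gamma)-e^{-\lambda\frac{d}{d\A}}H=\mathbf{F}(\gamma)-H$, so $R:=\mathbf{F}(\gamma)-H$ obeys the untwisted elliptic law and the base case applies to it. Then $\mathbf{F}(\gamma)=R+H$ has the asserted form, and taking $\frac{d}{d\A}$ (with $\frac{d}{d\A}R=0$) closes the induction and yields (a). The ranges $j\leq n-1$ and $s\in\{0,1\}$ are exactly the output of the converse lemmas (index $m=n-1$, and $\wp'^2\in\BQ[\wp,G_4,G_6]$), the $\A$-degree is finite by nilpotency of $T_\delta$ (a pole-order estimate at the lattice points sharpening it to $i\leq 2n$), and uniqueness of the $f_{i,j,s}$ follows from the algebraic independence of $\A,\wp,\wp',\Theta$ \cite{vIOP}.

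\textbf{Main obstacle.} I expect two delicate points. The minor one is the Kähler-positivity bookkeeping that makes the involution (and shift) admissible despite the obstruction for the bare class $A$; this hinges on the $W$-leading term and must be checked for every contributing $(d,r)$. The technical heart, however, is the induction of the last step: one must transfer the operator-valued, merely nilpotent anomaly $T_\delta$ on cohomology into the scalar quasi-Jacobi generator $\A$, and it is precisely the cancellation $e^{-\lambda T_\delta}\mathbf{F}(\gamma)-e^{-\lambda\frac{d}{d\A}}H=\mathbf{F}(\gamma)-H$ that reduces each weight level to the genuinely elliptic situation handled by the converse Lemmas.
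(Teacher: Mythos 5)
Your proposal is correct and follows essentially the same route as the paper's own proof: the involution monodromy gives (b) and (c), the shift monodromy $e^{\lambda T_\delta}$ gives the twisted elliptic transformation law, and the induction on total weight — with the formal $\A$-antiderivative $H$ (the paper's $\widetilde{F}$) and the cancellation $e^{-\lambda T_\delta}\mathbf{F}(\gamma)-e^{-\lambda\frac{d}{d\A}}H=\mathbf{F}(\gamma)-H$ — reduces to the untwisted law, after which Lemmas~\ref{lemma:elliptic to Jac 1} and~\ref{lemma:elliptic to Jac 2} give the asserted shape and $\frac{d}{d\A}R=0$ gives (a). Your explicit check that the $W$-leading term makes both monodromies admissible (positivity against a Kähler class, cf.\ the caveat about the bare class $A$) is a point the paper leaves implicit, but it does not change the argument.
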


The idea of the proof of Proposition~\ref{prop:constrains monodromy} is not difficult:
The monodromy operators described in
Section~\ref{monodromy:involution} and~\ref{monodromy:shift}
together with the invariance of Gromov-Witten invariants under deformations (which preserve the Hodge type of the curve class)
yield two basic identities on the generating series
$F^{S^{[n]}}_{g}(\taut; \gamma_1, \ldots, \gamma_N)$.
Up to correction terms coming from insertions of lower weight,
these identities are precisely the conditions 
given in Lemmata~\ref{lemma:elliptic to Jac 1} and~\ref{lemma:elliptic to Jac 2},
and hence up to the correction term force an expression of the series as a certain polynomial in $\wp$, $\Theta$ and $\wp'$.
To control the correction term, we argue by an induction on the order of the weight.
The correction term is then controlled by the $\A$-holomorphic anomaly equation,
and the claim follows by a formal argument.

\subsection{Proof of Proposition~\ref{prop:constrains monodromy}}
We split the proof in two parts:

\vspace{5pt}
\noindent
\textbf{Step 1.: The $p \mapsto p^{-1}$ symmetry}
We first prove the properties (b) and (c).
By Section~\ref{mon:involution}
there exists a monodromy $\mon(g)$ of $S^{[n]}$
which acts on cohomology by
\[
\Fq_{k_1}(\alpha_1) \cdots \Fq_{k_{\ell}}(\alpha_{\ell})\1
\mapsto 
(-1)^{n+\ell} \Fq_{k_1}(\alpha_1) \cdots \Fq_{k_{\ell}}(\alpha_{\ell})\1.
\]
In particular, it acts on $H_2(S^{[n]}, \BZ)$ by the identity on $H_2(S,\BZ)$ and sends $A$ to $-A$.
By deformation invariance of the Gromov-Witten invariants we obtain that:
\begin{align*}
\left\langle \taut ; \gamma_1, \ldots, \gamma_N \right\rangle^{S^{[n]}}_{g, W+dF+rA}
& = 
\left\langle \taut ; \mon(g) \gamma_1, \ldots, \mon(g) \gamma_N \right\rangle^{S^{[n]}}_{g, \mon(g)(W+dF+rA)} \\
& = 
(-1)^{N \cdot n + \ell_1 + \ldots + \ell_N}
\left\langle \taut ; \gamma_1, \ldots, \gamma_N \right\rangle^{S^{[n]}}_{g, W+dF-rA}.
\end{align*}
For any curve class $\beta \in H_2(S,\BZ)$ there exists an integer $r_{\beta}$ such that for all $r \geq r_{\beta}$ there are no curves in $S^{[n]}$ of class $\beta - r A \in H_2(S^{[n]})$.
Hence this equality proves (b) and (c).

\vspace{5pt}
\noindent
\textbf{Step 2.: The $p \mapsto p q^{\lambda}$ symmetry.}
We apply the deformation invariance with respect to the monodromy considered in Section~\ref{monodromy:shift}. It yields
\[
\left\langle \taut ; \gamma_1, \ldots, \gamma_N \right\rangle^{S^{[n]}}_{g, W+dF+rA}
=
\left\langle \taut ; e^{\lambda T_{\delta}} \gamma_1, \ldots, e^{\lambda T_{\delta}} \gamma_N \right\rangle^{S^{[n]}}_{g, W + \left(d-r \lambda + \lambda^2 (n-1) \right) F + (r -2 \lambda (n-1)) A}.
\]
By multiplying with $(-p)^{r-2 \lambda m} q^{d - r \lambda + \lambda^2 m}$, summing over $r$ and $d$ and replacing $\lambda$ by $-\lambda$, we obtain that:
\begin{equation} \label{key transformation property F_g}
F^{S^{[n]}}_{g}(\taut; \gamma_1, \ldots, \gamma_N)(pq^{\lambda}, q)
=
p^{-2 \lambda m} q^{-\lambda^2 m}
F^{S^{[n]}}_{g}(\taut; e^{- \lambda T_{\delta}} \gamma_1, \ldots, e^{-\lambda T_{\delta}} \gamma_N).
\end{equation}

We argue now the remaining claims by induction on the total weight of the insertions
\[ \sum_i \wt(\gamma_i) = L. \]
Assume that the claim of the proposition holds for all insertions $\gamma'_i$ with $\sum_i \wt(\gamma_i') < L$.
(Since we always have $\wt(\gamma_i) \geq -n$, the statement is true for $L<-nN$. This provides the base of the induction.)
Since $T_{\delta}$ decreases the weight by one (see Lemma~\ref{lemma:T action}), the series
\begin{equation} \label{deriv}
\sum_{i=1}^{N} 
F^{S^{[n]}}_{g}(\taut; \gamma_1, \ldots, \gamma_{i-1}, T_{\delta} \gamma_i, \gamma_{i+1}, \ldots, \gamma_N)
\end{equation}
satisfies the induction hypothesis and hence has all the desired properties. In particular, it is equal to $\Theta^{2n-2} \Delta(q)^{-1}$ times a polynomial in $A, \wp, \wp'$ with coefficients power series in $q$. Consider the integral with respect to $A$,
\[ \widetilde{F} = \sum_{i=1}^{N} \int
F^{S^{[n]}}_{g}(\taut; \gamma_1, \ldots, \gamma_{i-1}, T_{\delta} \gamma_i, \gamma_{i+1}, \ldots, \gamma_N) d A, \]
which is defined here formally as the right inverse to $\frac{d}{dA}$
with constant term in $A$ to be zero.
(In other words, $\int A^i dA = A^{i+1}/(i+1)$.)
By Lemma~\ref{elliptic transformation} and using the induction hypothesis to calculate $\frac{d}{dA}$
we obtain the transformation property:
\begin{align*}
p^{2 \lambda m} q^{\lambda^2 m}
\widetilde{F}(pq^{\lambda}, q)
& =
e^{-\lambda \frac{d}{dA}} \widetilde{F}(p,q) \\
& = \widetilde{F}(p,q)
- \lambda \frac{d}{dA} \widetilde{F} 
+ \frac{\lambda^2}{2} \left( \frac{d}{dA} \right)^2 \widetilde{F}
+ \ldots \\
& = \widetilde{F}(p,q) - F^{S^{[n]}}_{g}(\taut; \gamma_1, \ldots, \gamma_N) \\
& \quad \quad \quad + F^{S^{[n]}}_{g}(\taut; e^{-\lambda T_{\delta}} \gamma_1, \ldots, e^{-\lambda T_{\delta}} \gamma_N).
\end{align*}
Using this equation and \eqref{key transformation property F_g} we conclude that
\[ F(p,q) = F^{S^{[n]}}_{g}(\taut; \gamma_1, \ldots, \gamma_N) - \widetilde{F}(p,q) \]
satisfies
\[
p^{2 \lambda m} q^{\lambda^2 m}
F(pq^{\lambda}, q)
= F(p,q).
\]

Since $T_{\delta}$ is a cubic in Nakajima operators (see \eqref{T delta})
its action on a cohomology class changes the parity of the number of Nakajima factors in which it is written.
In particular, if $r=N \cdot n + \sum_i l(\gamma_i)$ is even,
then the function \eqref{deriv} is odd in $p$ by Step 1,
and, since $A(p,q)$ is odd in $p$, its integration with respect to $A$ is again even in $p$.
Similar arguments apply, if $r$ is odd.
We obtain that
\[ F(p^{-1}, q) = (-1)^{N \cdot n + \sum_i l(\gamma_i)} F(p,q). \]

Using Lemmata~\ref{lemma:elliptic to Jac 1} and~\ref{lemma:elliptic to Jac 2} (depending on the parity of $N n + \sum_i \ell_i$) 
we conclude that
\begin{equation} \label{F expression} 
F(p,q) = 
\begin{cases}
\Delta(q)^{-1} \Theta^{2m}(p,q) \wp'(p,q) \sum_{i=2}^{m} f_i(q) \wp(p,q)^{m-i} & \text{ if } Nn + \sum_{i} l(\gamma_i) \text{ is even} \\
\Delta(q)^{-1} \Theta^{2m}(p,q) \sum_{i=0}^{m} f_i(q) \wp(p,q)^{m-i} & \text{ if } Nn + \sum_i l(\gamma_i) \text{ is odd}.
\end{cases}
\end{equation}
for some power series $f_i(q) \in \BC[[q]]$.
This proves the main claim.

Since $F(p,q)$ is written without any $\A$ we have
\begin{multline*}
0 = \frac{d}{d\A} F(p,q) = \frac{d}{d \A} F^{S^{[n]}}_{g}(\taut; \gamma_1, \ldots, \gamma_N) - \frac{d}{d\A} \widetilde{F}(p,q) \\
=\frac{d}{d \A} F^{S^{[n]}}_{g}(\taut; \gamma_1, \ldots, \gamma_N) - 
\sum_{i=1}^{N} 
F^{S^{[n]}}_{g}(\taut; \gamma_1, \ldots, \gamma_{i-1}, T_{\delta} \gamma_i, \gamma_{i+1}, \ldots, \gamma_N)
\end{multline*}
that is we also have the holomorphic anomaly equation (part a) with respect to $\A$. \qed \\

The argument in Step 1 of the proof more generally shows the following:
\begin{lemma} \label{lemma:Laurent polynomial}
For any K3 surface $S$ and effective curve class $\beta \in H_2(S,\BZ)$ the series
\[ Z_{g,\beta}^{S^{[n]}}(\taut; \gamma_1, \ldots \gamma_N) := \sum_{r \in \mathbb{Z}} \left\langle \taut ; \gamma_1, \ldots, \gamma_N \right\rangle^{S^{[n]}}_{g,\beta+rA} (-p)^r \]
is a Laurent polynomial in $p$, and if the $\gamma_i$ are in the Nakajima basis, then
\[ 
Z_{g,\beta}^{S^{[n]}}(\taut; \gamma_1, \ldots \gamma_N)(p^{-1})
=
(-1)^{N \cdot n + \sum_i l(\gamma_i)}
Z_{g,\beta}^{S^{[n]}}(\taut; \gamma_1, \ldots \gamma_N).
\]
\end{lemma}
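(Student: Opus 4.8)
The plan is to reproduce Step 1 of the proof of Proposition~\ref{prop:constrains monodromy} essentially verbatim, the only change being that the elliptic K3 surface and the distinguished class $W+dF$ are replaced by an arbitrary K3 surface $S$ and an arbitrary effective class $\beta$. The single ingredient that drives Step 1 — the involution monodromy of Section~\ref{monodromy:involution} — exists for every K3 surface, so nothing obstructs the generalization at the level of the formal manipulation.

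I would first take the insertions $\gamma_i$ in the Nakajima basis and set $\ell_i := l(\gamma_i)$. The involution $g$ of Section~\ref{monodromy:involution}, which fixes $H^2(S,\BZ)$ and sends $\delta \mapsto -\delta$, is orientation preserving and acts by $\pm 1$ on the discriminant, hence defines a monodromy operator for any $S$. By the computation there, $\mon(g)$ scales a length-$\ell$ Nakajima cycle by $(-1)^{n+\ell}$, acts by the identity on $H_2(S,\BZ)$, and sends $A \mapsto -A$. Deformation invariance along $g$ (the principle recalled in Section~\ref{sec:cohomology}) then yields
\[
\left\langle \taut ; \gamma_1, \ldots, \gamma_N \right\rangle^{S^{[n]}}_{g,\beta+rA}
=
(-1)^{N \cdot n + \sum_i \ell_i}
\left\langle \taut ; \gamma_1, \ldots, \gamma_N \right\rangle^{S^{[n]}}_{g,\beta-rA}.
\]
Multiplying by $(-p)^r$, summing over $r$, and substituting $r \mapsto -r$ turns this identity into the asserted $p \mapsto p^{-1}$ transformation law for $Z^{S^{[n]}}_{g,\beta}$.

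For the Laurent-polynomial claim I would establish two-sided vanishing of the coefficients. Since $A$ is a nonzero effective class and the cone of curves is salient, $\beta + rA$ leaves the cone once $r$ is sufficiently negative, so the moduli space is empty and the invariant vanishes there. Feeding the same effectivity bound for $\beta - rA$ into the displayed symmetry then kills the invariants for $r$ sufficiently positive as well. Hence only finitely many $r$ contribute and $Z^{S^{[n]}}_{g,\beta}$ is a Laurent polynomial in $p$. For the first assertion with general (not necessarily Nakajima-basis) insertions one simply expands each $\gamma_i$ in the Nakajima basis: finiteness in $r$ is preserved under finite linear combinations, even though the precise symmetry sign is only meaningful basis-by-basis.

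The one delicate point — and exactly the one the caveat in Section~\ref{sec:cohomology} warns against — is the legitimacy of the deformation-invariance step for those $r$ where $\beta - rA$ is no longer effective, since $g$ reverses $A$ and hence does not preserve effectivity. Here I would use that deformation invariance requires only that $\beta + rA$ and $\beta - rA$ be of Hodge type and pair positively with a K\"ahler class, \emph{not} that they be effective. For $A \mapsto -A$ this fails precisely because the ``surface part'' vanishes; but for $\beta \neq 0$ the pullback $\pi^{\ast}H$ of an ample class along the Hilbert--Chow morphism $\pi : S^{[n]} \to \mathrm{Sym}^n S$ satisfies $\pi^{\ast}H \cdot A = 0$ and $\pi^{\ast}H \cdot \beta_{[n]} > 0$, so a small perturbation of $\pi^{\ast}H$ into the K\"ahler cone pairs positively with both $\beta + rA$ and $\beta - rA$ for any fixed $r$ (the Hodge type being arranged by the deformation). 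Verifying this positivity is the heart of the matter; once it is in place, the remainder is bookkeeping.
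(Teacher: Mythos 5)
Your proposal is correct and takes essentially the same route as the paper, whose proof of this lemma is literally the observation that Step 1 of the proof of Proposition~\ref{prop:constrains monodromy} (the involution monodromy of Section~\ref{monodromy:involution}, deformation invariance, and the effectivity bound for $\beta - rA$) goes through verbatim for an arbitrary effective class $\beta$. Your closing paragraph, checking that $\beta + rA$ and $\beta - rA$ both pair positively with a K\"ahler class obtained by perturbing the pullback of an ample class under the Hilbert--Chow morphism, verifies a hypothesis of the invariance statement of Section~\ref{sec:cohomology} that the paper leaves implicit, and is a welcome addition rather than a departure.
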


\section{Relative Gromov-Witten theory} \label{sec:Rel GW theory}
\subsection{Overview}
Let $X$ be a smooth projective divisor and let $D \subset X$ be a smooth divisor with connected components $D_i$ for $i=1,\ldots, N$.
In this section we consider the relative Gromov-Witten theory of the pair $(X,D)$ introduced by Li \cite{Li1,Li2}, see also \cite{ABPZ, MarkedRelative} for introductions.
In the first part we introduce the basic structures of the theory: moduli spaces, evaluation maps, psi classes, and rubber moduli spaces.
Then, we recall three basic equations that will be needed later on:
a splitting formula for the relative diagonal,
proven recently in \cite{ABPZ} (Proposition~\ref{prop:splitting relative diagonal});
a splitting formula for relative psi-classes (Proposition~\ref{prop:splitting relative psi class}), and finally we prove a new formula for the restriction
of relative Gromov-Witten classes
to the non-separating bounary divisor in the moduli space of curves
(Proposition~\ref{prop:restriction to boundary}).

%
%
%

\subsection{Moduli space}
Let $\beta \in H_2(X,\BZ)$ be a curve class and let $\vec{\lambda} = (\vec{\lambda}_{1}, \dots, \vec{\lambda}_N)$ be a tuple of ordered partitions $\lambda_i = (\lambda_{i,j})_{j=1}^{\ell}$ of size and length
\[ |\vec{\lambda}_i| = \sum_{j} \lambda_{i,j} = D_i \cdot \beta, \quad \ell(\lambda_i) = \ell. \]
Consider the moduli space of $r$-pointed genus $g$ degree $\beta$ relative stable maps from connected curves to the pair $(X,D)$
with ordered ramification profile $\vec{\lambda}_i$ along the divisor $D_i$,
\[ \Mbar_{g,r,\beta}((X,D), \vec{\lambda}) \]
By definition, an element of the moduli space is a map
$f : C \to X[k]$
where $X[k]$ is a target degeneration of $X$ along $D$ which satisfies a list of conditions
(finite automorphism, predeformability, no components mapping entirely mapped to the singular fibers, relative multiplicities as specified). The degree
of the map is $\pi_{\ast} f_{\ast}[C] = \beta$
where $\pi : X[k] \to X$ is the canonical map that contracts the expansion.

\subsection{Evaluation maps} \label{sec:eval maps}
For every boundary divisor $D_i$ we have relative evaluation maps
\[ \ev_{i,j}^{\text{rel}} \colon \Mbar_{g,r,\beta}((X,D), \vec{\lambda}) \to D_i, \quad j=1, \ldots, \ell(\vec{\lambda}_i) \]
which send a stable map to the $j$-th intersection point with the divisor $D_i$. 

We also have an interior evaluation map:
\[
\ev : \Mbar_{g,r,\beta}((X,D), \vec{\lambda}) \to (X,D)^r
\]
which takes values in the (smooth projective) moduli space $(X,D)^r$ of (ordered) tuples of $r$ points on the relative geometry $(X,D)$,
see \cite{KS} for a construction.
For example, as a variety
$(X,D)^1$ is isomorphic to $X$,
and $(X,D)^2$ is the blow-up $\mathrm{Bl}_{\sqcup_i D_i \times D_i}(X \times X)$.
We refer to \cite{PaPix_GWPT} and \cite[Sec.3.4]{ABPZ} for beautiful self-explaining figures illustrating the situation.
By forgetting points we have for any $I \subset \{ 1, \ldots, r \}$ contraction maps $p_I : (X,D)^r \to (X,D)^{|I|}$.
We can hence view classes on $\prod_i (X,D)^{a_i}$ with $\sum_i a_i = r$ (such as $X^r$) as defining cohomology classes on $(X,D)^r$
via pullback by the projections.
We write $\ev_I = p_I \circ \ev$.

The class of the locus in $(X,D)^2$ of incident points (the relative diagonal) is denoted by
\[ \Delta_{(X,D)}^{\text{rel}} \subset H^{\ast}((X,D)^2). \]

\subsection{Psi-classes}
There are cotangent line bundles at both interior and relative markings.
We let their first Chern classes be denoted, respectively, by
\[
\psi_i, \quad i=1,\ldots, r, \quad \psi_{i,j}^{\text{rel}} , \quad i=1, \ldots, N, \quad j=1, \ldots, \ell(\lambda_i).
\]

Let also $\BL_{D_i}$ be the cotangent line bundle associated to $D_i$ on the stack of target expansions $\CT$
as defined in \cite[1.5.2]{MP}.
The line bundle $\BL_{D_i}$ has a section which vanishes precisely at expansions corresponding to bubbling at $D_i$.
Let $\Psi_{D_i} = c_1(\BL_{D_i})$ and let
\[ \mathsf{q} : \Mbar_{g,r,\beta}((X,D), \vec{\lambda}) \to \CT \]
be the classifying map corresponding to the universal target over the moduli space.
The relative $\psi$-classes then satisfy the following well-known lemma:
\begin{lemma} \label{lemma:psi rel in terms Psi}
$\lambda_{i,j} \psi_{i,j}^{\rel} = \mathsf{q}^{\ast}( \Psi_i ) - \ev_{i,j}^{\text{rel} \ast }( c_1(N_{D_i/X}) )$,
\end{lemma}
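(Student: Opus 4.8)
The plan is to promote the claimed equality of classes to a canonical isomorphism of line bundles on the moduli space and then pass to first Chern classes. Write $a = \lambda_{i,j}$ and let $L = T^{\vee}_{x_{i,j}}C$ be the cotangent line to the source at the relative marking, so that $\psi_{i,j}^{\rel} = c_1(L)$. First I would recall the local geometry of the expansion along $D_i$. The relative marking $x_{i,j}$ lies on a component of the source mapping into the last bubble over $D_i$; this bubble is the projective completion of the total space of $N_{D_i/X}$, i.e. a $\p^1$-bundle over $D_i$, and the relative divisor onto which $x_{i,j}$ is sent is the section of this bundle whose normal bundle inside the bubble is canonically $N_{D_i/X}$ (this is pulled back to the moduli space via $\ev_{i,j}^{\text{rel}}$, which lands in $D_i$). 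By predeformability, in a local coordinate $t$ on $C$ centred at $x_{i,j}$ the normal component of $f$ vanishes to order exactly $a$.

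Next I would construct the relevant section from this local normal behaviour. Writing the map near $x_{i,j}$ as a base part $\bar f$ into the relative divisor together with a fibre (normal) part $f_N$, a section of $\bar f^{\ast}N_{D_i/X}$ over a disc vanishing to order $a$, the leading $a$-th Taylor coefficient of $f_N$ defines, globally as the marking varies in families, the $a$-th normal derivative
\[ \sigma \colon L^{\otimes a} \longrightarrow \ev_{i,j}^{\text{rel} \ast} N_{D_i/X}, \]
i.e. a global section $\sigma \in \Gamma\bigl( L^{\otimes a}\otimes \ev_{i,j}^{\text{rel}\ast} N_{D_i/X}\bigr)$. The content of predeformability is precisely that $\sigma$ is nowhere vanishing exactly where the map does not acquire a bubble at $D_i$; its leading coefficient degenerates along the bubbling locus.

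The key step is then to compare $\sigma$ with the tautological section of $\mathsf{q}^{\ast}\BL_{D_i}$ built into the definition of $\BL_{D_i}$ in \cite[1.5.2]{MP}, whose zero locus is the divisor where the last bubble at $D_i$ degenerates. Both sections vanish, with multiplicity one, along this same boundary divisor, so they differ by a nowhere-vanishing function and yield a canonical isomorphism
\[ L^{\otimes a}\otimes \ev_{i,j}^{\text{rel}\ast} N_{D_i/X} \;\cong\; \mathsf{q}^{\ast}\BL_{D_i}. \]
Taking first Chern classes gives $a\,\psi_{i,j}^{\rel} + \ev_{i,j}^{\text{rel}\ast} c_1(N_{D_i/X}) = \mathsf{q}^{\ast}\Psi_i$, which rearranges to the asserted identity.

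The main obstacle is the careful bookkeeping of normal-bundle conventions and orders of vanishing: one must verify that the section at which the markings lie really has normal bundle $N_{D_i/X}$ inside the last bubble (and not its dual, which is what fixes the sign of the correction term), and that the normal-derivative section $\sigma$ and the tautological section of $\mathsf{q}^{\ast}\BL_{D_i}$ vanish to the same order one along the bubbling divisor. Once the normal direction is correctly identified and the vanishing orders matched, the Chern class identity follows formally.
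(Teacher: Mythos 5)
Your proof is correct and is essentially the argument the paper relies on: the paper's own proof is a one-line citation to \cite[Proof of Lemma 12]{OPix2}, and the argument there is precisely your comparison of the leading-coefficient section with the tautological section of $\mathsf{q}^{\ast}\BL_{D_i}$ — equivalently, the identification of the normal bundle of the universal relative divisor as $\ev_{i,j}^{\mathrm{rel}\ast}N_{D_i/X}$ twisted by $\mathsf{q}^{\ast}\BL_{D_i}^{\vee}$, which is exactly the content of the two verifications you flag (and which follow from the local model in which the contracted map's normal component is $\varepsilon\cdot t^{a}\cdot(\text{unit})$, with $\varepsilon$ the smoothing parameter of the bubble). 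The only slip is a slot mismatch in your definition of $\sigma$: the $a$-th Taylor coefficient is a section of $L^{\otimes a}\otimes \ev_{i,j}^{\mathrm{rel}\ast}N_{D_i/X}$ (cotangent powers tensor the normal bundle), not a homomorphism $L^{\otimes a}\to \ev_{i,j}^{\mathrm{rel}\ast}N_{D_i/X}$; with the former reading, your comparison of vanishing divisors and the Chern-class computation give the stated identity with the correct signs.
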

\begin{proof}
See for example \cite[Proof of Lemma 12]{OPix2}.
\end{proof}

\subsection{Cohomology weighted partitions}
Consider a $H^{\ast}(D_i)$-weighted partition $\mu$ 
\begin{equation} \label{weighted partition} \big( (\mu_1, \delta_1) , \ldots, (\mu_{\ell}, \delta_{\ell} ) \big), \quad \delta_j \in H^{\ast}(D_i), \quad \mu_1 \geq \ldots \geq \mu_{\ell} \geq 1.  \end{equation}
We write $\ell = \ell(\mu)$ for the length and $|\mu| = \sum_i \mu_i$ for the size of the partition.
The \emph{partition underlying $\mu$} is the ordered partition 
\[ \vec{\mu} = (\mu_1, \ldots, \mu_{\ell}), \]
While the $\delta_i$ are arbitrary cohomology classes on $D_i$,
we often take them to be elements of a fixed basis $\CB$ of $H^{\ast}(D_i)$.
In this case we say $\mu$ is $\CB$-weighted.
Given a $\CB$-weighted partition $\mu$, the automorphism group $\Aut(\mu)$ consists of the permutation symmetries of $\mu$.

\subsection{Gromov-Witten invariants}
For $i \in \{ 1, \ldots, N \}$ consider $H^{\ast}(D_i)$-weighted partitions
\[ \lambda_i = ((\lambda_{i,j}, \delta_{i,j}))_{j=1}^{\ell(\lambda_i)} \]
and let $\vec{\lambda}_i$ be the partition underlying $\lambda_i$.
Fix also a class
\[ \gamma \in H^{\ast}((X,D)^n). \]
We define relative Gromov-Witten invariants by
integration over the virtual fundamental class \cite{Li2} of the moduli space:
\[
\big\langle \, \lambda_1, \ldots, \lambda_N \, \big| \, \gamma \big\rangle^{(X,D)}_{g,\beta} \\
:=
\int_{ [ \Mbar_{g,r,\beta}((X,D), \vec{\lambda}) ]^{\vir} }
\ev^{\ast}(\gamma) \prod_{i=1}^{N} \prod_{j=1}^{\ell(\lambda_i)} \ev^{\text{rel}}_{i,j}( \delta_{i,j} ) \,.
\]

We will also sometimes need to include $\psi$-classes in the integral.
A more general definition is hence the following.
Let $a_{i,j}$ and $b_{i}$ be arbitrary non-negative integers.
\begin{multline} \label{GWbracket}
\left\langle \, \left( \lambda_i \prod_{j=1}^{\ell(\lambda_i)} (\psi^{\text{rel}}_{i,j})^{a_{ij}} \right)_{i=1}^{N} \, \middle| \, (\tau_{b_1} \cdots \tau_{b_r})(\gamma) \right\rangle^{(X,D)}_{g,\beta} \\
:=
\int_{ [ \Mbar_{g,r,\beta}((X,D), \vec{\lambda}) ]^{\vir} }
\prod_{i=1}^{r} \psi_{i}^{b_i} \cdot  \ev^{\ast}(\gamma) \cdot \prod_{i=1}^{N} \prod_{j=1}^{\ell(\lambda_i)} (\psi^{\text{rel}}_{i,j})^{a_{ij}} \ev^{\text{rel}}_{i,j}( \delta_{i,j} ) \,.
\end{multline}
If all $b_i=0$ we will simply write $\gamma$ instead of $\tau_{b_1} \cdots \tau_{b_r}(\gamma)$.

The discussion above also work 
when we allow the source curve of our relative stable map to be disconnected.
More precisely, we let 
\[ \Mbar_{g,r,\beta}^{\bullet}((X,D), \vec{\lambda}) \]
denote the moduli space of relative stable maps to $(X,D)$ as above
except that we allow disconnected domain curves
and require the following condition:

($\bullet$) For any stable map $f : \Sigma \to (S \times C)[\ell]$ to a target expansion of the pair $(S \times C, S_{z})$,
the stable map $f$ has non-zero degree on every of its connected components.

We define Gromov-Witten invariants in the disconnected case completely parallel as in \eqref{GWbracket}.
The brackets on the left hand side will be denoted with a supscript $\bullet$, as in $\langle .. \rangle^{(X,D), \bullet}$.

\subsection{Rubber moduli space}
For any of the divisors $E \in \{ D_1, \ldots, D_N \}$ consider the projective bundle
\[ \p = \p(N_{E/X} \oplus \CO_{E}) \to E. \]
The projection has two canonical sections
$E_{0}, E_{\infty} \subset \p$
called the zero and infinite section with normal bundle $N_{E/\p} \cong N_{E/X}^{\vee}$
and $N_{E/X}$ respectively.
Let
\begin{equation} \label{rubber moduli space} \Mbar_{g,r,\alpha}^{\sim}((\p,E_0 \sqcup E_{\infty}), \vec{\lambda}) \end{equation}
be the moduli space of genus $g$ degree $\alpha \in H_2(E,\BZ)$ rubber stable maps with target $(\p, E_{0} \sqcup E_{\infty})$.
Elements of the moduli space are maps $f : C \to \p_{l}$, where $\p_l$ is a chain of $l$ copies of $\p$ with zero sections glued along infinite section of the next components,
satisfying a list of conditions. The degree of a rubber stable map
is fixed here to be $\pi_{E \ast} f_{\ast} [C] = \alpha$ where $\pi_E : \p_l \to E$ is the natural projection.
In the definition of \eqref{rubber moduli space} we let the source curve be connected.
If we allow disconnected domains and require condition ($\bullet$), we decorate the moduli space (and the invariants below) with the supscript $\bullet$.
As before we have evaluation maps at the relative markings denoted $\ev_{i,j}^{\text{rel}}$.
By evaluating the composition $\pi_E \circ f$ at the interior marked points we also have a well-defined interior evaluation map:
\[ \ev: \Mbar_{g,r,\alpha}^{\sim}((\p,E_0 \sqcup E_{\infty}), \vec{\lambda}) \to E^r \]
Given $H^{\ast}(E)$-weighted partitions $\lambda, \mu$ and $\gamma \in H^{\ast}(E^r)$ we define:
\[
\big\langle \, \lambda, \mu \, \big| \, \gamma \big\rangle^{(\p,E_0 \sqcup E_{\infty}), \sim}_{g,\alpha} \\
=
\int_{ [ \Mbar_{g,r,\alpha}^{\sim}((\p,E_0 \sqcup E_{\infty}), \vec{\lambda}) ]^{\vir} }
\ev^{\ast}(\gamma) \prod_{i=1}^{N} \prod_{j=1}^{\ell(\lambda_i)} \ev^{\text{rel}}_{i,j}( \delta_{i,j} ) \,.
\]

\subsection{Splitting formulas} \label{subsection:splitting formulas}
We state two splitting formulas that we will need later on.
Let $\iota : D \to X$ denote the inclusion.
We begin with the splitting of the relative diagonal. 
\begin{prop} \label{prop:splitting relative diagonal}
\begin{multline*}
\big\langle \, \lambda_1, \ldots, \lambda_N \, \big| \, \Delta_{(X,D)}^{\text{rel}} \big\rangle^{(X,D), \bullet}_{g,\beta} 
=
\big\langle \, \lambda_1, \ldots, \lambda_N \, \big| \, \Delta_X \big\rangle^{(X,D), \bullet}_{g,\beta}  \\
-
\sum_{i=1}^{N}
\sum_{\mu}
\sum_{\substack{ g_1+g_2=g + 1 - \ell(\mu) \\ \iota_{\ast} \alpha + \beta' = \beta}}
\frac{\prod_i \mu_i}{|\Aut(\mu)|}
\big\langle \, \lambda_1, \ldots, \underbrace{\mu}_{i\text{-th}}, \ldots , \lambda_N \, \big\rangle^{(X,D), \bullet}_{g_1,\beta'} 
\big\langle \, \lambda_i, \mu^{\vee} \, \big| \, \Delta_{D} \big\rangle^{(\p,D_{i,0} \sqcup D_{i,\infty}), \bullet, \sim}_{g_2,\alpha}.
\end{multline*}

In the above formula, $\mu$ runs over all cohomology weigted partitions $\mu = \{ (\mu_i, \gamma_{s_i}) \}$ of size $\beta \cdot D_i$,
with weights from a fixed basis $\{ \gamma_i \}$ of $H^{\ast}(D_i)$.
Moreover, we let $\mu^{\vee} = \{ (\eta_i, \gamma_{s_i}^{\vee}) \}$ be the dual partition,
with weights from the basis $\{ \gamma_i^{\vee} \}$ which is dual to $\{ \gamma_i \}$.
\end{prop}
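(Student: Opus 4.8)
The plan is to prove the formula by comparing the relative diagonal with the absolute diagonal on the moduli space $(X,D)^2$ and then analysing the resulting exceptional correction by the degeneration formula. Recall from Section~\ref{sec:eval maps} that $(X,D)^2 = \mathrm{Bl}_{\sqcup_i D_i \times D_i}(X\times X)$, with blow-down map $\pi$, and that the interior evaluation map $\ev$ of the two markings satisfies $\pi \circ \ev = \ev^{\mathrm{abs}}$, where $\ev^{\mathrm{abs}}$ records the two images in $X\times X$. The relative diagonal $\Delta_{(X,D)}^{\rel}$ is the proper transform of the ordinary diagonal $\Delta_X \subset X\times X$, so it agrees with $\pi^{\ast}\Delta_X$ away from the exceptional locus. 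The first step is to establish, on $(X,D)^2$, an identity of the form
\[ \pi^{\ast}\Delta_X = \Delta_{(X,D)}^{\rel} + \sum_{i=1}^{N} (j_i)_{\ast}\xi_i, \]
where $j_i$ is the inclusion of the exceptional divisor $E_i$ over $D_i\times D_i$ and $\xi_i$ is an explicit class supported over the diagonal $\Delta_{D_i}\subset D_i\times D_i$.

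The identity above is an excess-intersection computation. The diagonal $\Delta_X$ (of dimension $\dim X$) and the centre $D_i\times D_i$ (of dimension $2\dim X -2$) meet inside $X\times X$ along $\Delta_{D_i}$, which has dimension $\dim X - 1$; this exceeds the expected dimension by one, the excess being the common normal-to-$D_i$ direction. Consequently $\xi_i$ is read off from the Segre class of this one-dimensional excess, and in particular is supported over $\Delta_{D_i}$ and not over all of $D_i\times D_i$. This is precisely the geometric reason that the diagonal class $\Delta_{D}$ --- rather than a more general class on $D_i\times D_i$ --- will appear in the rubber factor.

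Pulling back along $\ev$ and integrating, the term $\pi^{\ast}\Delta_X$ produces the absolute-diagonal invariant $\langle \lambda_1,\ldots,\lambda_N \mid \Delta_X\rangle^{(X,D),\bullet}_{g,\beta}$, and it remains to evaluate the contribution of each $(j_i)_{\ast}\xi_i$. Here the key geometric input is that $\ev^{-1}(E_i)$ is the boundary locus of maps whose domain splits as $C = C_0 \cup C_{\mathrm{rub}}$, with $C_0$ a relative stable map to $(X,D)$ and $C_{\mathrm{rub}}$ a rubber stable map to $\p = \p(N_{D_i/X}\oplus \CO_{D_i})$ carrying both interior markings, the two pieces being glued along $D_i$. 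On this locus $\xi_i$ restricts to the class coupling the two interior points by the diagonal of $D_i$, which forces the insertion $\Delta_{D}$ on the rubber and places the original relative condition $\lambda_i$ on the far section $D_{i,\infty}$.

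To finish I would apply Li's gluing/degeneration formula \cite{Li2} to this boundary stratum. Summing over the cohomology-weighted contact profile $\mu$ along the gluing divisor splits the integral into the relative invariant $\langle \lambda_1,\ldots,\mu,\ldots,\lambda_N\rangle^{(X,D),\bullet}_{g_1,\beta'}$ (the $X$-part, with the $i$-th condition now $\mu$) times the rubber invariant $\langle \lambda_i,\mu^{\vee}\mid \Delta_{D}\rangle^{(\p,D_{i,0}\sqcup D_{i,\infty}),\bullet,\sim}_{g_2,\alpha}$. The standard gluing combinatorics produce the weight $\prod_i \mu_i$ from the contact orders at the $\ell(\mu)$ nodes, the factor $1/|\Aut(\mu)|$ from the symmetries of the profile, the genus relation $g = g_1 + g_2 + \ell(\mu) - 1$, and the degree splitting $\beta = \beta' + \iota_{\ast}\alpha$; the condition $(\bullet)$ is preserved by the splitting. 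The main obstacle is the first step: identifying $\xi_i$ together with its multiplicity and verifying that, after pullback to the moduli space, it yields exactly the insertion $\Delta_{D}$ with the normalisation that makes the contact weights $\prod_i\mu_i$ appear with coefficient one. Once this excess geometry on $(X,D)^2$ is controlled, the remaining rubber bookkeeping is a delicate but standard application of the degeneration formula.
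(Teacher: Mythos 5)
Your geometric skeleton is the right one, and it is essentially the geometry underlying the result that the paper itself invokes: the paper does not prove this proposition directly, but quotes it as a special case of \cite[Theorem 3.10]{ABPZ}. Moreover, your Step 1 works out more cleanly than you suggest. Since $\Delta_{D_i}\subset \Delta_X$ is a divisor, the proper transform of $\Delta_X$ (which is the relative diagonal, being the closure of the incidence locus over $X\setminus D$) is just a copy of $X$, and a direct local computation (already visible for $X=\BA^2$, $D=\{y=0\}$, where the pulled-back ideal $(x_1-x_2,\, y_1(1-u))$ splits into two reduced components) gives
\[
\pi^{\ast}[\Delta_X] \;=\; [\Delta_{(X,D)}^{\mathrm{rel}}] \;+\; \sum_i \big[\, E_i|_{\Delta_{D_i}} \big],
\]
with all coefficients equal to $1$, i.e. $\xi_i = \mathrm{pr}_i^{\ast}[\Delta_{D_i}]$ for the projection $\mathrm{pr}_i : E_i \to D_i\times D_i$. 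So the step you single out as "the main obstacle" is in fact the easy part.

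The genuine gap is in your final paragraph, which you dismiss as "a delicate but standard application of the degeneration formula." The class $(j_i)_{\ast}\xi_i$ is supported on the exceptional divisor of the Kim--Sato space $(X,D)^2$, and Li's splitting formula \cite{Li2} does not apply verbatim to $\ev^{\ast}$ of such a class: Li's formula computes the restriction of the virtual class along the boundary divisor pulled back from the stack of expanded targets, whereas here you must intersect with the divisor $\ev^{\ast}E_i$, which is set-theoretically contained in that boundary but is a different divisor, supported only on strata where \emph{both} interior markings lie in bubbles over $D_i$ and possibly carrying different multiplicities along them. What your argument actually requires is an identification of the refined Gysin pullback $j_i^{!}\,[\Mbar_{g,r,\beta}((X,D),\vec{\lambda})]^{\vir}$ with the sum, over exactly those splittings, of glued products of relative and rubber virtual classes with the weights $\prod_j \mu_j/|\Aut(\mu)|$ and the genus relation $g_1+g_2=g+1-\ell(\mu)$. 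Establishing those multiplicities needs the local comparison between the equation of $E_i$ and the node-smoothing parameters (the relations $t=s_j^{\mu_j}$ of \cite[Sec.4.4]{Li2}); this is precisely the technical content of \cite[Thm.3.2, Thm.3.10]{ABPZ}, and it is the same kind of analysis the paper performs by hand in the proof of Proposition~\ref{prop:restriction to boundary}. As written, your last step assumes the statement to be proven; you should either cite the ABPZ results (as the paper does) or carry out this virtual-class splitting along $\ev^{-1}(E_i)$ explicitly.
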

\begin{proof}
This is a special case of \cite[Theorem 3.10]{ABPZ}.
\end{proof}

Next we explain how to remove the relative $\psi$-classes.
Again we only need a special case (the general case is similar),
and without loss of generality we can consider relative $\psi$-classes for the first component $D_1$.
\begin{prop} \label{prop:splitting relative psi class}
For any $j \in \{ 1, \ldots, \ell(\lambda_1) \}$,
\begin{multline*}
\lambda_{1,j} \big\langle \, \psi_{1,j}^{\text{rel}} \lambda_1, \ldots, \lambda_N \big\rangle^{(X,D), \bullet}_{g,\beta} 
=
- \big\langle \, \widehat{\lambda}_1, \lambda_2, \ldots, \lambda_N \, \big\rangle^{(X,D), \bullet}_{g,\beta}  \\
+
\sum_{\mu}
\sum_{\substack{ g_1+g_2=g + 1 - \ell(\mu) \\ \iota_{\ast} \alpha + \beta' = \beta}}
\frac{\prod_i \mu_i}{|\Aut(\mu)|}
\big\langle \, \lambda_1, \ldots, \underbrace{\mu}_{i\text{-th}}, \ldots , \lambda_N \, \big\rangle^{(X,D), \bullet}_{g_1,\beta'} 
\big\langle \, \lambda_i, \mu^{\vee} \, \big\rangle^{(\p,D_{1,0} \sqcup D_{1,\infty}), \bullet, \sim}_{g_2,\alpha},
\end{multline*}
where $\widehat{\lambda}_1$ is the weighted partition $\lambda_1$
but with $j$-th cohomology weight $\delta_{1j}$ replaced by $\delta_{1j} \cup c_1(N_{D_1/X})$.
Moreover, $\mu$ runs over the same data as in Proposition~\ref{prop:splitting relative diagonal}.
\end{prop}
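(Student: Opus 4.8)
The plan is to convert the relative $\psi$-class into the cotangent line class on the stack of expansions via Lemma~\ref{lemma:psi rel in terms Psi}, and then to evaluate that class geometrically as a rubber boundary. Without loss of generality I work with $D_1$. First I would apply Lemma~\ref{lemma:psi rel in terms Psi}, which gives
\[ \lambda_{1,j}\, \psi_{1,j}^{\rel} = \mathsf{q}^{\ast}(\Psi_{D_1}) - \ev_{1,j}^{\rel\,\ast}\big( c_1(N_{D_1/X}) \big). \]
Substituting this into the definition of the bracket on the left-hand side, the second summand contributes exactly $-\langle \widehat{\lambda}_1, \lambda_2, \ldots, \lambda_N\rangle^{(X,D),\bullet}_{g,\beta}$, since multiplying the relative insertion $\delta_{1,j}$ at the $j$-th point of $D_1$ by $c_1(N_{D_1/X})$ is precisely the modification producing $\widehat{\lambda}_1$. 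It therefore remains to identify the contribution of $\mathsf{q}^{\ast}(\Psi_{D_1})$ with the rubber sum.

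Next I would analyze $\mathsf{q}^{\ast}(\Psi_{D_1})$ geometrically. By construction $\BL_{D_1}$ carries a tautological section on $\CT$ vanishing exactly along the locus of targets that have bubbled off a component at $D_1$, so $\Psi_{D_1}=c_1(\BL_{D_1})$ is represented by this bubbling divisor. Pulling back along $\mathsf{q}$ and capping with $[\Mbar_{g,r,\beta}((X,D),\vec{\lambda})]^{\vir}$, the class $\mathsf{q}^{\ast}(\Psi_{D_1})$ is supported on the boundary stratum where the stable map acquires a non-trivial rubber over $D_1$. Up to the gluing morphism along the relative divisor, this stratum is a fiber product of a moduli space of relative stable maps to $(X,D)$ carrying a cohomology-weighted partition $\mu$ along $D_1$ with a rubber moduli space over $(\p, D_{1,0}\sqcup D_{1,\infty})$ carrying $\mu^{\vee}$ over the zero section and the original condition $\lambda_1$ over the infinite section. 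The compatibility of the virtual classes under this decomposition is the degeneration formula of \cite{Li2}; concretely this is the same boundary-splitting mechanism of \cite{ABPZ} that yields Proposition~\ref{prop:splitting relative diagonal}, now applied to the divisor $\Psi_{D_1}$ rather than to the relative diagonal.

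Finally I would collect the combinatorial data. Summing over all weighted partitions $\mu$ with weights from a fixed basis of $H^{\ast}(D_1)$ (and the dual basis on the rubber side), gluing along the $\ell(\mu)$ relative nodes contributes the tangency multiplicity $\prod_i \mu_i$ together with the symmetry factor $1/|\Aut(\mu)|$, while additivity of genus and degree under gluing imposes $g_1+g_2=g+1-\ell(\mu)$ and $\iota_{\ast}\alpha+\beta'=\beta$. Assembling these terms, with the sign dictated by the identification of $\mathsf{q}^{\ast}(\Psi_{D_1})$ with the rubber boundary (as in \cite[Proof of Lemma 12]{OPix2}), produces the asserted formula. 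I expect the main obstacle to be precisely this last identification: showing that $\mathsf{q}^{\ast}(\Psi_{D_1})\cap[\Mbar]^{\vir}$ equals the rubber boundary with the correct multiplicities and sign, which requires the careful virtual-class bookkeeping on the expansion stack rather than a purely formal manipulation.
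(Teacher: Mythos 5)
Your proposal is correct and coincides with the paper's own (much terser) proof, which simply invokes Lemma~\ref{lemma:psi rel in terms Psi} together with Li's splitting analysis \cite{Li2} (compare \cite[Lem.~12]{OPix2}): the $-\ev^{\rel\,\ast}_{1,j}(c_1(N_{D_1/X}))$ term yields $-\big\langle\widehat{\lambda}_1,\lambda_2,\ldots,\lambda_N\big\rangle$, while $\mathsf{q}^{\ast}(\Psi_{D_1})$, represented by the vanishing locus of the canonical section of $\BL_{D_1}$, is evaluated as the rubber boundary with the multiplicities $\prod_i\mu_i/|\Aut(\mu)|$ and the constraints $g_1+g_2=g+1-\ell(\mu)$, $\beta'+\iota_{\ast}\alpha=\beta$ coming from Li's degeneration formula. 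The virtual-class and sign bookkeeping you defer to the references is exactly what the paper also leaves to \cite{Li2} and \cite{OPix2}, so your write-up is, if anything, more detailed than the original.
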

\begin{proof}
This follows from Lemma~\ref{lemma:psi rel in terms Psi} and \cite{Li2},
compare also \cite[Lem.12]{OPix2}.
\end{proof}

\subsection{Boundary restriction}
We will also require the restriction of relative Gromov-Witten classes to the boundary. 
Consider the class in $H_{\ast}( \Mbar_{g,r,\beta}((X,D) )$ defined by
\begin{equation} \label{J class}
J^{(X,D)}_{g,\beta}(\lambda\, |\, \gamma)
= \ev^{\ast}(\gamma) \prod_{i=1}^{N} \prod_{j=1}^{\ell(\lambda_i)} \ev^{\text{rel}}_{i,j}( \delta_{i,j} ) \cdot [ \Mbar_{g,r,\beta}((X,D), \vec{\lambda}) ]^{\vir}.
\end{equation}
If there exists a forgetful morphism
\[ \tau : \Mbar_{g,r,\beta}((X,D), \vec{\lambda}) \to \Mbar_{g, n}, \]
where $n=r+\sum_i \ell(\vec{\lambda}_i)$ consider also the pushforward
\begin{equation} \label{I class}
I^{(X,D)}_{g,\beta}(\lambda\, |\, \gamma)
= \tau_{\ast}
\left( \ev^{\ast}(\gamma) \prod_{i=1}^{N} \prod_{j=1}^{\ell(\lambda_i)} \ev^{\text{rel}}_{i,j}( \delta_{i,j} ) \cdot [ \Mbar_{g,r,\beta}((X,D), \vec{\lambda}) ]^{\vir} \right).
\end{equation}
Let $u : \Mbar_{g-1,n+1} \to \Mbar_{g,n}$ be the natural gluing morphism.

\begin{prop} \label{prop:restriction to boundary}
\begin{align*}
& u^{\ast} I^{(X,D)}_{g,\beta}(\lambda_1, \ldots, \lambda_N)  
= I_{g-1,\beta}^{(X,D)}\left(\lambda_1, \ldots, \lambda_N \middle| \Delta_{(X,D)}^{\text{rel}} \right)  \\
& \quad +
\sum_{i=1}^{N}
\sum_{\substack{ m \geq 0 \\ g = g_1 + g_2 + m \\ \beta = \beta'+ \iota_{\ast} \alpha}}
\sum_{\substack{ b ,  b_1, \ldots, b_m \\ \ell , \ell_1, \ldots, \ell_m}}
\frac{\prod_{i=1}^{m} b_i}{m!} \Bigg\{ 
\\
& \qquad \begin{array}{r}  \xi_{\ast} j^{\ast} \Bigg[ J^{(X,D), \bullet}_{g_1, \beta'}\Big( \lambda_1, \ldots, \lambda_{i-1}, \Big( \underbrace{(b, \Delta_{D_i,\ell})}_{(n+1)\textup{-th}}, (b_j, \Delta_{D_i, \ell_j})_{j=1}^{m} \Big), \lambda_{i+1}, \ldots, \lambda_N \Big) \\
\boxtimes 
J^{(\p,D_{i,0} \sqcup D_{i,\infty}), \bullet, \sim}_{g_2, \alpha}\Big( \big( \underbrace{(b, \Delta_{D_i,\ell}^{\vee})}_{(n+2)\textup{-th}}, (b_j, \Delta^{\vee}_{D_i, \ell_j})_{j=1}^{m} \big) , \lambda_i \Big) \Bigg]
\end{array} \\
& \qquad \begin{array}{r} +  \xi_{\ast} j^{\ast} \Bigg[ J^{(X,D), \bullet}_{g_1, \beta'}\Big( \lambda_1, \ldots, \lambda_{i-1}, \Big( \underbrace{(b, \Delta_{D_i,\ell})}_{(n+2)\textup{-th}}, (b_j, \Delta_{D_i, \ell_j})_{j=1}^{m} \Big), \lambda_{i+1}, \ldots, \lambda_N \Big) \\
\boxtimes 
J^{(\p,D_{i,0} \sqcup D_{i,\infty}), \bullet, \sim}_{g_2, \alpha}\Big( \big( \underbrace{(b, \Delta_{D_i,\ell}^{\vee})}_{(n+1)\textup{-th}}, (b_j, \Delta^{\vee}_{D_i, \ell_j})_{j=1}^{m} \big) , \lambda_i \Big) \Bigg]
 \Bigg\}
\end{array}
\end{align*}
where
\begin{itemize}[itemsep=0pt]
\item '$(n+1)$-th' stands for labeling the corresponding marked points by $n+1$,
\item $b, b_1, \ldots, b_m$ run over all positive integers such that
$b + \sum_j b_j = \beta \cdot D_i$,
\item $\Delta_D = \sum_{\ell} \Delta_{D, \ell} \otimes \Delta_{D, \ell}^{\vee}$ is a K\"unneth decomposition of the diagonal of $D$.
\end{itemize}
Moreover, $j$ is the embedding of the (closed and open) component
\begin{multline*}
U \subset \Mbar_{g_1,\beta'}((X,D), (\vec{\lambda} \setminus \vec{\lambda}_i, (b ,b_1,\ldots, b_m) ) \times 
\Mbar_{g_2,\alpha}^{\bullet, \sim}((\p,D_{i,0} \sqcup D_{i,\infty}), \vec{\lambda}_i, (b,b_1,\ldots, b_m) )
\end{multline*}
parametrizing pairs $(f_1:C_1 \to X[k], p_i)$ and $(f_2 : C_2 \to \p_{\ell},p_i')$ such that
the curve, which is obtained by gluing $C_1$ to $C_2$ pairwise along the $m$ markings labeled by $b_i$, is connected.
And we let
\[ \xi : U \to \Mbar_{g-1,n+2} \]
is the map that forgets the maps $f_1,f_2$, glues together the curves $C_1, C_2$ pairwise along the markings labeled by $b_i$, and then contracts unstable components.
\end{prop}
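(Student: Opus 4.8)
The plan is to compute $u^{\ast} I^{(X,D)}_{g,\beta}(\lambda_1,\ldots,\lambda_N)$ by restricting to the image of the gluing morphism $u$, namely the boundary divisor $\Delta_{\mathrm{irr}} \subset \Mbar_{g,n}$ of irreducible (non-separating) nodal curves. Since $I$ is the $\tau$-pushforward of the class $J^{(X,D)}_{g,\beta}(\lambda)$, the first step is to commute $u^{\ast}$ past $\tau_{\ast}$. Writing $\tau^{-1}(\Delta_{\mathrm{irr}})$ for the preimage, the compatibility of the relative virtual class with the boundary stratification of $\Mbar_{g,n}$ -- which is precisely the technical input supplied by Proposition~\ref{prop:Schmitt} -- allows me to replace $u^{\ast}\tau_{\ast} J$ by the pushforward of the restriction of $J$ to $\tau^{-1}(\Delta_{\mathrm{irr}})$. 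The core of the argument is then a careful analysis of the locus of relative stable maps whose stabilized domain acquires a non-separating node, organized according to the geometry of that node.

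In the first type of contribution, the non-separating node is already present on the map $f : C \to X[k]$ and survives stabilization as a genuine node whose two branches are incident in the relative geometry. This incidence is cut out by the relative diagonal $\Delta^{\mathrm{rel}}_{(X,D)} \subset (X,D)^2$ inserted at the two new markings of the genus $g-1$ domain; cutting the node lowers the genus by one and leaves the curve connected, which yields the main term $I^{(X,D)}_{g-1,\beta}(\lambda_1,\ldots,\lambda_N \mid \Delta^{\mathrm{rel}}_{(X,D)})$. This is also the step where the relative diagonal, rather than the naive diagonal $\Delta_X$, is forced, in harmony with Proposition~\ref{prop:splitting relative diagonal}.

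In the second type, the node of the stabilized curve is created when stabilization contracts a rubber subcurve lying over one of the divisors $D_i$. Such configurations are governed by the degeneration of the expanded target: the domain splits into a main part mapping to $(X,D)$ and a rubber part mapping to $(\p, D_{i,0}\sqcup D_{i,\infty})$, glued along $m+1$ predeformable nodes with contact multiplicities $b, b_1,\ldots,b_m$ subject to $b+\sum_j b_j = \beta\cdot D_i$ and $\iota_\ast\alpha + \beta' = \beta$. Here the standard degeneration bookkeeping enters: each glued node forces a summation over a K\"unneth decomposition $\Delta_{D_i} = \sum_\ell \Delta_{D_i,\ell}\otimes \Delta^{\vee}_{D_i,\ell}$, the $m$ interchangeable rubber-attaching nodes contribute their multiplicities $b_j$ together with the automorphism factor $1/m!$, and the distinguished node labeled $b$ -- whose branches become the markings $n+1$ and $n+2$ -- is the one that is cut by $u$. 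The genus relation $g=g_1+g_2+m$ and the target $\Mbar_{g-1,n+2}$ of the gluing map $\xi$ are fixed by requiring the glued curve to be connected with exactly one distinguished node becoming non-separating. The two symmetric summands reflect the two ways of assigning the branches $n+1, n+2$ of this distinguished node to the main versus the rubber component.

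The main obstacle is the virtual-cycle bookkeeping behind the first two steps: one must show, at the level of virtual classes and not merely of underlying loci, that $\tau^{-1}(\Delta_{\mathrm{irr}})$ decomposes into the stated strata with the correct virtual multiplicities and that the rubber degeneration splits the virtual class compatibly with $\xi$. This is exactly what Proposition~\ref{prop:Schmitt} is designed to deliver; once it is available, the remaining work is the purely combinatorial matching of the multiplicity and automorphism factors -- in particular the precise appearance of $\prod_{i=1}^m b_i / m!$ and the absence of a multiplicity factor for the distinguished node -- together with the verification, using Lemma~\ref{lemma:psi rel in terms Psi} and Proposition~\ref{prop:splitting relative psi class} to handle the target cotangent line produced by the contracted rubber, that no further correction terms arise.
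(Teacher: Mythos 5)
Your outline lands on the right endpoints (a relative-diagonal term plus rubber-splitting terms with factor $\prod_i b_i/m!$), and you correctly sense that Proposition~\ref{prop:Schmitt} is the technical input, but the two steps where the actual work lies would not go through as you describe them. First, Proposition~\ref{prop:Schmitt} does not let you ``replace $u^{\ast}\tau_{\ast}J$ by the pushforward of the restriction of $J$ to $\tau^{-1}(\Delta_{\mathrm{irr}})$'': $u$ is not flat, and the relevant preimage is neither of expected dimension nor reduced, so no naive restriction computes $u^{\ast}\tau_{\ast}J$. What the proposition actually delivers is the identity $(u')^{!}=q_{\ast}\widetilde{u}^{!}$, i.e. it trades the gluing map $u$ of stable curves for the gluing map $\widetilde{u}:\mathfrak{M}_{g-1,n+2}\to\mathfrak{M}_{g,n}$ of prestable curves, so that the object one decomposes is the fiber product over the Artin stack $\mathfrak{M}_{g,n}$, which parametrizes a relative stable map together with a chosen non-separating node of its \emph{actual} domain curve $C$, not of its stabilization. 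This matters because the correct dichotomy is by where that node maps in the expanded target: to a non-singular point of $X[k]$ (the component $\CP$, which pushes forward to the $\Delta_{(X,D)}^{\text{rel}}$-term by \cite[Thm.~3.2]{ABPZ}), or to the singular locus (the component $\CN$, which forces the predeformable splitting into main and rubber parts). Your dichotomy --- node ``survives stabilization'' versus node ``created by contracting a rubber subcurve'' --- is not the right one: a node at the interface between the main part and a rubber part whose rubber side is stable (say, carrying several relative markings) is already present, survives stabilization, and has incident branches, so your criterion would send it to the relative-diagonal term, whereas it must contribute to the rubber terms; conversely, nothing in the rubber terms requires any contraction. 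With your criterion the decomposition is neither open-closed nor correctly counted, and organizing by nodes of the stabilized curve forces you to untangle the many-to-one relation between structures on $C$ and nodes of its stabilization --- exactly the complication that passing to $\mathfrak{M}_{g,n}$ is designed to avoid.

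Second, the factor $\prod_{i=1}^{m}b_i$ is not ``standard degeneration bookkeeping'' to be matched combinatorially at the end: it is the output of a scheme-theoretic local computation in the style of Li. Near a point of $\CN$ the node-smoothing parameters satisfy $t=s^{b}$ and $t=s_i^{b_i}$, so $\CN=\{s=0,\ s_i^{b_i}=0\}$ is non-reduced, while the image of the gluing morphism \eqref{dfsg343} is $\{s=0,\ s_i=0\}$; the multiplicities $b_i$ are the lengths of this non-reduced structure and are picked up by the refined Gysin pullback $\widetilde{u}^{!}$, not by any restriction of cycles, and not by Proposition~\ref{prop:Schmitt}, which is pure Chow-theoretic functoriality and contains no geometry of expansions. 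Finally, your plan to invoke Lemma~\ref{lemma:psi rel in terms Psi} and Proposition~\ref{prop:splitting relative psi class} is off target: no relative $\psi$-classes enter this proof (those results are used elsewhere in the paper), and their appearance in your sketch suggests the mechanism you have in mind for the contracted-rubber corrections is not the one that actually occurs.
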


A related formula for the restriction of the double ramification cycle 
to the divisor $\Mbar_{g-1,n+2} \to \Mbar_{g,n}$
was given (only with a sketch) by Zvonkine in \cite{Zvonkine}.

\begin{proof}
Let $\mathfrak{M}_{g,n}$ be the Artin stack of prestable curves, where $n=\sum_i \ell(\lambda_i)$.
We refer to \cite{BS1} for an introduction to the stack $\mathfrak{M}_{g,n}$.
The map $\tau$ factors as a morphism $\tilde{\tau}$ to $\mathfrak{M}_{g,n}$ followed by the stabilization map $\mathsf{st}:\mathfrak{M}_{g,n} \to \Mbar_{g,n}$.
Form the fiber diagram
\[
\begin{tikzcd}
M_1 \ar{r}{q} \ar{d}{\rho} & M_2 \ar{r} \ar{d}{\sigma} & \Mbar_{g,r,\beta}((X,D), \vec{\lambda}) \ar{d}{\widetilde{\tau}} \\
\mathfrak{M}_{g-1,n+2} \ar{r}{\tilde{q}} & W \ar{r}{u'} \ar{d}{\mathsf{st}} &  \mathfrak{M}_{g,n} \ar{d}{\mathsf{st}} \\
& \Mbar_{g-1,n+2} \ar{r}{u} & \Mbar_{g,n}.
\end{tikzcd}
\]

Consider also the gluing map on prestable curves
\[ \widetilde{u} = u' \circ \tilde{q} : \mathfrak{M}_{g-1,n+2} \to \mathfrak{M}_{g,n}. \]

We want to apply Proposition~\ref{prop:Schmitt} below.
Observe the following:
\begin{itemize}
\item $\mathfrak{M}_{g,n}$ is smooth and by \cite[Example 4]{BS1} has a good filtration by quotient stacks.
\item Since $u' : W \to \mathfrak{M}_{g,n}$ is representable and $\mathfrak{M}_{g,n}$ has affine stabilizers at geometric points \cite[Prop.3.1]{BS1},
by \cite[Proposition 3.5.5]{Kresch} and \cite[Proposition 3.5.9]{Kresch} $W$ has affine stabilizers at geometric points.
\item The gluing maps $u : \Mbar_{g-1,n+2} \to \Mbar_{g,n}$ and $\widetilde{u} : \mathfrak{M}_{g-1,n+2} \to \mathfrak{M}_{g,n}$
are both representable \cite[Lemma 2.2]{BS1}.
\item By \cite[Prop 3.13]{BS1} the map
\[ \tilde{q} : \mathfrak{M}_{g-1,n+2} \to W=\mathfrak{M}_{g,n} \times_{ \Mbar_{g,n}} \Mbar_{g-1,n+2} \]
is proper and birational. Since $\widetilde{u}$ is representable, $\tilde{q}$ is representable.
\item Since the domain and target of $\widetilde{u}$ is smooth, $\widetilde{u}$ is lci.
\item By \cite[Prop.3]{Beh2} the stabilization map $\mathsf{st} : \mathfrak{M}_{g,n} \to \Mbar_{g,n}$ is flat.
Since $u : \Mbar_{g-1,n+2} \to \Mbar_{g,n}$ is lci, and this is preserved by flat base change (see \cite[Tag 069I]{StacksProject}),
also $u'$ is lci.
\item The map $a: \Mbar_{g,r,\beta}((X,D), \vec{\lambda}) \to \mathfrak{M}_{g,n}$ is representable, since it is injective on stabilizers:
The group of automorphisms of $(C \to X[k], p_i)$ is a subgroup of the group of automorphisms of $(C,p_i)$.
\end{itemize}

By the above, $\widetilde{u}$ and $u$ are proper representable, so $M_1, M_2$ are proper DM stacks.

By Proposition~\ref{prop:Schmitt} below we obtain that
\[ (\iota')^{!} = q_{\ast} \widetilde{\iota}^{!} :
A_{\ast}(\Mbar_{g,r,\beta}((X,D), \vec{\lambda})) \to A_{\ast - 1}(M_2).
 \]

Consider the clas
\[ J := J^{(X,D)}_{g,\beta}(\lambda). \]
We obtain
\begin{align*}
u^{\ast} I^{(X,D)}_{g,\beta}(\lambda) 
& = (\mathsf{st} \circ \sigma)_{\ast} u^{!} J \\
& = (\mathsf{st} \circ \sigma)_{\ast} (u^{\prime})^{!} J \\
& = (\mathsf{st} \circ \sigma)_{\ast} q_{\ast} \tilde{u}^{!} J) \\
& = (q \circ \mathsf{st} \circ \sigma)_{\ast} \tilde{u}^{!} J \\
& = \pi_{\ast} \tilde{u}^{!} J
\end{align*}
where 
\[\pi = q \circ \mathsf{st} \circ \sigma : \mathfrak{M}_{g-1,n+2} \times_{\mathfrak{M}_{g,n}} \Mbar_{g,r,\beta}((X,D), \vec{\lambda}) \to \Mbar_{g,n+2}. \]
Hence we need to compute the refined pullback $\widetilde{u}^{!} J$.

The stack
\[ \mathfrak{M}_{g-1,n+2} \times_{\mathfrak{M}_{g,n}} \Mbar_{g,r,\beta}((X,D), \vec{\lambda}) \]
parametrizes relative stable maps $(f : C \to X[k],p_1,\ldots,p_r)$ together with a chosen non-separating nodal point $p \in C$
and two marking $p_{n_1}, p_{n+2}$ on the partial normalization $\widetilde{C} \to C$ at $p$.
By \cite[Sec.1.5]{ABPZ} we have a disjoint union (both components open and closed)
\[
\mathfrak{M}_{g-1,n+2} \times_{\mathfrak{M}_{g,n}} \Mbar_{g,r,\beta}((X,D), \vec{\lambda}) = \CP_{g,r,\beta}((X,D), \vec{\lambda})
\sqcup \CN_{g,r,\beta}((X,D), \vec{\lambda}).
\]

The component $\CP_{g,r,\beta}((X,D), \vec{\lambda})$ parametrizes relative stable maps where the marked point $p$
map to a {\em non-singular} point on some expanded degeneration $X[k]$ of $(X,D)$.
By \cite[Thm.3.2]{ABPZ} we have then
\[ 
\pi_{\ast}\left( \widetilde{u}^{!}(J)|_{\CP_{g,r,\beta}((X,D), \vec{\lambda})} \right)
= 
I_{g-1,\beta}^{(X,D)}\left(\lambda_1, \ldots, \lambda_N \middle| \Delta_{(X,D)}^{\text{rel}} \right).
\]

The other component $\CN_{g,r,\beta}((X,D), \vec{\lambda})$ parametrizes maps where $p$ maps to the singular locus,
and hence forces a splitting of the source curve $C$,
\[ C = C_1 \cup C_2, \]
where $f|_{C_1} : C_1 \to X[a]$ is a relative stable map to $(X,D)$ and $f|_{C_2} : C_2 \to \p_{\ell}$ maps entirely into a bubble of $D_i$ for some $i$.
The marked points $p_{n+1}, p_{n+2}$ have to lie on different components $C_i$,
hence there are two choices: $p_{n+1}$ can lie on $C_1$ and $p_{n+2}$ lies on $C_2$, or vice versa.
The curve $C$ is obtained by gluing $C_1,C_2$ along $p_{n+1}$ and $p_{n+2}$,
as well as along 'secondary' markings $q_i \in C_1$ and $q'_i \in C_2$ for $i=1, \ldots, m$.
The latter markings are called 'secondary' because they will be forgotten by pushforward along $\pi$ to $\Mbar_{g-1,n+2}$.
Let $b$ be the contact order of $f$ with the divisor at $p_{n+1}$, and let $b_i$ be the contact order at the $q_i$.

We consider the local structure of the component  $\CN_{g,r,\beta}((X,D), \vec{\lambda})$.
A local versal family for the gluing nodes of $C$ is given by $xy=s$ and $x_i y_i = s_i$ for $i=1,\ldots, m$.
Let $t$ be \'etale locally the coordinate defining the bubble splitting $X[a] \cup \p_{\ell}$. The coordinate $t$ is pulled back from the stack of target degeneration. Then 
the local analysis of \cite[Sec.4.4]{Li2} shows
that $t=s^{b}$ and $t=s_i^{b_i}$.
Hence $\CN_{g,r,\beta}((X,D), \vec{\lambda})$, which is cut out by $s=0$, is given by the equations $\{ s=0, s_i^{b_i} = 0 \}$.
On the other hand, the image stack of the glueing morphism
\begin{multline} \label{dfsg343} 
\Mbar_{g_1,\beta'}((X,D), (\vec{\lambda} \setminus \vec{\lambda}_i, (b ,b_1,\ldots, b_m) ) \times_{D^{m+1}} \\
\Mbar_{g_2,\alpha}^{\bullet, \sim}((\p,D_{i,0} \sqcup D_{i,\infty}), \vec{\lambda}_i, (b,b_1,\ldots, b_m) )
\xrightarrow{\ \xi \ } \CN_{g,r,\beta}((X,D), \vec{\lambda})
\end{multline}
is given by $\{ s = 0, s_i=0 \}$. Since the gluing morphism is finite of degree $|\Aut(\eta)|$,
by the arguments in \cite{Li2}, especially Lemma 3.12,
one obtains that the virtual class of $\CN_{g,r,\beta}((X,D), \vec{\lambda})$
is $\prod_{i=1}^{m} b_i / |\Aut(b_1, \ldots, b_m)|$ times the pushforward by $\xi$ of the natural virtual class on the domain of the map \eqref{dfsg343}.\footnote{The more modern viewpoint is to
work relative to the moduli space of stable maps to the universal target $(\BA^1/\BG_m, 0/\BG_m)$
as proposed in \cite{ACW}.
The moduli space $\Mbar_{g,n,d}(\BA^1/\BC^{\ast},0/\BG_m)$ is pure of expected dimension,
and the virtual class on $\Mbar_{g,r,\beta}(X,D)$ is the virtual pullback of the fundamental class on $\Mbar_{g,n,d}(\BA^1/\BC^{\ast},0/\BG_m)$.
The local argument above proves an equality of codimension $1$ classes in $\Mbar_{g,n,d}(\BA^1/\BC^{\ast},0/\BG_m)$.
The equality \eqref{N virtual} of virtual classes on $\Mbar_{g,r,\beta}(X,D)$ follows from this by virtual pullback (after matching the relative perfect obstruction theories).
See \cite[Proof of Thm 3.2]{ABPZ} for a similar case.
I thank P. Bousseau for discussions related to this point.}
In total one obtains:
\begin{multline}  \label{N virtual}
\left( \widetilde{u}^{!} [\Mbar_{g,r,\beta}(X,D) ]^{\vir} \right) |_{\CN_{g,r,\beta}((X,D), \vec{\lambda})}
= \\
\sum_{i=1}^{N} \sum_{\substack{ m \geq 0 \\ g = g_1 + g_2 + m \\ \beta' + \iota_{\ast} \alpha }}
\sum_{b ;  b_1, \ldots, b_m}
\frac{\prod_{i=1}^{m} b_i}{m!}
\xi_{\ast}
\Delta_{D^{m+1}}^{!} j^{\ast} \Big( [ \Mbar_{g_1,\beta'}((X,D), \vec{\lambda} \setminus \vec{\lambda}_i, (\underbrace{b}_{n+1} ,b_1,\ldots, b_m) ) ]^{\vir} \times \\
[ \Mbar_{g_2,\alpha}^{\bullet, \sim}((\p,D_{i,0} \sqcup D_{i,\infty}), \vec{\lambda}_i, (\underbrace{b}_{n+2},b_1,\ldots, b_m) ) ]^{\text{vir}} \Big] \\
+ \text{(same term with role of $(n+1)$ and $(n+2)$ interchanged)}.
\end{multline}
Here we have viewed $(b_1, \ldots, b_m)$ as a list of numbers and not as a partition,
so that the factor $1/ |\Aut(b_1, \ldots, b_m)|$ has to be replaced by $1/ m!$ to compensate for overcounting.

Pushing forward \eqref{N virtual} by $\pi$ completes the proof.
\end{proof}

\begin{example}
We consider a basic example (adapted from \cite{LiICTP}) which illustrates the local analysis in the last step of the proof above
in the case of a univeral target $(\CA, \CD) = (\BA^1/\BG_m, 0 / \BG_m)$.
The universal target was introduced in \cite{ACW}, see also \cite[Proof of Thm.3.2]{ABPZ}.
We let $w_0$ be the coordinate on the chart $\BA^1 \to \CA$.
Let $\CT^1 = \BA^1 / \BG_m$ be the stack of $1$-step target expansion of $(\CA,\CD)$.
The universal family of targets over $\CT^1$ is
\[ \widetilde{\BA^1[1]} = \mathrm{Bl}_{0}(\BA^1 \times \BA^1) \to \BA^1 \]
modulo a quotient by $\BG_m^3$.
Explicitly, if $t$ is the coordinate on $\BA^1$ (the chart of $\CT^1$), then
\[ \widetilde{\BA^1[1]} = \mathrm{Bl}_{0}(\BA^1 \times \BA^1) = V( w_0 z_1 = t w_1 ) \subset \BA^1_{w_0} \times \p^1 \times \BA^1_{t} \]
where $w_1, z_1$ are the homogeneous coordinates on $\p^1$.

Consider a family of degenerating curves $C = \BA^2 \to \BA^1_s$ given by $(x,y) \mapsto s=xy$,
and consider the commutative diagram:
\[
\begin{tikzcd}
C \ar{r}{f} \ar{d} & \BA^2_{w_0, Z} \subset \widetilde{\BA^1[1]} \ar{d} \\
\BA^1_s \ar{r} & \BA^1_{t}
\end{tikzcd}
\]
where we let $\BA^2_{w_0, Z}$ be the affine chart $\Spec( \BC[ w_0, Z ] ) \subset \widetilde{\BA^1[1]}$ for $Z = z_1/w_1$,
and the map $f$ is described by $x \mapsto w^r$ and $y \mapsto Z^r$.
Then the lower horizontal map is given by $t \mapsto s^r$,
that is the coordinate defining the bubble $t$ corresponds to the $r$-th power of the coordinate defining the node of $C$. \qed
\end{example}

\begin{prop}[Schmitt] \label{prop:Schmitt}
Consider the following data:
\begin{itemize}
\item Let $X,Y,Z$ be algebraic stacks locally of finite type over $\BC$ of pure dimension,
and assume that $Y$ has affine stabilizers at geometric points,
and that $Z$ is smooth and has a good filtration by finite type substacks\footnote{In the sense of \cite[Defn.A.2]{BS1} or \cite[Defn.5]{Oes},
i.e. there exists a collection $(\CU_{m})_{m \in \BN}$ of open substacks of finite type of $Z$
with $\CU_m \subset \CU_{\ell}$ for $m \leq \ell$ and such that $\dim( Z \setminus \CU_{m} ) < \dim(Z) - m$},
\item let $g : X \to Y$ be proper birational of DM type, let $f: Y \to Z$ be representable and lci of relative dimension $k$,
and assume that $h = g \circ f : X \to Z$ is representable and lci,
\item let $W$ be a finite type DM stack and let $a:W \to Z$ be a representable morphism.
\end{itemize}
Consider the fiber diagram 
\[
\begin{tikzcd}
U \ar{r}{\tilde{g}} \ar{d} & V \ar{r}{\tilde{f}} \ar{d} & W \ar{d}{a} \\
X \ar[bend right]{rr}{h} \ar{r}{g} & Y \ar{r}{f} & Z.
\end{tikzcd}
\]
Then we have
\[
f^{!} = \tilde{g}_{\ast} h^{!} : A_{\ast}(W) \to A_{\ast+k}(V).
\]
\end{prop}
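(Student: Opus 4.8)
The plan is to recognize $f^{!}$ and $h^{!}$ as the refined Gysin homomorphisms attached to the lci morphisms $f$ and $h$, and to deduce the identity from the birational invariance of such pullbacks. The hypotheses are exactly what is needed for this to make sense: since $Z$ is smooth with a good filtration, $Y$ has affine stabilizers, and $f,h,a$ are representable, all the fiber products $V=Y\times_{Z}W$ and $U=X\times_{Z}W$ are finite-type Deligne--Mumford stacks and the intersection theory of \cite{Kresch} applies, so that $f^{!}\colon A_{\ast}(W)\to A_{\ast+k}(V)$ and $h^{!}\colon A_{\ast}(W)\to A_{\ast+k}(U)$ are well defined. Note $h\colon X\to Z$ is the composite, of relative dimension $k$ since $g$ is birational of relative dimension $0$, so that $\tilde g_{\ast}\colon A_{\ast+k}(U)\to A_{\ast+k}(V)$ is dimension preserving and the asserted equation is at least consistent.

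First I would reduce $f$ to a regular embedding. Using that $Z$ is smooth and $f$ is lci, I factor $f=p\circ i$ with $i\colon Y\hookrightarrow P$ a regular embedding of some codimension $c$ and $p\colon P\to Z$ smooth of relative dimension $n$, so that $k=n-c$ and $f^{!}=i^{!}\circ p^{\ast}$ (using the intrinsic normal cone to avoid a global factorization if necessary). Writing $h=p\circ(i\circ g)$ and invoking the cancellation law for lci morphisms (with $p$ smooth and $h$ lci), the morphism $i\circ g\colon X\to P$ is itself lci, and it factors as the proper birational $g$ followed by the regular embedding $i$; hence $h^{!}=(i\circ g)^{!}\circ p^{\ast}$. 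Since both $f^{!}$ and $h^{!}$ now factor through the same flat pullback $p^{\ast}$, the statement is reduced to the comparison
\[ \tilde g_{\ast}\,(i\circ g)^{!}=i^{!}\colon A_{\ast}(M)\to A_{\ast-c}(V),\qquad M:=P\times_{Z}W, \]
where $V=Y\times_{P}M$, $U=X\times_{P}M$, and $\tilde g\colon U\to V$ is the base change of $g$.

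This reduced identity is the heart of the matter, and I would attack it by deformation to the normal cone. The Gysin map $i^{!}$ is computed by specializing a cycle on $M$ to its normal cone $C_{V/M}\subseteq N_{Y/P}|_{V}$ and applying the zero-section Gysin map of $N_{Y/P}$. Because $i\circ g$ factors through the \emph{same} regular embedding $i$, the virtual pullback $(i\circ g)^{!}$ is computed against the \emph{same} pulled-back normal bundle $N_{Y/P}$, now applied to the image of the intrinsic normal cone $C_{U/M}$. Granting the compatibility $\tilde g_{\ast}[C_{U/M}]=[C_{V/M}]$, the identity then follows from the projection formula together with the commutativity of the zero-section Gysin map with proper pushforward.

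The main obstacle is precisely this cone-pushforward statement. Although $g$ is birational, its base change $\tilde g$ need not be: the fiber product may acquire components lying entirely over the locus $Y\setminus Y^{\circ}$ on which $g$ fails to be an isomorphism, and one must show these contribute nothing. Over the dense open $Y^{\circ}$ where $g$ is an isomorphism the two sides agree tautologically, since there $h$ and $f$ literally coincide; the difference is thus supported over $Y\setminus Y^{\circ}$, and promoting this generic equality to an equality of cycles is where the relative-dimension-$0$ and pure-dimensionality hypotheses enter, via the birational invariance of Segre classes of normal cones (Fulton's intersection theory, in the stacky formulation of \cite{Kresch}). Conceptually the whole argument can be phrased bivariantly: $f$ and $h$ carry canonical orientation classes $[f],[h]$ with $f^{!}=[f]\cap(-)$ and $h^{!}=[h]\cap(-)$, the projection formula gives $\tilde g_{\ast}h^{!}=(g_{\ast}[h])\cap(-)$, and the content collapses to the bivariant identity $g_{\ast}[h]=[f]$, which rests on the degree-one relation $g_{\ast}[X]=[Y]$ for the proper birational $g$.
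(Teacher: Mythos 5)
Your main proposed route has a genuine gap, and one of its intermediate claims is false as stated. After factoring $f = p\circ i$ and reducing to the comparison $\tilde g_\ast (i\circ g)^{!} = i^{!}$, you assert that ``the virtual pullback $(i\circ g)^{!}$ is computed against the \emph{same} pulled-back normal bundle $N_{Y/P}$, now applied to the image of the intrinsic normal cone.'' This is not correct: the lci pullback along $i\circ g$ is controlled by the normal bundle (stack) of $i\circ g$, i.e.\ by the cotangent complex $L_{X/P}$, which differs from $g^{\ast}N_{Y/P}$ whenever $g$ is not \'etale --- already for $g$ a blow-up the two disagree. Consequently the two Gysin operations do not live over a common bundle, and the cone comparison you then need, $\tilde g_{\ast}[C_{U/M}]=[C_{V/M}]$, is not a meaningful statement in the form given. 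You acknowledge that this cone-pushforward compatibility is ``the main obstacle,'' but the tool you invoke for it --- birational invariance of Segre classes --- does not suffice: refined Gysin maps are not computed from Segre classes of these fibered cones, and normal cone \emph{cycles} (as opposed to their Segre classes) do not push forward under proper birational maps. So the heart of the argument is missing, not merely deferred. There are also unresolved stack-theoretic issues (global factorization of a representable lci morphism of Artin stacks through a smooth morphism and a regular embedding; which Chow formalism supports deformation to the normal cone here), which your parenthetical remarks do not settle.

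Your closing bivariant paragraph is much closer to a correct proof, and in fact to the proof the paper gives, but the crucial point is exactly the one you pass over: why does the degree-one relation $g_{\ast}[X]=[Y]$ imply the bivariant identity $g_{\ast}[h]=[f]$? Agreement of two bivariant classes on the single cycle $[Z]$ forces their equality only via a Poincar\'e-duality statement for bivariant classes over the \emph{smooth} base $Z$, and in the setting of Artin stacks with Kresch/Bae--Schmitt Chow groups this is precisely what must be proved. The paper does it as follows: first assume $W$ is a smooth finite-type \emph{scheme}; then $a$ is lci, so $[W]=a^{!}[Z]$, and the chain
\[
f^{!}[W] = f^{!}a^{!}[Z] = a^{!}f^{!}[Z] = a^{!}[Y] = a^{!}g_{\ast}[X] = \tilde g_{\ast}a^{!}[X] = \tilde g_{\ast}a^{!}h^{!}[Z] = \tilde g_{\ast}h^{!}a^{!}[Z] = \tilde g_{\ast}h^{!}[W]
\]
uses only commutativity of refined Gysin pullbacks, the compatibility of proper (DM-type) pushforward with refined pullback along representable morphisms, and $g_{\ast}[X]=[Y]$. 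The general case then follows because $A_{\ast}(W)$ with $\BQ$-coefficients is generated by classes $\iota_{\ast}[\widetilde W]$ with $\widetilde W$ a smooth finite-type scheme and $\iota$ proper representable, together with one more application of push-pull compatibility. This reduction to smooth $W$ is the substitute for the duality statement your bivariant sketch takes for granted; without it (or an equivalent), your argument does not close.
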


\begin{proof}
We work with the Chow groups as introduced in \cite[App.A]{BS1} and \cite{Kresch}.
In particular, for any locally finite type algebraic stack $\CX$ over $\BC$ we define
\[ A_{\ast}(\CX) = \varprojlim_{i} A_{\ast}(\CU_i) \]
where $(\CU_i)_{i \in I}$ is a directed system of finite type open substacks of $\CX$ whose union is all of $\CX$,
and the Chow groups $A_{\ast}(\CU_i)$ are taken with $\BQ$-coefficients in the sense of Kresch \cite{Kresch}.
If $\CX$ is pure dimensional and admits a good filtration $(\CU_{m})_{m \in \BN}$  by finite type substacks then
\[ A_{\dim(\CX)-d}(\CX) = A_{\dim(\CX)-d}(\CU_m) \quad \text{for all } m>d. \]
In this case all functionalities of Kresch's Chow groups also apply to $A_{\ast}(\CX)$.

Kresch defines only a projective pushforward.
A proper pushforward along proper morphisms of DM type has been defined
in \cite[Theorem B.17]{BS1}, assuming that the target has affine stabilizers at geometric points,
or equivalently is stratified by quotient stacks \cite[Thm. 2.1.12]{Kresch}.
In particular, by our assumptions on $Y$ there exists a proper pushforward $g_{\ast}$.

Assume first that $W$ is a smooth finite type scheme.
Then since the source and target of $a$ are smooth, $a$ is lci.
By the commutativity of refined pullbacks \cite{Kresch},
and the compatibility \cite[Proposition B.18]{BS1} of proper pushforward (along the DM type morphism $g$) and refined Gysin pullback (along the representable morphism $a$)
we then have
\begin{multline} \label{W smooth}
f^{!} [W] = f^{!} a^{!} [Z] 
= a^{!} f^{!} [Z] 
= a^{!} [Y] 
\overset{(\ast)}{=} a^{!} g_{\ast} [X] \\
= \tilde{g}_{\ast} a^{!}[X] 
= \tilde{g}_{\ast} a^{!} h^{!}[Z] 
= \tilde{g}_{\ast} h^{!} a^{!}[Z] 
= \tilde{g}_{\ast} h^{!} [X],
\end{multline}
where (*) follows since $g$ is birational and hence of degree $1$, compare \cite[Prop.25]{BHPSS}.

In the general case, the Chow group of $W$ is generated by $\iota_{\ast} [ \widetilde{W} ]$, where $\widetilde{W}$ are smooth finite type schemes
and $\iota : \widetilde{W} \to W$ is proper and representable.
Form the fiber diagram:
\[
\begin{tikzcd}
\widetilde{U} \ar{r}{\dbtilde{g}} \ar{d}{\iota''} & \widetilde{V} \ar{r}{\dbtilde{f}} \ar{d}{\iota'} & \widetilde{W} \ar{d}{\iota} \\
U \ar{r}{\tilde{g}} \ar{d} & V \ar{r}{\tilde{f}} \ar{d} & W \ar{d}{a} \\
X \ar[bend right]{rr}{h} \ar{r}{g} & Y \ar{r}{f} & Z.
\end{tikzcd}
\]
With \eqref{W smooth} and using again
the compatibility \cite[Proposition B.18]{BS1} of proper pushforward and refined Gysin pullback (along the representable morphisms $f,h$) we find:
\[
f^{!} \iota_{\ast} [\widetilde{W}] = \iota'_{\ast} f^{!} [\widetilde{W}] = \iota'_{\ast} \dbtilde{g}_{\ast} h^{!} [\widetilde{W}]
=
\widetilde{g}_{\ast} \iota''_{\ast} h^{!} [\widetilde{W}] = \widetilde{g}_{\ast} h^{!} \iota_{\ast}[\widetilde{W}].
\]
\end{proof}

\section{Relative Gromov-Witten theory of $(K3 \times C, K3_z)$} \label{sec:Rel GW of K3xC}
\subsection{Overview}
Let $S$ be a smooth projective K3 surface,
let $C$ be a smooth curve and let $z=(z_1, \ldots, z_N)$ be a tuple of distinct points $z_i \in C$.
We specialize here to the relative Gromov-Witten theory of
the pair
\begin{equation} (S \times C, S_z), \quad S_{z} = \bigsqcup S \times \{ z_i \}: \label{relative geometry} \end{equation}
After introducing our notation for the relative Gromov-Witten invariants in Section~\ref{Section:Relative_Gromov_Witten_theory_of_P1K3},
we state in Section~\ref{sec:hilb/GW correspondence} the main input in this paper: the correspondence between relative invariants of $(S \times C, S_z)$ and the invariants of the Hilbert scheme of $S$ (Theorem~\ref{thm:GW/Hilb}).

Then we discuss further preliminaries.
In Section~\ref{subsec:Degeneration formula} we state the reduced degeneration formula. In Sections~\ref{subsec:Non-reduced invariants} and~\ref{subsec:reduced rubber invariants} we give basic evaluations of non-reduced invariants and reduced rubber invariants.

The curve classes on $S \times C$ will be denoted throughout by
\[ (\beta, n) = \iota_{\ast} \beta + n [C] \in H_2(S \times C, \BZ) \cong H_2(S,\BZ) \oplus \BZ [C]. \]

\subsection{Definition}
\label{Section:Relative_Gromov_Witten_theory_of_P1K3}
For $i \in \{ 1, \ldots, N \}$, consider $H^{\ast}(S)$-weighted partitions
\[ \lambda_i =  \big((\lambda_{i,j}, \delta_{i,j}) \big)_{j=1}^{\ell(\lambda_i)} \]
of size $n$ with underlying partition $\vec{\lambda}_i$.
Let also $\gamma \in H^{\ast}( (S \times C, S_z)^r )$ be a cohomology class.
If $\beta \neq 0$, define the partition function of reduced Gromov-Witten invariants
\begin{multline} \label{defn:ZGW}
Z^{(S \times C, S_z)}_{\GW, (\beta,n)}\left( \lambda_1, \ldots, \lambda_N \middle| (\tau_{k_1} \cdots \tau_{k_r})(\gamma) \right) 
=
(-1)^{(1-g(C)-N)n + \sum_i \ell(\lambda_i)}
z^{(2-2g(C)-N)n + \sum_i \ell(\lambda_i)} \\
\sum_{g \in \BZ} (-1)^{g-1} z^{2g-2}
\left\langle \, \lambda_1, \ldots, \lambda_N \, \middle| \, (\tau_{k_1} \cdots \tau_{k_r})(\gamma) \right\rangle^{(S \times C,S_z), \bullet}_{g, (\beta,n)}
\end{multline}
where the invariants on the right hand side are defined by integration over the \emph{reduced virtual fundamental class}
of the moduli space which is obtained by cosection localization \cite{KL}
from the surjective cosection constructed in \cite{MP_GWNL, MPT}.
If all $k_i=0$, we often just write $\gamma$ instead of $\tau_{k_1} \cdots \tau_{k_r}(\gamma)$.
Sometimes we will also include psi classes $\psi_{i,j}^{\mathrm{rel}}$ at the relative markings
where we follow the notation of \eqref{GWbracket}.
If $\beta = 0$, the series \eqref{defn:ZGW} is defined to vanish.

For any $(\beta,m)$ the moduli space $\Mbar^{\bullet}_{g,r,(\beta,n)}((S \times C,S_z), \vec{\lambda})$ carries also the ordinary or standard
 (i.e. non-reduced) virtual class.
By the existence of the non-trivial cosection it vanishes for all $\beta \neq 0$,
so it is only interesting for $\beta=0$.
In case $\beta=0$ we denote it by $[ - ]^{\std}$.
If we integrate over the 'standard' virtual class,
we decorate the corresponding Gromov-Witten bracket and the partition function $Z$ with a supscript $\std$.
The rest of the notation is unchanged.

We can associate to every $H^{\ast}(S)$-weighted partition a class on the Hilbert scheme:
\begin{defn} \label{defn:coh weighted to Nakajima}
The class in $H^{\ast}(S^{[n]})$ associated to a $H^{\ast}(S)$-weighted partition $\mu = \{ (\mu_i, \delta_{i}) \}$ of size $n$ is defined by
\begin{equation} \mu = \frac{1}{\prod_i \mu_i} \prod_{i} \Fq_{\mu_i}(\delta_i) \vacuum. \label{identification} \end{equation}
\end{defn}

We extend the Gromov-Witten bracket \eqref{GWbracket} for $(S \times C, S_z)$ and the partition functions $Z(..)$ by multilinearity in the entries $\lambda_i$.
Since the Gromov-Witten bracket is invariant under permutations of relative markings that preserve the ramification profile (i.e. under $\Aut(\vec{\lambda}_i)$),
the partition function $Z^{(S \times C, S_z)}_{\GW, (\beta,n)}\left( \lambda_1, \ldots, \lambda_N \middle| \gamma \right)$
only depends on the associated class $\lambda_i \in H^{\ast}(S^{[n]})$. Hence we obtain a morphism:
\[
Z^{(S \times C, S_z)}_{\GW, (\beta,n)}\left( -, \ldots, - \middle| \gamma \right) : H^{\ast}(S^{[n]})^{\otimes N} \to \BQ((z)).
\]

\subsection{Hilb/GW correspondence} \label{sec:hilb/GW correspondence}
Assume that $2g(C)-2+N>0$ so that $(C, z_1, \ldots, z_N)$
is a marked stable curve,
\[ \xi = [ (C, z_1, \ldots, z_N) ] \in \Mbar_{g,N}. \]
Given classes $\lambda_1, \ldots, \lambda_k \in H^{\ast}(S^{[n]})$
we define the generating series
\begin{equation} \label{Z Hilb series}
Z^{(S \times C, S_z)}_{\Hilb, (\beta,n)}\left( \lambda_1, \ldots, \lambda_N \right)
=
\sum_{r \in \mathbb{Z}}
(-p)^r
\int_{[ \Mbar_{g(C),N}(S^{[n]}, \beta+rA) ]^{\vir}}
\tau^{\ast}([\xi]) \prod_i \ev_i^{\ast}(\lambda_i).
\end{equation}
By Lemma~\ref{lemma:Laurent polynomial} the series \eqref{Z Hilb series} is a Laurent polynomial in $p$.

\begin{thm}[\cite{N1,N2,MarkedRelative}] \label{thm:GW/Hilb}
If $\beta \in H_2(S,\BZ)$ is primitive, then we have
\[ Z^{(S \times C, S_z)}_{\Hilb, (\beta,n)}\left( \lambda_1, \ldots, \lambda_N \right)
=
Z^{(S \times C, S_z)}_{\GW, (\beta,n)}\left( \lambda_1, \ldots, \lambda_N \right).
\]
under the variable change $p=e^{z}$.
\end{thm}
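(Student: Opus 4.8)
The plan is to obtain the equality by factoring the Hilb/GW correspondence through the intermediate Pandharipande--Thomas theory of the relative threefold $(S \times C, S_z)$. Concretely, I would introduce a stable pairs partition function $Z^{(S \times C, S_z)}_{\PT, (\beta,n)}(\lambda_1, \ldots, \lambda_N)$, normalized with the same sign and $z$-power conventions as in \eqref{defn:ZGW} but with the genus expansion replaced by the expansion in the variable conjugate to $\ch_3$, and then establish the two separate equalities
\[ Z^{(S \times C, S_z)}_{\Hilb, (\beta,n)} = Z^{(S \times C, S_z)}_{\PT, (\beta,n)}, \qquad Z^{(S \times C, S_z)}_{\PT, (\beta,n)} = Z^{(S \times C, S_z)}_{\GW, (\beta,n)}, \]
after which the theorem follows by transitivity. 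Note that by Lemma~\ref{lemma:Laurent polynomial} the series $Z_{\Hilb}$ is a Laurent polynomial in $p$, so that substituting $p=e^z$ produces a power series in $z$ that can be compared termwise with the genus expansion defining $Z_{\GW}$.

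For the first equality I would apply Nesterov's quasimap wall-crossing \cite{N1}. The Hilbert scheme invariants defining $Z_{\Hilb}$ arise as the large-$\epsilon$ (honest stable maps to $S^{[n]}$) limit of the quasimap theory, while the small-$\epsilon$ limit is governed by stable pairs on $(S \times C, S_z)$; the insertion $\tau^{\ast}([\xi])$ with $\xi = [(C,z_1,\ldots,z_N)]$ fixes the complex structure of the base curve together with its marked points, which is exactly what matches the relative geometry over $C$. The wall-crossing a priori contributes correction terms, but by \cite{N2} these vanish for K3 surfaces, so the two limits agree on the nose. Under this identification the degree $r$ over the exceptional class $A$ on the Hilbert scheme side is converted into the Euler characteristic on the stable pairs side, and the variable $p$ is matched accordingly.

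For the second equality I would invoke the GW/PT correspondence for $(S \times C, S_z)$. In the primitive case this is established for $K3 \times \p^1$ in \cite{MarkedRelative}; here the genus expansion of the Gromov--Witten side in the variable $z$ is matched with the Euler-characteristic expansion of the stable pairs side under the change of variables $p = e^z$, which is precisely the normalization built into \eqref{defn:ZGW}. To pass from $\p^1$ to an arbitrary base curve $C$ I would degenerate $(C, z_1, \ldots, z_N)$ to a nodal curve whose components are copies of $\p^1$ and apply the degeneration formula in relative Gromov--Witten and stable pairs theory; both $Z_{\GW}$ and $Z_{\PT}$ satisfy compatible gluing axioms along the relative divisors, so the equality propagates from the building blocks to general $C$. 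The primitivity hypothesis on $\beta$ enters exactly at the point where \cite{MarkedRelative} is applied.

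The main obstacle is bookkeeping rather than geometry: one must match the sign prefactor $(-1)^{(1-g(C)-N)n + \sum_i \ell(\lambda_i)}$ and the power $z^{(2-2g(C)-N)n + \sum_i \ell(\lambda_i)}$ of \eqref{defn:ZGW} against the degree and Euler-characteristic shifts appearing in the Hilb/PT and GW/PT dictionaries, and verify that the variable identification $p = e^z$, together with its signs, is consistent across all three theories. A secondary subtlety is checking that the disconnected ($\bullet$) conventions and the reduced virtual class used throughout are respected by each correspondence, in particular that cosection localization is compatible with both the wall-crossing and the degeneration arguments. Once these compatibilities are recorded, the stated equality is a formal consequence of the quoted correspondences.
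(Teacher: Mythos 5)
Your proposal follows essentially the same route as the paper: the paper's proof also factors through relative Pandharipande--Thomas theory, citing Nesterov's quasimap wall-crossing (\cite[Cor.4.5]{N2}) for the Hilb/PT equality and the GW/PT correspondence of \cite[Thm.1.2]{MarkedRelative} for primitive $\beta$ for the PT/GW equality. The only difference is that the paper invokes \cite{MarkedRelative} directly for $(S \times C, S_z)$ with arbitrary base curve $C$, so your additional degeneration step from $\p^1$ to $C$ (which would require care with the reduced/non-reduced splitting in the degeneration formula) is not needed.
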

\begin{proof}
Denis Nesterov in \cite{N1,N2} showed that the left hand side is equal to
a partition function of relative Pandharipande-Thomas invariants
of $(S \times C, S_z$), see in particular \cite[Cor.4.5]{N2}.
The statement follows then from the GW/PT correspondence for $(S \times C, S_z)$ proven in \cite[Thm.1.2]{MarkedRelative} whenever $\beta$ is primitive.
\end{proof}

\begin{rmk} \label{rmk:GW/Hilb imprimitive}
If the multiple cover conjecture \cite[C2]{K3xE} holds for an effective curve class $\beta \in H_2(S,\BZ)$
then Theorem~\ref{thm:GW/Hilb} holds also for $\beta$, see \cite[Prop.1.4]{MarkedRelative}.
\end{rmk}

\subsection{Degeneration formula} \label{subsec:Degeneration formula}
We recall the reduced degeneration formula for reduced invariants.
Let $C \rightsquigarrow C_1 \cup_x C_2$ be a degeneration of $C$.
Let
\[ \{ 1, \ldots, N \} = A_1 \sqcup A_2 \]
be a partition of the index set of relative divisors,
and write $z(A_i) = \{ z_j | j \in A_i \}$.
We choose that the points in $A_i$ specialize to the curve $C_i$ disjoint from $x$.
Recall also the K\"unneth decomposition of the diagonal of the Hilbert scheme in the Nakajima basis:

\begin{lemma} In $H^{\ast}(S^{[n]} \times S^{[n]})$ we have
\begin{equation} \label{Kunneth diagonal}
\Delta_{S^{[n]}} = \sum_{\mu} (-1)^{n - \ell(\mu)}
\frac{\prod_i \mu_i}{|\Aut(\mu)|} \cdot \mu \boxtimes \mu^{\vee}.
\end{equation}
where $\mu$ runs over all cohomology weighted partitions $\mu = \{ (\mu_i, \gamma_{s_i}) \}$ with weights from a fixed basis $\CB = (\gamma_1, \ldots, \gamma_{24})$ of $H^{\ast}(S)$,
and $\mu^{\vee} = \{ (\eta_i, \gamma_{s_i}^{\vee}) \}$ is the dual partition.
\end{lemma}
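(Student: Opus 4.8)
The plan is to use the standard identification of the diagonal class with the identity operator: under the K\"unneth isomorphism $H^\ast(S^{[n]}\times S^{[n]})\cong H^\ast(S^{[n]})\otimes H^\ast(S^{[n]})$ the diagonal is $\Delta_{S^{[n]}} = \sum_a e_a \otimes e_a^{\#}$, where $\{e_a\}$ is any homogeneous basis of $H^\ast(S^{[n]})$ and $\{e_a^{\#}\}$ is the dual basis with respect to the Poincar\'e pairing, i.e. $\langle e_a, e_b^{\#}\rangle = \delta_{ab}$. Taking $\{e_a\}$ to be the Nakajima basis indexed by $\CB$-weighted partitions $\mu$ (Definition~\ref{defn:coh weighted to Nakajima}), the whole statement reduces to computing the Poincar\'e pairing of two Nakajima classes and then identifying the dual basis. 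Two facts drive the computation: the adjointness of the Nakajima operators, and the Heisenberg relation~\eqref{eqn:heis op} (which is nonzero only when the indices sum to zero). I note at the outset that $S$ is a K3 surface, so $H^\ast(S)$ is concentrated in even degrees and no Koszul signs arise.

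First I would record the adjointness of the Nakajima operators with respect to the Poincar\'e pairing. Directly from the correspondence definitions of $\Fq_{k}$ and $\Fq_{-k}$ and the projection formula on the diagram~\eqref{eqn:diagram zk}, for $k>0$ one finds
\[ \langle \Fq_k(\alpha)x, y\rangle = (-1)^k\langle x,\Fq_{-k}(\alpha)y\rangle, \]
so that $\Fq_k(\alpha)^{\dagger} = (-1)^k\Fq_{-k}(\alpha)$; indeed both sides equal the integral of $p_-^\ast x\cdot p_S^\ast\alpha\cdot p_+^\ast y$ over $S^{[n,n+k]}$. Next I would compute, for positive parts $k_i,l_j$ and classes $\alpha_i,\beta_j\in H^\ast(S)$, the pairing
\[ \Big\langle \prod_{i=1}^{\ell}\Fq_{k_i}(\alpha_i)\vacuum, \prod_{j=1}^{m}\Fq_{l_j}(\beta_j)\vacuum\Big\rangle. \]
The method is to peel off the leftmost creation operator using the adjointness above, turning $\Fq_{k_1}(\alpha_1)$ into $(-1)^{k_1}\Fq_{-k_1}(\alpha_1)$ acting on the second factor, and then to commute this annihilation operator to the right until it annihilates $\vacuum$, picking up a term $[\Fq_{-k_1}(\alpha_1),\Fq_{l_j}(\beta_j)] = -k_1(\alpha_1,\beta_j)\delta_{k_1,l_j}$ for each factor it meets. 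Iterating, the pairing vanishes unless the underlying partitions agree (so $\ell=m$ and the parts coincide), in which case it equals
\[ (-1)^{n-\ell}\Big(\prod_i k_i\Big)\sum_{\sigma}\prod_{i=1}^{\ell}(\alpha_i,\beta_{\sigma(i)}), \]
where $\sigma$ runs over the part-preserving bijections and $n=\sum_i k_i$, the sign being $(-1)^{\sum k_i+\ell}=(-1)^{n-\ell}$.

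Finally I would specialize to the normalized classes. For $\CB$-weighted partitions $\mu=\{(\mu_i,\gamma_{s_i})\}$ and $\nu=\{(\nu_j,\gamma_{t_j})\}$, inserting the normalization $1/\prod_i\mu_i$ and pairing $\mu$ against $\nu^\vee=\{(\nu_j,\gamma_{t_j}^\vee)\}$, the factors $(\gamma_{s_i},\gamma_{t_{\sigma(i)}}^\vee)=\delta_{s_i,t_{\sigma(i)}}$ force $\sigma$ to match both the part sizes and the cohomology labels. Hence the pairing vanishes unless $\mu=\nu$, and the number of surviving $\sigma$ is exactly $|\Aut(\mu)|$, giving
\[ \langle\mu,\nu^\vee\rangle = \delta_{\mu\nu}\,(-1)^{n-\ell(\mu)}\frac{|\Aut(\mu)|}{\prod_i\mu_i}. \]
Therefore the dual basis to $\{\mu\}$ is $\mu^{\#}=(-1)^{n-\ell(\mu)}\frac{\prod_i\mu_i}{|\Aut(\mu)|}\mu^\vee$, and substituting into $\Delta_{S^{[n]}}=\sum_\mu\mu\otimes\mu^{\#}$ yields~\eqref{Kunneth diagonal}. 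The only real work lies in the sign and automorphism bookkeeping of the last two steps; the underlying structure is classical (Nakajima, Lehn), so I would cite the standard references for the adjointness and the pairing formula and present the specialization to the dual basis in detail.
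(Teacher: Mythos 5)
Your proposal is correct and follows essentially the same route as the paper: the paper's proof consists precisely of the orthogonality relation $\int_{S^{[n]}} \mu \cdot \nu^{\vee} = \delta_{\mu \nu} (-1)^{n+\ell(\mu)} |\Aut(\mu)|/ \prod_i \mu_i$ (stated there without derivation), combined with the standard dual-basis description of the diagonal. Your derivation of that relation from the adjointness $\Fq_k(\alpha)^{\dagger} = (-1)^k \Fq_{-k}(\alpha)$ and the Heisenberg commutation relations, including the sign $(-1)^{n-\ell(\mu)} = (-1)^{n+\ell(\mu)}$ and the $|\Aut(\mu)|$ count, matches the paper's claim exactly.
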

\begin{proof}
For $\CB$-weighted partitions $\mu, \nu$ one has
$\int_{S^{{n]}}} \mu \cdot \nu^{\vee} = \delta_{\mu \nu} (-1)^{n+\ell(\mu)} |\Aut(\mu)|/ \prod_i \mu_i.$
\end{proof}

\begin{prop}
For any $\alpha_i \in H^{\ast}(S \times C)$ we have:
\begin{gather*}
Z^{(S \times C, S_z)}_{\GW, (\beta,n)}\left( \lambda_1, \ldots, \lambda_N \middle| \prod_{i} \tau_{k_i}(\alpha_i) \right) = 
\sum_{ \{ 1, \ldots, r \} = B_1 \sqcup B_2 } \Bigg( \\
\phantom{+}
Z^{(S \times C_1,S_{z(A_1),x})}_{(\beta,n)}\left( \prod_{i \in A_1} \lambda_i, \Delta_1 \middle| \prod_{i \in B_1} \tau_{k_i}( \alpha_i ) \right)
Z^{(S \times C_2,S_{z(A_2),x}), \std}_{(0,n)}\left( \prod_{i \in A_2} \lambda_i, \Delta_2 \middle| \prod_{i \in B_2} \tau_{k_i}(  \alpha_i ) \right) \phantom{\Bigg)}\\
+
Z^{(S \times C_1,S_{z(A_1),x}),\std}_{(0,n)}\left( \prod_{i \in A_1} \lambda_i, \Delta_1 \middle| \prod_{i \in B_1} \tau_{k_i}( \alpha_i ) \right)
Z^{(S \times C_2,S_{z(A_2),x})}_{(\beta,n)}\left( \prod_{i \in A_2} \lambda_i, \Delta_2 \middle| \prod_{i \in B_2} \tau_{k_i}(  \alpha_i ) \right) \Bigg)
\end{gather*}
where
$(\Delta_1, \Delta_2)$ stands for summing over the K\"unneth decomposition of the diagonal \eqref{Kunneth diagonal}. 
\end{prop}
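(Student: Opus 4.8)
The statement is the reduced degeneration formula applied to the specific degeneration $C \rightsquigarrow C_1 \cup_x C_2$ of the base curve, keeping the K3 factor $S$ fixed. The plan is to deduce it from the standard (ordinary) degeneration formula for relative Gromov-Witten theory, combined with the key structural feature of reduced invariants over a K3 surface: the reduced obstruction theory carries exactly one extra cosection coming from the holomorphic-symplectic form on $S$, and this extra class must distribute onto exactly one side of any degeneration. First I would degenerate the total space $S \times C \rightsquigarrow (S \times C_1) \cup_{S \times \{x\}} (S \times C_2)$, with the relative divisors $S_{z(A_i)}$ specializing to the respective pieces as prescribed, and $S_x = S \times \{x\}$ becoming a new relative divisor along which the two pieces are glued. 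Applying Jun Li's degeneration formula \cite{Li1,Li2} to the \emph{ordinary} virtual class then expresses the total invariant as a sum over ways to split the curve class, the genus, the interior insertions $B_1 \sqcup B_2$, and a gluing sum over the cohomology of $S^{[n]}$ (equivalently, over cohomology-weighted partitions along $S_x$) weighted by the diagonal.

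The second and essential step is to pass from the ordinary to the reduced formula. For that I would invoke the reduced degeneration formula as established in \cite{MPT} (see also \cite{MP_GWNL}): because the cosection obstruction vanishes unless the curve class $\beta$ on the $S$-factor is concentrated on one side, the reduced class splits as a sum of two terms, one in which the $C_1$-piece carries the reduced class (and hence the full class $\beta$) while the $C_2$-piece carries the \emph{standard} virtual class in class $(0,n)$, and one with the roles reversed. This is exactly the two-term structure displayed in the proposition, with the superscript $\std$ marking the side carrying the non-reduced class in a fiber class $(0,n)$. The diagonal gluing term is then rewritten in the Nakajima basis using the K\"unneth decomposition \eqref{Kunneth diagonal}, so that the gluing sum becomes the sum over $(\Delta_1,\Delta_2)$; the signs $(-1)^{n-\ell(\mu)}\prod_i \mu_i / |\Aut(\mu)|$ appearing in \eqref{Kunneth diagonal} are precisely what convert the abstract gluing over a dual basis into the stated pairing, and these are compatible with the overall sign and power-of-$z$ normalization built into the definition \eqref{defn:ZGW}.

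The main obstacle, and the step requiring genuine care, is bookkeeping the normalization factors of \eqref{defn:ZGW} across the degeneration. The prefactor $(-1)^{(1-g(C)-N)n + \sum_i \ell(\lambda_i)} z^{(2-2g(C)-N)n + \sum_i \ell(\lambda_i)}$ depends on the genus $g(C)$ and the number of relative markings, both of which change when $C$ breaks: genus is additive only up to the gluing node, and a new pair of relative markings along $S_x$ is introduced on each side. I would verify that the product of the two normalized piece-partition-functions reproduces the normalization of the left-hand side by checking the additivity $g(C) = g(C_1) + g(C_2)$ (for a non-separating degeneration; the formula also covers the separating case by the same accounting) and matching the contributions of the new markings against the factors $\prod_i \mu_i$ and the sign $(-1)^{n-\ell(\mu)}$ carried by the diagonal. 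The upshot is that the $z$-powers and signs from the two factors combine exactly to the left-hand normalization, and the reduced/standard split of Maulik--Pandharipande--Thomas then gives the two summands verbatim.
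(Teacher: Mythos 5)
Your proposal is correct and follows essentially the same route as the paper: the paper's proof simply cites Li's degeneration formula \cite{Li1,Li2}, the reduced-case modifications discussed in \cite{MPT}, and \cite[Sec.5.3]{MarkedRelative} for the matching of signs and exponents — precisely the three ingredients you spell out (ordinary degeneration of $S\times C$ along $S\times\{x\}$, the cosection argument forcing the reduced class onto exactly one side with the standard class in fiber class $(0,n)$ on the other, and the bookkeeping of the normalization in \eqref{defn:ZGW} against the diagonal \eqref{Kunneth diagonal}). One cosmetic slip: the degeneration $C \rightsquigarrow C_1 \cup_x C_2$ has a \emph{separating} node, which is exactly the case where $g(C)=g(C_1)+g(C_2)$ holds, whereas your parenthetical labels this the non-separating case.
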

\begin{proof}
The required modifications to the usual degeneration formula of Li \cite{Li1,Li2}
needed in the reduced case are discussed in \cite{MPT}.
We refer also to \cite[Sec.5.3]{MarkedRelative} for a discussion
of the matching of signs and exponents, and to \cite[Sec.8.1]{MarkedRelative}
for a conceptual explanation for the form of the equation.
\end{proof}

\subsection{Rubber invariants}
We will also need generating series of rubber invariants.
For any $\alpha_i \in H^{\ast}(S)$ define
\begin{multline*}
Z^{(S \times \p^1, S_{0,\infty}),\sim}_{\GW, (\beta,n)}\left( \lambda, \mu \middle| \prod_i \tau_{k_i}(\alpha_i) \right) \\
=
(-1)^{-n + \ell(\lambda) + \ell(\mu)}
z^{\ell(\lambda) + \ell(\mu)}
\sum_{g \in \BZ} (-1)^{g-1} z^{2g-2}
\left\langle \lambda, \mu \middle| \prod_i \tau_{k_i}(\alpha_i) \right\rangle_{g, (\beta,n)}^{(S \times \p^1, S_{0,\infty}),\bullet, \sim}
\end{multline*}
where the brackets on the right hand side are defined by integrating over the reduced virtual class of the moduli space of rubber stable maps to $(S \times \p^1, S \times \{0, \infty \})$.
The rubber invariants for the standard (non-reduced) virtual class are denoted by $\std$.

\subsection{Non-reduced invariants}
\label{subsec:Non-reduced invariants}
We state two explicit evaluations of non-reduced relative invariants:

\begin{prop}[\cite{BP}]  \label{prop:std evaluation}
For any cohomology weighted partitions $\lambda_1, \ldots, \lambda_N$ of size $n$,
\[
Z^{(S \times \p^1,S_{z}), \std}_{\GW, (0,n)}
\left( \lambda_1, \ldots, \lambda_N \right)
=
\int_{S^{[n]}} \lambda_1 \cup \ldots \cup \lambda_N.
\]
\end{prop}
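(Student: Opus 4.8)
The plan is to organize the standard relative invariants in class $(0,n)$ into a two-dimensional topological field theory on the Fock space $\bigoplus_{n} H^{\ast}(S^{[n]})$ and to identify the resulting Frobenius algebra with $(H^{\ast}(S^{[n]}), \cup, \int_{S^{[n]}})$, so that the $N$-holed sphere produces exactly the $N$-point function $\lambda_1 \otimes \cdots \otimes \lambda_N \mapsto \int_{S^{[n]}} \lambda_1 \cup \cdots \cup \lambda_N$.

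First I would exploit the $S$-locality forced by $\beta = 0$: on each connected component of the domain the projection to $S$ has degree $0$, hence is constant at some $x \in S$, so $f^{\ast} T_S \cong \CO_C \otimes T_x S$ and the $S$-directed part of the obstruction theory is $H^1(C,\CO_C) \otimes T_x S \cong \BE^{\vee} \boxtimes T S$. Its contribution to the standard virtual class is the Euler class $e(\BE^{\vee} \boxtimes TS)$, which using $c_1(S) = 0$ and $c_2(S) = 24\,[\pt]$ equals $\prod_{a}(\alpha_a^2 + 24\,[\pt])$, where the $\alpha_a$ are the Chern roots of $\BE^{\vee}$. Since $[\pt]^2 = 0$ and $\lambda_g^2 = (\prod_a \alpha_a)^2 = 0$ on $\Mbar_g$ for $g \geq 1$ (the codimension-$2g$ part of the Mumford relation $c(\BE)c(\BE^{\vee}) = 1$), the leading term drops out and a genus-$g$ component survives only through the single linear-in-$[\pt]$ term $24\,[\pt] \cdot e_{g-1}(\alpha_1^2, \ldots, \alpha_g^2)$. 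Thus higher-genus components \emph{do} contribute, but always by pinning the $S$-coordinate to a point and inserting a tautological factor on $\Mbar_{g}$; this is precisely the mechanism that will generate the non-classical, $c_2(S)$-dependent structure constants of the Lehn--Sorger ring.

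Next I would reduce to building blocks. The ordinary (non-reduced) degeneration formula of Li \cite{Li1,Li2}, together with the K\"unneth decomposition of the diagonal \eqref{Kunneth diagonal}, shows that $Z^{(S \times \p^1, S_z),\std}_{\GW,(0,n)}$ defines a gluing-compatible TQFT; degenerating $(\p^1, z_1, \ldots, z_N)$ into a chain of rational curves carrying at most three special points each reduces the claim to the cylinder ($N=2$) and the pair of pants ($N=3$), the cap and unit being recovered from these. The cylinder is a rubber/tube computation whose connected genus-$0$ contributions are the totally ramified covers $w \mapsto w^d$, and one checks that the full tube reproduces the Fock-space pairing $Z^{\std}(\lambda, \mu) = \langle \lambda, \mu \rangle = \int_{S^{[n]}} \lambda \cup \mu$, consistent with the normalization \eqref{defn:ZGW} and the identification \eqref{identification}.

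The main obstacle is the pair of pants: one must show that the genus-summed, standard, three-fibered invariant in class $(0,n)$ computes the cup product on $H^{\ast}(S^{[n]})$, \emph{including} its $c_2(S)$-corrections. This is the substantive content of the local Gromov--Witten theory of curves \cite{BP}: by a further degeneration and the local analysis of interior branch points one expresses the three-point function through the operators of \cite{Nak} and matches it, genus by genus, against the Lehn--Sorger presentation of $\cup$, the higher-genus local contributions isolated above furnishing exactly the Euler-class corrections of that presentation. Granting the cylinder and the pants, the general $N$ follows formally: the $N$-holed sphere is the composite of $N-2$ multiplications followed by the counit $\int_{S^{[n]}}$, which by associativity and commutativity of $\cup$ yields $\int_{S^{[n]}} \lambda_1 \cup \cdots \cup \lambda_N$, as claimed.
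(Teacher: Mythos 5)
Your overall architecture --- splitting off the $S$-direction of the obstruction theory by $S$-locality, organizing the class-$(0,n)$ standard theory into a TQFT via the degeneration formula, reducing to the tube and the pair of pants, and identifying the pants with the cup product of $H^{\ast}(S^{[n]})$ --- is the route that the paper itself implicitly relies on: the paper gives no proof of Proposition~\ref{prop:std evaluation} at all, citing \cite{BP}, whose content is exactly this TQFT analysis. Your tube evaluation is also consistent with the paper's proof of Proposition~\ref{prop:rubber std}, which in passing establishes the $N=2$ case.

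There is, however, a concrete error in your Euler-class analysis, and it infects the mechanism you propose for the pants. Mumford's relation $c(\BE)c(\BE^{\vee})=1$ gives $\prod_a (1-\alpha_a^2)=1$, i.e. $e_k(\alpha_1^2,\ldots,\alpha_g^2)=0$ for \emph{every} $k\geq 1$, not only the top case $\lambda_g^2=e_g(\alpha^2)=0$ that you extract from it. Hence your surviving term $24[\pt]\cdot e_{g-1}(\alpha_1^2,\ldots,\alpha_g^2)$ vanishes for all $g\geq 2$: a connected component of genus $\geq 2$ contributes zero, a genus-one component contributes exactly $24[\pt]$ (as $\lambda_1^2=0$), and only genus $0$ and $1$ survive --- in agreement with \eqref{degree zero virtual class} and with the paper's proof of Proposition~\ref{prop:rubber std}, where connected contributions are killed for $g\geq 2$. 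Consequently your assertion that higher-genus components do contribute, and that these contributions ``furnish exactly the Euler-class corrections'' of the Lehn--Sorger presentation, is wrong as a mechanism: the $c_2(S)$-dependent structure constants arise solely from genus-one components together with the ramification combinatorics of genus-zero multiple covers, and for fixed insertions only a single total genus contributes at all, by the dimension constraint on the standard virtual class (which is also what makes the answer $z$-independent --- a point your proposal never addresses). Since the one substantive step --- that the pants computes $\cup$ on $H^{\ast}(S^{[n]})$ including these corrections --- is in your write-up deferred to \cite{BP} and to a ``genus by genus'' matching built on the erroneous genus analysis, that step would not go through as described; it is precisely the content for which the paper cites \cite{BP}.
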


Recall the class $\delta \in H^2(S^{[n]})$ from Section~\ref{sec:cohomology}.
\begin{prop}
 \label{prop:rubber std}
\[
Z^{\sim, \std}_{\GW, (0,n)}( \lambda, \mu )
= 
z \int_{S^{[n]}} \delta \cup \lambda \cup \mu
\]
\end{prop}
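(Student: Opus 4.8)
The plan is to read $Z^{\sim,\std}_{\GW,(0,n)}(\lambda,\mu)$ as the matrix coefficient of an operator on the Fock space $H^{\ast}(\Hilb)$ and to identify that operator with cup product by $\delta$. By multilinearity in $\lambda$ and $\mu$ and the conventions of Section~\ref{subsec:Conventions}, the assignment $(\lambda,\mu)\mapsto Z^{\sim,\std}_{\GW,(0,n)}(\lambda,\mu)$ is recorded by a single operator $\mathsf{D}\colon H^{\ast}(S^{[n]})\to H^{\ast}(S^{[n]})$, and since cup product by $\delta$ is self-adjoint for the Poincar\'e pairing the desired identity is equivalent to the operator equation $\mathsf{D}=z\,(\delta\cup-)$. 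The first step is therefore to describe the geometry: because $\beta=0$, every rubber stable map is constant along $S$, so each connected component of the domain is sent to a single point of $S$ and to a rubber map of positive degree to the $\p^1$-factor. Allowing disconnected domains (the condition $\bullet$), different components may be sent to different points of $S$, and this is exactly what produces the Fock-space vertex structure.

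The second step is the evaluation of the per-component contribution. On a connected component the standard virtual class factors, through the product structure of $S\times\p^1$, into the $\p^1$-rubber obstruction theory and the obstruction bundle $\BE^\vee\boxtimes T_S$ of constant maps to the surface. Using $c_1(T_S)=0$ together with Mumford's relation $c_g(\BE)^2=0$, one computes $e(\BE^\vee\boxtimes T_S)$ and finds that the surviving contributions are the genus-$0$ three-valent rubber vertices (carrying an $S$-weight of total complex degree $2$) and, through the factor $\int_S c_2(T_S)=24$, the point-weighted pieces. A count of the standard virtual dimension against the prefactors $z^{\ell(\lambda)+\ell(\mu)}\sum_g(-1)^{g-1}z^{2g-2}$ in the definition of $Z^{\sim,\std}$ then forces the total power of $z$ to be $1$ and selects the three-valent configuration; the $\p^1$-rubber factor is evaluated by the degree-$n$ vertex of \cite{BP}, equivalently by $\BC^\ast$-rubber localization. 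Assembling the vertices over the K\"unneth components of the small diagonal $\Delta_{123}$ of $S^3$ yields the cubic Nakajima operator $-\tfrac{1}{6}\sum_{i+j+k=0}\colon\!\Fq_i\Fq_j\Fq_k(\Delta_{123})\colon$, which by \eqref{LLV_operators} and \cite{Lehn} is precisely $\delta\cup-$. Finally the overall coefficient is pinned down to $z^1$ by testing a single matrix element against Proposition~\ref{prop:std evaluation}.

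The main obstacle is this second step: matching the disconnected rubber sum to $\delta\cup-$ requires careful bookkeeping of signs, automorphism factors and the normal-ordering convention, and in particular a correct treatment of the negative-virtual-dimension "tube" components together with the way the spectator Nakajima factors are matched, so that only a single three-valent vertex survives and contributes the lone power of $z$. The reduction to an operator identity and the dimension count are routine once the vertex evaluation is in hand; the cleanest route to the latter is to run the computation in parallel with the non-rubber evaluation of \cite{BP} underlying Proposition~\ref{prop:std evaluation}, obtaining the rubber case from the rigid cap by the standard rubber-to-relative comparison of \cite{MP}.
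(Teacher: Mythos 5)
Your proposal follows essentially the same route as the paper's proof: for $\beta=0$ the product formula (equivalently, the obstruction bundle $\BE^{\vee}\boxtimes T_S$ together with $c_1(T_S)=0$ and Mumford's relations) kills genus $\geq 2$, dimension counting kills the genus $1$ ($c_2$-weighted) pieces, the disconnected rubber series collapses to a single genus-zero three-valent vertex $\int_S \gamma_a\gamma_b\gamma_c'$ times tube factors carrying exactly one power of $z$, and the outcome is matched against the cubic Nakajima formula \eqref{LLV_operators} for $e_{\delta}$. The only difference is packaging — you phrase the comparison as an operator identity on Fock space assembled from the small diagonal and normalize via the rigid case, while the paper compares the matrix elements of $e_{\delta}$ with the weighted partitions $\lambda,\mu$ directly — which is cosmetic.
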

\begin{proof}
Consider first the connected rubber invariants $\langle \lambda, \mu \rangle^{\sim}_{g, (0,n)}$ (no $\bullet$ means connected).
By the stability of the moduli space we have $2g-2+\ell(\lambda) + \ell(\mu) > 0$.
Hence we can apply the product formula which shows that the invariant vanishes for $g \geq 2$.
If $g=1$ all the cohomology weights of $\lambda, \mu$ have to be of degree $0$,
hence $\deg(\lambda) + \deg(\mu) \leq 2n-2$. Since the moduli space is of virtual dimension $2n-1$, the integral vanishes.
This leaves $g=0$. Let $\lambda = (\lambda_i, \gamma_i)$ and $\mu = (\mu_i, \gamma_i')$.
We find
\[ \sum_i \deg(\gamma_i) + \sum_i \deg_i(\gamma_i') = 2. \]
On the other hands, by \eqref{deg Nakajima cycle} we have
\begin{gather*}
\deg(\lambda) = n - \ell(\lambda) + \sum_i \deg(\gamma_i) \\
\deg(\mu) = n- \ell(\mu) + \sum_i \deg(\gamma_i')
\end{gather*}
and moreover we can assume the dimension constraint:
\[ \deg(\lambda) + \deg(\mu) = 2n-1. \]
Inserting, we find $\ell(\lambda) + \ell(\mu) = 3$.
If we assume that $\lambda= ((\lambda_a, \gamma_a) (\lambda_b, \gamma_b))$ and $\mu = ((\mu_c, \gamma'_c))$,
then by the product formula we obtain
\begin{align*}
\langle \lambda, \mu \rangle^{\sim}_{g, (0,n)}
& =
\delta_{g0}
\int_{[ \Mbar_{0,3}(S,0) ]^{\std}} \pi^{\ast}( \DR_0(\lambda_a, \lambda_b, -\mu_c)) \ev_1^{\ast}(\gamma_a) \ev_2^{\ast}(\gamma_b) \ev_3^{\ast}(\gamma'_c) \\
& =
\delta_{g0}
\int_S \gamma_a \gamma_b \gamma'_c.
\end{align*}
where $\DR_g(a)$ is the double ramification cycle and we used that it is $=1$ in genus 0.

For the disconnected case, recall that all connected non-rubber invariants of $(S \times \p^1,S_{0,\infty})$
with only relative insertions vanish (see e.g. \cite[Lemma 2]{HAE}), except for the tube evaluation
\[
\int_{[ \Mbar^{\bullet}_{g}( S \times \p^1 / S_{0,\infty}, (0,n), ( (n), (n)) ) ]^{\vir}}
\ev_1^{\ast}(\gamma) \ev_2^{\ast}(\gamma') = \delta_{0g} \frac{1}{n} \int_{S} \gamma \cdot \gamma'.
\]
(This also proves Proposition~\ref{prop:std evaluation} in the case $N=2$.)
Moreover, in the disconnected series, we have one rubber term and the remaining terms are non-rubber,

We conclude that we must have $\ell(\lambda) = \ell(\mu) \pm 1$, otherwise all invariants vanish.
We assume that $\ell(\lambda) = \ell(\mu)+1$, the other case is parallel.
We find that
\begin{multline*}
Z^{\sim, \std}_{\GW, (0,n)}( \lambda, \mu ) 
= 
\sum_{g \in \BZ} (-1)^{g-1} 
(-1)^{-n + \ell(\lambda) + \ell(\mu)} z^{2g-2 + \ell(\lambda) + \ell(\mu)}
\Big\langle \lambda, \mu \Big\rangle_{g, (0,n)}^{\bullet, \sim} \\
= 
\sum_{g \in \BZ} (-1)^{g-1} 
(-1)^{-n + \ell(\lambda) + \ell(\mu)} z^{2g-2 + \ell(\lambda) + \ell(\mu)}
\sum_{\substack{ 1 \leq a,b \leq \ell(\lambda)  \\ a \neq b \\ 1 \leq c \leq \ell(\mu) }} 
\left( \delta_{g+\ell(\lambda'), 0} \int_{S} \gamma_{a} \gamma_{b} \gamma'_c \right) \\
\left( (-1)^{ |\lambda'| + \ell(\lambda')} \frac{1}{\prod_{i \neq a,b} \lambda_i \prod_{i \neq c} \mu_i} \int_{ S^{[ |\lambda'| ]} } \prod_{i \neq a,b} \Fq_{\lambda_i}(\gamma_i) \1 \cup \prod_{i \neq c} \Fq_{\mu_i}(\gamma'_i) \1 \right) 
\end{multline*}
where $\lambda'$ is the partition $\lambda$ without the parts $(\lambda_a, \gamma_a), (\lambda_b, \gamma_b)$. Since it is of length $\ell(\lambda') = \ell(\lambda)-2$,
we obtain:
\begin{multline*}
Z^{\sim, \std}_{\GW, (0,n)}( \lambda, \mu ) 
=
\frac{z}{\prod_i \lambda_i \prod_i \mu_i} \\
\sum_{\substack{ 1 \leq a,b \leq \ell(\lambda)  \\ a \neq b \\ 1 \leq c \leq \ell(\mu) }} 
(-1)^{\lambda_a + \lambda_b} \lambda_a \lambda_b \mu_c \left( \int_{S} \gamma_a \gamma_b \gamma'_c \right)
\int_{S^{[n-\lambda_a - \lambda_b]}}  \prod_{i \neq a,b} \Fq_{\lambda_i}(\gamma_i) \1 \cup \prod_{i \neq c} \Fq_{\mu_i}(\gamma'_i) \1.
\end{multline*}

On the other side, recall that the operator of cup product with $\delta$ can be explicitly described as a cubic in Nakajima operators \eqref{LLV_operators}.
For $i = j+k$, one obtains:
\[
\Big( \Fq_i(\gamma_i), e_{\delta}\, \Fq_j(\gamma_j) \Fq_k(\gamma_k) \1 \Big)
= (-1)^{j+k} i \cdot j \cdot k \int_S \gamma_i \gamma_j \gamma_k.
\]
where we write $(- , - )$ for the intersection pairing on $S^{[n]}$.
One finds that
$\int_{S^{[n]}} \delta \cup \lambda \cup \mu$
vanishes unless  $\ell(\lambda) = \ell(\mu) \pm 1$.
Assuming that $\ell(\lambda) = \ell(\mu)+1$ we compute:
\begin{multline*}
\int_{S^{[n]}} \delta \cup \lambda \cup \mu
=
\frac{1}{\prod_i \lambda_i \prod_i \mu_i} \\
\sum_{\substack{ 1 \leq a,b \leq \ell(\lambda)  \\ a \neq b \\ 1 \leq c \leq \ell(\mu) }} 
(-1)^{\lambda_a + \lambda_b} \left( \int_{S} \gamma_a \gamma_b \gamma'_c \right)
\int_{S^{[n-\lambda_a - \lambda_b]}}  \prod_{i \neq a,b} \Fq_{\lambda_i}(\gamma_i) \1 \cup \prod_{i \neq c} \Fq_{\mu_i}(\gamma'_i).
\end{multline*}
The claim follows by comparison.
\end{proof}

\subsection{Reduced rubber invariants}
\label{subsec:reduced rubber invariants}
The reduced rubber invariants can be expressed in terms of the non-reduced ones by rigidification.
This is the K3 surface analogue of \cite[Prop. 4.4]{M}:

\begin{prop} \label{prop:rubber in terms of non-rubber} For any $D \in H^2(S)$ and $\beta \neq 0$ we have
\begin{multline*}
(D \cdot \beta)
Z^{(S \times \p^1, S_{0,\infty}),\sim}_{\GW, (\beta,n)}( \lambda, \mu ) \\
= Z^{(S \times \p^1, S_{0,1,\infty})}_{\GW,(\beta,n)}\big(\lambda, \mu, D \big)
+ \left( \int_{S^{[n]}} \lambda \cdot \mu \right) 
Z^{(S \times \p^1,S_{0})}_{\GW, (\beta,n)}\big( (1,\pt)^n | \tau_0(\omega D) \big).
\end{multline*}
where $D = \frac{1}{(n-1)!} ( (1,D)(1,1)^{n-1})$.
\end{prop}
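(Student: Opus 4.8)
The plan is to prove this by rubber rigidification, the reduced analogue of the argument for Proposition~4.4 in \cite{M}. The key geometric input is that the reduced cosection of \cite{MP_GWNL, MPT} is pulled back from the $S$-factor, hence is invariant under the scaling $\mathbb{C}^{\ast}$ of the rubber target and restricts compatibly under degeneration and rigidification; this lets the classical rubber calculus of \cite{Li1,Li2} run in the reduced setting. Writing $p_S$ and $p_{\p^1}$ for the two projections of $S\times\p^1$, the first step is the divisor equation on the rubber: the class $p_S^{\ast}(D)\in H^2(S\times\p^1)$ is pulled back from $S$ and so is $\mathbb{C}^{\ast}$-invariant, whence inserting it at one interior marking of the reduced rubber gives, with no $\psi$-corrections since the relative entries are $\tau_0$-type,
\[
Z^{(S\times\p^1, S_{0,\infty}),\sim}_{\GW,(\beta,n)}\!\big(\lambda,\mu\,\big|\,\tau_0(p_S^{\ast}D)\big) = (D\cdot\beta)\, Z^{(S\times\p^1, S_{0,\infty}),\sim}_{\GW,(\beta,n)}(\lambda,\mu).
\]
This produces the left-hand side.

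Next I would rigidify the floating interior marking $x$ carrying $p_S^{\ast}(D)$ by pinning it to the central fibre $S_1 = S\times\{1\}$, i.e.\ by replacing its position with the constraint $p_{\p^1}^{\ast}(\omega)$. Over the open locus where no domain component is contracted into $S_1$, pinning $x$ to a fixed fibre is a section of the $\mathbb{C}^{\ast}$-action and identifies the rubber-with-marking with the corresponding rigid relative moduli space of $(S\times\p^1,S_{0,\infty})$ carrying the interior class $p_{\p^1}^{\ast}(\omega)\cdot p_S^{\ast}(D)$. On this locus the incidence of the curve with $S_1$ upgrades, by the degeneration-to-the-normal-cone comparison of \cite{Li1,Li2}, to a genuine relative condition along a third relative divisor $S_1$; since the remaining $n-1$ sheets meet $S_1$ without constraint, that relative condition is exactly the Hilbert-scheme divisor $\tfrac{1}{(n-1)!}\big((1,D)(1,1)^{n-1}\big)$ associated to $D\in H^2(S)$. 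This stratum therefore contributes the rigid term $Z^{(S\times\p^1, S_{0,1,\infty})}_{\GW,(\beta,n)}(\lambda,\mu,D)$.

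The complementary stratum, where a connected component is contracted into the fibre $S_1$, supplies the correction. Here the rigidification degenerates: the contracted fibre component carries the standard (non-reduced) class, and by Proposition~\ref{prop:std evaluation} together with the K\"unneth splitting \eqref{Kunneth diagonal} of the diagonal of $S^{[n]}$ it evaluates to the scalar $\int_{S^{[n]}}\lambda\cdot\mu$, while the surviving curve of class $(\beta,n)$ remains relative only to $S_0$, retains the interior insertion $\omega D$, and meets the collapsed fibre with the totally separated profile $(1,\pt)^n$. This yields precisely $\big(\int_{S^{[n]}}\lambda\cdot\mu\big)\, Z^{(S\times\p^1, S_0)}_{\GW,(\beta,n)}\big((1,\pt)^n \,\big|\, \tau_0(\omega D)\big)$, and explains why only the main series is reduced whereas the collapsed factor is the non-reduced fibre evaluation. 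Combining the two computations of the rigidified invariant gives the stated identity.

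The main obstacle will be the virtual-class bookkeeping in the rigidification: one must verify, working with the reduced obstruction theory rather than the standard one, that pinning $x$ to $S_1$ computes the reduced rubber series with multiplicity exactly $D\cdot\beta$, and that the contracted-fibre locus contributes only the single product displayed rather than a longer sum over splittings of the domain and the curve class. Matching the signs, the powers of $z$ built into the definitions \eqref{defn:ZGW} of the partition functions, and the normalisation $1/(n-1)!$ in the relative insertion $D$ is then routine but must be carried out with care.
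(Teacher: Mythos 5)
Your proposal is correct in outline and follows essentially the same route as the paper's proof: the divisor equation on the rubber plus the rigidification statement (the reduced analogue of \cite[Prop.~4.3]{M}, cf.\ \cite[Prop.~3.12]{MarkedRelative}) gives the exact identity
\[
(\beta\cdot D)\,\big\langle \lambda,\mu\big\rangle^{(S\times\p^1,S_{0,\infty}),\sim}_{g,(\beta,n)}
=\big\langle \tau_0(\omega D)\,\big|\,\lambda,\mu\big\rangle^{(S\times\p^1,S_{0,\infty})}_{g,(\beta,n)},
\]
and your ``incidence-to-relative'' step is then carried out in the paper by applying the reduced degeneration formula to the degeneration of $S\times\p^1$ that breaks off the fiber over $1$. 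The ``virtual bookkeeping'' you defer is exactly two explicit evaluations and nothing more: the sum over cohomology-weighted gluing profiles $\nu$ collapses because the standard-class bubble factor $Z^{(S \times \p^1, S_{0}), \std}_{\GW, (0,n)}( \tau_0(\omega D) \,|\, \nu^{\vee})$ vanishes unless $\nu=(1,\gamma)(1,\pt)^{n-1}$, and these terms assemble into the divisor class $D$; in the term with the reduced class on the bubble, dimension reasons force $\nu=(1,1)^{n}$, and Proposition~\ref{prop:std evaluation} produces the factor $\int_{S^{[n]}}\lambda\cdot\mu$. Two corrections to your geometric picture, neither of which affects the final formula you state: (i) the rigidification is an equality with no correction terms, so you should not present the correction term as a failure of rigidification on a bad locus --- it arises only from the subsequent degeneration; (ii) the labels in your correction term are inverted: the piece of the curve that falls into the bubble over $S_1$ is the one carrying the class $(\beta,n)$, the reduced class, the interior insertion $\omega D$ and the contact profile $(1,\pt)^n$, whereas it is the degree-$(0,n)$, standard-class piece attached to $S_0$ and $S_\infty$ that carries $\lambda,\mu$ and evaluates to $\int_{S^{[n]}}\lambda\cdot\mu$.
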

\begin{proof}[Proof of Proposition~\ref{prop:rubber in terms of non-rubber}]
Rigidification of the rubber as discussed in \cite[Prop. 4.3]{M} (or \cite{MP}, or \cite[Prop.3.12]{MarkedRelative}) implies:
\begin{align*}
(\beta \cdot D) \big\langle \lambda, \mu \big\rangle_{g, (\beta,n)}^{(S \times \p^1, S_{0,\infty}),\sim}
& = 
\big\langle \tau_0(D) \big| \lambda, \mu \big\rangle_{g, (\beta,n)}^{(S \times \p^1, S_{0,\infty}),\sim} \\
& = 
\big\langle \tau_0(\omega D) \big| \lambda, \mu \big\rangle_{g, (\beta,n)}^{(S \times \p^1, S_{0,\infty})}.
\end{align*}
For the disconnected rubber invariants we hence obtain that:
\begin{align*}
& (\beta,D) Z^{(S \times \p^1, S_{0,\infty}),\sim}_{\GW, (\beta,n)}( \lambda, \mu ) \\
= & \sum_{g \in \BZ} (-1)^{g-1} 
(-1)^{-n + \ell(\lambda) + \ell(\mu)} z^{2g-2 + \ell(\lambda) + \ell(\mu)}
\sum_{\substack{\lambda = \lambda' \sqcup \lambda'' \\
\mu = \mu' \sqcup \mu''}}
\Big\langle \tau_0(\omega D) \Big| \lambda'', \mu'' \Big\rangle_{g + \ell(\lambda'), (\beta,n)}^{(S \times \p^1, S_{0,\infty})}
 \\
& \quad \cdot \left( (-1)^{ |\lambda'| + \ell(\lambda')} \frac{1}{\prod_{\lambda_i \in \lambda'} \lambda_i \prod_{i \in \mu'} \mu_i} \int_{ S^{[ |\lambda'| ]} } \prod_{\lambda_i \in \lambda'} \Fq_{\lambda_i}(\gamma_i) \1 \cup \prod_{\mu_i \in \mu'} \Fq_{\mu_i}(\gamma'_i) \1 \right) \\
= &
Z^{(S \times \p^1, S_{0,\infty})}_{\GW, (\beta,n)}\left( \tau_0(\omega D) \middle| \lambda, \mu \right).
\end{align*}
We now apply the degeneration formula which gives:
\begin{multline}
\label{sdasf}
Z^{(S \times \p^1, S_{0,\infty})}_{\GW, (\beta,n)}\left( \tau_0(\omega D) \middle| \lambda, \mu \right) \\
= \sum_{\nu} 
Z^{(S \times \p^1, S_{0,1,\infty})}_{\GW, (\beta,n)}\left( \lambda, \mu, \nu \right) (-1)^{|\nu| + \ell(\nu)} \frac{\prod_i \nu_i}{|\Aut(\nu)|}
Z^{(S \times \p^1, S_{0}), \std}_{\GW, (0,n)}\left( \tau_0(\omega D) \middle| \nu^{\vee} \right) \\
+ 
\sum_{\nu} 
Z^{(S \times \p^1, S_{0,1,\infty}), \std}_{\GW, (0,n)}\left( \lambda, \mu, \nu \right) (-1)^{|\nu| + \ell(\nu)} \frac{\prod_i \nu_i}{|\Aut(\nu)|}
Z^{(S \times \p^1, S_{0})}_{\GW, (\beta,n)}\left( \tau_0(\omega D) \middle| \nu^{\vee} \right).
\end{multline}
We have the straightforward evaluation
\[
Z^{(S \times \p^1, S_{0}), \std}_{\GW, (0,n)}\left( \tau_0(\omega D) \middle| \nu^{\vee} \right)
=
\begin{cases}
\int_{S} \gamma D & \text{ if } \nu = (1,\gamma)(1,\pt)^{n-1} \\
0 & \text{ if } \nu = (2,\pt) (1,\pt)^{n-2}
\end{cases}
\]
which gives us
\[
\sum_{\nu}  (-1)^{|\nu| + \ell(\nu)} \frac{\prod_i \nu_i}{|\Aut(\nu)|}
Z^{(S \times \p^1, S_{0}), \std}_{\GW, (0,n)}\left( \tau_0(\omega D) \middle| \nu^{\vee} \right) \cdot \nu = \frac{1}{(n-1)!} ( (1,D)(1,1)^{n-1}) = D.
\]
Moreover, in the second summand on the right of \eqref{sdasf} we must have $\nu = (1,1)^{n}$ for dimension reasons.
Using Proposition~\ref{prop:std evaluation} the claim follows.
\end{proof}

For primitive $\beta$ the second term on the right of the proposition is known:
\begin{prop}[\cite{K3xP1}] \label{K3xP1 lemma}
If $\beta \in H_2(S,\BZ)$ is primitive, then
\[
Z^{(S \times \p^1, S_{0})}_{\GW, (\beta,n)}( (1,\pt)^n | \tau_0(\omega D) ) = (\beta,D) \mathrm{Coeff}_{q^{\beta^2/2}}\left( \frac{\mathbf{G}^n(z,q)}{\Theta^2(z,q) \Delta(q)} \right),
\]
where $\mathbf{G}(z,q) = -\Theta(z,\tau)^2 D_z^2 \log(\Theta(z,\tau))$ with $D_z = \frac{d}{dz}$ and $q=e^{2\pi i \tau}$.
\end{prop}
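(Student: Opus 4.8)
The plan is to evaluate this relative invariant by factoring the geometry $S \times \p^1$ into its two factors, feeding in the known reduced theory of the K3 surface $S$ on one side and the Gromov--Witten theory of $\p^1$ relative to a point on the other. First I would use deformation invariance to reduce to the case where $\beta$ is a primitive class on an elliptic K3 surface, so that the reduced invariants of $S$ in class $\beta$ are accessible through the evaluation of \cite{MPT}. The key structural input is a reduced refinement of Behrend's product formula: a stable map to $S \times \p^1$ of class $(\beta,n)$ projects to a reduced stable map to $S$ of class $\beta$ and a degree-$n$ map to $(\p^1, S_0)$ sharing the same domain, and the reduced virtual class of $\Mbar^{\bullet}((S\times\p^1, S_{0}),\ldots)$ is the product of the reduced class on the $S$-side with the ordinary class on the $(\p^1,0)$-side, fibered over the moduli of domain curves. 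Under this splitting the $n$ relative markings of weight $\pt$ become $n$ point insertions on the $S$-side, the interior class $\omega \cdot D$ splits as the point class $\omega$ on $\p^1$ (which rigidifies the target) times the divisor $D$ on $S$, and the divisor equation on the K3 side produces the prefactor $(\beta, D)$.

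Second, I would evaluate the two factors separately and then form the $z$-expansion dictated by \eqref{defn:ZGW}. On the K3 side, summing the reduced genus-$g$ invariants of $S$ in class $\beta$ with $n$ point conditions over all genera, and over all primitive classes weighted by $q^{\beta^2/2}$, yields the factor $1/\Delta(q)$ by the Yau--Zaslow/KKV evaluation of \cite{MPT}. On the $\p^1$ side, the relative degree-$n$ theory of $(\p^1, 0)$ with full ramification profile $(1^n)$ over $0$ is controlled by the Gromov--Witten/Hurwitz correspondence; resumming over genus with the sign and $z$-weights of \eqref{defn:ZGW} assembles the $n$ sheets together with the rigidified interior point into the factor $\mathbf{G}(z,q)^n / \Theta(z,q)^2$. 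Here I would exploit the identity $\mathbf{G}/\Theta^2 = \wp + 2G_2 = -D_z^2 \log \Theta$ to match the local vertex contribution of each sheet, the overall $\Theta^{-2}$ playing the role of the propagator attached to the interior marking. Multiplying the two factors and extracting the coefficient of $q^{\beta^2/2}$ then gives the claimed formula.

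The hard part will be the interaction between the reduced class and the product structure in the first step, together with the $\p^1$ resummation in the second. The reduced virtual class does not factor through the naive product, so the product formula must be applied carefully to the reduced obstruction theory, and the fibering over the moduli of domain curves couples the $\psi$- and tautological classes on the two factors. Reorganizing this fibered product so that the genus resummation on the $\p^1$ side reproduces \emph{exactly} the $z$-expansion of $\mathbf{G}^n/\Theta^2$, rather than some other theta-like series, is the technical heart of the argument, and is where the modularity input of \cite{MPT} and the explicit $z$-expansions of $\Theta$ and $\wp$ are essential. An alternative route, avoiding the product formula altogether, would be to pass through the Hilb/GW correspondence of Theorem~\ref{thm:GW/Hilb} and compute the corresponding genus-$0$ invariants of $S^{[n]}$ directly; this trades the reduced product formula for a computation in the quantum cohomology of the Hilbert scheme.
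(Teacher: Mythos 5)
First, note that the paper you are reading contains \emph{no proof} of this Proposition: it is imported from \cite{K3xP1} (that is what the bracketed citation in the statement means), so there is no internal argument to compare yours against. Measured on its own terms, your proposal has a genuine gap, and its central step is false as stated. The reduced product formula does not express the invariant as a product of a K3 generating series with a $\p^1$ generating series: it expresses it, genus by genus, as an integral over $\Mbar_{g,n+1}(S,\beta)$ of the reduced class against evaluation classes and a Hurwitz-type tautological class pushed forward from the moduli of relative maps to $(\p^1,0)$ with profile $(1^n)$, so the two sides stay coupled through the moduli of domain curves. More decisively, in this problem the variable $q$ records the K3 class $\beta$ (via $q^{\beta^2/2}$), while the relative theory of $(\p^1,0)$ knows nothing about $q$; hence within any factorization the $\p^1$ side can only produce a function of $z$, and it can never produce $\mathbf{G}(z,q)^n/\Theta(z,q)^2$, which has nontrivial $q$-dependence. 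Indeed the right-hand side $\mathbf{G}^n/(\Theta^2\Delta)$ does not split as $f(q)\,g(z)$ at all (already $\mathbf{G}/\Theta^2=\wp+2G_2$ has $z^{-2}$-coefficient $1$ but $z^{0}$-coefficient $2G_2(q)$), so no argument of product shape can yield it. What the product formula actually leaves you with is a double sum, over the genus and over the degree decomposition of the Hurwitz classes on $\Mbar_{g,n+1}$, of reduced K3 integrals with tautological, point and divisor insertions; Theorem~\ref{thm:MPT} guarantees each of these is quasimodular over $\Delta$, but the Proposition requires their \emph{explicit} values together with a resummation identity assembling them into $\mathbf{G}^n/(\Theta^2\Delta)$. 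That is exactly the step you defer as ``the technical heart'', and it is the entire content of the cited result.

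Two subsidiary steps are also off. The reduced K3 series with $n$ point conditions is not $1/\Delta$: by Bryan--Leung and \cite{BL,MPT} it is $1/\Delta$ times a nontrivial quasimodular form of positive weight (Yau--Zaslow gives $1/\Delta$ only for the $0$-point genus-$0$ series). And the prefactor $(\beta,D)$ cannot be produced by ``the divisor equation on the K3 side'': the interior marking constrains both factors simultaneously ($\omega$ on $\p^1$, $D$ on $S$), and the $\p^1$-side tautological class is not pulled back under forgetting that marking, so the divisor equation does not apply to one factor alone; in this paper such prefactors are obtained instead by rigidification, cf.\ Proposition~\ref{prop:rubber in terms of non-rubber}. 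Finally, your alternative route through Theorem~\ref{thm:GW/Hilb} is not available as stated: that correspondence requires the marked base curve to be stable, which $(\p^1,0)$ with an interior marking is not, and in the quasimap framework the invariant of this Proposition is essentially the wall-crossing correction term itself (compare \eqref{def 2 pt function}), so the Hilbert-scheme route presupposes the computation of \cite{QuasiK3} --- the same kind of explicit evaluation that is missing from your argument.
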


\section{Holomorphic anomaly equations: $(K3 \times C, K3_z)$}
\label{sec:HAE on K3}

\subsection{Overview}
In this section we prove that the natural
generating series of Gromov-Witten invariants of
$(S \times C, K3_z)$
for an elliptic K3 surface $S$ in primitive classes,
are quasi-modular forms and satisfy a holomorphic anomaly equation (Theorem~\ref{thm:rel HAE}).
The idea is straightforward: We apply the product formula in Gromov-Witten theory
and use the corresponding results from the Gromov-Witten theory of K3 surfaces
which were proven in \cite{MPT,HAE}.

The details require some work: 
First in Section~\ref{subsec:Preliminaries HAE K3xC} we introduce a special set of disconnected invariants labeled by '$\sharp$'
which is well-adapted to the holomorphic anomaly equation.
In Section~\ref{subsec:quasi-modularity K3} and Section~\ref{subsec:HAE K3}
we recall the quasi-modularity and holomorphic anomaly equation for K3 surfaces in this convention. In Section~\ref{subsec:conclusion HAE K3xC} we then state and prove Theorem~\ref{thm:rel HAE} using the product formula and by a careful application of the splitting formulas and the new boundary restriction formulas introduced in Section~\ref{sec:Rel GW theory}.

\subsection{Preliminaries} \label{subsec:Preliminaries HAE K3xC}
To state the holomorphic anomaly equations we will need another convention
for disconnected Gromov-Witten invariants.
Let $\pi : X \to B$ be an elliptic fibration and let 
\[ \Mbar_{g,n}^{\sharp}(X,\beta) \]
be the moduli space of stable maps $f : C \to X$ from possibly disconnected
curves of genus $g$ in class $\beta$, with the following requirement:

($\sharp$) For every connected component $C' \subset C$ 
at least one of the following holds:
\begin{enumerate}
    \item[(i)] $\pi \circ f|_{C'}$ is non-constant, or 
    \item[(ii)] $C'$ has genus $g'$ and carries $n'$ markings with $2g'-2+n' > 0$.
\end{enumerate}

Parallel definitions apply to relative targets $(X,D)$ admitting an elliptic fibration to a pair $(B,A)$,
moduli spaces of rubber stable maps, etc. We will denote the invariants defined from moduli satisfying condition ($\sharp$) by a supscript $\sharp$.

\subsection{Quasi-modularity} \label{subsec:quasi-modularity K3}
Let $\pi : S \to B \cong \p^1$ be an elliptic K3 surface with a section,
let $B,F$ denote the class of the section and a fiber respectively,
and set $W = B+F$.
For any tautological class $\taut \in \tau^{\ast} R^{\ast}(\Mbar_{g,n})$ (or $\taut = 1$ in the unstable cases $2g-2+N \leq 0$) and $\gamma_i \in H^{\ast}(S)$ 
consider (or recall from \eqref{Fg}) the generating series
\[
F^{S}_{g}(\taut; \gamma_1, \ldots, \gamma_N) = \sum_{d \geq -1} \left\langle \taut ; \gamma_1, \ldots, \gamma_N \right\rangle^{S}_{g, W+dF} q^d.
\]
\begin{thm}[{\cite{MPT},\cite[Sec.4.6]{BOPY}}] \label{thm:MPT}
For $\wt$-homogeneous classes $\gamma_i \in H^{\ast}(S)$, we have
\[ F^{S}_{g}(\taut; \gamma_1, \ldots, \gamma_N) \in \frac{1}{\Delta(q)} \QMod_{s} \]
for $s = 2g + N + \sum_i \wt(\gamma_i)$.
\end{thm}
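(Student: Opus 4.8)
The plan is to treat this as the $n=1$ case of the quasi-Jacobi property, where the series carries no $p$-variable and the assertion is pure quasimodularity in $q$. First I would record the geometry of the curve classes: since $W=B+F$ and $B^2=-2$, $B\cdot F=1$, $F^2=0$, the class $W+dF=B+(d+1)F$ is primitive with $(W+dF)^2=2d$, so $q^d$ records $\tfrac12(W+dF)^2$. By deformation invariance together with the global Torelli theorem and Markman's monodromy description recalled in Section~\ref{sec:cohomology}, the reduced invariant $\langle\taut;\gamma_1,\dots,\gamma_N\rangle^{S}_{g,W+dF}$ depends on $d$ and on the $\gamma_i$ only through monodromy-invariant data; for $\wt$-homogeneous insertions this is precisely the information recorded by the weight grading on $H^{\ast}(S)$ of Section~\ref{sec:Weight grading}.

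The substance of the theorem is the quasimodularity itself, which I would obtain from the Gromov--Witten/Noether--Lefschetz correspondence of Maulik--Pandharipande--Thomas. Choosing a one-parameter family of K3 surfaces, i.e. an auxiliary threefold $\pi:X\to\p^1$ whose fibers are K3 surfaces carrying the class $W+dF$, one compares its (non-reduced) genus-$g$ Gromov--Witten invariants in fiberwise classes with the reduced K3 invariants. The correspondence expresses the former as a sum $\sum_d r_{g,d}\,\mathrm{NL}(d)$, where $r_{g,d}$ is the reduced K3 invariant in a class of square $2d$ and $\mathrm{NL}(d)$ is a Noether--Lefschetz number of the family. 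By the theory of Borcherds and Kudla--Millson the series $\sum_d \mathrm{NL}(d)\,q^d$ is a vector-valued modular form, while the threefold invariants are quasimodular on independent grounds; inverting the correspondence yields quasimodularity of $\sum_d r_{g,d}q^d$. The cusp pole $1/\Delta(q)$ is produced by the factor $\prod_{n\ge1}(1-q^n)^{-24}$ of the Katz--Klemm--Vafa formula, and incorporating the tautological class $\taut$ and the weighted insertions $\gamma_i$ is then routine, giving the descendent refinement.

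Finally I would pin down the weight $s$ by bookkeeping against the master generating function. After including a fiber fugacity, the reduced theory is governed by a meromorphic Jacobi form of weight $-10$ --- the reciprocal of the weight-$10$ Igusa cusp form $\chi_{10}$ entering the $K3\times E$ evaluation. The genus-$g$, $N$-marked piece sits at Euler-characteristic $z$-degree $2g-2+N$, so by the lemma on $z^r$-coefficients above (a weight-$k$ form has $z^r$-coefficient quasimodular of weight $r+k$) this contributes $2g-2+N$; clearing the $1/\Delta$ pole adds $12$, and the insertions add $\sum_i\wt(\gamma_i)$, since $\Wt=\act(e\wedge f+W\wedge F)$ acts compatibly with the correspondence as a ring derivation. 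Summing, $s=(-10)+(2g-2+N)+12+\sum_i\wt(\gamma_i)=2g+N+\sum_i\wt(\gamma_i)$; note $\deg(\taut)$ does not appear, reflecting that tautological classes carry no modular variable. I expect the Gromov--Witten/Noether--Lefschetz step to be the genuine obstacle, the reduction and the weight count being essentially formal once the modular structure is in hand.
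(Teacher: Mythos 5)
First, a point of orientation: the paper does not prove this statement at all --- it imports it. The quasimodularity with the single $1/\Delta(q)$ pole is due to \cite{MPT}, and the refined weight $s=2g+N+\sum_i\wt(\gamma_i)$ for $\wt$-homogeneous insertions is exactly the content of \cite[Sec.4.6]{BOPY}; so your proposal has to be measured against those proofs. Measured that way, it has a gap at its center: you obtain quasimodularity of the K3 series by asserting that ``the threefold invariants are quasimodular on independent grounds'' and then inverting the Gromov--Witten/Noether--Lefschetz relation. This begs the question, and in fact reverses the logic of the cited proof. In \cite{MPT} the implication runs the other way: modularity of fiberwise invariants of K3-fibered threefolds (the STU model, the Klemm--Mayr--Vafa prediction) is \emph{deduced} from the K3 statement together with Borcherds' modularity of the Noether--Lefschetz series. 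The actual engine for the K3 statement is geometric: a reduced degeneration of the elliptic K3 into rational elliptic surfaces in the spirit of Bryan--Leung \cite{BL}, reducing reduced invariants to standard relative invariants of $(R,E)$, whose quasimodularity ultimately traces back to the Gromov--Witten theory of target/elliptic curves \cite{BO, OkPand_Completed_Cycles}. Your proposal contains no substitute for this engine. Moreover, even granting quasimodularity of the threefold series, the ``inversion'' does not work as stated: the GW/NL relation pairs the unknown K3 series against a vector-valued modular form, and dividing a quasimodular form by a modular form does not produce a quasimodular form unless that form is $\eta$-like (in general it has zeros in the upper half-plane); making the inversion rigorous requires the triangularity/Heegner structure of the NL numbers, which you never invoke.

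The second gap is the weight, which is precisely what \cite[Sec.4.6]{BOPY} adds to \cite{MPT}, and which your third paragraph establishes only by numerology. The Igusa cusp form $\chi_{10}$ governs the $K3\times E$ partition function --- Hodge-integral insertions, summed over genus and over the degree along $E$ --- and is not a master object for descendent series with arbitrary insertions; the claim that the $(g,N)$ piece sits in ``$z$-degree $2g-2+N$'' has no justification, and the additivity in $\sum_i\wt(\gamma_i)$ (the whole point of the refinement) is asserted via ``$\Wt$ acts as a ring derivation,'' which is not an argument. A real proof must track the $\wt$-grading through the geometry; as a sanity check of what has to be verified: inserting $W$ in the class $B+(d+1)F$ gives $W\cdot\beta=d$, i.e.\ the operator $q\frac{d}{dq}$, raising the weight by $2=1+\wt(W)$, while inserting $F$ gives $F\cdot\beta=1$, raising it by $0=1+\wt(F)$; for non-divisor insertions one needs invariance under the monodromy stabilizer of $\beta$ together with the degeneration/induction scheme. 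Related to this, your first paragraph's claim that for $\wt$-homogeneous insertions the monodromy-invariant data ``is precisely the information recorded by the weight grading'' is false as stated: $\wt$ is defined via the fixed classes $W,F$ and is not monodromy-invariant, and classes in $\{W,F,1,\pt\}^{\perp}$ all have $\wt=0$ while the invariants certainly depend on their pairwise intersections. Finally, the tautological class $\taut$ --- whose degree correctly does not enter $s$ --- is waved off as routine, but compatibility of the reduced degeneration formula with classes pulled back from $\Mbar_{g,N}$ is part of what the cited proofs actually check.
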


Consider the generating series of disconnected invariants (for the $\sharp$-condition):
\[ 
F^{S, \sharp}_{g}(\taut; \gamma_1, \ldots, \gamma_N) 
=
\sum_{d \geq -1} q^d
\int_{[ \Mbar_{g,N}^{\sharp}(S,W+dF) ]^{vir}}
\pi^{\ast}(\taut)
\prod_{i=1}^{N} \ev_i^{\ast}(\gamma_i).
\]
\begin{cor} \label{cor:MPT}
For $\wt$-homogeneous classes $\gamma_i \in H^{\ast}(S)$, we have
\[ F^{S, \sharp}_{g}(\taut; \gamma_1, \ldots, \gamma_N) \in \frac{1}{\Delta(q)} \QMod_{s} \]
for $s = 2g + N + \sum_i \wt(\gamma_i)$.
\end{cor}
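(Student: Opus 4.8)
The plan is to deduce the disconnected statement from the connected one (Theorem~\ref{thm:MPT}) by the standard passage from connected to disconnected reduced invariants, using the cosection that defines the reduced class. The first step is to determine which configurations of connected components can contribute. Intersecting the total class $W+dF = B+(d+1)F$ with a fiber gives $(W+dF)\cdot F = B\cdot F = 1$; since the degree over the base $\p^1$ is non-negative and additive over connected components, exactly one component dominates $\p^1$ and hence satisfies condition (i) of ($\sharp$), while every other connected component is $\pi$-vertical, i.e.\ contracted to a point of $\p^1$, and therefore must be stable by condition (ii). This is the combinatorial skeleton of the argument.

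Next I would invoke the cosection. The reduced class is produced from the cosection of \cite{MP_GWNL,MPT} via \cite{KL}, and this cosection is surjective over any locus carrying a non-zero curve class, so the standard virtual class of $S$ vanishes on every non-zero class. For a disconnected domain the cosection is the direct sum of the cosections of the components, and a single reduction can only be absorbed on one of them. Hence a non-vanishing term forces the distinguished base-dominating component to carry the entire non-zero class $W+dF$ together with the reduction, while all remaining components are contracted (class $0$) and carry their standard virtual class. The distinguished factor is then a connected reduced series in class $W+dF$, which lies in $\frac{1}{\Delta(q)}\QMod$ of the appropriate weight by Theorem~\ref{thm:MPT}; the contracted components, carrying class $0$, contribute only to the $q^0$-coefficient and produce $q$-independent scalars, namely degree-zero Gromov–Witten numbers of $S$ of the type evaluated in Proposition~\ref{prop:std evaluation}. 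Summing over all splittings of the genus, the markings, and the tautological class exhibits $F^{S,\sharp}_g$ as a finite $\BC$-linear combination of connected series $F^{S}_{g_0}(\cdots)\in\frac{1}{\Delta(q)}\QMod$, and the claim follows from multiplicativity of quasi-modular forms once the weights are matched.

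The weight bookkeeping is the main obstacle, and I would handle it as follows. Writing a surviving splitting as a distinguished component of genus $g_0$ with markings $A$ and contracted components with markings $B=\{1,\dots,N\}\setminus A$, Theorem~\ref{thm:MPT} gives the distinguished factor the weight $s_0 = 2g_0 + |A| + \sum_{i\in A}\wt(\gamma_i)$, so, using that $2g-2$ is additive over connected components, it suffices to show that each non-vanishing contracted factor of genus $g_c$ with $N_c$ markings satisfies the local balance $(2g_c-2)+N_c+\sum_{i\in c}\wt(\gamma_i)=0$. For a contracted component the virtual class factors through $\Mbar_{g_c,N_c}\times S$ with obstruction bundle $\mathbb{E}^{\vee}\boxtimes T_S$; since $c_1(S)=0$ and $\lambda_{g_c}^2=0$, the Euler class collapses to $c_2(S)=24\,\pt$ times a Hodge class, so the integral over $S$ extracts the coefficient of $\vacuum$ in $\prod_{i\in c}\gamma_i$. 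By the multiplicativity of the weight grading on $S=S^{[1]}$ (Lemma~\ref{lemma:wt multiplicative}) together with the fact that the Poincar\'e pairing on $H^{\ast}(S)$ has weight $0$, this coefficient vanishes unless $\sum_{i\in c}\wt(\gamma_i)$ takes the required value. The delicate point, which I expect to be the crux, is to rule out the a priori dangerous higher-genus contracted contributions whose naive weight defect is $2g_c-2>0$: here I would combine the same $\lambda_{g_c}^2=0$ vanishing with the dimension constraint on $\Mbar_{g_c,N_c}$ imposed by the degree of the allowed tautological insertions, which pins the contributing configurations to $g_c\le 1$. Carrying this balance through the sum gives $s_0=s=2g+N+\sum_i\wt(\gamma_i)$ for every surviving term, completing the proof.
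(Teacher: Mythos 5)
Your proposal follows essentially the same route as the paper's proof: a second-cosection argument forces all but one connected component to be contracted, the explicit degree-zero (standard) virtual class of $S$ gives the local weight balance $(2g_c-2)+N_c+\sum_{i\in c}\wt(\gamma_i)=0$ on each contracted component, and Theorem~\ref{thm:MPT} applies to the unique non-contracted component. The preliminary step about degree over the base $\p^1$ is harmless but redundant, since the cosection argument alone does the work.

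However, the step you single out as the crux --- ruling out contracted components of genus $g_c\geq 2$ --- is justified incorrectly, and as written it would fail. Dimension constraints coming from the tautological insertions do not exclude these configurations: for example, a contracted genus $2$ component with one marking carrying $\ev^{\ast}(\pt)$ exactly satisfies the dimension constraint (the standard virtual dimension of $\Mbar_{g_c,N_c}(S,0)$ is $g_c-1+N_c=2$), yet it violates your weight balance, so if it contributed, the bookkeeping in your last paragraph would break. What actually holds is the unconditional vanishing $[\Mbar_{g_c,N_c}(S,0)]^{\vir}=0$ for $g_c\geq 2$: writing $a_1,\ldots,a_{g_c}$ for the Chern roots of the Hodge bundle $\BE$, the obstruction bundle Euler class is
\[
e(\BE^{\vee}\boxtimes T_S)=\prod_{i}\bigl(c_2(S)+a_i^2\bigr),
\]
and Mumford's relation $c(\BE)\,c(\BE^{\vee})=1$ kills \emph{all} the symmetric functions $e_k(a_1^2,\ldots,a_{g_c}^2)$ with $k\geq 1$, not only $\lambda_{g_c}^2$; this is what makes the whole class vanish. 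This is precisely the standard degree-zero formula that the paper quotes at the start of its proof (the $S^{[n]}$-analogue is \eqref{degree zero virtual class}). A related small slip: the degree-zero numbers produced by the contracted components are governed by this formula, not by Proposition~\ref{prop:std evaluation}, which concerns the relative threefold geometry $(S\times\p^1,S_z)$ rather than constant maps to $S$.
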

\begin{proof}
Recall that the standard virtual class satisfies
\[
[ \Mbar_{g,n}(S,0) ]^{\std}
=
\begin{cases}
[ \Mbar_{0,n} \times S] & \text{ if } g=0 \\
c_2(S) \cap [ \Mbar_{1,n} \times S] & \text{ if } g=1 \\
0 & \text{ if } g \geq 2.
\end{cases}
\]
If an invariant
\[ \int_{ [ \Mbar_{g,n}(S,0) ]^{\std} } \pi^{\ast}(\taut) \prod_i \ev_i^{\ast}(\gamma_i) \]
is to contribute, we must have:
\begin{itemize}[itemsep=0pt]
\item $g=0$ and $\sum_i \wt(\gamma_i) = 2-n$
\item $g=1$ and $\sum_i \wt(\gamma_i) = -n$.
\end{itemize}
In both cases we have
\[ -2 + 2g + n + \sum_i \wt(\gamma_i) = 0. \]

Now, if a connected components of $\Mbar^{\sharp}_{g,N}(S, \beta)$ contributes non-trivially to the disconnected Gromov-Witten invariant,
then by a second-cosection argument the component
must parametrize stable maps $f : C \to S$ which are non-constant only on one component $C'$.
Let $g', N'$ be the genus and number of markings on $C'$.
The above computation shows that
\[ 2g+N+\sum_{i=1}^{N} \wt(\gamma_i) = 2g'+N'+\sum_{j=1}^{N'} \wt(\gamma_{i_j}) \]
where $i_j$ are the indices of marked points on $C'$.
The claim hence follows from Theorem~\ref{thm:MPT}.
\end{proof}

\subsection{Holomorphic anomaly equation}
\label{subsec:HAE K3}
We state the holomorphic anomaly equation for K3 surfaces in primitive classes.

\begin{thm}[\cite{HAE}]
\begin{align*}
\frac{d}{d G_2} F^S_{g}(\taut; \gamma_1, \ldots, \gamma_r)
= & 
F^S_{g-1}(\taut'; \gamma_1, \ldots, \gamma_r, \Delta_{B} ) \\
& + 2 \sum_{\substack{g=g_1 + g_2 \\ \{ 1, \ldots, r \} = A \sqcup B }} F^S_{g_1}(\taut_1 ; \gamma_A, \Delta_{B,1}) F_{g_2}^{S,\std}(\taut_2 ; \gamma_B , \Delta_{B,2} ) \\
& - 2 \sum_{i=1}^{r} F^S_{g}( \psi_i \cdot \taut ; \gamma_1, \ldots, \gamma_{i-1}, \pi^{\ast} \pi_{\ast} \gamma_i, \gamma_{i+1}, \ldots, \gamma_r) \\
& - \sum_{a,b} (g^{-1})_{ab} T_{e_a} T_{e_b} F^S_{g}(\taut; \gamma_1, \ldots, \gamma_r)
\end{align*}
where we follow the notation of Conjecture~\ref{Conj:HAE} and moreover
\begin{itemize}
\item $\Delta_{B,1}, \Delta_{B,2}$ stands for summing over the K\"unneth decomposition of the diagonal class
$\Delta_B \in H^{\ast}(B \times B)$, and we have suppressed the pullback to $S \times S$,
\end{itemize}
\end{thm}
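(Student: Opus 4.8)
The plan is to derive the equation from the quasimodularity of Theorem~\ref{thm:MPT} together with a degeneration analysis of the reduced virtual class along the boundary of $\Mbar_{g,N}$, which is exactly the principle underlying the boundary-restriction technology of Proposition~\ref{prop:restriction to boundary}. By Theorem~\ref{thm:MPT} the series $\Delta(q)\,F^S_g$ is quasimodular, hence a polynomial in $G_2$ over modular forms, and $\frac{d}{dG_2}$ is the formal derivative in this presentation. The first step is to give this formal derivative a geometric meaning: completing $G_2$ to the almost-holomorphic $\hat G_2 = G_2 - \frac{1}{8\pi\,\mathrm{Im}(\tau)}$ makes the completed series modular, and by the dictionary recalled in the excerpt the operator $\frac{d}{dG_2}$ then measures the $\frac{\partial}{\partial\bar\tau}$-anomaly of this completion. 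The task thus becomes to identify the geometric origin of the non-holomorphic term, which I expect to be the contribution of the boundary strata of $\Mbar_{g,N}$.

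Second, I would compute this boundary contribution by pulling back along the gluing maps $\Mbar_{g-1,N+2}\to\Mbar_{g,N}$ and $\Mbar_{g_1,|A|+1}\times\Mbar_{g_2,|B|+1}\to\Mbar_{g,N}$ and applying the reduced splitting formula for the reduced virtual class (in the style of \cite{MPT} and Proposition~\ref{prop:restriction to boundary}). The non-separating node yields the term $F^S_{g-1}(\taut';\ldots,\Delta_B)$ and the separating node the product $F^S_{g_1}(\taut_1;\cdots)\,F^{S,\std}_{g_2}(\taut_2;\cdots)$. The crucial and K3-specific point, and the reason the pulled-back base diagonal $\Delta_B\in H^{\ast}(B\times B)$ appears rather than the full diagonal $\Delta_S$, is reducedness: the cosection kills all but a rank-one piece of the obstruction at the node, so only the diagonal of the elliptic base $B\cong\p^1$ survives (this is also why the $n=1$ insertion $\Delta_B$ matches the operator $U$ of Conjecture~\ref{Conj:HAE}). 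On the separating node the primitive part $W=B+F$ of the curve class must localize on one branch; the other branch then carries no curve class, so the reduced class degenerates there to the degree-zero standard class, producing the factor $F^{S,\std}_{g_2}$.

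Third come the two corrections. The term $-2\sum_i F^S_g(\psi_i\cdot\taut;\ldots,\pi^{\ast}\pi_{\ast}\gamma_i,\ldots)$ arises from comparing cotangent line classes before and after stabilization, i.e. from the identity $\psi_i=\tau^{\ast}\psi_i+(\text{boundary})$ together with the fibered evaluation $\pi^{\ast}\pi_{\ast}$ forced by the section-plus-fiber structure $W=B+F$. The fourth term $-\sum_{a,b}(g^{-1})_{ab}\,T_{e_a}T_{e_b}F^S_g$ is the transcendental anomaly: it is produced by the completion in the directions $\{F,W\}^{\perp}\subset H^2(S,\BQ)$, reflecting that the reduced theory is a lattice-valued modular object in these variables, and I would extract it by tracking the dependence of the series on the Noether--Lefschetz data and matching it with the operators $T_{e_a}$ of \eqref{T alpha action}. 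As an alternative route, the whole equation could be deduced from the Gromov--Witten theory of $K3\times E$ by degenerating $E$ to a nodal rational curve and invoking the holomorphic anomaly equation for the elliptic curve.

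The main obstacle is the control of the reduced virtual class under degeneration: one must establish the precise reduced splitting formula and, in particular, identify which diagonal survives at each type of node, which needs a genuine cosection-localization analysis rather than the naive degeneration formula. The secondary difficulty is the bookkeeping---matching the factors of $2$, the signs coming from conventions of the type in \eqref{defn:ZGW}, and especially pinning down the exact lattice correction $\sum_{a,b}(g^{-1})_{ab}T_{e_a}T_{e_b}$---for which the explicit weight operators and the description of $H^2(S)$ from Section~\ref{sec:cohomology} will be indispensable.
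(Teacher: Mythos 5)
First, a point of logic: the paper contains no proof of this statement. The theorem is quoted from the reference \cite{HAE} (Oberdieck--Pixton, \emph{Invent.\ Math.} 213 (2018)) and enters Section~\ref{sec:HAE on K3} purely as an external input, which is then lifted to $(S\times C,S_z)$ in Theorem~\ref{thm:rel HAE} and to $S^{[n]}$ in Theorem~\ref{thm:prim HAE}. So the only thing to assess is whether your sketch could be completed into a proof of the cited result.

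As written, it cannot, and the obstruction is structural rather than a matter of bookkeeping. You propose to compute $\frac{d}{dG_2}F^S_g$ by interpreting it as the $\bar{\tau}$-anomaly of the almost-holomorphic completion and then identifying that anomaly with boundary contributions, computed by pulling back along the gluing maps and applying a ``reduced splitting formula''. But restriction to the boundary of $\Mbar_{g,N}$ and the $G_2$-derivative are two \emph{different} operations, and one can see concretely that no splitting-type argument can produce the right-hand side of the theorem. Any splitting formula (reduced or not) inserts the \emph{full} diagonal $\Delta_S\in H^4(S\times S)$ at the node; this is exactly what the reduced-class statements in this paper do as well (Proposition~\ref{prop:splitting relative diagonal} and Proposition~\ref{prop:restriction to boundary} produce $\Delta_D$, resp.\ the relative diagonal, distributed over reduced$\,\otimes\,$standard terms --- never a base diagonal). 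Since every K\"unneth component of $\Delta_S$ has total $\wt$ equal to $0$, the resulting series has, by Theorem~\ref{thm:MPT}, the \emph{same} quasimodular weight $s=2g+N+\sum_i\wt(\gamma_i)$ as $F^S_g$ itself. By contrast, $\frac{d}{dG_2}$ lowers the weight by $2$, and every term on the right-hand side of the theorem does so: the K\"unneth components of $\Delta_B=F\otimes 1+1\otimes F$ have total weight $-2$, the operator $\pi^{\ast}\pi_{\ast}$ drops $\wt$ by $2$, and each $T_{e_a}$ drops it by $1$. So a boundary-restriction identity and the holomorphic anomaly equation are identities in different weights; your claim that cosection localization ``kills all but a rank-one piece of the obstruction at the node, so only the diagonal of the base survives'' is unsupported and false --- the cosection redistributes the reduced class (as in the degeneration formula of Section~\ref{sec:Rel GW of K3xC}) but does not truncate $\Delta_S$ to $\Delta_B$. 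The passage from $\Delta_S$ to $\Delta_B$ \emph{is} the theorem, and your opening step --- asserting that the anomaly of the completion is a boundary contribution --- is the BCOV heuristic, i.e.\ precisely what needs proof; the formal dictionary $\frac{d}{dG_2}=\frac{d}{d\nu}$ gives no access to the geometry of $\Mbar_{g,N}$.

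Two parts of your sketch do point in the right direction: the description of the separating term (curve class forced onto one branch, the other carrying the standard class) matches the reduced degeneration structure, and your one-sentence ``alternative route'' via $K3\times E$ and the elliptic curve is much closer to how the result is actually established in \cite{HAE}, where the equation is deduced from explicit structure results for the primitive-class theory and degeneration arguments rather than from any geometric interpretation of $\frac{d}{dG_2}$. But in your proposal that route carries no argument, so it cannot substitute for the failed main line.
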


This immediately yields the following for the series of disconnected invariants (compare with \cite[Sec.3.2]{OPix2} for a similar case).
\begin{cor} \label{cor:HAE}
\begin{align*}
\frac{d}{dG_2} F^{S, \sharp}_{g}(\taut; \gamma_1, \ldots, \gamma_r) 
& = F^{S, \sharp}_{g-1}(\taut'; \gamma_1, \ldots, \gamma_r, \Delta_{B})  \\
& - 2 \sum_{i=1}^{r} F^{S, \sharp}_{g}(\psi_i \cdot \taut ; \gamma_1, \ldots,
\gamma_{i-1}, \pi^{\ast} \pi_{\ast} \gamma_i, \gamma_{i+1}, \ldots, \gamma_r) \\
& - \sum_{a,b} (g^{-1})_{ab} T_{e_a} T_{e_b} F^{S, \sharp}_{g}(\taut; \gamma_1, \ldots, \gamma_r).
\end{align*}
\end{cor}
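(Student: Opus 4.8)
The plan is to deduce the disconnected equation algebraically from the connected holomorphic anomaly equation of \cite{HAE}, using the structure of the $\sharp$-moduli space already established in the proof of Corollary~\ref{cor:MPT}. The essential input is the second-cosection argument: on every component of $\Mbar^{\sharp}_{g,N}(S,W+dF)$ contributing to a reduced invariant, the stable map is non-constant on exactly one connected component of the domain, while the remaining connected components are contracted and $\sharp$-stable, and therefore carry the ordinary degree-$0$ virtual class. First I would use this to factor $F^{S,\sharp}_g$, after distributing the markings and the class $\tau^{\ast}(\alpha)$ over the connected components, as the connected reduced series $F^S$ on the distinguished component times a background built from connected degree-$0$ standard invariants. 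The decisive feature of the background is that, being of degree $0$ over the base $B$, it carries no fiber class and is therefore constant in $q$; in particular it is annihilated by $\frac{d}{dG_2}$.

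The second step is to apply $\frac{d}{dG_2}$ to this factorization. By the product rule and the $q$-independence of the background, the derivative acts only on the distinguished reduced factor, to which I would apply the connected equation of \cite{HAE}. Of its four terms, three reassemble immediately into their disconnected analogues. The non-separating term lowers the genus of the distinguished component and reglues a pair of markings by $\Delta_B$; multiplied against the unchanged background it produces $F^{S,\sharp}_{g-1}(\taut';\gamma_1,\ldots,\gamma_N,\Delta_B)$. The $\psi$-insertion term and the term $\sum_{a,b}(g^{-1})_{ab}T_{e_a}T_{e_b}$ only modify insertions on the distinguished component and, since $T_{e_a}$ and the substitution $\gamma_i\mapsto\pi^{\ast}\pi_{\ast}\gamma_i$ commute with attaching background components, reassemble verbatim into the second and third terms of the Corollary. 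These reassemblies are routine genus- and marking-bookkeeping once the factorization is in place.

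The main obstacle is the remaining separating term $2\sum_{g_1+g_2}F^S_{g_1}(\taut_1;\gamma_A,\Delta_{B,1})\,F^{S,\std}_{g_2}(\taut_2;\gamma_B,\Delta_{B,2})$, which must disappear. Geometrically it comes from a separating node that detaches, across $\Delta_B=F\otimes1+1\otimes F$, a contracted genus-$g_2$ component carrying the standard virtual class --- precisely the type of object out of which the degree-$0$ background is assembled. The crucial point is that the $\sharp$-condition places such a contracted component on exactly the same footing whether it is attached to the distinguished component at a node or appears as a free connected component of the disconnected domain, so that detaching a contracted bubble does not move the stable map to a genuine new stratum. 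The hard part will be to turn this observation into a precise cancellation: reorganizing the separating term together with the background, I expect the attached and free contributions to combine with opposite signs --- coming from the genus and $(-1)^{g-1}z^{2g-2}$ conventions of Section~\ref{sec:Rel GW of K3xC} --- so that the net separating contribution to $\frac{d}{dG_2}F^{S,\sharp}_g$ vanishes. Making this matching precise, including the node insertions $\Delta_B$, the automorphism factors, and the signs, is the one genuinely delicate computation, and I would model it on the connected/disconnected formalism of \cite[Sec.3.2]{OPix2}; everything else in the argument is formal.
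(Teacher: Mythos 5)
Your first step is the right one and matches the intended argument: on every contributing component of $\Mbar^{\sharp}_{g,N}(S,W+dF)$ the map is non-constant on exactly one connected component of the domain, the background of contracted components carries standard degree-$0$ classes and is constant in $q$, so $\frac{d}{dG_2}$ hits only the distinguished connected factor, to which one applies the equation of \cite{HAE}. But your treatment of the separating term is a genuine error. That term does \emph{not} cancel; it is \emph{absorbed} into the first term $F^{S,\sharp}_{g-1}(\taut';\gamma_1,\ldots,\gamma_r,\Delta_B)$ of the Corollary. The mechanism is genus bookkeeping for disconnected domains: a curve with connected components of genera $g_1,g_2$ has arithmetic genus $g_1+g_2-1$, so the connected separating contributions with $g_1+g_2=g$ (distinguished reduced piece of genus $g_1$, detached contracted piece of genus $g_2$, the two diagonal markings on the two different components) are precisely the mixed contributions to the disconnected series in genus $g-1$. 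There is no sign cancellation available to you: the signs $(-1)^{g-1}z^{2g-2}$ you invoke belong to the partition functions $Z^{(S\times C,S_z)}_{\GW}$ of Section~\ref{sec:Rel GW of K3xC}, whereas $F^{S,\sharp}_g$ is a plain $q$-series for each fixed genus, with no $z$-variable and no genus-dependent signs. The paper's own example refutes the cancellation: for $(g,N)=(0,3)$ with insertions $W,F,F$ the connected separating term contributes $4/\Delta(q)$, and this value reappears (together with an extra $2/\Delta(q)$) inside $F^{S,\sharp}_{-1}(W,F,F,\Delta_B)=6/\Delta(q)$.

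Your claim that the $\psi$-term and $T$-term "reassemble verbatim" is also false, by the same example: the connected $\psi$-term vanishes identically there (since $\psi_i=0$ on $\Mbar_{0,3}$), while the disconnected one equals $-2F_0^{S,\sharp}(\psi\cdot 1,F,F)=-2\big(24\int_{\Mbar_{1,1}}\psi_1\big)F_0^S(F,F)=-2/\Delta(q)\neq 0$, because the marking carrying $\psi_i\cdot\taut$ may sit on a contracted genus-$1$ background component. So each term on the right-hand side of the Corollary contains strictly more than (connected term)$\,\times\,$(background), and the genuinely delicate point of the proof --- which your plan does not identify --- is that these extra background-supported contributions cancel \emph{among the three terms of the Corollary}, not against the separating term: the contributions to $F^{S,\sharp}_{g-1}(\ldots,\Delta_B)$ in which both diagonal markings lie on background components cancel against the background contributions of the $\psi$- and $T$-terms (in the example, $+2-2=0$). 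This residual cancellation is an identity purely about standard degree-$0$ invariants, consistent with the fact that $\frac{d}{dG_2}$ annihilates the background. Your intuition that a detached contracted bubble is "on the same footing" as a free background component is correct, but its correct use is the absorption statement above, not a vanishing of the separating term; as stated, your computation would produce the wrong right-hand side (missing $F^{S,\sharp}_{g-1}$-contributions and retaining uncancelled background terms).
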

%
\begin{example}
Instead of the proof (which is straightforward) let us consider a concrete example
that highlights all the main points. Consider the series
\[ F_0^S(W,F,F) = F_0^{S, \sharp}(W,F,F) = q \frac{d}{dq} \frac{1}{\Delta(q)}. \]
We compute in three different ways the $G_2$-derivative. First directly:
\[
\frac{d}{d G_2} F_0^S(W,F,F) = \left[ \frac{d}{dG_2}, q \frac{d}{dq} \right] \frac{1}{\Delta(q)} = -2 \cdot (-12) \frac{1}{\Delta(q)} = 24 \frac{1}{\Delta(q)}.
\]
Second, by the holomorphic anomaly equations for the connected series:
\[
\frac{d}{dG_2} F_0^S(W,F,F)
=
2 \cdot 2 \cdot F_0^S(F, U_1) F_0^{S,\std}(U_2, W, F)
+ 20 F_0^S(F,F,F) = 24 \frac{1}{\Delta(q)}
\]
where the extra factor $2$ comes from choosing which of the two $F$'s goes to the two factors.
Third, by the disconnected holomorphic anomaly equation:
\[
\frac{d}{dG_2} F_0^{S,\sharp}(W,F,F)
= F_{-1}^{S,\sharp}(W,F,F, \Delta_{B})
-2 F_0^{S,\sharp}( \psi \cdot 1,F,F)
+ 20 F_0^{S,\sharp}(F,F,F) 
= (6 - 2 + 20) \frac{1}{\Delta(q)} \]
where we have used that:
\begin{gather*}
F_{-1}^{S,\sharp}(W,F,F, \Delta_{B})
=
2 F_{-1}^{S,\sharp}(W,F,F, F, 1)
=
6 F_{0}^S(F,F) F_0^{\std}(W,F,1) = 6 \frac{1}{\Delta(q)}.\\
-2 F_0^{S,\sharp}( \psi \cdot 1,F,F)
=
-2 \left( 24 \int_{\Mbar_{1,1}} \psi_1 \right) F_0^S(F,F).
\end{gather*} \qed
\end{example}

\subsection{Relative geometry $(S \times C, S_z)$} \label{subsec:conclusion HAE K3xC}
Consider the relative geometry:
\begin{equation} (S \times C, S_{z}), \quad z=(z_1, \ldots, z_N), \quad S_{z} = \bigsqcup_i S \times \{ z_i \}, \label{rel geometry2} \end{equation}
where we assume that that the pair $(C,z_1, \ldots, z_N)$ is stable, i.e. $2g-2+N>0$.
Define the generating series of relative invariants satisfying the $\sharp$ condition:
\[
F_{g}^{(S \times C, S_{z}), \sharp}(\lambda_1, \ldots, \lambda_N | \gamma )
=
\sum_{d \geq -1} q^d 
\big\langle \, \lambda_1, \ldots, \lambda_N \big| \gamma \big\rangle^{(S \times C, S_{z}), \sharp}_{g, (W+dF,n)}.
\]
where $\lambda_i$ are $H^{\ast}(S)$-weighted partitions and $\gamma \in H^{\ast}( (S \times C,S_z)^r)$.
Similarly, we have the corresponding series of reduced rubber invariants, see Section~\ref{sec:Rel GW of K3xC}.

We also also require the non-reduced invariants:
\[
F_{g}^{(S \times C, S_{z}), \sharp, \std}(\lambda_1, \ldots, \lambda_N | \gamma )
=
\big\langle \, \lambda_1, \ldots, \lambda_N \big| \gamma \big\rangle^{(S \times C, S_{z}), \sharp, \std}_{g, (0,n)}.
\]

\begin{thm} \label{thm:rel HAE}
(a) For cohomology weighted partitions $\lambda_i = (\lambda_{i,j}, \delta_{i,j})$ where $\delta_{i,j} \in H^{\ast}(S)$ are  $\wt$-homogeneous
we have
\[ F_{g}^{(S \times C, S_{z}), \sharp}(\lambda_1, \ldots, \lambda_N ) \in \frac{1}{\Delta(q)} \QMod_s. \]
where $s = 2g + \sum_{i=1}^{N} \ell(\lambda_i) + \sum_{i,j} \wt(\delta_{i,j})$.

(b) We have the holomorphic anomaly equation
\begin{align*}
& \frac{d}{dG_2} F_{g}^{(S \times C, S_{z}), \sharp}(\lambda_1, \ldots, \lambda_N ) \\
= \ & 
F_{g-1}^{(S \times C, S_{z}), \sharp}(\lambda_1, \ldots, \lambda_N| \Delta^{\rel}_{(B \times C, B_{z})} ) \\
& + 2 \sum_{i=1}^{N} \sum_{\substack{ m \geq 0 \\ g = g_1 + g_2 + m }}
\sum_{\substack{ b , b_1, \ldots, b_m \\ \ell , \ell_1, \ldots, \ell_m}}
\frac{\prod_{i=1}^{m} b_i}{m!}
 \\
&
\left(
\begin{array}{l}
F_{g_1}^{(S \times \p^1,S_{0,\infty}),\sim, \sharp,\std}\left( \lambda_i, \big( (b, \Delta_{B,\ell}) , (b_i, \Delta_{S, \ell_i})_{i=1}^{m}\big)  \right) \\
\qquad \qquad \cdot F_{g_2}^{(S \times C , S_{z}), \sharp}\big( \lambda_1, \ldots, \lambda_{i-1},  \big( (b, \Delta_{B, \ell}^{\vee}), (b_i, \Delta^{\vee}_{S, \ell_i})_{i=1}^{m} \big), \lambda_{i+1}, \ldots \lambda_N \big) \\
+ F_{g_1}^{(S \times \p^1,S_{0,\infty}), \sim, \sharp}\left( \lambda_i, \big( (b, \Delta_{B,\ell}) , (b_i, \Delta_{S, \ell_i})_{i=1}^{m}\big)  \right) \\
\qquad \qquad \cdot F_{g_2}^{(S \times C , S_{z}), \sharp, \std}\big( \lambda_1, \ldots , \lambda_{i-1}, \big( (b, \Delta_{B, \ell}^{\vee}), (b_i, \Delta^{\vee}_{S, \ell_i})_{i=1}^{m} \big), \lambda_{i+1}, \ldots, \lambda_N \big)
\end{array}\right) \\
& - 2 \sum_{i=1}^{N} \sum_{j=1}^{\ell(\lambda_i)} 
F_{g}^{(S \times C, S_{z}), \sharp}(\lambda_1, \ldots, \lambda_{i-1}, \psi_{i,j}^{\rel} \cdot \lambda_i^{(j)}, \lambda_{i+1}, \ldots \lambda_N ) \\
& - \sum_{a,b} (g^{-1})_{ab} T_{e_a} T_{e_b} F_{g}^{(S \times C, S_{z}), \sharp}(\lambda_1, \ldots, \lambda_N ).
\end{align*}
Here, the $b, b_1, \ldots, b_m$ run over all positive integers such that
$b+ \sum_i b_i = n$, and the $\ell, \ell_i$ run over the splitting of the diagonals of $B$ and $S$ respectively:
\[ 
\Delta_B = \sum_{\ell} \Delta_{B,\ell} \otimes \Delta_{B, \ell}^{\vee}, \quad \quad
 \forall i \colon\  \Delta_{S} = \sum_{\ell_i} \Delta_{S, \ell_i} \otimes \Delta_{S, \ell_i}^{\vee}.
\]
Moreover, $\lambda_{i}^{(j)}$ is the weighted partition $\lambda_{i}$ but with $j$-th weight $\delta_{ij}$ replaced by $\pi^{\ast} \pi_{\ast}( \delta_{ij})$.
\end{thm}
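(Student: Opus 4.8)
The plan is to deduce both statements from the corresponding results for the K3 surface $S$ itself, namely Corollary~\ref{cor:MPT} and Corollary~\ref{cor:HAE}, by reducing the relative theory of $(S \times C, S_z)$ to the Gromov-Witten theory of $S$ via the product formula for $S \times C$ together with the degeneration formula of Section~\ref{sec:Rel GW of K3xC}. The key structural observation is that, since the curve class is $(W+dF, n)$ with fixed projection $W+dF$ to $S$, the entire dependence on $q$ — and hence the entire dependence on $G_2$ — is carried by the $S$-factor, while the relative direction over $C$ contributes only $q$-independent intersection numbers. Throughout I would keep track of the $\sharp$-condition, which is precisely what permits the second-cosection vanishing to isolate the relevant components on both sides.

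For part (a), I would apply the product formula to express each relative invariant $\langle \lambda_1, \ldots, \lambda_N \mid \gamma \rangle^{(S\times C, S_z), \sharp}_{g,(W+dF,n)}$ as a finite sum of products of an absolute reduced $\sharp$-invariant of $S$ in class $W+dF$, paired along the diagonal of $\Mbar_{g,n}$ with a $q$-independent relative invariant of $(C,z)$. On the $S$-factor the relative markings become interior markings, so there are $\sum_i \ell(\lambda_i)$ markings weighted by the classes $\delta_{i,j}$. Corollary~\ref{cor:MPT} then places this factor in $\frac{1}{\Delta(q)}\QMod_{s}$ with $s = 2g + \sum_i \ell(\lambda_i) + \sum_{i,j}\wt(\delta_{i,j})$, and since the $C$-factor is constant in $q$ the claimed quasi-modularity and weight follow.

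For part (b), since $\frac{d}{dG_2}$ acts only through the $S$-factor, I would apply Corollary~\ref{cor:HAE} term by term and reinterpret each term in the relative geometry. The divisor term $-2\sum_i F^{S,\sharp}_g(\psi_i\cdot\taut;\ldots,\pi^{\ast}\pi_{\ast}\gamma_i,\ldots)$ translates directly into the relative $\psi$-class term, using that the modified insertion sits at a relative marking and converting the absolute $\psi$ into $\psi^{\rel}_{i,j}$ by Lemma~\ref{lemma:psi rel in terms Psi}; the connection term $-\sum_{a,b}(g^{-1})_{ab}T_{e_a}T_{e_b}$ commutes with the product formula and carries over verbatim. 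The genus-lowering term $F^{S,\sharp}_{g-1}(\taut';\ldots,\Delta_B)$ is the subtle one: the restriction $\taut'=\tau^{\ast}\iota^{\ast}(\alpha)$ to the non-separating boundary forces a node on the shared domain, and the behaviour of this node in the relative direction over $C$ is governed exactly by the boundary restriction formula of Proposition~\ref{prop:restriction to boundary}. Its two types of contribution — the node mapping to a nonsingular point of the expansion, versus forcing a rubber bubble along the relative divisor — produce respectively the first relative term with insertion $\Delta^{\rel}_{(B\times C, B_z)}$ and the second (rubber splitting) term, in which the base diagonal $\Delta_B$ becomes the special part $(b,\Delta_{B,\ell})$ while the secondary gluing nodes carry the diagonal $\Delta_{S,\ell_i}$ of $S$.

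The main obstacle will be executing this last step: matching the combinatorial factors $\prod_i b_i/m!$, the signs and powers of $z$ coming from the connected/disconnected and reduced/standard conventions in the definition of the series, and the consistent propagation of the $\sharp$-condition through Proposition~\ref{prop:restriction to boundary}, so that the splitting of Corollary~\ref{cor:HAE}'s $\Delta_B$-term aligns precisely with the $\Delta^{\rel}_{(B\times C, B_z)}$-term and the rubber term (with their reduced/standard swap). A secondary point to verify is that the target-expansion correction to $\psi^{\rel}$ in Lemma~\ref{lemma:psi rel in terms Psi} introduces no spurious terms, which I expect because the correction is supported on the relative divisor where the relevant insertion $\pi^{\ast}\pi_{\ast}\delta_{i,j}$ is fibrewise constant.
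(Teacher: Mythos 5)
Your overall strategy coincides with the paper's: use the product formula to write $F_{g}^{(S \times C, S_{z}), \sharp}$ as $F_{g}^{S,\sharp}$ with the pushed-forward relative class of $(C,z)$ inserted as a tautological class, deduce (a) from Corollary~\ref{cor:MPT}, and obtain (b) by applying the disconnected K3 anomaly equation (Corollary~\ref{cor:HAE}) term by term, with the genus-lowering term analyzed via the boundary-restriction formula of Proposition~\ref{prop:restriction to boundary}. Your identification of the two contributions of the node (nonsingular point of the expansion versus rubber bubble) with the $\Delta^{\rel}_{(B\times C, B_z)}$-term and the rubber splitting term is also how the paper proceeds.

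There is, however, a genuine error at the one place where the terms must be made to close up, namely your treatment of the $\psi$-class term. First, the comparison you need is not Lemma~\ref{lemma:psi rel in terms Psi} (which relates $\psi^{\rel}_{i,j}$ to the cotangent class of the universal target): what is required is the comparison of cotangent lines under the projection
\[
q : \Mbar^{\sharp, \mathrm{contr}}_{g,(W+dF,n)}\big( (S \times C,S_z), \vec{\lambda}\big) \longrightarrow \Mbar^{\sharp}_{g, \sum_i \ell(\lambda_i)}(S, W+dF),
\]
under which the relative markings become interior markings on the $S$-side; it reads $q^{\ast}(\psi_{i,j}) = \psi_{i,j}^{\rel} - D$, where $D$ is the virtual boundary divisor of maps for which the $(i,j)$-th relative marking lies on a genus $0$ component mapping entirely into a bubble contracted by $q$. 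Second, and more importantly, your expectation that this correction ``introduces no spurious terms'' is false, and the heuristic you give (fibrewise constancy of $\pi^{\ast}\pi_{\ast}\delta_{i,j}$) does not make $D$ integrate to zero. The correction is indispensable: Proposition~\ref{prop:restriction to boundary} produces the first two terms of part (b) only up to the components in which a totally ramified genus $0$ tube in the rubber is attached at the distinguished marking $b$ (these are excluded because the markings $n+1$, $n+2$ are not glued to each other), and it is exactly the $D$-term of the $\psi$-comparison that supplies these missing components. If you carry out your plan assuming the correction vanishes, the rubber splitting term comes out too small and the identity does not close; the proof works only because the deficit in the boundary-restriction term cancels against the $D$-contribution from the $\psi$-comparison.
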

\begin{proof}
Consider a stable map $f : \Sigma \to (S \times C)[k]$ parametrized by the moduli space
$\Mbar^{\sharp}_{g,(W+dF,n)}((S \times C,S_z), \vec{\lambda})$.
In order for the connected component of the moduli space containing $f$ to contribute non-trivially to the Gromov-Witten invariant,
there must be  precisely one connected component $\Sigma' \subset \Sigma$ where $f$ is of non-zero degree over the K3 surface $S$.
Moreover, we claim that $f|_{\Sigma'}$ in this case is also of non-zero degree over $C$.
Indeed if not, then the remaining components yields a factor of
\[ \big\langle \, \lambda_1, \ldots, \lambda_N \big\rangle^{(S \times C, S_{z}), \sharp, \std}_{g', (0,n)}, \]
which have to vanish for dimension reasons (since the standard virtual class is dimension one less than the reduced virtual class
and the degree of the insertions $\lambda_1, \lambda_2, \lambda_3$ are chosen to sum up to the degree of the reduced virtual class).
Since $(C, z)$ is stable, it follows that $\Sigma'$ satisfies $2g(\Sigma') -2 + n(\Sigma') > 0$, so its stabilization is well-defined.
Similarly, 
if $\Sigma' \subset \Sigma$ is a connected component whose degree over the K3 surface $S$ is trivial, then either $2g(\Sigma')-2+n(\Sigma) > 0$ by assumption of the moduli space,
or the degree over $C$ is non-trivial. In the latter case by the stability of $(C,z)$ we have that $\Sigma'$ has again at least $N$ special points and genus $\geq g(C)$, hence $\Sigma'$ and its markings defines a stable curve.
Note also since we have no interior markings, there are no contributions from contracted genus $g \geq 2$ components.
Let $\Mbar^{\sharp, \mathrm{contr}}_{g,(W+dF,n)}( (S \times C,S_z), \vec{\lambda})$ be the union of connected components
which have a non-trivial contribution,
where we have written $\vec{\lambda} = (\vec{\lambda}_1, \ldots, \vec{\lambda}_N)$. 
We have shown that there exists a commutative diagram:
\[
\begin{tikzcd}
\Mbar^{\sharp, \mathrm{contr}}_{g,(W+dF,n)}( (S \times C,S_z), \vec{\lambda}) \ar{r}{q} \ar{d} &  \Mbar^{\sharp}_{g, \sum_i \ell(\lambda_i)}(S, W+dF) \ar{d}\\
\Mbar'_{g,n}( (C,z), \vec{\lambda}) \ar{r}{\pi}
& \Mbar_{g,n}'
\end{tikzcd} 
\]
where $\Mbar'_{g,n}(X,D)$ is the moduli space of disconnected relative stable maps where each connected component of the source is stable,
and $\Mbar'_{g,n}$ is simply the moduli space of disconnected stable curves (where each connected component is stable).

Recall from \eqref{I class} the class
\[
I^{(C, z), \prime}_{g,n}( \vec{\lambda}\, |\, \gamma)
=
\pi_{\ast}
\left( \ev^{\ast}(\gamma) [ \Mbar_{g,r,\beta}((C,z), \vec{\lambda}) ]^{\vir} \right).
\]
Then applying the product formula of \cite{Beh, LQ} we conclude that:
\[
F_{g}^{(S \times C, S_{z}), \sharp}(\lambda_1, \ldots, \lambda_N )
=
F_{g}^{S, \sharp}\Big( I^{(C, z), \prime}_{g,n}( \vec{\lambda}) ; \prod_{i=1}^{\ell(\lambda_i)} \prod_j \delta_{i,j} \Big)
\]
The first claim hence follows from Corollary~\ref{cor:MPT}.

For the second claim we apply the holomorphic anomaly equation of Corollary~\ref{cor:HAE}.
Let
\[ \iota : \Mbar'_{g-1,n+2} \to \Mbar'_{g,n} \]
be the morphism that glues the $(n+1)$-th and $(n+2)$-th marked point.
By an application of Proposition~\ref{prop:restriction to boundary} 
we then have:
\begin{align*}
&  \iota^{\ast} I^{(C, z), \prime}_{g,n}( \vec{\lambda} ) 
=
I_{g-1,\beta}^{(C,z),\prime}\left( \vec{\lambda} \middle| \Delta_{(C,z)}^{\text{rel}} \right) 
 +
\sum_{i=1}^{N}
\sum_{\substack{ m \geq 0 \\ g = g_1 + g_2 + m }}
\sum_{\substack{ b ,  b_1, \ldots, b_m \\ \ell, \ell_1, \ldots, \ell_m}} \frac{\prod_{i=1}^{m} b_i}{m!} \cdot 
\\
& \quad \quad \quad \begin{array}{r} 
\Bigg\{ \xi_{\ast} j^{\ast} \Bigg[
J^{(C,z), \bullet}_{g_1, \beta'}\Big( \lambda_1, \ldots, \lambda_{i-1}, \big( b, b_1, \ldots, b_m \big), \lambda_{i+1}, \ldots, \lambda_N \Big) \\
\boxtimes 
J^{(\p^1, \{ 0, z_i \} ), \bullet, \sim}_{g_2, \alpha}\Big( \big( b, b_1, \ldots, b_m \big) , \lambda_i \Big)
\Bigg] + (\text{reversed}) \Bigg\}
\end{array} 
\end{align*}
where (reversed) stands for the same term as before but with the role of the markings $(n+1)$ and $(n+2)$ reversed,
and the rest of the notation is as in Proposition~\ref{prop:restriction to boundary} (except that we do not require the glued curve to be connected).
Since only the $(n+1, n+2)$-th marked points are not glued, we exclude precisely those components of the moduli space
where there is a totally ramified morphism from a genus $0$ component to rubber $(\p^1, 0 \sqcup \infty)$ which is ramified over $0$ by some relative marking $\lambda_{i,j}$ and over $\infty$ by $b$ (corresponding to the marking labeled $n+1$ or $n+2$).
Applying the product formula in reverse we hence find that
\[ F_{g-1}^{S, \sharp}\Big( \iota^{\ast} I^{(C, z), \prime}_{g,n}( \vec{\lambda} )  ; \prod_{i=1}^{\ell(\lambda_i)} \prod_j \delta_{i,j} \Big) \]
accounts for precisely the first two terms on the right of part (b) of Theorem~\ref{thm:rel HAE},
except for the components where we have a contribution from a totally ramified map to a bubble attached to the marking $b$.

The second term on the right of Corollary~\ref{cor:HAE} is
\[ - 2 \sum_{i=1}^{r} F^{S, \sharp}_{g}(I^{(C, z), \bullet}_{g,n}( \vec{\lambda} )  ; \gamma_1, \ldots,
\gamma_{j-1}, \psi_i \cdot \pi^{\ast} \pi_{\ast} \gamma_j, \gamma_{j+1}, \ldots, \gamma_r) \]
where we write $(\gamma_1, \ldots, \gamma_r) = ( \delta_{ij} )_{i,j}$.
Again we apply the product formula in reverse. For that we need to compare the psi-classes $\psi_i$ on the domain and target of the morphism:
\[ \Mbar^{\sharp, \mathrm{contr}}_{g,(W+dF,n)}( (S \times C,S_z), \vec{\lambda}) \xrightarrow{q} \Mbar^{\sharp}_{g, \sum_i \ell(\lambda_i)}(S, W+dF). \]
Precisely, we have
\[ q^{\ast}(\psi_{i,j}) = \psi_{i,j}^{\text{rel}} - D \]
where $D$ is the virtual boundary divisor parametrizing splittings of maps $f : C \to X[k]$
where the relative marking $\lambda_{i,j}$ lies on a genus $0$ component mapping entirely into the bubble, such that the underlying curve is contracting after forgetting the map to $(C,z)$.
We hence obtain precisely the third tirm in part (b) of the claim,
plus the contribution we were missing in the first $2$ terms.

Finally, the third term in Corollary~\ref{cor:HAE} yields precisely part (b) in our claim.
This completes the proof.
\end{proof}

\begin{rmk}
The holomorphic anomaly equation of Theorem~\ref{thm:rel HAE} is a version (for reduced virtual classes) of the holomorphic anomaly equation
conjectured for the relative Gromov-Witten theory of elliptic fibrations in Conjecture D of \cite{OPix2}.
The form in \cite{OPix2} is more natural, but requires more notation (for once it is defined on the cycle level).
Theorem~\ref{thm:rel HAE} is then a special case of the following statement:
if the holomorphic anomaly equation (in the form of \cite[Conjecture B]{OPix2}) holds for an elliptic fibration $S \to B$,
then for any relative pair $(X,D)$ the holomorphic anomaly equation holds for the elliptic fibration $S \times X \to B \times X$ relative to $S \times D \to B \times D$ (in the form of \cite[Conjecture D]{OPix2}). \qed
\end{rmk}

\section{Holomorphic anomaly equations: Primitive case} \label{sec:HAE primitive case}
\subsection{Overview}
Let $S \to B$ be an elliptic K3 surface and
recall the generating series
\[
F^{S^{[n]}}_{g}(\taut; \gamma_1, \ldots, \gamma_N) = \sum_{d \geq -1} \sum_{r \in \mathbb{Z}} \left\langle \taut ; \gamma_1, \ldots, \gamma_N \right\rangle^{S^{[n]}}_{g, W+dF+rA} q^d (-p)^r,
\]
where $W=B+F$  and $B,F$ are the section and fiber class. 
The following is the conjectural quasi-Jacobi form property and holomorphic anomaly equation in the special case of primitive classes.
We follow parallel notation as in Conjecture~\ref{Conj:HAE}.

\begin{conj} \label{conj:fg HAE prim}
(a) For $\wt$-homogeneous classes $\gamma_i$, we have
\[ F^{S^{[n]}}_{g}(\taut; \gamma_1, \ldots, \gamma_N) \in 
\frac{1}{\Delta(q)}
\QJac_{k, n-1} \]
where $k=n(2g-2+N) + 2 + \sum_i \wt(\gamma_i)$.

(b) Assuming part (a) we have
\begin{align*}
\frac{d}{d G_2} F^{S^{[n]}}_{g}(\taut; \gamma_1, \ldots, \gamma_N)
= & 
F^{S^{[n]}}_{g-1}(\taut'; \gamma_1, \ldots, \gamma_N, U) \\
& + 2 \sum_{\substack{g=g_1 + g_2 \\ \{ 1, \ldots, N \} = A \sqcup B }} F^{S^{[n]}}_{g_1}(\taut_1 ; \gamma_A, U_1) F_{g_2}^{S^{[n]},\std}(\taut_2 ; \gamma_B , U_2 ) \\
& - 2 \sum_{i=1}^{N} F^{S^{[n]}}_{g}( \psi_i \cdot \taut ; \gamma_1, \ldots, \gamma_{i-1}, U(\gamma_i), \gamma_{i+1}, \ldots, \gamma_N) \\
& - \sum_{a,b} (g^{-1})_{ab} T_{e_a} T_{e_b} F^{S^{[n]}}_{g}(\taut; \gamma_1, \ldots, \gamma_N).
\end{align*}
\end{conj}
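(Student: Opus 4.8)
The plan is to combine the two independent modular constraints on $F^{S^{[n]}}_g$ coming from the two special features of $S^{[n]}$: its hyperk\"ahler structure (via the monodromy, Section~\ref{sec:constraints from monodromy}) and its description as a Hilbert scheme of points (via the GW/PT/Hilb correspondences). Throughout I specialize the tautological insertion to $\taut=\tau^{\ast}[\xi]$ for a fixed $\xi=[(C,z_1,\dots,z_N)]\in\Mbar_{g,N}$; for the range $(g,N)=(0,3)$ of Theorem~\ref{thm:Main result} one has $\Mbar_{0,3}=\mathrm{pt}$ and $\taut=1$. The monodromy already supplies the \emph{elliptic} half of the Jacobi transformation law, while the Hilbert-scheme side supplies the \emph{modular} half; together these are exactly the two transformation laws characterizing a quasi-Jacobi form, and correspondingly the two holomorphic anomaly equations (with respect to $\A$ and to $G_2$).

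For part (a) I would start from Proposition~\ref{prop:constrains monodromy}, which writes
\[
\Delta(q)\,F^{S^{[n]}}_g(\taut;\gamma_1,\dots,\gamma_N)=\Theta^{2n-2}\sum_{i,j,s}f_{i,j,s}(q)\,\A^i\wp^j(\wp')^s
\]
for some $q$-series $f_{i,j,s}$, and which moreover gives $\frac{d}{d\A}F^{S^{[n]}}_g=T_\delta F^{S^{[n]}}_g$ in closed form. It then remains only to show that each $f_{i,j,s}$ is \emph{quasi-modular} of the weight forced by the index-$(n-1)$ grading, which upgrades the right-hand side from a meromorphic Jacobi form to an honest element of $\frac{1}{\Delta}\QJac_{k,n-1}$. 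To this end I pass to the $z$-expansion ($p=e^z$). By the Hilb/GW correspondence (Theorem~\ref{thm:GW/Hilb}, valid since $\beta$ is primitive), $F^{S^{[n]}}_g$ equals the genus-generating series $\sum_d q^d Z^{(S\times C,S_z)}_{\GW,(W+dF,n)}$, whose $z$-coefficients are fixed-genus relative Gromov-Witten series of $(S\times C,S_z)$. By Theorem~\ref{thm:rel HAE}(a) these lie in $\frac{1}{\Delta}\QMod$ of the predicted weight, so every $z^r$-coefficient of $\Delta F^{S^{[n]}}_g$ is quasi-modular of weight $r+(k+12)$. Feeding this into the converse $z$-expansion Lemmata~\ref{lemma:z expansion 1} and~\ref{lemma:z expansion 2} (applied according to the $p\mapsto p^{-1}$ parity recorded in Lemma~\ref{lemma:Laurent polynomial}) forces the $f_{i,j,s}$ to be quasi-modular. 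A short induction on the total weight $\sum_i\wt(\gamma_i)$ absorbs the $\A$-dependent part, since the $\frac{d}{d\A}$-anomaly expresses it through $T_\delta$-insertions of strictly lower weight, exactly as in the proof of Proposition~\ref{prop:constrains monodromy}.

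For part (b) I would transport the relative holomorphic anomaly equation of Theorem~\ref{thm:rel HAE}(b) along the same correspondence. On the relative side $\frac{d}{dG_2}$ acts coefficientwise in $z$ on quasi-modular forms; under $p=e^z$ this is the operator $(\frac{d}{dG_2})_z$, which differs from the genuine index-$(n-1)$ operator by the correction identity \eqref{fiberwise ddg2},
\[
\frac{d}{dG_2}F^{S^{[n]}}_g=\Big(\tfrac{d}{dG_2}\Big)_z F^{S^{[n]}}_g+2z\,\frac{d}{d\A}F^{S^{[n]}}_g+2z^2(n-1)F^{S^{[n]}}_g .
\]
The first summand is evaluated by the relative HAE; the second is rewritten via $\frac{d}{d\A}F^{S^{[n]}}_g=T_\delta F^{S^{[n]}}_g$ from part (a). What remains is geometric bookkeeping: using the product formula together with the explicit Nakajima descriptions \eqref{LLV_operators} and \eqref{T delta} of $U$ and $T_\delta$, the relative-diagonal term, the rubber-splitting term (evaluated through Propositions~\ref{prop:rubber std} and~\ref{prop:rubber in terms of non-rubber}), and the relative $\psi$-term of Theorem~\ref{thm:rel HAE}(b) must be shown to combine with the two correction terms into exactly the first three terms of Conjecture~\ref{conj:fg HAE prim}(b) carrying the $U$-insertion. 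The $T_{e_a}T_{e_b}$ term matches directly, as $T_{e_a}=\act(e_a\wedge F)$ acts compatibly on the two sides for $e_a\perp\{W,F\}$.

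The hardest step will be this last reconciliation. The subtlety is that the relative geometry $(S\times C,S_z)$ only sees index-$0$ (quasi-modular) modularity in the base, whereas $S^{[n]}$ carries the full index-$(n-1)$ Jacobi structure; the discrepancy is precisely the $z$- and $z^2$-corrections in \eqref{fiberwise ddg2}, and one must verify that these corrections, once re-expressed through $T_\delta$ and the index, are exactly what promotes the base-diagonal and rubber contributions on $S\times C$ to the single LLV operator $U=\act(F\wedge f)$ on $S^{[n]}\times S^{[n]}$. Making this identification rigorous also requires matching the $\bullet$ and $\sharp$ conventions of Sections~\ref{sec:Rel GW of K3xC}--\ref{sec:HAE on K3} and tracking the point-class insertion $\tau^{\ast}[\xi]$ through the product formula, which is cleanest and unconditional exactly in the range $g=0$, $N\le 3$.
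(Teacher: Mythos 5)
Your route is the same as the paper's: part (a) is proved by combining the monodromy constraints of Proposition~\ref{prop:constrains monodromy} (the elliptic half and the $\frac{d}{d\A}=T_\delta$ equation) with the GW/Hilb correspondence (Theorem~\ref{thm:GW/Hilb}) and the relative holomorphic anomaly equation for $(S\times\p^1,S_{0,1,\infty})$ (Theorem~\ref{thm:rel HAE}), fed into the $z$-expansion Lemmata~\ref{lemma:z expansion 1}--\ref{lemma:z expansion 2} by induction on $\sum_i\wt(\gamma_i)$; part (b) is proved by reducing to the $\left(\tfrac{d}{dG_2}\right)_z$-identity for the three-point function via \eqref{fiberwise ddg2} and the $\frac{d}{d\A}$-equation (this is Proposition~\ref{prop:reduction}), then matching the relative-diagonal, rubber, relative-$\psi$ and $T_{e_a}T_{e_b}$ contributions, with $T_\delta=[e_\delta,U]$ and the symmetry of $U$ effecting the final cancellation --- exactly the paper's Terms 1, 2a, 2b, 3, 4. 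You have correctly identified this last matching as the crux.

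There is, however, one genuine gap in part (a). Quasi-modularity of the coefficient series $f_{i,j,s}$ gives only
\[
\Delta(q)\,F^{S^{[n]}}_0(\gamma_1,\ldots,\gamma_N)\in \MQJac_{k,n-1},
\]
i.e.\ a \emph{meromorphic} quasi-Jacobi form; it does not by itself yield membership in $\QJac_{k,n-1}$, contrary to your sentence claiming that quasi-modularity of the $f_{i,j,s}$ ``upgrades the right-hand side \ldots\ to an honest element of $\frac{1}{\Delta}\QJac_{k,n-1}$''. Indeed, a monomial such as $\Theta^{2n-2}\wp^{n-1}\wp'$ has a pole of order $3$ at the lattice points (pole order $2(n-1)+3$ against vanishing order $2n-2$ of $\Theta^{2n-2}$), and $\QJac$ is by definition the subring of holomorphic elements of $\MQJac$. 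The paper closes this in Step 3 of Proposition~\ref{prop:quasi-modularity}: holomorphicity at $z=0$ holds because every $q$-coefficient is a Laurent polynomial in $p$ (Lemma~\ref{lemma:Laurent polynomial}), and holomorphicity at all remaining lattice points then follows from the elliptic transformation law (Lemma~\ref{elliptic transformation}) together with the $\frac{d}{d\A}=T_\delta$ equation and the nilpotence of $T_\delta$, which transport the question back to $z=0$. You have assembled all the needed ingredients, but this step must be carried out explicitly. (Two smaller bookkeeping points: the $z^r$-coefficient of $\Delta(q)F^{S^{[n]}}_g$ is quasi-modular of weight $r+k$ with $k$ as in the conjecture, not $r+(k+12)$; and the cases $N\le 2$, as well as general $\taut$ in part (b), require the short reduction to $(g,N)=(0,3)$, $\taut=1$ via the string/divisor equations and boundary restrictions, as in Propositions~\ref{prop:quasi-modularity} and~\ref{prop:reduction}.)
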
 

\vspace{5pt}
In this section we prove the following:
\begin{thm} \label{thm:prim HAE}
Conjecture~\ref{conj:fg HAE prim} holds
in case $g=0$ and $N \leq 3$.
\end{thm}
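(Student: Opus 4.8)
The plan is to combine the two independent constraints established in the previous sections. From hyperk\"ahler geometry, Proposition~\ref{prop:constrains monodromy} controls the dependence of $F^{S^{[n]}}_0$ on the elliptic variable (equivalently on $p$) and already supplies the anomaly equation with respect to $\A$; from the Hilbert-scheme side, the Hilb/GW correspondence together with the quasi-modularity and holomorphic anomaly equation of the elliptic K3 surface controls the dependence on $\tau$ (equivalently on $q$). I treat the principal case $N=3$ in full and reduce the cases $N\le 2$ to it at the end. Throughout I take $W=B+F$ and insertions $\gamma_i$ that are $\wt$-homogeneous.

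For part~(a), I would begin from the structural description of Proposition~\ref{prop:constrains monodromy}: there are unique power series $f_{i,j,s}(q)$ with
\[
F^{S^{[n]}}_0(1;\gamma_1,\gamma_2,\gamma_3)=\frac{\Theta(p,q)^{2n-2}}{\Delta(q)}\sum_{i,j,s}f_{i,j,s}(q)\,\A(p,q)^i\wp(p,q)^j\wp'(p,q)^s,
\]
which already exhibits the index $n-1$ and the correct elliptic transformation law. It then remains only to prove that each $f_{i,j,s}$ is quasi-modular of the predicted weight, and by Lemmas~\ref{lemma:z expansion 1} and~\ref{lemma:z expansion 2} this reduces to showing that every coefficient in the $z$-expansion of $F^{S^{[n]}}_0(1;\gamma_1,\gamma_2,\gamma_3)$ is quasi-modular of the weight dictated by Lemma~\ref{lemma:z^r coefficient}. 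This is exactly where the correspondence enters: since $W+dF$ is primitive for every $d\ge -1$, Theorem~\ref{thm:GW/Hilb} applied to the base $(C,z)=(\p^1,z_1,z_2,z_3)$ (so that $\taut=\tau^{\ast}[\xi]=1$, as $\M_{0,3}$ is a point) identifies $F^{S^{[n]}}_0(1;\gamma_1,\gamma_2,\gamma_3)$, under $p=e^{z}$, with $\sum_d q^d Z^{(S\times\p^1,S_z)}_{\GW,(W+dF,n)}(\gamma_1,\gamma_2,\gamma_3)$. By the product formula packaged in Theorem~\ref{thm:rel HAE}(a), each $z$-expansion coefficient of the latter is a K3-surface series lying in $\frac{1}{\Delta}\QMod$ of the weight governed by Corollary~\ref{cor:MPT}. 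The proof of part~(a) is then completed by the weight bookkeeping, matching the K3 weight $s=2g+\sum_i\ell(\lambda_i)+\sum_{i,j}\wt(\delta_{i,j})$ against the Jacobi weight $k=n+2+\sum_i\wt(\gamma_i)$ through the identity $\wt(\gamma_i)=\sum_j\wt(\delta_{i,j})$ for a Nakajima cycle.

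For part~(b), I would differentiate the identity of part~(a) and invoke the relation~\eqref{fiberwise ddg2}, which on a form of index $n-1$ reads $\frac{d}{dG_2}=\left(\frac{d}{dG_2}\right)_z+2z\frac{d}{d\A}+2z^2(n-1)$. The middle term is supplied by the monodromy input, since $\frac{d}{d\A}F^{S^{[n]}}_0=T_{\delta}F^{S^{[n]}}_0$ by Proposition~\ref{prop:constrains monodromy}(a). The coefficient-wise operator $\left(\frac{d}{dG_2}\right)_z$ is applied to equal quasi-modular $z$-coefficients on the two sides of the correspondence, so it is computed, coefficient by coefficient, by the relative holomorphic anomaly equation of Theorem~\ref{thm:rel HAE}(b) (which in turn rests on Corollary~\ref{cor:HAE}). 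The remaining work is to verify that the transported relative terms, together with the index corrections $2zT_{\delta}F^{S^{[n]}}_0$ and $2z^2(n-1)F^{S^{[n]}}_0$, reassemble into the four terms on the right-hand side of Conjecture~\ref{conj:fg HAE prim}(b). Here the decisive identification is that the base-diagonal insertion $\Delta^{\rel}_{(B\times C,B_z)}$ and the reduced-rubber contributions of Theorem~\ref{thm:rel HAE}(b) correspond to the LLV operator $U=\act(F\wedge f)=-\sum_{n>0}\frac{1}{n^2}\Fq_n\Fq_{-n}(F_1+F_2)$, regarded as a class in $H^{\ast}(S^{[n]}\times S^{[n]})$: the K\"unneth factor $F_1+F_2$ arises from $\pi^{\ast}\Delta_B=F\otimes 1+1\otimes F$, while the weights $\frac{1}{n^2}$ and the genus/rubber combinatorics are produced by the rubber evaluations of Propositions~\ref{prop:rubber std} and~\ref{prop:rubber in terms of non-rubber}, with the series $\mathbf{G}^n/(\Theta^2\Delta)$ of Proposition~\ref{K3xP1 lemma} furnishing precisely the index corrections. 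The $\psi_i$-term and the $T_{e_a}T_{e_b}$-term transport directly, using the monodromy-equivariance of the Nakajima operators (Property~3 of Section~\ref{subsec:monodromy}) to match $T_{e_a}$ on $S$ with $T_{e_a}$ on $S^{[n]}$ and to recognise $\pi^{\ast}\pi_{\ast}\gamma_i$ as $U(\gamma_i)$.

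Finally, the cases $N\le 2$ reduce to $N=3$ via the divisor equation with the insertion $\delta$: since $\delta\cdot(W+dF+rA)=r$, one has $F^{S^{[n]}}_0(1;\delta,\gamma_1,\dots,\gamma_N)=-D_x F^{S^{[n]}}_0(1;\gamma_1,\dots,\gamma_N)$, with the case $N=0$ vanishing for $n\ge 2$ by dimension. As $D_x$ determines every $z$-coefficient except the $z^0$-coefficient, knowledge of the left-hand side from the $N+1\le 3$ case together with the rigid structural form of Proposition~\ref{prop:constrains monodromy} pins down $F^{S^{[n]}}_0(1;\gamma_1,\dots,\gamma_N)$ and shows it lies in $\frac{1}{\Delta}\QJac_{k,n-1}$, while the anomaly equation follows by applying $D_x$ to the already-proven equation in $N+1$ insertions and using the commutation relations~\eqref{eq:comm relations 1}. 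I expect the main obstacle to be the term-by-term matching in part~(b): keeping track of all signs and of the disconnected ($\bullet$ and $\sharp$) conventions, and above all proving that the base-diagonal and rubber contributions assemble into the single clean operator $U$ together with exactly the index corrections demanded by~\eqref{fiberwise ddg2}. This is the point where the relative degeneration and boundary-restriction formulas, the rubber evaluations, and the explicit LLV description of $U$ must be forced to agree.
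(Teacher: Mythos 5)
Your plan follows the same skeleton as the paper's proof: Theorem~\ref{thm:GW/Hilb} plus the relative theory of Theorem~\ref{thm:rel HAE} controls the $q$-direction, Proposition~\ref{prop:constrains monodromy} controls the $p$-direction, the identity \eqref{fiberwise ddg2} mediates between the Jacobi-form anomaly operator $\frac{d}{dG_2}$ and the coefficientwise operator $\left(\frac{d}{dG_2}\right)_z$, and part (b) is a term-by-term matching in which the K\"unneth decomposition of $U$ (Lemma~\ref{lemma:Udecomp}), the rubber evaluations, and the relation $T_\delta=[e_\delta,U]$ do the work. However, your part (a) has a genuine gap, in two layers. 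First, Lemmas~\ref{lemma:z expansion 1} and~\ref{lemma:z expansion 2} cannot be applied directly to the structural form of Proposition~\ref{prop:constrains monodromy}, because that form contains powers $\A^i$, while the lemmas only treat $\Theta^{2m}\sum_i f_i\wp^{m-i}$ (resp.\ the same with one factor $\wp'$). The paper first strips the $\A$-dependence by induction on the total weight $\sum_i\wt(\gamma_i)$: one subtracts the formal $\A$-antiderivative $\widetilde F$ of $\sum_i F_0(\ldots,T_\delta\gamma_i,\ldots)$, which is $\A$-free precisely because of Proposition~\ref{prop:constrains monodromy}(a), and only then invokes the $z$-expansion lemmas. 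So your claimed reduction ``it remains only to prove each $f_{i,j,s}$ is quasi-modular'' is not licensed as stated, though it is repairable by exactly this device.

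Second, and more seriously: even after both repairs, your argument only places $F^{S^{[n]}}_0$ in $\frac{1}{\Delta(q)}\MQJac_{k,n-1}$, i.e.\ it produces a \emph{meromorphic} quasi-Jacobi form with possible poles at lattice points, whereas Conjecture~\ref{conj:fg HAE prim}(a) asserts membership in $\QJac_{k,n-1}$, which requires holomorphy of the function $\BC\times\BH\to\BC$. Holomorphy is not automatic from the structural form with quasi-modular coefficients: for instance $\wp-\A^2\in\MQJac_{2,0}$ is holomorphic at $z=0$ but has a pole at $z=2\pi i\tau$, because quasi-Jacobi forms are not invariant under elliptic translations (Lemma~\ref{elliptic transformation} introduces $e^{-\lambda\frac{d}{d\A}}$). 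This is exactly Step 3 of the paper's Proposition~\ref{prop:quasi-modularity}: holomorphy at $z=0$ comes from Proposition~\ref{prop:constrains monodromy}(b) (each $q$-coefficient is a Laurent polynomial in $p$), and holomorphy at all other lattice points is then propagated by combining the elliptic transformation law with the $\frac{d}{d\A}$-equation of Proposition~\ref{prop:constrains monodromy}(a), using that $T_\delta$ is nilpotent so that $e^{-\lambda\frac{d}{d\A}}F_0$ is a finite sum of series of the same shape. Your proposal never addresses these potential poles, so as written it does not prove part (a). Smaller corrections: with the $(-p)^r$ convention the divisor equation gives $F_0(\delta,\ldots)=+D_xF_0(\ldots)$, not $-D_x$, and the simpler reduction for $N\le 2$ is to insert the divisor $F$, which has degree $1$ on every class $W+dF+rA$, avoiding your integration-constant discussion entirely; also, in part (b) the two $\mathbf{G}^n/(\Theta^2\Delta)$ contributions from the rubber evaluations cancel against each other by the symmetry of $U$ — the index corrections $-2zT_\delta$ and $-2(n-1)z^2$ arise instead from the rubber-standard $\delta$-insertions combined via $T_\delta=[e_\delta,U]$ and from the $\Delta^{\rel}_{(B\times C,B_z)}$-term, respectively.
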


The proof below proceeds in three steps.
Afte reducing to $N=3$ and $\taut=1$, 
the GW/Hilb correspondence (Theorem~\ref{thm:GW/Hilb}) implies the following basic statement, see \eqref{xx4tewt}:
\begin{equation*}
F^{S^{[n]}}_{0}(\lambda_1, \lambda_2, \lambda_3)
=
\sum_{g \in \BZ} z^{2g-2 -n + \sum_i l(\lambda_i) }
(-1)^{g-1+\sum_i l(\lambda_i)} F_g^{(S \times \p^1,S_{0,1,\infty}), \sharp}\left( \lambda_1, \lambda_2, \lambda_3 \right)
\end{equation*}
under the variable change $p=e^z$.
By Theorem~\ref{thm:rel HAE} of Section~\ref{sec:HAE on K3} we know that each
$F_g^{(S \times \p^1,S_{0,1,\infty}), \sharp}\left( \lambda_1, \lambda_2, \lambda_3 \right)$ is a quasi-modular form satisfying a holomorphic anomaly equation.
Moreover, the Hilbert scheme series $F^{S^{[n]}}_{0}(\lambda_1, \lambda_2, \lambda_3)$ on the left
satisfies
the structure described in Proposition~\ref{prop:constrains monodromy}.
Our main work is then to turn these two inputs
into the quasi-Jacobi form property and the holomorphic anomaly equation for quasi-Jacobi forms for the Hilbert scheme series.
In Section~\ref{subsec:QJ prop} we first discuss that the left hand side is a quasi-Jacobi form.
Then in Section~\ref{subsec:reduction} we reduce the holomorphic anomaly equation
for the left hand side to an identity of the correspondence $z$-series.
This is done by using Lemma~\ref{lemma:z^r coefficient}
on the comparison of the $G_2$-holomorphic anomaly equation for quasi-Jacobi  forms
with the factorwise $G_2$-holomorphic anomaly equation on the $z$-expansion.
Finally, the required identity is checked in Section~\ref{subsec:conclusion HAE}
in a longer and technical 4-step argument.

\subsection{Quasi-Jacobi form property}
\label{subsec:QJ prop}
We start with the quasi-Jacobi form part of Theorem~\ref{thm:prim HAE}.
\begin{prop} \label{prop:quasi-modularity}
Assume that $g=0$ and $N \leq 3$. For $\wt$-homogeneous classes $\gamma_i$, we have
\[ F^{S^{[n]}}_{g}(\taut; \gamma_1, \ldots, \gamma_N) \in 
\frac{1}{\Delta(q)}
\QJac_{s, n-1} \]
where $s=n(2g-2+N) + 2 + \sum_i \wt(\gamma_i)$.
\end{prop}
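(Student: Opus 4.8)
## Proof strategy for Proposition 7.3

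The plan is to reduce the quasi-Jacobi form property for the Hilbert scheme $S^{[n]}$ to the quasi-modularity already established on the K3 surface side (Theorem~\ref{thm:MPT}, Corollary~\ref{cor:MPT}) together with the elliptic transformation law coming from monodromy (Proposition~\ref{prop:constrains monodromy}). These two inputs are complementary: the monodromy already tells us (Proposition~\ref{prop:constrains monodromy}) that $F^{S^{[n]}}_g(\taut;\gamma_1,\ldots,\gamma_N)$, after the correction by $\Theta(p,q)^{2n-2}/\Delta(q)$, has exactly the elliptic transformation behaviour of an index $n-1$ form and that it is annihilated-up-to-$T_\delta$ by $\frac{d}{d\A}$. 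What is missing is control of the modular (i.e. $S$-transformation, or equivalently the $\tau$-dependence encoded in quasi-modularity of the Fourier coefficients) behaviour, and this is precisely what the GW/Hilb correspondence imports from the K3 surface.

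First I would invoke the Hilb/GW correspondence (Theorem~\ref{thm:GW/Hilb}) in the primitive case $\ell=1$ to identify the Hilbert scheme generating series with the relative Gromov-Witten series $Z^{(S\times C,S_z)}_{\GW,(\beta,n)}$ for $(\beta,n)=(W+dF+rA,n)$; here one takes $(C,z_1,\ldots,z_N)=(\p^1,z_1,\ldots,z_N)$ with $N\le 3$ markings so that $2g-2+N=N-2>0$ forces $g=0$, $N=3$ (the unstable cases $N\le 2$ being handled separately, either directly or by the fiber-class arguments of Section~\ref{sec:fiber case}). Then I would apply Theorem~\ref{thm:rel HAE}(a): the disconnected relative series $F^{(S\times C,S_z),\sharp}_g(\lambda_1,\ldots,\lambda_N)$ lies in $\frac{1}{\Delta(q)}\QMod_s$ with $s=2g+\sum_i\ell(\lambda_i)+\sum_{i,j}\wt(\delta_{i,j})$. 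The key combinatorial bookkeeping is to translate the weights: using Definition~\ref{defn:coh weighted to Nakajima} and the weight formula \eqref{wt grading}, a cohomology-weighted partition $\lambda_i=\fq_{\lambda_{i,j}}(\delta_{i,j})\1$ contributing to a $\wt$-homogeneous class $\gamma_i\in H^\ast(S^{[n]})$ satisfies $\wt(\gamma_i)=\sum_j\wt(\delta_{i,j})$, so that each $q^d$-coefficient of $F^{S^{[n]}}_g$ is quasi-modular of weight $s=2g+\sum_i\ell(\lambda_i)+\sum_i\wt(\gamma_i)$.

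The decisive step is matching this per-coefficient weight $s$ with the claimed global weight of the index $n-1$ quasi-Jacobi form. By Lemma~\ref{lemma:z^r coefficient}, the $z^r$-coefficient of a form in $\MQJac_{k,m}$ is quasi-modular of weight $r+k$; conversely Lemmata~\ref{lemma:z expansion 1} and~\ref{lemma:z expansion 2} show that once the $p\mapsto pq^\lambda$ and $p\mapsto p^{-1}$ symmetries of Proposition~\ref{prop:constrains monodromy} pin the series to the shape $\Theta^{2n-2}\sum_i f_i(q)\wp^{n-1-i}(\wp')^s$, it suffices to check that each $f_i(q)$ is quasi-modular of the correct weight, and this follows from the coefficientwise quasi-modularity just established. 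Assembling the pieces, $F^{S^{[n]}}_g$ is $\frac{1}{\Delta(q)}$ times an index $n-1$ quasi-Jacobi form; a final weight count, using $\deg\Theta^{2n-2}=-(2n-2)$ and $\deg\Delta^{-1}=+12$, yields total weight $n(2g-2+N)+2+\sum_i\wt(\gamma_i)$ as claimed.

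The main obstacle I anticipate is \emph{bookkeeping the Jacobi index and the weight shift simultaneously}, rather than any single hard estimate. The index $n-1$ is forced by the monodromy symmetry, but verifying that the weight $s$ of the individual Fourier coefficients aligns consistently with the $z^r$-grading of a genuine weight $k$ index $n-1$ form requires carefully tracking the $(-1)^{\text{parity}}$ signs from Step~1 of Proposition~\ref{prop:constrains monodromy} (choosing between Lemma~\ref{lemma:elliptic to Jac 1} and~\ref{lemma:elliptic to Jac 2}), the length shifts $\ell(\lambda_i)$ versus $\wt(\gamma_i)$, and the $\Theta^{2n-2}$ prefactor's own weight $-(2n-2)$. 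A secondary subtlety is ensuring that the disconnected $\sharp$-contributions and the degree-zero ($\beta=0$) pieces do not spoil quasi-modularity; this is exactly what Corollary~\ref{cor:MPT} is designed to guarantee, via the second-cosection vanishing argument that confines nontrivial contributions to a single non-constant component whose weight matches the connected count.
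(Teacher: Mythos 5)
Your overall architecture is the same as the paper's: reduce $N\le 2$ to $N=3$ (the paper does this by the divisor equation; your appeal to the fiber-class arguments of Section~\ref{sec:fiber case} is off-target, since that section concerns classes $dF+rA$, not the number of markings), pass through the GW/Hilb correspondence (Theorem~\ref{thm:GW/Hilb}) to the relative theory of $(S\times\p^1,S_{0,1,\infty})$, use Theorem~\ref{thm:rel HAE}(a) to get quasi-modularity of weight $r+s$ for every $z^r$-coefficient, and then convert this coefficientwise statement into a quasi-Jacobi form statement via the monodromy constraints. However, your ``decisive step'' has a genuine gap. Proposition~\ref{prop:constrains monodromy} does \emph{not} pin $F^{S^{[n]}}_{0}$ to the $\A$-free shape $\Theta^{2n-2}\sum_i f_i(q)\wp^{n-1-i}(\wp')^s$: its statement involves powers $\A^i$ up to $i=2n$, and correspondingly the elliptic transformation law \eqref{key transformation property F_g} of the full series is twisted by $e^{-\lambda T_\delta}$ insertions, so the untwisted invariance $c(d-\lambda k+m\lambda^2,\,k-2\lambda m)=c(d,k)$ required by Lemmata~\ref{lemma:elliptic to Jac 1} and~\ref{lemma:elliptic to Jac 2} fails for it. The paper repairs this by an induction on the total weight $\sum_i\wt(\gamma_i)$: one forms the formal $\A$-integral $\widetilde F=\sum_i\int F_0(\ldots,T_\delta\gamma_i,\ldots)\,d\A$, which is (meromorphic) quasi-Jacobi of the right weight and index by the induction hypothesis (since $T_\delta$ lowers $\wt$ by one, Lemma~\ref{lemma:T action}), and only the difference $F_0-\widetilde F$ obeys the untwisted elliptic law, acquires the $\A$-free shape \eqref{F expression}, and is then amenable to Lemmata~\ref{lemma:z expansion 1} and~\ref{lemma:z expansion 2}. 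Without this correction term your extraction of the coefficient functions does not get started: distinct monomials $\A^i\wp^j$ with the same $i+2j$ have the same leading $z$-order, so coefficientwise quasi-modularity of the $z$-expansion alone does not isolate them, and no lemma in the paper covers the $\A$-dependent case directly.

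A second, smaller gap: what you obtain at the end is membership in $\frac{1}{\Delta(q)}\MQJac_{s,n-1}$, i.e.\ a \emph{meromorphic} quasi-Jacobi form with possible poles at the lattice points $x=\lambda\tau+\mu$, whereas the proposition asserts membership in $\frac{1}{\Delta(q)}\QJac_{s,n-1}$, the holomorphic forms. The paper needs a separate Step 3 for this: holomorphicity at $z=0$ follows from Proposition~\ref{prop:constrains monodromy}(b), because the $z$-expansion is a power series in $q$ whose coefficients are Laurent polynomials in $p$, hence has no polar part in $z$; holomorphicity at the remaining lattice points then follows from the twisted elliptic transformation law together with the nilpotency of $T_\delta$, which makes $e^{-\lambda T_\delta}$ a finite sum of series each holomorphic at $z=0$. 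You should add both the weight-induction with the $\widetilde F$ correction and this holomorphicity argument to complete the proof.
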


\begin{proof}
For $g=0$ and $N\leq 3$ we can take $\taut=1$.
By using the divisor equation the claim for $N \in \{ 0, 1, 2 \}$ reduces to case $N=3$. 
Consider three $H^{\ast}(S)$-weighted partitions,
\[ \lambda_i = \left( \lambda_{ij}, \delta_{ij} \right)_{j}, \quad i=1,2,3 \]
We argue in three steps.

\vspace{5pt}
\noindent
\textbf{Step 1.} Under the variable change $p=e^{z}$ the $z^r$ coefficient in $\Delta(q) \cdot F^{S^{[n]}}_{0}(\lambda_1, \lambda_2, \lambda_3)$ 
is a quasi-modular form of weight $r+n+2 + \sum_i \wt(\lambda_i)$.

\begin{proof}[Proof of Step 1]
By Theorem~\ref{thm:GW/Hilb} under the variable change $p=e^{z}$ we have
\[
F^{S^{[n]}}_{0}(\lambda_1, \lambda_2, \lambda_3)
=
\sum_{d \geq -1} Z^{(S \times \p^1,S_{0,1,\infty})}_{\GW, (W+dF,n)}\left( \lambda_1, \lambda_2, \lambda_3 \right) q^{d}
\]

Since $(\p^1, 0, 1, \infty)$ is stable and there are no interior markings, we have the inclusion
\[ \Mbar^{\sharp}_{g,(W+dF,n)}((S \times \p^1,S_{0,1,\infty}), \vec{\lambda}) \subset \Mbar^{\bullet}_{g,(W+dF,n)}((S \times \p^1,S_{0,1,\infty}), \vec{\lambda}) \]
and moreover, every connected component in the complement does not contribute to the Gromov-Witten invariant
since the obstruction theory will admit an extra cosection coming from stable maps with two components of the domain curve of non-trivial degree over $S$.
Hence we find that
\begin{equation} \label{xx4tewt} 
F^{S^{[n]}}_{0}(\lambda_1, \lambda_2, \lambda_3)
=
\sum_{g \in \BZ} z^{2g-2 -n + \sum_i l(\lambda_i) }
(-1)^{g-1+\sum_i l(\lambda_i)} F_g^{(S \times \p^1,S_{0,1,\infty}), \sharp}\left( \lambda_1, \lambda_2, \lambda_3 \right).
\end{equation}
By Theorem~\ref{thm:rel HAE} (a) the series
$\Delta(q) \cdot F_g^{(S \times \p^1,S_{0,1,\infty}), \sharp}\left( \lambda_1, \lambda_2, \lambda_3 \right)$
is a quasi-modular form of weight
\[ 2g + \sum_{i=1}^{3} \ell(\lambda_i) + \sum_{i,j} \wt(\delta_{i,j}). \]
Hence under $p=e^{z}$
the $z^r$ coefficient of $\Delta(q) F^{S^{[n]}}_{0}(\lambda_1, \lambda_2, \lambda_3)$ is a quasi-modular form of weight $r+s$ where
\[ s = \left( 2g+ \sum_i \ell(\lambda_i) + \sum_{i,j} \wt(\delta_{ij}) \right)
-
\left( 2g-2 - n + \sum_i \ell(\lambda_i) \right)
=
n+2 + \sum_i \wt(\lambda_i).
\]
\end{proof}

\vspace{5pt}
\noindent
\textbf{Step 2.} $\Delta(q) \cdot F^{S^{[n]}}_{0}(\lambda_1, \lambda_2, \lambda_3) \in \MQJac_{s, n-1}$, where $s=n+2 + \sum_i \wt(\lambda_i)$.

\begin{proof}[Proof of Step 2]
We argue by induction on the total weight of the insertions
\[ \sum_i \wt(\lambda_i) = L. \]
We assume that the claim holds for all insertions $\lambda'_i$ with $\sum_i \wt(\lambda_i') < L$.
By induction and Lemma~\ref{lemma:T action} 
we have
\begin{equation*}
\sum_{i=1}^{3} F^{S^{[n]}}_{0}(\lambda_1, \ldots, \lambda_{i-1}, T_{\delta} \lambda_i, \lambda_{i+1}, \ldots, \lambda_3) \  \in \MQJac_{s-1, n-1}.
\end{equation*}
As in Step 2 of the proof of Proposition~\ref{prop:constrains monodromy} we consider the integral with respect to $A$,
\[ \widetilde{F} = \sum_{i=1}^{3} \int
F^{S^{[n]}}_{0}(\lambda_1, \ldots, \lambda_{i-1}, T_{\delta} \lambda_i, \lambda_{i+1}, \ldots, \lambda_3) dA \]
which lies in $\MQJac_{s,n-1}$. Consider also the difference:
\[ F(p,q) = F^{S^{[n]}}_{0}(\lambda_1, \lambda_2, \lambda_3) - \widetilde{F}(p,q). \]
Then as shown in \eqref{F expression} there exists power series $f_i(q) \in \BQ[[q]]$ such that
\[ 
F(p,q) = 
\begin{cases}
\Delta^{-1}(q) \Theta^{2m}(p,q) \wp'(p,q) \sum_{i=2}^{m} f_i(q) \wp(p,q)^{m-i} & \text{ if } 3n + \sum_{i=1}^{3} \ell(\lambda_i) \text{ is even} \\
\Delta^{-1}(q) \Theta^{2m}(p,q) \sum_{i=0}^{m} f_i(q) \wp(p,q)^{m-i}
& \text{ if } 3n + \sum_{i=1}^{3} \ell(\lambda_i) \text{ is odd}.
\end{cases}
\]

By Step 1 (for the term $F^{S^{[n]}}_{0}(\lambda_1, \lambda_2, \lambda_3)$) and by 
Lemma~\ref{lemma:z^r coefficient} (for $\widetilde{F} \in \MQJac_{s,n-1}$) every $z^r$ coefficient of $F(p,q)$ is a quasi-modular form of weight $r+s$.
By Lemma~\ref{lemma:z expansion 1} or Lemma~\ref{lemma:z expansion 2} (depending on the parity of $3n + \sum_{i=1}^{3} \ell(\lambda_i)$)
the claim follows.
\end{proof}

\vspace{5pt}
\noindent
\textbf{Step 3.} $\Delta(q) \cdot F^{S}_{0}(\lambda_1, \lambda_2, \lambda_3) \in \QJac_{s, n-1}$, where $s=n+2 + \sum_i \wt(\lambda_i)$.

\begin{proof}[Proof of Step 3]
The function $F(z,\tau) = \Delta(q) \cdot F^{S}_{0}(\lambda_1, \lambda_2, \lambda_3)$
defines a meromorphic function $\BC \times \BH \to \BC$
which is holomorphic away from the lattice points $\frac{z}{2 \pi i} = \lambda \tau + \mu$ for all $\lambda, \mu \in \BZ$.

By Proposition~\ref{prop:constrains monodromy}(b) the expansion of $z$ around $z=0$ takes the form
\[
F(z,\tau) = \sum_{k \geq 0} f_k(\tau) z^k
\]
where $f_k(\tau)$ are quasi-modular forms.
This shows that $F(z,\tau)$ is holomorphic at $z=0$.

To check the other lattice points, we apply Lemma~\ref{elliptic transformation} which yields the transformation
\begin{align*} F(z + 2 \pi i (\lambda \tau + \mu),\tau) & = q^{-\lambda^2 m} p^{-2 \lambda m} e^{- \lambda \frac{d}{d A} } F(z,\tau)
\end{align*}
By Proposition~\ref{prop:constrains monodromy}(a) (the behaviour under $\frac{d}{dA}$) this equals:
\begin{align*}
& = q^{-\lambda^2 m} p^{-2 \lambda m}
\Delta(q) \cdot F^{S}_{0}(\taut; e^{-\lambda T_{\delta}} \lambda_1, e^{- \lambda T_{\delta}} \lambda_2, e^{-\lambda T_{\delta}} \lambda_3)
\end{align*}
Since $T$ is nilpotent there are only finitely many terms on the right hand side. Hence by Proposition~\ref{prop:constrains monodromy}(b) again,
the right hand side is holomorphic at $z=0$.
\end{proof}

\end{proof}

\subsection{Reduction} \label{subsec:reduction}
Recall the operator that takes the $G_2$-derivative of a power series in $z$ with coefficients quasi-modular forms factorwise:
\[ \left( \frac{d}{dG_2} \right)_z : \QMod((z)) \to \QMod((z)). \]
After having shown part (a) of Conjecture~\ref{conj:fg HAE prim}
we now reduce part (b) to a statement about the $z$-series of the $3$-point function:
\begin{prop} \label{prop:reduction}
Part (b) of Conjecture~\ref{conj:fg HAE prim} holds for $g=0$ and $N \leq 3$
if for any cohomology weighted partitions $\lambda_1, \lambda_2, \lambda_3$ we have
\begin{equation} \label{eq: prop:reduction}
\begin{aligned}
\left( \frac{d}{dG_2} \right)_z F_0^{S^{[n]}}( \lambda_1, \lambda_2, \lambda_3 )
& = 2 \Big( F_0^{S^{[n]}}(\lambda_1, U(\lambda_2 \lambda_3)) 
- F_0^{S^{[n]}}(U \lambda_1, \lambda_2 \lambda_3) \\
& \quad
+ F_0^{S^{[n]}}(\lambda_2, U(\lambda_1 \lambda_3)) 
- F_0^{S^{[n]}}( U\lambda_2, \lambda_1 \lambda_3) \\
& \quad
+ F_0^{S^{[n]}}(\lambda_3, U(\lambda_1 \lambda_2)) 
- F_0^{S^{[n]}}( U \lambda_3, \lambda_1 \lambda_2) \Big) \\
& \phantom{=} - \sum_{a,b} (G^{-1})_{ab} T_{e_a} T_{e_b} F_0^{S^{[n]}}(\lambda_1, \lambda_2, \lambda_3) \\
& \phantom{=} -2 z \left( F_0^{S^{[n]}}(T_{\delta} \lambda_1, \lambda_2, \lambda_3)
+ F_0^{S^{[n]}}(\lambda_1, T_{\delta} \lambda_2, \lambda_3)
+ F_0^{S^{[n]}}(\lambda_1, \lambda_2, T_{\delta} \lambda_3) \right) \\
& \phantom{=} -2 (n-1) z^2 F_0^{S^{[n]}}( \lambda_1, \lambda_2, \lambda_3 ).
\end{aligned}
\end{equation}
\end{prop}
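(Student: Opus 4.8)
The plan is to prove Proposition~\ref{prop:reduction} by showing that, once part~(a) of Conjecture~\ref{conj:fg HAE prim} is available (Proposition~\ref{prop:quasi-modularity}), equation~\eqref{eq: prop:reduction} is nothing but a reformulation of part~(b) for $g=0$, $N=3$. First I would reduce to the three-point case: in genus $0$ with $N\le 3$ we may take $\taut=1$, and the divisor equation expresses the insertion of a divisor $D\in H^2(S^{[n]})$ as a first-order operator in $D_\tau$ and $D_x$ applied to the $N$-point series. Combined with the commutation relations~\eqref{eq:comm relations 1} between $\frac{d}{dG_2},\frac{d}{d\A}$ and $D_\tau,D_x$, the holomorphic anomaly equation for the three-point function with a divisor insertion is equivalent to the one for the two-point function, and likewise down to $N=0$. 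Choosing $D$ to pair nontrivially with $F$ makes the relevant operator invertible, so it suffices to establish part~(b) for $N=3$.

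Set $F:=F_0^{S^{[n]}}(\lambda_1,\lambda_2,\lambda_3)$. By Proposition~\ref{prop:quasi-modularity} we have $\Delta(q)F\in\QJac_{s,n-1}$, so Lemma~\ref{lemma:z^r coefficient} applies (extended $\Delta^{-1}$-linearly, since $\Delta$ is independent of $G_2$, $\A$ and of $z$) and gives
\[
\left(\frac{d}{dG_2}\right)_z F \;=\; \frac{d}{dG_2}F \;-\; 2z\,\frac{d}{d\A}F \;-\; 2(n-1)z^2\,F .
\]
Now I substitute the second anomaly equation $\frac{d}{d\A}F=T_\delta F$ of Proposition~\ref{prop:constrains monodromy}(a), where $T_\delta F=\sum_i F_0^{S^{[n]}}(\lambda_1,\dots,T_\delta\lambda_i,\dots,\lambda_3)$ as in~\eqref{T alpha action}. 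The last two lines of~\eqref{eq: prop:reduction} are exactly $-2z\,T_\delta F-2(n-1)z^2F=-2z\frac{d}{d\A}F-2(n-1)z^2F$, so these cancel against the corresponding terms in the displayed identity, and~\eqref{eq: prop:reduction} becomes equivalent to
\[
\frac{d}{dG_2}F \;=\; 2\sum_{i=1}^{3}\Big(F_0^{S^{[n]}}(\lambda_i,U(\lambda_j\lambda_k))-F_0^{S^{[n]}}(U\lambda_i,\lambda_j\lambda_k)\Big)\;-\;\sum_{a,b}(G^{-1})_{ab}\,T_{e_a}T_{e_b}F ,
\]
where $\{j,k\}=\{1,2,3\}\setminus\{i\}$. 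It therefore remains to match this with the right-hand side of part~(b).

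The fourth term matches verbatim, and the genus $-1$ term vanishes. For the splitting term $2\sum_{A\sqcup B}F_{g_1}^{S^{[n]}}(\gamma_A,U_1)F_{g_2}^{S^{[n]},\std}(\gamma_B,U_2)$ I use the degree-zero evaluation $[\Mbar_{0,N}(S^{[n]},0)]^{\vir}=[\Mbar_{0,N}\times S^{[n]}]$ together with the vanishing of the standard virtual class in every nonzero curve class (holomorphic symplecticity): the $\std$-factor forces $g_2=0$ and zero class, and since $\int_{\Mbar_{0,N}}1=0$ for $N\ge 4$ only $|B|=2$ survives, yielding $\sum_i F_0^{S^{[n]}}(\lambda_i,U_1)\int_{S^{[n]}}\lambda_j\lambda_k\,U_2=\sum_i F_0^{S^{[n]}}(\lambda_i,U(\lambda_j\lambda_k))$ after summing the K\"unneth decomposition in the conventions of Section~\ref{subsec:Conventions}. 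Here I use that $U$ is self-adjoint for the Poincar\'e pairing, so that the class $U\in H^\ast((S^{[n]})^2)$ is symmetric; this is immediate from the explicit formula $U=-\sum_{m>0}\frac1{m^2}\Fq_m\Fq_{-m}(F_1+F_2)$ and the adjunction $\Fq_m(\alpha)^\ast=(-1)^m\Fq_{-m}(\alpha)$. For the $\psi$-term I apply the genus-$0$ topological recursion relation, which expresses $F_0^{S^{[n]}}(\psi_i;\dots,U\lambda_i,\dots)$ as a sum over splittings of $2$-point functions $\langle U\lambda_i,T_e\rangle$ against $\langle T^e,\lambda_j,\lambda_k\rangle$; the reduced class lives on a single branch, and in every splitting in which the branch carrying $\lambda_j,\lambda_k$ has nonzero curve class the complementary factor is a standard invariant in a nonzero class and hence vanishes, leaving only the degree-zero contribution $\sum_e F_0^{S^{[n]}}(U\lambda_i,T_e)\langle T^e,\lambda_j\lambda_k\rangle=F_0^{S^{[n]}}(U\lambda_i,\lambda_j\lambda_k)$. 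Assembling these gives precisely the six $U$-terms above.

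I expect the genus-$0$ topological recursion step for the reduced theory to be the main obstacle. One must justify the reduced analogue of the recursion and, in particular, control how the reduced virtual class restricts to the boundary divisor where the domain breaks: exactly one branch should inherit the reduction, the complementary branch being forced into the zero curve class, where the standard class vanishes. The remaining work --- getting the signs, automorphism factors and the degree-zero evaluations right so that the splitting term and the $\psi$-term combine into the six terms of~\eqref{eq: prop:reduction} --- is essentially bookkeeping, but must be carried out carefully precisely because of the reduced virtual class.
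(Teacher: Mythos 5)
Your proposal is correct and takes essentially the same route as the paper's own proof: reduce to the case $g=0$, $N=3$, $\taut=1$ by compatibility of part~(b) of Conjecture~\ref{conj:fg HAE prim} with the divisor/string equations and boundary restriction, evaluate the splitting term and the $\psi_i$-term into the six $U$-terms of \eqref{eq: prop:reduction}, and convert between $\frac{d}{dG_2}$ and $\left(\frac{d}{dG_2}\right)_z$ via Lemma~\ref{lemma:z^r coefficient} together with the $\frac{d}{d\A}$-equation of Proposition~\ref{prop:constrains monodromy}. The only difference is presentational: you spell out the degree-zero and boundary-splitting evaluations that the paper states without proof, and you flag the reduced-class boundary compatibility as the main technical point, which the paper likewise defers to the parallel arguments of \cite{HAE} and \cite{BB}.
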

\begin{proof} 
Part (b) is Conjecture~\ref{conj:fg HAE prim} is compatible under the
divisor equations, string equation, and restriction to boundary.
This can be proven parallel to \cite[Sec.2]{HAE} or \cite[Sec.3]{BB}.
Hence to prove part (b) it suffices to consider the case $\alpha=1$, $g=0$, $N=3$,
i.e. to prove the holomorphic anomaly equation for $F_0^{S^{[n]}}(\lambda_1, \lambda_2, \lambda_3)$.

One has that
\begin{multline*}
 \sum_{ \{ 1, \ldots, 3 \} = A \sqcup B } F_{0}^{S^{[n]}}(1 ; \lambda_A, U_1) F_{0}^{S^{[n]}, \std}(1 ; \lambda_B , U_2 ) \\
 =
F_0^{S^{[n]}}(\lambda_1, U(\lambda_2 \lambda_3)) + F_0^{S^{[n]}}(\lambda_2, U(\lambda_1 \lambda_3)) + F_0^{S^{[n]}}(\lambda_3, U(\lambda_1 \lambda_2)),
 \end{multline*}
and by expressing $\psi_i$ as boundary we also get:
\[
F_0^{S^{[n]}}( \psi_1; U \lambda_1, \lambda_2, \lambda_3 ) = F_0^{S^{[n]}}( U \lambda_1, \lambda_2 \lambda_3 ).
\]
Hence the equation that we need to prove is:
\begin{equation} \label{w0-sdif}
\begin{aligned}
\frac{d}{dG_2} F_0^{S^{[n]}}( \lambda_1, \lambda_2, \lambda_3 )
& = 2 \Big( F_0^{S^{[n]}}(\lambda_1, U(\lambda_2 \lambda_3)) 
- F_0^{S^{[n]}}(U \lambda_1, \lambda_2 \lambda_3) \\
& \quad
+ F_0^{S^{[n]}}(\lambda_2, U(\lambda_1 \lambda_3)) 
- F_0^{S^{[n]}}( U\lambda_2, \lambda_1 \lambda_3) \\
& \quad
+ F_0^{S^{[n]}}(\lambda_3, U(\lambda_1 \lambda_2)) 
- F_0^{S^{[n]}}( U \lambda_3, \lambda_1 \lambda_2) \Big) \\
& \phantom{=} - \sum_{a,b} (G^{-1})_{ab} T_{e_a} T_{e_b} F_0^{S^{[n]}}(\lambda_1, \lambda_2, \lambda_3)
\end{aligned}
\end{equation}

We now apply the variable change $p=e^{z}$ and view $F_0( \lambda_1, \lambda_2, \lambda_3 )$ as a power series in $z$ with coefficients quasi-modular forms.
Since $F_0( \lambda_1, \lambda_2, \lambda_3 )$ are quasi-Jacobi forms of index $n-1$
by Lemma~\ref{lemma:z^r coefficient}, we have the following relation of Jacobi and factorwise $G_2$-derivative:
\begin{multline*} \left( \frac{d}{dG_2} \right)_z F_0^{S^{[n]}}( \lambda_1, \lambda_2, \lambda_3 ) = \frac{d}{dG_2} F_0^{S^{[n]}}( \lambda_1, \lambda_2, \lambda_3 )  \\
- 2 z \frac{d}{dA} F_0^{S^{[n]}}( \lambda_1, \lambda_2, \lambda_3 ) 
-2 z^2 (n-1) F_0^{S^{[n]}}( \lambda_1, \lambda_2, \lambda_3 ).  \end{multline*}
By Proposition~\ref{prop:constrains monodromy} we have that
\[ \frac{d}{d\A} F_0^{S^{[n]}}(\lambda_1, \lambda_2, \lambda_3)
=
F_0^{S^{[n]}}(T_{\delta} \lambda_1, \lambda_2, \lambda_3)
+ F_0^{S^{[n]}}(\lambda_1, T_{\delta} \lambda_2, \lambda_3)
+ F_0^{S^{[n]}}(\lambda_1, \lambda_2, T_{\delta} \lambda_3).
\]
Expressing the left hand side in \eqref{w0-sdif} in terms of $\left( \frac{d}{dG_2} \right)_z$ then yields the claim.
\end{proof}

\subsection{Conclusion} \label{subsec:conclusion HAE}
We aim to prove the holomorphic anomaly equation \eqref{eq: prop:reduction},
which by Proposition~\ref{prop:reduction} gives us the remaining part of Theorem~\ref{thm:prim HAE}.
We start with the expression given in \eqref{xx4tewt},
\begin{equation}
F^{S^{[n]}}_{0}(\lambda_1, \lambda_2, \lambda_3)
=
\sum_{g \in \BZ} z^{2g-2 -n + \sum_i l(\lambda_i) } (-1)^{g-1+\sum_i l(\lambda_i)} F_g^{(S \times \p^1,S_{0,1,\infty}), \sharp}\left( \lambda_1, \lambda_2, \lambda_3 \right).
\end{equation}
We will compute the factorwise $G_2$-derivative $\left( \frac{d}{dG_2} \right)_z$
using the holomorphic anomaly equation given in Theorem~\ref{thm:rel HAE},
and then match all the terms with the right hand side of \eqref{eq: prop:reduction}.

We analyse all the terms appearing the right hand side of Theorem~\ref{thm:rel HAE}.
We need to analyze four terms, which we do in a sequence of lemmata.

\begin{lemma}[\textbf{Term 1}]
\begin{multline*}
\sum_{g \in \BZ} z^{2g-2 -n + \sum_i l(\lambda_i) } (-1)^{g-1+\sum_i l(\lambda_i)} 
F_{g-1}^{(S \times \p^1,S_{0,1,\infty}), \sharp}\left( \lambda_1, \lambda_2, \lambda_3 \middle| \Delta^{\rel}_{(B \times C, B_{z})} \right) \\
= (2-2n) z^2 F_0^{S^{[n]}}( \lambda_1, \lambda_2, \lambda_3 )
\end{multline*}
\end{lemma}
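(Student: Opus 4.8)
The statement isolates the first term of the relative holomorphic anomaly equation of Theorem~\ref{thm:rel HAE} and claims that, after the summation over $g$, it reproduces exactly the index correction $-2(n-1)z^2 = (2-2n)z^2$ predicted by Lemma~\ref{lemma:z^r coefficient}. My plan is to reduce the statement to a genus-by-genus proportionality
\[
F_{g-1}^{(S \times \p^1, S_{0,1,\infty}), \sharp}\left( \lambda_1, \lambda_2, \lambda_3 \,\middle|\, \Delta^{\rel}_{(B \times C, B_{z})} \right)
= (2n-2)\, F_{g-1}^{(S \times \p^1, S_{0,1,\infty}), \sharp}\left( \lambda_1, \lambda_2, \lambda_3 \right),
\]
and then to perform the bookkeeping reindexing of the sum over $g$.

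To set up the proportionality, I would first use that the base pair is a product, $(B \times C, B_z) = B \times (C,z)$, so its relative diagonal factors as $\Delta^{\rel}_{(B \times C, B_z)} = \Delta_B \boxtimes \Delta^{\rel}_{(C,z)}$, where $\Delta_B \in H^{\ast}(B \times B)$ is the absolute diagonal of the base of the elliptic fibration and $\Delta^{\rel}_{(C,z)}$ is the relative diagonal of $(\p^1, 0,1,\infty)$. Pulling back along $\pi : S \to B$ and using $\pi^{\ast}[\mathrm{pt}] = F$ gives $\Delta_B = F \otimes 1 + 1 \otimes F$ on $S \times S$. This is precisely the decomposition already implicit in the proof of Theorem~\ref{thm:rel HAE}, where the threefold relative diagonal was assembled, via the product formula, from the base diagonal $\Delta_B$ (the first term of Corollary~\ref{cor:HAE}) and the curve-side diagonal $\Delta^{\rel}_{(C,z)}$.

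Next I would run the product formula (Behrend, as in Theorem~\ref{thm:rel HAE}) in reverse, writing the left-hand invariant as a K3-surface invariant $F_{g-1}^{S,\sharp}$ whose two extra interior markings carry $\Delta_B = F \otimes 1 + 1 \otimes F$ and whose curve-side data (the relative diagonal $\Delta^{\rel}_{(C,z)}$ and the associated $\psi$-classes) is packaged into a tautological class on $\Mbar_{g-1, \bullet}$. The constant $2n-2$ then arises by combining the fiber-class insertion $F$ on the K3 factor with the degree-$n$ combinatorics of $\Delta^{\rel}_{(C,z)}$ on the curve factor. Conceptually the value is forced: $2n-2 = -(\delta,\delta)$ is exactly the coefficient appearing in the monodromy shift of Section~\ref{monodromy:shift}, where $(\delta \wedge F)(\delta) = (2n-2)F$ encodes the index $n-1$ of the quasi-Jacobi forms. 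To make this rigorous and to fix the sign, I would split $\Delta^{\rel}$ into the honest diagonal plus rubber terms via Proposition~\ref{prop:splitting relative diagonal}, and evaluate the rubber contributions through Propositions~\ref{prop:rubber std} and~\ref{prop:rubber in terms of non-rubber}, in which the class $\delta$ — and hence the factor $(\delta,\delta)$ — appears explicitly.

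Finally I would substitute the proportionality into the sum and reindex by $h = g-1$; this raises the exponent $2g-2-n+\sum_i l(\lambda_i)$ by two and flips the sign $(-1)^{g-1+\sum_i l(\lambda_i)}$, so that
\[
\sum_{g} z^{2g-2 -n + \sum_i l(\lambda_i)} (-1)^{g-1+\sum_i l(\lambda_i)} (2n-2)\, F_{g-1}^{(S \times \p^1, S_{0,1,\infty}), \sharp}(\lambda_1,\lambda_2,\lambda_3) = -(2n-2)\, z^2 F_0^{S^{[n]}}(\lambda_1,\lambda_2,\lambda_3),
\]
which equals $(2-2n)z^2 F_0^{S^{[n]}}(\lambda_1,\lambda_2,\lambda_3)$ once one substitutes the defining expansion~\eqref{xx4tewt} of $F_0^{S^{[n]}}$. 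The main obstacle is the middle step: determining the proportionality constant $2n-2$ with its correct sign. The reindexing is pure bookkeeping and the product-formula reduction is routine given the machinery set up for Theorem~\ref{thm:rel HAE}; the difficulty is isolating the $n$-dependent constant, which requires carefully matching the divisor/fundamental-class behaviour of the $\Delta_B$-insertion on the K3 factor against the degree-$n$ contribution of $\Delta^{\rel}_{(C,z)}$, and confirming that the $\Delta^{\rel}$-versus-$\Delta$ correction terms contribute exactly the $(\delta,\delta)$-factor rather than additional $n$-dependence.
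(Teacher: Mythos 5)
Your overall skeleton (split the relative diagonal via Proposition~\ref{prop:splitting relative diagonal}, establish a genus-by-genus proportionality with constant $2n-2$, then reindex the genus sum and invoke \eqref{xx4tewt} together with Theorem~\ref{thm:GW/Hilb}) agrees with the paper, and your reindexing bookkeeping is correct. But the step you yourself flag as the main obstacle --- determining the constant $2n-2$ --- is resolved incorrectly in your plan. You propose to extract the factor $(\delta,\delta)=2-2n$ from the \emph{rubber correction terms} of the splitting formula, evaluated through Propositions~\ref{prop:rubber std} and~\ref{prop:rubber in terms of non-rubber}. This would fail: in the splitting of $\Delta^{\rel}_{(B \times C, B_z)}$ the rubber corrections carry the insertion $\Delta_B = F_1 + F_2$, i.e.\ insertions of the form $\tau_0(1)\tau_0(F)$, and these terms \emph{vanish identically}. (A marking carrying $\tau_0(1)$ either lies on a component where the forgetful map is defined, so the integrand is pulled back and the integral dies, or it lies on a contracted genus-one component, in which case the complementary non-rubber factor vanishes by the product formula together with the vanishing $\pi_{\ast}[\Mbar_{g,r}(\p^1,\mu,\nu)]^{\vir}=0$.) Propositions~\ref{prop:rubber std} and~\ref{prop:rubber in terms of non-rubber}, where $\delta$ genuinely appears, are the tools for the \emph{other} terms of the anomaly equation (Terms 2a, 2b, 3), not for this one; and the numerological coincidence $2n-2=-(\delta,\delta)$ suggested by the monodromy shift of Section~\ref{monodromy:shift} is not a proof.

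The constant actually comes from the term you would discard as the ``easy'' part: the absolute diagonal. Writing $\Delta_{B \times C} = (F_1+F_2)(\omega_1+\omega_2)$ on $(S\times\p^1)^2$, the square terms $F_1\omega_1$ and $F_2\omega_2$ drop out, the two cross terms give $2\langle \lambda_1,\lambda_2,\lambda_3 \,|\, F,\omega\rangle^{\sharp}$, and this is evaluated by the divisor equation \emph{plus} a contracted genus-one correction permitted by the $(\sharp)$ convention: the $F$-marking contributes $\int_{(\beta,n)}F = 1$, while the $\omega$-marking contributes $\int_{(\beta,n)}\omega = n$ from the divisor equation and $\langle \tau_0(\omega)\rangle^{(S\times\p^1,S_{0,1,\infty})}_{g=1,(0,0)} = -1$ from a contracted elliptic component (computed from the log tangent bundle $T_S \oplus \CO_{\p^1}(-1)$ on $\Mbar_{1,1}\times S \times \p^1$). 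This gives $2(n-1)$ per genus, with the sign and the $n$-dependence coming from geometry ($n$ = fiber degree over $\p^1$, $-1$ = excess genus-one contribution), not from rubber $\delta$-insertions. Without this evaluation --- which you never set up --- your argument cannot close.
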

\begin{proof}
Let $\beta = \beta_d = W+dF$.
By the splitting formula of Proposition~\ref{prop:splitting relative diagonal} applied in the reduced case we have
\begin{multline*}
\left\langle \lambda_1, \lambda_2, \lambda_3 \middle| \Delta^{\rel}_{(B \times C, B_{z})} \right\rangle_{g, (\beta,n)}^{(S \times \p^1,S_{0,1,\infty}), \sharp}
=
\left\langle \lambda_1, \lambda_2, \lambda_3 \middle| \Delta_{B \times C} \right\rangle_{g, (\beta,n)}^{(S \times \p^1,S_{0,1,\infty}), \sharp} \\
- \sum_{\substack{i \in \{ 1, 2, 3 \}, \mu \\ g_1+g_2=g + 1 - \ell(\mu)}}
\frac{\prod_i \mu_i}{|\Aut(\mu)|}
\left(
\begin{array}{c}
\big\langle \, \lambda_1, \ldots, \underbrace{\mu}_{i\text{-th}}, \ldots , \lambda_N \, \big\rangle^{(S \times \p^1,S_{0,1,\infty}), \sharp, \std}_{g_1,(0,n)} 
\big\langle \, \lambda_i, \mu^{\vee} \, \big| \, \Delta_{D} \big\rangle^{(S \times \p^1, S_{0,\infty}), \sharp, \sim}_{g_2,(\beta,n)} + \\
\big\langle \, \lambda_1, \ldots, \underbrace{\mu}_{i\text{-th}}, \ldots , \lambda_N \, \big\rangle^{(S \times \p^1,S_{0,1,\infty}), \sharp}_{g_1,(\beta,n)} 
\big\langle \, \lambda_i, \mu^{\vee} \, \big| \, \Delta_{D} \big\rangle^{(S \times \p^1, S_{0,\infty}), \sharp, \sim, \std}_{g_2,(0,n)}
\end{array}\right)
\end{multline*}

To analyze the first term above we now use the K\"unneth decomposition
\[ \Delta_{B \times C} = \Delta_{B} \cdot \Delta_{C} = (\omega_1 + \omega_2) (F_1 + F_2). \]
The moduli space
$\Mbar_{1,1,(0,0)}((S \times \p^1,S_{0,1,\infty}), \varnothing)$ is naturally isomorphic to $\Mbar_{1,1} \times S \times \p^1$
with virtual class given by
\[ e( H^1(\Sigma, f^{\ast}(T_{(S \times \p^1,S_{0,1,\infty})}^{\text{log}}) )) = e(T_{(S \times \p^1,S_{0,1,\infty})}^{\text{log}}) - \lambda_1 c_2( T_{(S \times \p^1,S_{0,1,\infty})}^{\text{log}}) \]
where we used the log tangent bundle
\[ T_{(S \times \p^1,S_{0,1,\infty})}^{\text{log}} = T_S \oplus T_{(\p^1, \{ 0, 1, \infty \})}^{\text{log}} = T_S \oplus \CO_{\p^1}(-1). \]
It follows that
\[ \langle \tau_0(\alpha) \rangle_{g=1, (0,0)}^{(S \times \p^1, S_{0,1,\infty})} =
\begin{cases}
0 & \text{ if } \alpha \in \{ 1, F \} \\
-1 & \text{ if } \alpha = \omega
\end{cases}
\]
Observe that under the ($\sharp$) convention we can have genus $1$ components that are contracted,
but since we only have two interior markings there can be no contracted genus $0$ component.
Moreover, genus $\geq 2$ contracted component are ruled out since the K3 virtual class vanishes.
Hence applying the divisor equation yields:
\begin{align*}
& \left\langle \lambda_1, \lambda_2, \lambda_3 \middle| \Delta_{B \times C} \right\rangle_{g, (\beta,n)}^{(S \times \p^1,S_{0,1,\infty}), \sharp} \\
=\ & 2 \left\langle \lambda_1, \lambda_2, \lambda_3 \middle| F, \omega \right\rangle_{g, (\beta,n)}^{(S \times \p^1,S_{0,1,\infty}), \sharp} \\
=\ & 2 \langle \tau_0(\omega) \rangle_{g=1, (0,0)}^{(S \times \p^1, S_{0,1,\infty})} \left\langle \lambda_1, \lambda_2, \lambda_3 \right\rangle_{g, (\beta,n)}^{(S \times \p^1,S_{0,1,\infty}), \bullet} \\
& \quad \quad + 2 \left( \int_{(\beta,n)} \omega \right)  \left\langle \lambda_1, \lambda_2, \lambda_3 \right\rangle_{g, (\beta,n)}^{(S \times \p^1,S_{0,1,\infty}), \bullet} \\
=\ & (-2 + 2n) \left\langle \lambda_1, \lambda_2, \lambda_3 \right\rangle_{g, (\beta,n)}^{(S \times \p^1,S_{0,1,\infty}), \bullet}.
\end{align*}

On the other hand, we have $\Delta_D = F_1 + F_2$, so we find
\[
\big\langle \, \lambda_i, \mu^{\vee} \, \big| \, \Delta_{D} \big\rangle^{(S \times \p^1, S_{0,\infty}), \sharp, \sim}_{g_2,(\beta,n)}
=
2 \big\langle \, \lambda_i, \mu^{\vee} \, \big| \, \tau_0(1) \tau_0(F) \big\rangle^{(S \times \p^1, S_{0,\infty}), \sharp, \sim}_{g_2,(\beta,n)}
\]
If the marked point carrying $\tau_0(1)$ lies on a component of a curve which remains stable after forgetting the marking,
i.e. where on the corresponding connected component of the moduli space the morphism forgetting the marking is welldefined,
then since the integrand is pulled back from the forgetful morphism, the contribution vanishes.
Alternatively, $\tau_0(1)$ lies on a contracted genus $1$ component, which yields the contribution
\[ \langle \tau_0(1) \rangle_{g=1, (0,0)}^{(S \times \p^1, S_{0,\infty}), \sim} \left\langle \lambda_i, \mu^{\vee} | \tau_0(F) \right\rangle_{g, (\beta,n)}^{(S \times \p^1,S_{0,\infty}), \bullet} \]
where since $\tau_0(1)$ stabilizes the rubber action, the second factor is non-rubber(!).
The first factor is non-zero, but the second factor vanishes by the product formula and the general vanishing (see e.g. \cite[Lemma 2]{HAE})
\[ \pi_{\ast} [ \Mbar_{g,r}(\p^1, \mu, \nu) ]^{\text{vir}} = 0 \]
for $\pi$ the forgetful morphism to $\Mbar_{g,r+\ell(\mu)+\ell(n)}$ whenever $2g-2+r+\ell(\mu) + \ell(\nu) > 0$.
The case where the rubber carries the standard virtual class is similar.

In summary we obtain that:
\[
\left\langle \lambda_1, \lambda_2, \lambda_3 \middle| \Delta^{\rel}_{(B \times C, B_{z})} \right\rangle_{g, (\beta,n)}^{(S \times \p^1,S_{0,1,\infty}), \sharp}
=
2 (n-1) \left\langle \lambda_1, \lambda_2, \lambda_3 \right\rangle_{g, (\beta,n)}^{(S \times \p^1,S_{0,1,\infty}), \bullet}. \]
Replacing $g$ by $g-1$, and summing over the genus, then yields
\begin{align*}
& \sum_{g \in \BZ} z^{2g-2 -n + \sum_i l(\lambda_i) }
(-1)^{g-1+\sum_i l(\lambda_i)} 
\left\langle \lambda_1, \lambda_2, \lambda_3 \middle| \Delta^{\rel}_{(B \times C, B_{z})} \right\rangle_{g-1, (\beta,n)}^{(S \times \p^1,S_{0,1,\infty}), \sharp} \\
& = z^2 (-1) \sum_{g \in \BZ} z^{2(g-1)-2 -n + \sum_i l(\lambda_i) }
(-1)^{(g-1)-1+\sum_i l(\lambda_i)} 
2 (n-1) \left\langle \lambda_1, \lambda_2, \lambda_3 \right\rangle_{g, (\beta,n)}^{(S \times \p^1,S_{0,1,\infty}), \bullet} \\
& = -2 (n-1) z^2 Z^{(S \times \p^1,S_{0,1,\infty})}_{\GW,(\beta,n)}( \lambda_1, \lambda_2, \lambda_3 ) \\
& = -2 (n-1) z^2 Z^{(S \times \p^1,S_{0,1,\infty})}_{\Hilb, (\beta,n)}( \lambda_1, \lambda_2, \lambda_3 )
\end{align*}
where we used the triangle of correspondences in the last step.
Summing now over the curve class $\beta_d$ then completes the lemma.
\end{proof}

For the second term we need first some preparation.

\begin{lemma} \label{lemma:Udecomp}
The class $U \in H^{\ast}(S^{[n]})$ has K\"unneth decomposition:
\[
U = \sum_{m \geq 0} \sum_{\substack{ b ; b_1, \ldots, b_m \\ \ell ; \ell_1, \ldots, \ell_m}}
(-1)^{m+n+1} \frac{\prod_{i=1}^{m} b_i}{m!}
\big( (b, \Delta_{B,\ell}) , (b_i, \Delta_{S, \ell_i})_{i=1}^{m}\big) \boxtimes
\big( (b, \Delta_{B, \ell}^{\vee}), (b_i, \Delta^{\vee}_{S, \ell_i})_{i=1}^{m} \big)
\]
where the $b, b_1, \ldots, b_m$ run over all positive integers such that
$b+ \sum_i b_i = n$, and the $\ell, \ell_i$ run over the splitting of the diagonals of $B$ and $S$ respectively:
\[ 
\Delta_B = \sum_{\ell} \Delta_{B,\ell} \otimes \Delta_{B, \ell}^{\vee}, \quad \quad
 \forall i \colon\  \Delta_{S} = \sum_{\ell_i} \Delta_{S, \ell_i} \otimes \Delta_{S, \ell_i}^{\vee}.
\]
\end{lemma}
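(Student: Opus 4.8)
The plan is to compute the class $U\in H^{\ast}(S^{[n]}\times S^{[n]})$ directly from its description as an operator, using the explicit Künneth decomposition \eqref{Kunneth diagonal} of the diagonal. Recall from Section~\ref{subsec:Conventions} that the class $\Gamma_\phi$ attached to an operator $\phi$ is characterized by $\int_{(S^{[n]})^2}\Gamma_\phi\cdot(a\boxtimes b)=\int_{S^{[n]}}\phi(a)\cdot b$ for all $a,b$. Writing the diagonal in its symmetric form $\Delta_{S^{[n]}}=\sum_s e_s\otimes e^s=\sum_s e^s\otimes e_s$ for dual bases, one checks at once that $\Gamma_\phi=(\id\otimes\phi)(\Delta_{S^{[n]}})$. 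Applying this to $\phi=U$ and inserting \eqref{Kunneth diagonal}, I would obtain
\[
U=(\id\otimes U)(\Delta_{S^{[n]}})=\sum_{\nu}(-1)^{n-\ell(\nu)}\frac{\prod_i\nu_i}{|\Aut(\nu)|}\,\nu\boxtimes U(\nu^{\vee}),
\]
where $\nu$ runs over $\CB$-weighted partitions of size $n$. It then remains to evaluate $U$ on a Nakajima monomial and to reorganize the resulting sum.

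For the action I would use $U=\widetilde{f}_F=-\sum_{k>0}\frac{1}{k^2}\big(\Fq_k(F)\Fq_{-k}(1)+\Fq_k(1)\Fq_{-k}(F)\big)$ from \eqref{LLV_operators}. By the Heisenberg relations \eqref{eqn:heis op}, each summand acts on $\nu^{\vee}=\frac{1}{\prod_i\nu_i}\prod_i\Fq_{\nu_i}(\eta_i)\vacuum$ by contracting $\Fq_{-k}$ against one factor $\Fq_{\nu_i}$ of size $\nu_i=k$ and recreating a part of the same size carrying the weight $F$ (resp.\ $1$); thus $U$ replaces the weight of a chosen part of size $k$ by the image of the rank-two operator $u_k(\eta)=-\tfrac1k\big((\eta,1)F+(\eta,F)\,1\big)$, summed over all parts. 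The crucial reassembly step is that, when the weight $\eta=\gamma_s^{\vee}$ of the modified part is summed over the basis as dictated by the diagonal sum, one has $\sum_s\gamma_s\otimes u_k(\gamma_s^{\vee})=-\tfrac1k\,\Delta_B$, because $\sum_s(\gamma_s^{\vee},1)\gamma_s=1$ and $\sum_s(\gamma_s^{\vee},F)\gamma_s=F$ are the basis expansions of $1$ and $F$. Hence the modified (``distinguished'') part contributes the base-diagonal factor $\sum_\ell(k,\Delta_{B,\ell})\boxtimes(k,\Delta_{B,\ell}^{\vee})$ on both sides simultaneously, while the untouched parts, summed over the basis, reassemble into the $S$-diagonal factors $\sum_{\ell_i}(b_i,\Delta_{S,\ell_i})\boxtimes(b_i,\Delta_{S,\ell_i}^{\vee})$. (The resulting symmetry under swapping the two factors is consistent with $U$ being self-adjoint for the Poincaré pairing, the two summands in its definition being mutually adjoint.)

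Finally I would collect the constants. Separating the distinguished part of size $b$ from the remaining $m$ parts of sizes $b_1,\dots,b_m$, the factor $\prod_i\nu_i/|\Aut(\nu)|$ becomes $b\prod_i b_i/|\Aut(\nu)|$; the factor $1/b$ produced by $u_b$ cancels the leading $b$, and passing from partitions to ordered tuples $(b_1,\dots,b_m)$ converts $1/|\Aut(\nu)|$ into $1/m!$, producing the prefactor $\prod_i b_i/m!$ of the statement, while the sign $(-1)^{n-\ell(\nu)}=(-1)^{n-m-1}$ combined with the Heisenberg contraction sign yields $(-1)^{m+n+1}$. I expect the main obstacle to be precisely this last bookkeeping: one must track the signs in \eqref{eqn:heis op}, the normalization $1/\prod_i\nu_i$ hidden in the weighted-partition-to-class convention of Definition~\ref{defn:coh weighted to Nakajima}, and the interplay between $|\Aut(\nu)|$ and the symmetrization factor $m!$, all of which must conspire to give exactly $(-1)^{m+n+1}\prod_i b_i/m!$.
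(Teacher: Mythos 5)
Your strategy coincides with the paper's proof up to a reordering of the same steps: the paper starts from $U=-\sum_{b>0}\frac{1}{b^2}\Fq_b\Fq_{-b}(F_1+F_2)$, moves the annihilation operators over to the second factor as creation operators via adjointness (picking up $(-1)^b$), and then expands the smaller diagonal $\Delta_{S^{[n-b]}}$ by \eqref{Kunneth diagonal}; you instead expand $\Delta_{S^{[n]}}$ first and contract with $U$ using the Heisenberg relations. These are the same computation, and your reassembly of the untouched parts into $\Delta_S$-factors and of the contracted part into the pulled-back $\Delta_B=1\otimes F+F\otimes 1$ is correct, as is the conversion of $1/|\Aut(\nu)|$ into $1/m!$ after summing over the choice of distinguished part.

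However, your sign for the contraction is wrong, and in a lemma whose entire content is the sign and the prefactor this is a genuine error rather than a cosmetic one. With the paper's conventions \eqref{eqn:heis op} the relevant commutator is $[\Fq_{-k}(1),\Fq_{k}(\eta)]=-k\,(1,\eta)\,\Id$, and this minus sign cancels against the overall minus in $U=-\sum_{k>0}\frac{1}{k^2}\Fq_k\Fq_{-k}(F_1+F_2)$: the net effect is that $U$ replaces the weight $\eta$ of a size-$k$ part by $+\frac{1}{k}\big((\eta,1)F+(\eta,F)1\big)$, not $-\frac{1}{k}(\cdots)$, so that $\sum_s\gamma_s\otimes u_k(\gamma_s^{\vee})=+\frac{1}{k}\Delta_B$ with no residual sign. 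The final sign is then carried entirely by the factor $(-1)^{n-\ell(\nu)}$ from \eqref{Kunneth diagonal}, which for $\ell(\nu)=m+1$ equals $(-1)^{n+m+1}$, exactly as in the statement. Your concluding claim that $(-1)^{n-\ell(\nu)}=(-1)^{n-m-1}$ ``combined with the Heisenberg contraction sign'' yields $(-1)^{m+n+1}$ cannot be right: $(-1)^{n-m-1}$ already equals $(-1)^{n+m+1}$, so the extra sign that your $u_k=-\frac{1}{k}(\cdots)$ would produce gives $(-1)^{n+m}$, and the lemma would fail. Once $u_k$ is corrected, the rest of your bookkeeping (the $1/b$ from the contraction cancels the $b$ in $\prod_i\nu_i=b\prod_i b_i$, leaving $(-1)^{n+m+1}\prod_i b_i/m!$) goes through and the proof is complete.
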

\begin{proof}
Let $\Fq_i, \Fq_i'$ denote the Nakajima operators acting on the first and second copy of $S^{[j]} \times S^{[j]}$ respectively.
Then we have
\begin{align*}
U 
& = -\sum_{b > 0} \frac{1}{b^2}\Fq_b \Fq_{-b}( F_1 + F_2 ) \\
& = -\sum_{b > 0} \sum_{\lambda : |\lambda| = n-b} \frac{1}{b^2} (-1)^b \Fq_{b} \Fq_{b}'(F_1 + F_2) \Delta_{S^{[n-b]}} \\
& = -\sum_{b > 0} \sum_{\lambda : |\lambda| = n-b} \frac{1}{b^2} (-1)^b \Fq_{b} \Fq_{b}'(F_1 + F_2) \frac{(-1)^{|\ell|+\ell(\lambda)}}{ |\Aut(\lambda)| \prod_i \lambda_i } \Fq_{\lambda_1} \Fq_{\lambda_1}'(\Delta_S) \cdots \Fq_{\lambda_{\ell(\lambda)}} \Fq_{\lambda_{\ell(\lambda)}}'(\Delta_S) \\
& = \sum_{ m \geq 0} \sum_{b ; b_1, \ldots, b_m}
(-1)^{n+m+1} \frac{1}{m!} \frac{1}{b^2} \frac{1}{\prod_i b_i} \Fq_{b} \Fq_{b}'(F_1 + F_2) \Fq_{b_1} \Fq_{b_1}'(\Delta_S) \cdots \Fq_{b_m} \Fq_{b_m}'( \Delta_S ).
\end{align*}
By using Definition~\ref{defn:coh weighted to Nakajima} to rewrite this in terms of weighted partitions
yield the claim.
\end{proof}

\begin{lemma}[\textbf{Term 2a}]
\begin{multline} \label{2a:lhs}
2 \sum_{g \in \BZ} z^{2g-2 -n + \sum_i l(\lambda_i) } (-1)^{g-1+\sum_i l(\lambda_i)} 
\sum_{\substack{ m \geq 0 \\ g = g_1 + g_2 + m }}
\sum_{\substack{ b ; b_1, \ldots, b_m \\ \ell ; \ell_1, \ldots, \ell_m}}
\frac{\prod_{i=j}^{m} b_j}{m!} \\
F_{g_1}^{\sim, \sharp,\std}\left( \lambda_1, \big( (b, \Delta_{B,\ell}) , (b_i, \Delta_{S, \ell_i})_{i=1}^{m}\big)  \right)
F_{g_2}^{(S \times \p^1 , S_{0,1,\infty}), \sharp}\big( \big( (b, \Delta_{B, \ell}^{\vee}), (b_i, \Delta^{\vee}_{S, \ell_i})_{i=1}^{m} \big), \lambda_2, \lambda_3 \big) \\
= 2 z F_0^{S^{[n]}}( U( \delta \cdot \lambda_1 ), \lambda_2, \lambda_3 )
\end{multline}
\end{lemma}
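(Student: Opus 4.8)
The plan is to evaluate the left-hand side of \eqref{2a:lhs} by recognizing that the genus sum factors, through the product structure of the summand, into a product of two partition functions: the standard rubber series and the reduced three-point series of $(S \times \p^1, S_{0,1,\infty})$. Concretely, I would reorganize the sum over $g = g_1 + g_2 + m$ and redistribute the powers of $z$ together with the sign $(-1)^{g-1+\sum_i l(\lambda_i)}$ so that the factor indexed by $g_1$ assembles into the standard rubber partition function $Z^{\sim,\std}_{\GW,(0,n)}$ of Section~\ref{sec:Rel GW of K3xC}, while the factor indexed by $g_2$ assembles into $Z^{(S \times \p^1, S_{0,1,\infty})}_{\GW,(\beta,n)}$. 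The bulk of this step is matching the exponents of $z$ and the accumulated signs against the normalizations in \eqref{defn:ZGW} and in the definition of the rubber series; here one uses that the glued profile $\mu=\big((b,\Delta_{B,\ell}),(b_i,\Delta_{S,\ell_i})_{i=1}^m\big)$ has length $m+1$ and satisfies the size constraint $b+\sum_i b_i = n$.

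Once the factorization is in place, I would evaluate the rubber factor using Proposition~\ref{prop:rubber std}, giving
\[ Z^{\sim,\std}_{\GW,(0,n)}\big( \lambda_1, \mu \big) = z \int_{S^{[n]}} \delta \cup \lambda_1 \cup \mu, \]
where $\mu$ is read as a class on $S^{[n]}$ via Definition~\ref{defn:coh weighted to Nakajima}. I will need to observe that the $\sharp$-decorated standard rubber invariant agrees with the plain one here, which follows from the same contribution analysis as in the proof of Proposition~\ref{prop:rubber std} (only the genus $0$ term survives, all other components being constant over $B$ and hence forced to be stable). For the second factor I would apply the GW/Hilb correspondence (Theorem~\ref{thm:GW/Hilb}) together with the identity \eqref{xx4tewt}, so that summing over the genus $g_2$ and the fiber degree reconstructs the Hilbert-scheme three-point series $F_0^{S^{[n]}}(\mu^\vee, \lambda_2, \lambda_3)$ with $\mu^\vee = \big((b,\Delta^\vee_{B,\ell}), (b_i, \Delta^\vee_{S,\ell_i})_{i=1}^m\big)$ in the first slot.

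The final step is to reassemble the sum over the gluing data $m, b, b_i, \ell, \ell_i$ using the Künneth decomposition of $U$ from Lemma~\ref{lemma:Udecomp}. Writing $U = \sum_\mu c_\mu\, \mu \boxtimes \mu^\vee$ with $c_\mu = (-1)^{m+n+1}\tfrac{\prod_i b_i}{m!}$, the correspondence conventions of Section~\ref{subsec:Conventions} give $U(\delta \cup \lambda_1) = \sum_\mu c_\mu \big(\int_{S^{[n]}} (\delta\cup\lambda_1)\cup \mu\big)\, \mu^\vee$; substituting this into the first (linear) argument of $F_0^{S^{[n]}}(-,\lambda_2,\lambda_3)$ collapses the entire sum to $2z\, F_0^{S^{[n]}}(U(\delta\cdot\lambda_1),\lambda_2,\lambda_3)$, as claimed. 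The main obstacle is the sign and $z$-power bookkeeping of the first step: one must check that the sign from the genus sum, the two signs from the normalizations \eqref{defn:ZGW}, the factor $z$ produced by the rubber evaluation, and the sign $(-1)^{m+n+1}$ of Lemma~\ref{lemma:Udecomp} all conspire to produce exactly the prefactor $2z$ with no residual sign — this is precisely where an off-by-$(-1)$ error would most easily creep in, and it is worth verifying on the smallest case $m=0$ first.
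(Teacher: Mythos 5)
Your proposal is correct and takes essentially the same route as the paper's own proof: both recognize the genus sum as factoring into the standard rubber series times the reduced three-point series of $(S \times \p^1, S_{0,1,\infty})$, identify the sum over gluing data $(m,b,b_i,\ell,\ell_i)$ with the K\"unneth decomposition of $U$ from Lemma~\ref{lemma:Udecomp}, evaluate the rubber factor by Proposition~\ref{prop:rubber std}, and pass back to $F_0^{S^{[n]}}$ via Theorem~\ref{thm:GW/Hilb}, handling the $\sharp$ versus $\bullet$ issue by the absence of interior markings. The only difference is the order of bookkeeping (the paper regroups via Lemma~\ref{lemma:Udecomp} before evaluating, you reassemble at the end), which is immaterial.
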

\begin{proof}
By Lemma~\ref{lemma:Udecomp}, by a careful matching of the signs and $z$ factors,
and by observing that since we have no interior markings the ($\sharp$) convention
yields the same invariant as the ($\bullet$) convention, the left hand side in \eqref{2a:lhs}, equals
\[ 2 \sum_{d} q^d Z^{(S \times \p^1, S_{0,\infty}), \sim, \std}_{(0,n)}(\lambda_1, U_1) Z^{(S \times \p^1, S_{0,1,\infty})}_{(W+dF,n)}(U_2, \lambda_2, \lambda_3), \]
where we write $U_1, U_2$ for summing over the K\"unneth factors of the class $U \in H^{\ast}(S^{[n]} \times S^{[n]})$.
By Proposition~\ref{prop:rubber std} and Theorem~\ref{thm:GW/Hilb} the above then becomes:
\begin{align*}
= & 2 \sum_{d} q^d z \left( \int_{S^{[n]}} \delta \cdot \lambda_1 \cdot U_1 \right) Z^{(S \times \p^1, S_{0,1,\infty})}_{(W+dF,n)}(U_2, \lambda_2, \lambda_3) \\
= & 2 \sum_{d} q^d z Z^{(S \times \p^1, S_{0,1,\infty})}_{(W+dF,n)}(U( \delta \delta_1) , \lambda_2, \lambda_3) \\
= & 2 z F_0^{S^{[n]}}(U( \delta \cdot \lambda_1) , \lambda_2, \lambda_3).
\end{align*}
\end{proof}

\begin{lemma}[\textbf{Term 2b}]
\begin{multline*}
2 \sum_{g \in \BZ} z^{2g-2 -n + \sum_i l(\lambda_i) } (-1)^{g-1+\sum_i l(\lambda_i)} 
\sum_{\substack{ m \geq 0 \\ g = g_1 + g_2 + m }}
\sum_{\substack{ b ; b_1, \ldots, b_m \\ \ell ; \ell_1, \ldots, \ell_m}}
\frac{\prod_{i=j}^{m} b_j}{m!} \\
F_{g_1}^{\sim, \sharp}\left( \lambda_1, \big( (b, \Delta_{B,\ell}) , (b_i, \Delta_{S, \ell_i})_{i=1}^{m}\big)  \right)
F_{g_2}^{(S \times \p^1 , S_{0,1,\infty}), \sharp, \std}\big( \big( (b, \Delta_{B, \ell}^{\vee}), (b_i, \Delta^{\vee}_{S, \ell_i})_{i=1}^{m} \big), \lambda_2, \lambda_3 \big) \\
= 2 F_0^{S^{[n]}}( \lambda_1, U( \lambda_2 \lambda_3) )
+ 2 \left( \int_{S^{[n]}} \lambda_1 U(\lambda_2 \lambda_3) \right) \frac{\mathbf{G}(p,q)^n}{\Theta^2(p,q) \Delta(q)}
\end{multline*}
\end{lemma}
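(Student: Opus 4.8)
The plan is to follow the strategy used for Term 2a, but with the reduced and non-reduced factors interchanged and with one additional rigidification step at the end. First I would invoke Lemma~\ref{lemma:Udecomp}: the triple sum over $m$, over $b;b_1,\dots,b_m$, and over the diagonal splittings $\ell;\ell_1,\dots,\ell_m$, weighted by $\prod_j b_j/m!$, is precisely the K\"unneth decomposition $U=\sum U_1\boxtimes U_2$ of the class $U\in H^{\ast}(S^{[n]}\times S^{[n]})$, up to the sign $(-1)^{m+n+1}$. After matching this sign against the overall factor $(-1)^{g-1+\sum_i l(\lambda_i)}$ and reorganising the powers of $z$ using $g=g_1+g_2+m$ together with the $z$-exponents built into the definitions of the reduced rubber and standard relative series (this bookkeeping is identical to that in Term 2a), and using that the $(\sharp)$ and $(\bullet)$ conventions agree since there are no interior markings, the left-hand side of the lemma becomes
\[ 2 \sum_{d} q^d\, Z^{(S\times\p^1,S_{0,\infty}),\sim}_{\GW,(W+dF,n)}(\lambda_1, U_1)\, Z^{(S\times\p^1,S_{0,1,\infty}),\std}_{\GW,(0,n)}(U_2, \lambda_2, \lambda_3). \]

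Next I would evaluate the standard (non-reduced) factor by Proposition~\ref{prop:std evaluation}, giving $Z^{\std}_{\GW,(0,n)}(U_2,\lambda_2,\lambda_3)=\int_{S^{[n]}} U_2\cup\lambda_2\cup\lambda_3$. Summing over the K\"unneth factors and using that the class $U=-\sum_{k>0}k^{-2}\Fq_k\Fq_{-k}(F_1+F_2)$ is symmetric in the two factors of $S^{[n]}\times S^{[n]}$ (equivalently, that $U$ is a self-adjoint operator), this reassembles into $2\sum_d q^d\, Z^{\sim}_{\GW,(W+dF,n)}(\lambda_1, U(\lambda_2\lambda_3))$, a two-point reduced rubber series.

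The key step is then to rigidify the rubber using Proposition~\ref{prop:rubber in terms of non-rubber} with the divisor $D=F$. The crucial numerical point is that $F\cdot(W+dF)=1$ independently of $d$, so the factor $(D\cdot\beta)$ on the left disappears uniformly and I obtain
\begin{multline*}
Z^{\sim}_{\GW,(W+dF,n)}(\lambda_1, U(\lambda_2\lambda_3)) = Z^{(S\times\p^1,S_{0,1,\infty})}_{\GW,(W+dF,n)}(\lambda_1, U(\lambda_2\lambda_3), F) \\
+ \Big(\int_{S^{[n]}}\lambda_1\, U(\lambda_2\lambda_3)\Big)\, Z^{(S\times\p^1,S_0)}_{\GW,(W+dF,n)}\big((1,\pt)^n\,\big|\,\tau_0(\omega F)\big).
\end{multline*}
For the first term I would apply the GW/Hilb correspondence (Theorem~\ref{thm:GW/Hilb}, legitimate since $W+dF=B+(d+1)F$ is primitive) to pass to the Hilbert scheme three-point series, and then the divisor equation: since $F\cdot(W+dF+rA)=1$ for all $r$, the marked point carrying the divisor $F$ can be removed, so that $\sum_d q^d$ of this term is exactly $F_0^{S^{[n]}}(\lambda_1, U(\lambda_2\lambda_3))$. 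For the second term I would apply Proposition~\ref{K3xP1 lemma} with $D=F$: here $(\beta,F)=1$ and $\beta^2/2=d$, whence $Z^{(S\times\p^1,S_0)}_{\GW,(W+dF,n)}((1,\pt)^n\,|\,\tau_0(\omega F))=\mathrm{Coeff}_{q^d}(\mathbf{G}^n/(\Theta^2\Delta))$, so that $\sum_d q^d$ of it telescopes to $\mathbf{G}^n/(\Theta^2\Delta)$. Combining the two contributions, with the overall factor $2$ carried throughout, yields exactly the right-hand side of the lemma.

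I expect the main obstacle to be the careful matching of signs, $z$-powers, and automorphism factors in the first step, namely ensuring that the $\prod_j b_j/m!$ weights and the $(-1)^{m+n+1}$ sign of Lemma~\ref{lemma:Udecomp} combine correctly with the genus-splitting exponents to reproduce the K\"unneth decomposition of $U$ acting on $\lambda_2\lambda_3$. The only genuinely new input beyond Term 2a is the rigidification, where the delicate point is the choice $D=F$ rather than $D=W$: only with $D=F$ are the intersection numbers $F\cdot\beta$ and $F\cdot(\beta+rA)$ both equal to $1$ uniformly in $d$ and $r$, which is precisely what allows the divisor equation, the GW/Hilb correspondence, and Proposition~\ref{K3xP1 lemma} to assemble into the claimed closed form.
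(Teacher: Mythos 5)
Your proposal is correct and takes essentially the same route as the paper's proof: the K\"unneth identification of the triple sum with $U$ via Lemma~\ref{lemma:Udecomp}, evaluation of the standard factor by Proposition~\ref{prop:std evaluation}, rigidification of the resulting two-point rubber series via Proposition~\ref{prop:rubber in terms of non-rubber} with the fiber divisor, and the final assembly via Theorem~\ref{thm:GW/Hilb} together with the divisor equation and Proposition~\ref{K3xP1 lemma}. Your explicit verifications (self-adjointness of $U$, $F\cdot(W+dF)=1$ uniformly in $d$ and $r$, primitivity of $B+(d+1)F$, and $\beta^2/2=d$) only spell out points the paper leaves implicit.
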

\begin{proof}
With similar reasoning as for Term 2a and using Proposition~\ref{prop:std evaluation} and Proposition~\ref{prop:rubber in terms of non-rubber} this becomes
\begin{align*}
& 2 \sum_{d} q^d Z^{(S \times \p^1, S_{0,\infty}), \sim}_{(W+dF,n)}(\lambda_1, U_1) Z^{(S \times \p^1, S_{0,1,\infty}), \std}_{(0,n)}(U_2, \lambda_2, \lambda_3) \\
= & 2 \sum_{d} q^d Z^{(S \times \p^1, S_{0,\infty}), \sim}_{(W+dF,n)}(\lambda_1, U_1) \int_{S^{[n]}} U_2 \cdot \lambda_2 \cdot \lambda_3  \\
= & 2 \sum_{d} q^d Z^{(S \times \p^1, S_{0,\infty}), \sim}_{(W+dF,n)}(\lambda_1, U(\lambda_2 \lambda_3)).
\end{align*}
Let $D(F) = \frac{1}{(n-1)!} ( (1,F)(1,1)^{n-1}) \in H^2(S^{[n]})$.
Employing Proposition~\ref{prop:rubber in terms of non-rubber} and the evaluation of Proposition~\ref{K3xP1 lemma} we get
\begin{align*}
= & 2 \sum_{d} q^d Z^{(S \times \p^1, S_{0,\infty}), \sim}_{(W+dF,n)}(\lambda_1, U(\lambda_2 \lambda_3), D(F)) 
+ 2 \left( \int_{S^{[n]}} \lambda_1 \cdot U(\lambda_2 \lambda_3) \right) \frac{\mathbf{G}(z,q)^n}{\Theta^2(z,q) \Delta(q)} \\
= & 2 F_0^{S^{[n]}}( \lambda_1, U(\lambda_2 \lambda_3) ) + 2 \left( \int_{S^{[n]}} \lambda_1 \cdot U(\lambda_2 \lambda_3) \right) \frac{\mathbf{G}(p,q)^n}{\Theta^2(p,q) \Delta(q)}
\end{align*}
as desired.
\end{proof}

\begin{lemma}[\textbf{Term 3}]
Let $\lambda_{i}^{(j)}$ be the weighted partition $\lambda_{i}$ but with $j$-th weight $\delta_{ij}$ replaced by $\pi^{\ast} \pi_{\ast}( \delta_{ij})$.
Then we have
\begin{multline*} - 2 
\sum_{g \in \BZ} z^{2g-2 -n + \sum_i l(\lambda_i) } (-1)^{g-1+\sum_i l(\lambda_i)}
\sum_{j=1}^{\ell(\lambda_1)} F_{g}^{(S \times \p^1, S_{0,1,\infty}), \sharp}( \psi_{1,j}^{\rel} \cdot \lambda_1^{(j)}, \lambda_2, \lambda_3 )  \\
= \ 
-2 z F_0^{S^{[n]}}( \delta \cdot U(\lambda_1), \lambda_2, \lambda_3 ) 
 -2 F_0^{S^{[n]}}( U(\lambda_1), \lambda_2 \cdot \lambda_3 )  \\
-2 \left( \int_{S^{[n]}} U(\lambda_1) \lambda_2 \lambda_3 \right) \frac{\mathbf{G}(p,q)^n}{\Theta^2(p,q) \Delta(q)}
\end{multline*}
\end{lemma}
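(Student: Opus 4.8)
The plan is to run the relative-$\psi$ splitting formula (Proposition~\ref{prop:splitting relative psi class}) on each bracket appearing in the left-hand side and then evaluate the resulting rubber contributions exactly as was done for Terms 2a and 2b, the only genuinely new ingredient being a dictionary that identifies the relative-$\psi$ insertions with the operator $U$ on $H^{\ast}(S^{[n]})$. As in the earlier terms, I would note at the outset that since the geometry $(S \times \p^1, S_{0,1,\infty})$ carries no interior markings, the $\sharp$ and $\bullet$ conventions produce the same invariants, so I may pass freely to the $\bullet$-brackets to which the splitting formulas apply.

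First I would apply Proposition~\ref{prop:splitting relative psi class} at the divisor $D_1 = S \times \{ z_1 \}$. Since $D_1$ is a fiber of the projection $S \times \p^1 \to \p^1$, its normal bundle is trivial, so $c_1(N_{D_1/(S \times \p^1)}) = 0$ and the $\widehat{\lambda}_1$-term drops out entirely. Because the part sizes $\lambda_{1,j}$ are unaffected by the weight modification $\delta_{1j} \mapsto \pi^{\ast}\pi_{\ast}\delta_{1j}$, dividing by $\lambda_{1,j}$ leaves a clean rubber splitting, and summing over $j$ produces the combination $\sum_{j} \frac{1}{\lambda_{1,j}} \lambda_1^{(j)}$. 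The crux is then the algebraic identity
\[ U(\lambda_1) = \sum_{j=1}^{\ell(\lambda_1)} \frac{1}{\lambda_{1,j}} \lambda_1^{(j)}, \]
which I would prove directly from the Nakajima formula $U = -\sum_{b>0} b^{-2} \Fq_b \Fq_{-b}(F_1 + F_2)$ together with the observation that $\pi^{\ast}\pi_{\ast}\alpha = (\alpha,1)F + (\alpha,F)\cdot 1$ for $\alpha \in H^{\ast}(S)$: contracting a size-$b$ part by $\Fq_{-b}$ and recreating it by $\Fq_b$ replaces the weight of that part by $\frac{1}{b}\pi^{\ast}\pi_{\ast}$ of it, and summing over all parts gives the right-hand side. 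This identity is the Hilbert-scheme incarnation of the elliptic-fibration operator $\pi^{\ast}\pi_{\ast}$ that appears in the K3 holomorphic anomaly equation (Corollary~\ref{cor:HAE}), and it is precisely what converts the relative-$\psi$ term into a $U$-insertion. Note that this is a different source of $U$ than in Terms 2a and 2b, where $U$ entered through its K\"unneth decomposition (Lemma~\ref{lemma:Udecomp}).

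Next I would separate the rubber splitting according to where the reduced virtual class sits. When the main $(S \times \p^1, S_{0,1,\infty})$-factor carries the reduced class and the rubber is standard, Proposition~\ref{prop:rubber std} evaluates the rubber as $z \int_{S^{[n]}} \delta \cup \lambda_1^{(j)} \cup \mu^{\vee}$; summing over the weighted partitions $\mu$ reconstructs the diagonal \eqref{Kunneth diagonal} and glues $\delta \cdot \lambda_1^{(j)}$ into the main factor, which, after the identity above and the Hilb/GW correspondence (Theorem~\ref{thm:GW/Hilb}), yields the term $-2z F_0^{S^{[n]}}(\delta \cdot U(\lambda_1), \lambda_2, \lambda_3)$. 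In the complementary case the main factor is standard and the rubber reduced: Proposition~\ref{prop:std evaluation} turns the standard factor into $\int_{S^{[n]}} \mu \cup \lambda_2 \cup \lambda_3$, the sum over $\mu$ contracts this into the rubber to give the two-point reduced rubber invariant $\langle U(\lambda_1), \lambda_2 \cdot \lambda_3 \rangle^{\sim}$, and Propositions~\ref{prop:rubber in terms of non-rubber} and~\ref{K3xP1 lemma} evaluate it as $F_0^{S^{[n]}}(U(\lambda_1), \lambda_2 \cdot \lambda_3)$ plus $\left( \int_{S^{[n]}} U(\lambda_1) \lambda_2 \lambda_3 \right) \mathbf{G}^n/(\Theta^2 \Delta)$, matching the last two terms on the right.

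The main obstacle will be bookkeeping rather than any conceptual difficulty. One must track the signs $(-1)^{g-1+\sum_i l(\lambda_i)}$ and the powers of $z$ through the genus sum, and reconcile the sign $(-1)^{n-\ell(\mu)}$ in the diagonal decomposition \eqref{Kunneth diagonal} against the factor $\frac{\prod_i \mu_i}{|\Aut(\mu)|}$ supplied by Proposition~\ref{prop:splitting relative psi class}; these are absorbed by the $z$-exponent and the genus shift in exactly the manner already verified for Terms 1, 2a and 2b. The one point requiring genuine care is confirming that the $\frac{1}{\lambda_{1,j}}$ normalization coming from the relative-$\psi$ splitting combines with the weight modification to reproduce $U$ on the nose; once the identity $U(\lambda_1) = \sum_j \lambda_{1,j}^{-1} \lambda_1^{(j)}$ is in hand, the remaining steps are parallel to the previous two lemmata.
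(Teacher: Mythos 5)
Your proposal is correct and follows essentially the same route as the paper's proof: apply Proposition~\ref{prop:splitting relative psi class} (with the $\widehat{\lambda}_1$-term dropping since $N_{D_1/X}$ is trivial), use the identity $U(\lambda_1)=\sum_j \lambda_{1,j}^{-1}\lambda_1^{(j)}$ to convert the $\psi$-splitting into a $U$-insertion, evaluate the standard factors via Propositions~\ref{prop:std evaluation} and~\ref{prop:rubber std}, and handle the reduced rubber via Propositions~\ref{prop:rubber in terms of non-rubber} and~\ref{K3xP1 lemma} together with Theorem~\ref{thm:GW/Hilb}. The only difference is that you spell out two points the paper leaves implicit (the vanishing of the $c_1(N_{D_1/X})$-term and the Nakajima-commutator verification of the key identity), both of which check out.
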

\begin{proof}
We employ the splitting formula for the relative $\psi$-class given in Proposition~\ref{prop:splitting relative psi class}.
The left hand side term becomes:
\begin{align*}
& -2 \sum_{d} \sum_{j=1}^{\ell(\lambda_1)} \frac{1}{\lambda_{1,i}} q^d Z^{(S \times \p^1, S_{0,\infty}), \sim}_{(W+dF,n)}(\lambda_1^{(i)}, \Delta_1) 
Z^{(S \times \p^1, S_{0,1,\infty}), \std}_{(0,n)}(\Delta_2, \lambda_2, \lambda_3) \\
& -2 \sum_{d} \sum_{j=1}^{\ell(\lambda_1)} \frac{1}{\lambda_{1,i}} q^d Z^{(S \times \p^1, S_{0,\infty}), \sim, \std}_{(0,n)}(\lambda_1^{(i)}, \Delta_1) 
Z^{(S \times \p^1, S_{0,1,\infty})}_{(W+dF,n)}(\Delta_2, \lambda_2, \lambda_3) 
\end{align*}
where $\Delta_1, \Delta_2$ stands for summing over the K\"unneth decomposition of the diagonal in $(S^{[n]})^2$.

Observe that $U$ acts on a $H^{\ast}(S)$ weighted partition $\lambda = \big( (\lambda_j, \delta_j) \big)_{j=1}^{\ell}$ by
\[ U \lambda = \sum_{j=1}^{\ell(\lambda)} \frac{1}{\lambda_{j}} \big( (\lambda_1, \delta_1) \cdots \underbrace{(\lambda_i, \pi^{\ast} \pi_{\ast}(\gamma_i)}_{i\text{-th}} \cdots (\lambda_{\ell}, \delta_{\ell}) \big) \]
Hence with Proposition~\ref{prop:std evaluation} and Proposition~\ref{prop:rubber std} the above becomes
\begin{align*}
= & -2 \sum_{d} q^d Z^{(S \times \p^1, S_{0,\infty}), \sim}_{(W+dF,n)}( U(\lambda_1), \Delta_1) \int_{S^{[n]}} \Delta_2 \lambda_2 \lambda_3 \\
& -2 z \sum_{d} q^d \left( \int_{S^{[n]}} U(\lambda_1) \delta \Delta_1 \right) Z^{(S \times \p^1, S_{0,1,\infty})}_{(W+dF,n)}(\Delta_2, \lambda_2, \lambda_3) \\
= & -2 \sum_{d} q^d Z^{(S \times \p^1, S_{0,\infty}), \sim}_{(W+dF,n)}( U(\lambda_1), \lambda_2 \lambda_3) 
- 2 z \sum_{d} q^d Z^{(S \times \p^1, S_{0,1,\infty})}_{(W+dF,n)}(\delta \cdot U(\lambda_1), \lambda_2, \lambda_3) \\
= & -2 F_0^{S^{[n]}}( U(\lambda_1), \lambda_2 \lambda_3) -2 \left( \int_{S^{[n]}} U(\lambda_1) \lambda_2 \lambda_3 \right) \frac{\mathbf{G}(p,q)^n}{\Theta^2(p,q) \Delta(q)}
- 2 z F_0^{S^{[n]}}( \delta \cdot U(\lambda_1), \lambda_2, \lambda_3 ).
\end{align*}
\end{proof}

\begin{lemma}[\textbf{Term 4}]
\[ - \sum_{a,b} (G^{-1})_{ab} T_{e_a} T_{e_b} F_{g}^{(S \times \p^1, S_{0,1,\infty}), \sharp}(\lambda_1, \lambda_2, \lambda_3 )
=
- \sum_{a,b} (G^{-1})_{ab} T_{e_a} T_{e_b} F_{0}^{S^{[n]}}(\lambda_1, \lambda_2, \lambda_3 ).
 \]
\end{lemma}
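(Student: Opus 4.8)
The plan is to treat \textbf{Term 4} as the most direct of the four lemmata: the operator $-\sum_{a,b}(G^{-1})_{ab}T_{e_a}T_{e_b}$ is $\BQ$-linear in the insertions and depends neither on the genus $g$, nor on the power of $z$, nor on whether one is looking at a rubber or a non-rubber factor, so it commutes with the genus summation that reconstructs the Hilbert scheme series. There is no Gromov--Witten computation to perform here, in contrast to Terms 1, 2a, 2b and 3.

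Concretely, as in the preceding three lemmata the left-hand side is to be read as the weighted genus sum $\sum_{g\in\BZ}z^{2g-2-n+\sum_i l(\lambda_i)}(-1)^{g-1+\sum_i l(\lambda_i)}$ applied to the fourth term on the right of Theorem~\ref{thm:rel HAE}(b). First I would pull the fixed operator $-\sum_{a,b}(G^{-1})_{ab}T_{e_a}T_{e_b}$ outside of this sum, and then invoke \eqref{xx4tewt}, which identifies
\[
\sum_{g\in\BZ}z^{2g-2-n+\sum_i l(\lambda_i)}(-1)^{g-1+\sum_i l(\lambda_i)}\,F_g^{(S\times\p^1,S_{0,1,\infty}),\sharp}(\lambda_1,\lambda_2,\lambda_3)=F_0^{S^{[n]}}(\lambda_1,\lambda_2,\lambda_3)
\]
after the variable change $p=e^{z}$. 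Since both sides are multilinear in the insertions and the Hilb/GW correspondence of Theorem~\ref{thm:GW/Hilb} holds for arbitrary classes, pulling the operator back in produces exactly $-\sum_{a,b}(G^{-1})_{ab}T_{e_a}T_{e_b}F_0^{S^{[n]}}(\lambda_1,\lambda_2,\lambda_3)$, as claimed.

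The one substantive point---and the only place where geometry enters---is to verify that the two occurrences of the symbol $T_{e_a}$ denote the same operator. On the relative side it is inherited from the K3 surface equation of Corollary~\ref{cor:HAE} through the product formula, hence it acts on the $H^{\ast}(S)$-weights $\delta_{i,j}$ of the partitions by the surface operator on $H^{\ast}(S^{[1]})=H^{\ast}(S)$; on the Hilbert scheme side, as in \eqref{T alpha action} and the target equation \eqref{eq: prop:reduction}, it is $T_{e_a}=\act(e_a\wedge F)$ acting on $H^{\ast}(S^{[n]})$. I expect this compatibility to be the main (and essentially only) obstacle, and I would resolve it using the monodromy equivariance of Section~\ref{subsec:zariski closure}: because $e_a\in\{W,F\}^{\perp}\cap H^2(S)$ and $\delta\perp H^2(S)$, the bivector $e_a\wedge F$ fixes $\delta$, so Property~3 applies to the one-parameter subgroup $e^{t(e_a\wedge F)}$. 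Differentiating at $t=0$ shows that $\act(e_a\wedge F)$ acts on a Nakajima monomial $\prod_j\Fq_{k_j}(\alpha_j)\1$ by the Leibniz rule, replacing a single factor $\Fq_{k_j}(\alpha_j)$ by $\Fq_{k_j}(T_{e_a}^{[1]}\alpha_j)$, where $T_{e_a}^{[1]}$ is the surface operator (acting on $H^2(S)$ by $\beta\mapsto(e_a,\beta)F-(F,\beta)e_a$ and by zero on $H^0\oplus H^4$). This is precisely the weightwise action appearing on the relative side under Definition~\ref{defn:coh weighted to Nakajima}, so the two operators agree and the lemma follows.
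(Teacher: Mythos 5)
Your proposal is correct and takes essentially the same route as the paper: the paper's own proof of Term 4 consists of observing that, with no interior markings, the $\sharp$-invariants coincide with the $\bullet$-invariants and then simply applying the GW/Hilb correspondence of Theorem~\ref{thm:GW/Hilb}, which is exactly what your appeal to \eqref{xx4tewt} (combined with linearity and genus-independence of the operator) amounts to. The only difference is that you additionally verify that $T_{e_a}$ acting weight-wise on the partitions matches $\act(e_a\wedge F)$ on the associated Nakajima classes, via Property~3 of Section~\ref{subsec:zariski closure} applied to the one-parameter subgroup $e^{t(e_a\wedge F)}$ fixing $\delta$; the paper leaves this compatibility implicit, and your explicit check of it is correct.
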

\begin{proof}
Since there are no interior markings the ($\sharp$) condition yields the same invariants as the ($\bullet$) condition.
Hence the claim is just the application of Theorem~\ref{thm:GW/Hilb}.
\end{proof}

\begin{proof}[Proof of Theorem~\ref{thm:prim HAE}]
Part (a) was proven in Proposition~\ref{prop:quasi-modularity}.
For Part (b) it suffices to prove the equality in Proposition~\ref{prop:reduction}.
We start with equation \eqref{xx4tewt}.
We compute $\left( \frac{d}{d G_2} \right)_{z}$ of the left hand side of \eqref{xx4tewt}
by applying the holomorphic anomaly equation for $(S \times \p^1, S_{0,1,\infty})$ stated in Theorem~\ref{thm:rel HAE}.
This holomorphic anomaly equation produces four terms.
These four terms are precisely the terms labeled 1, 2a, 2b, 3, 4 in the above lemmata (up to permutation).
Summing these four terms together yields:
\begin{align*}
\left( \frac{d}{dG_2} \right)_z F^{S^{[n]}}_{0}(\lambda_1, \lambda_2, \lambda_3)
& =(2-2n) z^2 F_0^{S^{[n]}}( \lambda_1, \lambda_2, \lambda_3 ) \\
& + 2 z F_0^{S^{[n]}}( U( \delta \cdot \lambda_1 ), \lambda_2, \lambda_3 ) \\
& + 2 F_0^{S^{[n]}}( \lambda_1, U( \lambda_2 \lambda_3) ) + 2 \left( \int_{S^{[n]}} \lambda_1 U(\lambda_2 \lambda_3) \right) \frac{\mathbf{G}(p,q)^n}{\Theta^2(p,q) \Delta(q)} \\
& -2 z F_0^{S^{[n]}}( \delta \cdot U(\lambda_1), \lambda_2, \lambda_3 )  
-2 F_0^{S^{[n]}}( U(\lambda_1), \lambda_2 \cdot \lambda_3 )  \\
& \quad \quad -2 \left( \int_{S^{[n]}} U(\lambda_1) \lambda_2 \lambda_3 \right) \frac{\mathbf{G}(p,q)^n}{\Theta^2(p,q) \Delta(q)} \\
& - \sum_{a,b} (G^{-1})_{ab} T_{e_a} T_{e_b} F_{0}^{S^{[n]}}(\lambda_1, \lambda_2, \lambda_3 ) +  (...)
\end{align*}
where $(...)$ stands for the terms where the role of $\lambda_1$ is played by $\lambda_2$ and $\lambda_3$ in the four middle terms.
The above is precisely the right hand side in Proposition~\ref{prop:reduction} if we observe two basic facts:
First, the operator $U$ is symmetric (since the adjoint of $\Fq_n(\alpha)$ is $(-1)^n \Fq_{-n}(\alpha)$):
\[
\int_{S^{[n]}} U(\lambda) \cdot \mu = \int_{S^{[n]}} \lambda \cdot U(\mu),
\]
hence the $\mathbf{G}^n$ terms cancel. And second, we have
\[ T_{\delta} = [ e_{\delta}, U ], \quad \text{ hence } \quad T_{\delta} \lambda = \delta \cdot U(\lambda) - U( \delta \cdot \lambda ). \qedhere \]
\end{proof}

\section{Holomorphic anomaly equations: Imprimitive case}
\label{sec:HAE imprimitive case}
\subsection{Overview}
Let $g,N$ be fixed. For the elliptic K3 surface $S \to \p^1$ 
recall the generating series
\[
F_{g,\ell}(\taut; \gamma_1, \ldots, \gamma_N) = 
\sum_{d = -\ell}^{\infty} \sum_{r \in \BZ} \left\langle \taut ; \gamma_1, \ldots, \gamma_N \right\rangle^{S^{[n]}}_{g, \ell W+dF+rA} q^d (-p)^r,
\]
where we have dropped the supscript $S^{[n]}$ on the left.

We show that the quasi-Jacobi form property and the holomorpic anomaly equation for the primitive series $F_{g,1}$ (Conjecture~\ref{conj:fg HAE prim})
together with the Multiple Cover Conjecture (Conjecture~\ref{MCConjecture})
implies both claims for the general series $F_{g,\ell}$.
More precisely we have:
\begin{prop} \label{prop:Implication}
If Conjecture \ref{MCConjecture} and
Conjecture~\ref{conj:fg HAE prim}
hold for all $g',N'$ such that either $g'<g$ or ($g'=g$ and $N'<N$),
then Conjecture~\ref{Conj: Quasi Jacobi property} and Conjecture~\ref{Conj:HAE} holds for $g,N$.
\end{prop}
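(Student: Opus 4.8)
The plan is to obtain both conjectures at level $(g,N)$ from the single identity supplied by the Multiple Cover Conjecture. By multilinearity we may assume the $\gamma_i$ are $(\wt,\deg)$-bihomogeneous, so that Conjecture~\ref{MCConjecture} reads $F^{S^{[n]}}_{g,\ell}(\taut;\gamma_1,\dots,\gamma_N)=c_\ell\,T_{k,\ell}F^{S^{[n]}}_{g,1}(\taut;\gamma_1,\dots,\gamma_N)$ with $k=n(2g-2+N)+\sum_i\wt(\gamma_i)$ and the scalar $c_\ell=\ell^{\sum_i(\deg(\gamma_i)-n-\wt(\gamma_i))}$. For the quasi-Jacobi property I would feed in Conjecture~\ref{conj:fg HAE prim}(a), which places $F^{S^{[n]}}_{g,1}$ in $\tfrac1{\Delta(q)}\QJac_{k+2,n-1}$, i.e. gives it honest weight $k-10$; the Hecke weight $k$ thus exceeds the honest weight by exactly $10$. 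This is precisely the wrong-weight regime of Proposition~\ref{prop:Hecke wrong weight with poles}, which yields $T_{k,\ell}\big(\tfrac1{\Delta}\QJac_{k+2,n-1}\big)\subset\tfrac1{\Delta^\ell}\QJac_{k-10+12\ell,\,(n-1)\ell}(\Gamma_0(\ell))$. Since $c_\ell$ is a constant, $F^{S^{[n]}}_{g,\ell}$ lies in the same space, and the weight $k-10+12\ell$, the index $(n-1)\ell$ and the group $\Gamma_0(\ell)$ agree with Conjecture~\ref{Conj: Quasi Jacobi property} on the nose; the defect $10$ is exactly what forces the congruence subgroup, as anticipated after that conjecture.

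For the anomaly equation in $\A$ I would combine the monodromy identity $\tfrac{d}{d\A}F^{S^{[n]}}_{g,1}=T_\delta F^{S^{[n]}}_{g,1}$ of Proposition~\ref{prop:constrains monodromy}(a) with the Hecke commutation $\tfrac{d}{d\A}T_{k,\ell}=\ell\,T_{k-1,\ell}\tfrac{d}{d\A}$ of Proposition~\ref{prop:Hecke wrong weight with poles}, giving $\tfrac{d}{d\A}F^{S^{[n]}}_{g,\ell}=c_\ell\ell\,T_{k-1,\ell}T_\delta F^{S^{[n]}}_{g,1}$. On the other side, since $T_\delta$ preserves degree and lowers weight by one (Lemma~\ref{lemma:T action}), applying Conjecture~\ref{MCConjecture} to each summand of $T_\delta F^{S^{[n]}}_{g,\ell}=\sum_i F^{S^{[n]}}_{g,\ell}(\taut;\dots,T_\delta\gamma_i,\dots)$ shows that the $i$-th summand carries scalar $\ell c_\ell$ and Hecke weight $k-1$; summing reproduces $c_\ell\ell\,T_{k-1,\ell}T_\delta F^{S^{[n]}}_{g,1}$. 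Hence $\tfrac{d}{d\A}F^{S^{[n]}}_{g,\ell}=T_\delta F^{S^{[n]}}_{g,\ell}$.

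The $G_2$-equation is the substantive step. Applying $\tfrac{d}{dG_2}$ together with $\tfrac{d}{dG_2}T_{k,\ell}=\ell\,T_{k-2,\ell}\tfrac{d}{dG_2}$ gives $\tfrac{d}{dG_2}F^{S^{[n]}}_{g,\ell}=c_\ell\ell\,T_{k-2,\ell}\tfrac{d}{dG_2}F^{S^{[n]}}_{g,1}$, into which I substitute the primitive holomorphic anomaly equation of Conjecture~\ref{conj:fg HAE prim}(b). Because $T_{k-2,\ell}$ is a formal linear operator on $q,p$-series and the degree-zero factors $F^{S^{[n]},\std}_{g_2}$ are scalars, it distributes over the four primitive terms and pulls those scalars out. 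I would then recognise each resulting term as the corresponding term of Conjecture~\ref{Conj:HAE} by running Conjecture~\ref{MCConjecture} backwards at the relevant level: $(g-1,N+2)$ for the genus-reduction term, $(g_1,|A|+1)$ for the splitting term, and $(g,N)$ for the $\psi_i$-term and the $T_{e_a}T_{e_b}$-term. In the splitting term the stability bound $|B|\ge 2$ for a nonzero $F^{S^{[n]},\std}_{g_2}$ keeps $|A|+1\le N-1$ whenever $g_1=g$, so those invocations use strictly smaller levels, while the reduction itself and the $\psi_i$- and $T_{e_a}T_{e_b}$-terms use Conjecture~\ref{MCConjecture} and Conjecture~\ref{conj:fg HAE prim} at $(g,N)$ itself; all are available in the setting of Theorem~\ref{thm:Main result}. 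The key consistency check is that the same formal Hecke weight $k-2$ is correct for every term: using that $U$ lowers cohomological degree by one and splits with $\wt(U_1)+\wt(U_2)=-2$ (Lemma~\ref{lemma: weight of U}), and that $T_{e_a},T_\delta$ are degree-preserving of $\wt$-degree $-1$, one computes that the Multiple Cover weight of each modified invariant equals $k-2$.

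The main obstacle — the only genuinely delicate bookkeeping — is matching the multiple-cover scalars term by term, and in particular producing the coefficient $-\tfrac1\ell$ of the last term out of the $-1$ of the primitive equation. For the genus-reduction, splitting and $\psi_i$ terms the insertion changes raise the exponent $\sum_i(\deg(\gamma_i)-n-\wt(\gamma_i))$ by exactly one, turning the scalar into $\ell c_\ell$, which matches the prefactor $c_\ell\ell$ and leaves coefficient $+1$; for the splitting term this uses the weight and dimension relations $\sum_B\wt+\wt(U_2)=n(1-|B|)$ and $\sum_B\deg+\deg(U_2)=2n$ on the support of a nonzero $F^{S^{[n]},\std}_{g_2}$, which follow from Lemma~\ref{lemma:wt multiplicative} and the fact that $\int_{S^{[n]}}$ vanishes unless the integrand has weight $n$ (orthogonality of the weight grading for the Poincar\'e pairing). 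For the last term, $T_{e_a}T_{e_b}$ lowers total weight by two, raising the exponent by two and the scalar to $\ell^2 c_\ell$, so that $T_{e_a}T_{e_b}F^{S^{[n]}}_{g,\ell}=\ell^2 c_\ell\,T_{k-2,\ell}T_{e_a}T_{e_b}F^{S^{[n]}}_{g,1}$; matching this against the prefactor $c_\ell\ell$ forces precisely the normalisation $-\tfrac1\ell$, exactly the interplay flagged after Conjecture~\ref{Conj: Quasi Jacobi property}. Once these scalar identities are verified, summing the four matched terms reproduces Conjecture~\ref{Conj:HAE} verbatim and completes the reduction.
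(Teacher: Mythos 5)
Your proposal is correct and follows essentially the same route as the paper's proof: reduce via the Multiple Cover Conjecture to the primitive series, apply the wrong-weight Hecke results (Propositions~\ref{prop:Hecke wrong weight} and~\ref{prop:Hecke wrong weight with poles}) together with Conjecture~\ref{conj:fg HAE prim}(a) for the quasi-Jacobi property, use Proposition~\ref{prop:constrains monodromy}(a) plus the Hecke commutation relations for the $\frac{d}{d\A}$-equation, and run the Multiple Cover Conjecture backwards term by term in the $G_2$-equation with exactly the weight/exponent bookkeeping that the paper isolates in Lemmata~\ref{lemma:k evals} and~\ref{lemma:e evals}, including the observation that the $T_{e_a}T_{e_b}$-term shifts the exponent by $2$ and thereby produces the coefficient $-\frac{1}{\ell}$. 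The only sketch-level omission is that your dimension relation $\sum_B\deg+\deg(U_2)=2n$ for the splitting term is the $g_2=0$ case (for $g_2=1$ the class $c_{2n}(S^{[n]})$ of weight $n$ enters and the relation changes), but the conclusion $k\mapsto k-2$, $e\mapsto e+1$ persists, as in the paper's lemma.
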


Using Proposition~\ref{prop:Implication}
we obtain the proof of the main theorem of the paper:

\begin{proof}[Proof of Theorem~\ref{thm:Main result}]
If $g=0$ and $N \leq 3$, then Conjecture~\ref{MCConjecture} holds by Theorem~\ref{MCConjecture for g=0 N<=3},
and Conjecture~\ref{conj:fg HAE prim} was proven in Theorem~\ref{thm:prim HAE}.
Hence the claim follows from Proposition~\ref{prop:Implication}.
\end{proof}

The proof of Proposition~\ref{prop:Implication}
is purely formal: if the multiple cover formula holds,
then $F_{g,\ell}$ is obtained from $F_{g,1}$ by applying the Hecke operator.
The statement then follows from results about Hecke operators on quasi-Jacobi forms
(Section~\ref{subsec:Hecke operators}) and basic properties of the operators appearing in the holomorphic anomaly equation.

\subsection{Proof}
\begin{proof}[Proof of Proposition~\ref{prop:Implication}]
Recall the formal $\ell$-th weight $k$ Hecke operator $T_{k,\ell}$ defined in \eqref{Hecke formal}.
If the Multiple Cover Conjecture holds, then for all $\ell > 0$ we have
\[ F_{g,\ell}(\taut; \gamma_1, \ldots, \gamma_N) =  
\ell^{\sum_i (\deg(\gamma_i) - n - \wt(\gamma_i))}
T_{k,\ell} 
F_{g,1}(\taut; \gamma_1, \ldots, \gamma_N)
\]
where $k = n(2g-2+N) + \sum_i \wt(\gamma_i)$.
Assuming part (a) of Conjecture~\ref{conj:fg HAE prim} we have
\[
F_{g,1}(\taut; \gamma_1, \ldots, \gamma_N) \in \frac{1}{\Delta(\tau)} \QJac_{k', n-1}
\]
where $k' = n(2g-2+N) + \sum_i \wt(\gamma_i) - 10$.
Hence by Proposition~\ref{prop:Hecke wrong weight with poles} (describing the action of Hecke operators of weight $k$ on weight $k'$ forms) we find that:
\[
F_{g,\ell}(\taut; \gamma_1, \ldots, \gamma_N) \in \frac{1}{\Delta(\tau)^{\ell}} \QJac_{k'+12 \ell, (n-1) \ell}(\Gamma_0(\ell)).
\]
that is Conjecture~\ref{Conj: Quasi Jacobi property} holds.

To prove Conjecture~\ref{Conj:HAE}, the multiple cover conjecture and the relations \eqref{hecke relations wrong weight} give:
\begin{align*}
\frac{d}{d G_2} F_{g,\ell}(\taut; \gamma_1, \ldots, \gamma_N)
& = \ell^{e(\gamma_1,\ldots,\gamma_N)} \frac{d}{d G_2} T_{k,\ell} F_{g,1}(\taut; \gamma_1, \ldots, \gamma_N) \\
& = 
\ell^{e(\gamma_1,\ldots,\gamma_N)+1}
T_{k-2,\ell} 
\frac{d}{d G_2} F_{g,1}(\taut; \gamma_1, \ldots, \gamma_N). 
\end{align*}
where 
\begin{gather*} k = k(g,N, \gamma_1, \ldots, \gamma_N) := n(2g-2+N) + \sum_i \wt(\gamma_i), \\
e(\gamma_1,\ldots,\gamma_N) = \sum_i (\deg(\gamma_i) - n - \wt(\gamma_i)).
\end{gather*}
Assuming part (b) of Conjecture~\ref{conj:fg HAE prim} this equals
\[
= \ell^{e(\gamma_1,\ldots,\gamma_N)+1}
T_{k-2,\ell} 
\left[
\begin{array}{l} 
F_{g-1,1}(\taut; \gamma_1, \ldots, \gamma_N, U) \\
+ 2 
\sum_{\substack{g=g_1 + g_2 \\ \{ 1, \ldots, N \} = A \sqcup B }} F_{g_1,1}(\taut_1 ; \gamma_A, U_1) F_{g_2}^{\std}(\taut_2 ; \gamma_B , U_2 ) \\
- 2 \sum_{i=1}^{N} F_{g,1}( \psi_i \cdot \taut ; \gamma_1, \ldots, \gamma_{i-1}, U \gamma_i, \gamma_{i+1}, \ldots, \gamma_N) \\
- \sum_{a,b} (g^{-1})_{ab} T_{e_a} T_{e_b} F_{g,1}(\taut; \gamma_1, \ldots, \gamma_N)
\end{array}
\right]
\]
By Lemmata~\ref{lemma:e evals} and \ref{lemma:k evals} below
we can apply Conjecture~\ref{MCConjecture} to this term in reverse, e.g.
\begin{align*} 
\ell^{e(\gamma_1,\ldots,\gamma_N)+1}
T_{k-2,\ell} F_{g-1,1}(\taut; \gamma_1, \ldots, \gamma_N, U) 
& = F_{g-1, \ell}(\taut; \gamma_1, \ldots, \gamma_N, U),
\end{align*}
or the exceptional case
\begin{multline*}
\ell^{e(\gamma_1,\ldots,\gamma_N)+1}
T_{k-2,\ell} F_{g,1}(\taut; \ldots , T_{e_a} \gamma_i, \ldots, T_{e_b} \gamma_j, \ldots, ) \\
=
\frac{1}{\ell} F_{g,\ell}(\taut; \ldots , T_{e_a} \gamma_i, \ldots, T_{e_b} \gamma_j, \ldots, ), \end{multline*}
et cetera. As a result we obtain precisely the right hand side for the $\frac{d}{d G_2}$-holomorphic anomaly equation in
Conjecture~\ref{Conj:HAE}.

Similarly, by Proposition~\ref{prop:constrains monodromy} we have
\[
\frac{d}{d \A} F_{g,1}(\taut; \gamma_1, \ldots, \gamma_N) 
=
T_{\delta} F_{g,1}(\taut; \gamma_1, \ldots, \gamma_N).
\]
Hence by the relations \eqref{hecke relations wrong weight} we have
\begin{align*}
\frac{d}{d \A} 
 F_{g,\ell}(\taut; \gamma_1, \ldots, \gamma_N)
& = \ell^{\sum_i (\deg(\gamma_i) - n - \wt(\gamma_i))} \frac{d}{d\A} T_{k,\ell} \frac{d}{d \A} F_{g,1}(\taut; \gamma_1, \ldots, \gamma_N) \\
& = \ell \cdot \ell^{\sum_i (\deg(\gamma_i) - n - \wt(\gamma_i))} T_{k-1,\ell} \frac{d}{d \A} F_{g,1}(\taut; \gamma_1, \ldots, \gamma_N) \\
& = \ell \cdot \ell^{\sum_i (\deg(\gamma_i) - n - \wt(\gamma_i))} T_{k-1,\ell} T_{\delta} F_{g,1}(\taut; \gamma_1, \ldots, \gamma_N) \\
& = T_{\delta} F_{g,\ell}(\taut; \gamma_1, \ldots, \gamma_N)
\end{align*}
where we used that $T_{\delta}$ is of weight $-1$ (Lemma~\ref{lemma:T action}).
\end{proof}

\begin{lemma} \label{lemma:k evals}
If $U=\sum_i a_i \otimes b_i$ is a $\wt$-homogeneous K\"unneth decomposition of $U \in H^{\ast}(S^{[n]})^{\otimes 2}$,
then for every $i$ we have
\begin{gather*}
k(g-1,N+2, \gamma_1, \ldots, \gamma_N,a_i,b_i) = k(g,N, \gamma_1, \ldots, \gamma_N,a_i,b_i) - 2 \\
k(g_1,|A|+1,\gamma_A, a_i) = k(g,N,\gamma_1,\ldots,\gamma_N)-2, \quad \text{ if } F_{g_2}^{S^{[n]}, \std}(\taut_2 ; \gamma_B , b_i ) \neq 0 \\
k(g,N,\gamma_1, \ldots, U(\gamma_i), \ldots, \gamma_N) = k(g,N,\gamma_1, \ldots, \gamma_N)-2 \\
k(g,N,\gamma_1, \ldots, T_{e_a}\gamma_i, \ldots, T_{e_b}\gamma_j, \ldots, \gamma_N) = k(g,N,\gamma_1,\ldots, \gamma_N) - 2.
\end{gather*}
\end{lemma}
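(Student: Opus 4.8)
The plan is to verify each of the four displayed equalities by a direct computation from the definition $k(g,N,\gamma_1,\dots,\gamma_N)=n(2g-2+N)+\sum_i\wt(\gamma_i)$, feeding in the weight shifts of the operators $U$ and $T_\alpha$ recorded in Lemma~\ref{lemma:T action} together with the diagonal weight relation of Lemma~\ref{lemma: weight of U}. Three of the four are then purely formal, and I would dispatch these first. For the third equality, since $U\colon V_d\to V_{d-2}$ lowers the weight by $2$, replacing $\gamma_i$ by $U(\gamma_i)$ drops $\sum_i\wt(\gamma_i)$ by $2$ while leaving $g$ and $N$ fixed, giving $k-2$. For the fourth, $T_{e_a}$ and $T_{e_b}$ each lower the weight by $1$ (again Lemma~\ref{lemma:T action}, as $e_a,e_b\in\{W,F\}^{\perp}$), so applying them to two distinct insertions lowers $\sum_i\wt$ by $2$; once more $g,N$ are unchanged and we land on $k-2$.

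For the first equality I would expand $k(g-1,N+2,\gamma_1,\dots,\gamma_N,a_i,b_i)=n(2(g-1)-2+N+2)+\sum_{j}\wt(\gamma_j)+\wt(a_i)+\wt(b_i)$, simplify the coefficient of $n$ to $2g-2+N$, and invoke Lemma~\ref{lemma: weight of U} in the form $\wt(a_i)+\wt(b_i)=-2$; the expression then collapses to $n(2g-2+N)+\sum_j\wt(\gamma_j)-2=k(g,N,\gamma_1,\dots,\gamma_N)-2$.

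The one substantive equality is the second, and this is where the hypothesis that the standard factor is nonzero enters. Writing it out and using $g=g_1+g_2$, $N=|A|+|B|$ and $\wt(a_i)=-2-\wt(b_i)$, the target $k(g_1,|A|+1,\gamma_A,a_i)=k-2$ is equivalent to the weight balance $n(2g_2-2+(|B|+1))+\sum_{j\in B}\wt(\gamma_j)+\wt(b_i)=0$, i.e.\ to the vanishing of $n(2g_2-2+M)+\sum_{\mathrm{ins}}\wt$ for the insertions of $F_{g_2}^{S^{[n]},\std}(\taut_2;\gamma_B,b_i)$, where $M=|B|+1$. I would prove this balance is forced by non-vanishing of the standard degree-$0$ invariant. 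The key geometric input is that the Chern classes $c_j(T S^{[n]})$ are $\wt$-homogeneous of weight $j-n$: indeed $\Wt=h+\act(W\wedge F)$, the Lefschetz operator $h$ acts on $H^{2j}$ by $j-n$, and $\act(W\wedge F)\in\so(H^2)$ annihilates the monodromy-invariant classes $c_j(T S^{[n]})$. Since the degree-$0$ virtual class is built from these Chern classes and the Hodge bundle, each contributing term pairs a product of $c_j(T S^{[n]})$ with the insertions against the fundamental class; as $H^{4n}(S^{[n]})$ is one-dimensional of weight $n$, Lemma~\ref{lemma:wt multiplicative} converts non-vanishing of this pairing into the displayed balance, provided the tautological factor $\taut_2$ is top-dimensional on $\Mbar_{g_2,M}$ (equivalently, only the pure Euler-class term of the obstruction bundle contributes). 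In the application relevant to Theorem~\ref{thm:Main result} one has $g=0$, hence $g_2=0$, the obstruction bundle is trivial, $\taut_2$ is automatically a top class on $\Mbar_{0,M}$, and the balance reduces to the transparent statement that $\int_{S^{[n]}}\bigl(\prod_{j\in B}\gamma_j\bigr)\cdot b_i\neq 0$ pins the product to top weight $n$.

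The step I expect to be the main obstacle is exactly this balance for the standard invariant: tracking how the Hodge-bundle factors in the degree-$0$ virtual class redistribute complex degree without disturbing the weight bookkeeping, so that non-vanishing of the pairing against the one-dimensional top cohomology yields a single linear relation in the weights. Once the weight $j-n$ of the Chern classes is identified this becomes a finite check, but it is the only place where genuine geometry rather than formal weight arithmetic is used, and the only place where one must use that the relevant tautological class matches the dimension of the standard factor.
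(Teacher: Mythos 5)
Your handling of the first, third and fourth equalities coincides with the paper's proof: they are formal consequences of Lemma~\ref{lemma: weight of U} and Lemma~\ref{lemma:T action}, and your expansions are correct. For the second equality your argument is in substance also the paper's, just reorganized: where the paper evaluates $\sum_i F_{g_2}^{\std}(\taut_2;\gamma_B,b_i)\,a_i$ via the explicit degree-zero virtual class \eqref{degree zero virtual class} as a multiple of $U\bigl(\prod_{i\in B}\gamma_i\bigr)$ (if $g_2=0$), resp.\ of $U\bigl(c_{2n}(X)\prod_{i\in B}\gamma_i\bigr)$ (if $g_2=1$, writing $X=S^{[n]}$), and then applies Lemma~\ref{lemma:wt multiplicative} together with $\wt(c_{2n}(X))=n$, you reduce the desired identity to the weight balance $k(g_2,|B|+1,\gamma_B,b_i)=0$ and argue that non-vanishing of the standard invariant forces this balance. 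Your reduction is correct, and your justification of $\wt\bigl(c_j(TS^{[n]})\bigr)=j-n$ by monodromy invariance is a genuine improvement: the paper invokes $\wt(c_{2n}(X))=n$ without comment.

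The gap is your proviso that ``only the pure Euler-class term of the obstruction bundle contributes,'' which you then verify only when $g_2=0$. The lemma is stated, and is used in Proposition~\ref{prop:Implication}, for arbitrary $g$, so the case $g_2=1$ must be closed unconditionally; as written, your proof establishes the lemma only in the situation relevant to Theorem~\ref{thm:Main result}. The fix is immediate and is what the paper does: equation \eqref{degree zero virtual class} already records that for $g_2=1$ the degree-zero virtual class is $[\Mbar_{1,N}\times X]\,\pi_2^{\ast}(c_{2n}(X))$ with no Hodge-class admixture. Indeed, since the Hodge bundle has rank one in genus one, the only potential mixed term is $-\lambda_1\, c_{2n-1}(TX)$, and it vanishes because odd Chern classes of a holomorphic-symplectic variety are zero in rational cohomology (and $\lambda_1^2=0$); for $g_2\geq 2$ the standard virtual class vanishes identically, so the hypothesis $F_{g_2}^{\std}(\taut_2;\gamma_B,b_i)\neq 0$ is vacuous. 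Granting this, your balance argument goes through verbatim in genus one: the required balance becomes $n(|B|+1)+\sum_{j\in B}\wt(\gamma_j)+\wt(b_i)=0$, which is exactly what non-vanishing of $\int_{S^{[n]}}c_{2n}(X)\cdot\prod_{j\in B}\gamma_j\cdot b_i$ forces, using $\wt(c_{2n}(X))=n$ and the one-dimensionality of $H^{4n}(S^{[n]})$ in weight $n$.
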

\begin{proof}
The first of these equations follows from Lemma~\ref{lemma: weight of U},
and the 3rd and 4th follows from Lemma~\ref{lemma:T action}.
For the second, recall that for $X=S^{[n]}$ we have
\begin{equation} \label{degree zero virtual class}
[ \Mbar_{g,N}(X,0)]^{\std} =
\begin{cases}
[\Mbar_{0,N} \times X] & \text{ if } g=0, N \geq 3 \\
[\Mbar_{1,N} \times X] \pi_2^{\ast}(c_{2n}(X)) & \text{ if } g=1, N \geq 1 \\
0 & \text{ if } g \geq 2
\end{cases}
\end{equation}
If $F_{g_2}^{\std}(\taut_2 ; \gamma_B , b_i ) \neq 0$, we hence find:
\[ \sum_i F_{g_2}^{\std}(\taut_2 ; \gamma_B , b_i ) a_i =
\left( \int_{\Mbar_{g_2,|B|+1}}  \taut_2\right)  \cdot 
\begin{cases}
U\left( \prod_{i \in B} \gamma_i \right) & \text{ if } g_2=0 \\
U\left( c_{2n}(X) \prod_{i \in B} \gamma_i \right) & \text{ if } g_2=1.
\end{cases}
\]
Hence using Lemma~\ref{lemma:wt multiplicative} we get:
\begin{align*}
k(g_1,|A|+1,\gamma_A, a_i) 
& =
\begin{cases}
k\left(g,|A|+1,\gamma_A, U\left( \prod_{i \in B} \gamma_i \right) \right) & \text{ if } g_2=0 \\
k\left(g-1,|A|+1,\gamma_A, U\left( c_{2n}(X) \prod_{i \in B} \gamma_i \right) \right) & \text{ if } g_2=1
\end{cases}\\
& = k(g,N,\gamma_1, \ldots, \gamma_N)
\end{align*}
where we used $\wt(c_{2n}(X)) = n$ in the last step.
\end{proof}

\begin{lemma}\label{lemma:e evals}
If $U=\sum_i a_i \otimes b_i$ is a $\wt$-homogeneous K\"unneth decomposition of $U \in H^{\ast}(S^{[n]})^{\otimes 2}$,
then for every $i$ we have
\begin{gather*}
e(\gamma_1, \ldots, \gamma_N, a_i, b_i) = e(\gamma_1, \ldots, \gamma_N)+1 \\
e(\gamma_A, a_i) = e(\gamma) + 1, \quad \text{ if } F_{g_2}^{\std}(\taut_2 ; \gamma_B , b_i ) \neq 0 \text{ for some $g_2$}  \\
e(\gamma_1, \ldots, U(\gamma_i), \ldots, \gamma_N) = e(\gamma_1, \ldots, \gamma_N)+1  \\
e(\gamma_1, \ldots, T_{e_a}\gamma_i, \ldots, T_{e_b}\gamma_j, \ldots, \gamma_N) = e(\gamma_1,\ldots, \gamma_N) + 2.
\end{gather*}
\end{lemma}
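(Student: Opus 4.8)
The plan is to read all four identities as purely numerical statements about the bookkeeping function $e(\gamma_1,\ldots,\gamma_N)=\sum_i\big(\deg(\gamma_i)-n-\wt(\gamma_i)\big)$, in complete analogy with the proof of Lemma~\ref{lemma:k evals}, so that everything reduces to tracking how $\deg$ and $\wt$ of an insertion change under the operators $U$ and $T_\alpha$. The two inputs I would record first are the bidegree shifts of these operators: the operator $U=\act(F\wedge f)$ is the Lefschetz dual of cup product with $F$, hence lowers the complex degree $\deg$ by $1$, and by Lemma~\ref{lemma:T action} it lowers $\wt$ by $2$; the operator $T_\alpha=\act(\alpha\wedge F)$ is degree preserving (Section~\ref{sec:Weight grading}) and by Lemma~\ref{lemma:T action} lowers $\wt$ by $1$. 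Since $U$ is therefore bihomogeneous, I may and will take the K\"unneth decomposition $U=\sum_i a_i\otimes b_i$ to consist of classes $a_i,b_i$ homogeneous for both $\deg$ and $\wt$.

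The first, third and fourth identities are then immediate. For the first, the correspondence $U$ acts by $\gamma\mapsto\sum_i\big(\int_X\gamma\,a_i\big)b_i$ and lowers $\deg$ by $1$, so each factor satisfies $\deg(a_i)+\deg(b_i)=2n-1$; combined with $\wt(a_i)+\wt(b_i)=-2$ from Lemma~\ref{lemma: weight of U}, the added contribution of the two slots is $(\deg a_i-n-\wt a_i)+(\deg b_i-n-\wt b_i)=(2n-1)-2n+2=1$. For the third, replacing $\gamma_i$ by $U(\gamma_i)$ changes $\deg-n-\wt$ at that slot by $(-1)-(-2)=1$. For the fourth, replacing $\gamma_i,\gamma_j$ by $T_{e_a}\gamma_i,T_{e_b}\gamma_j$ changes it by $0-(-1)=1$ at each of the two modified slots, giving $+2$ in total.

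The genuine work, and the only place the hypothesis $F_{g_2}^{\std}(\taut_2;\gamma_B,b_i)\neq 0$ enters, is the second identity; here I would follow the same route as in Lemma~\ref{lemma:k evals}. By the degree-zero virtual class \eqref{degree zero virtual class}, nonvanishing of this standard invariant forces $g_2\in\{0,1\}$ and expresses it as a tautological integral over $\Mbar_{g_2,|B|+1}$ times a Poincar\'e pairing over $X=S^{[n]}$, namely $\int_X\big(\prod_{j\in B}\gamma_j\big)b_i$ when $g_2=0$ and $\int_X c_{2n}(X)\big(\prod_{j\in B}\gamma_j\big)b_i$ when $g_2=1$. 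This pairing being nonzero yields two constraints on $b_i$: a dimension constraint fixing $\deg(b_i)$, and a weight constraint $\wt(b_i)=-\wt\big(c_{2n}(X)^{\epsilon}\prod_{j\in B}\gamma_j\big)$ with $\epsilon\in\{0,1\}$. I then evaluate the right-hand weight by multiplicativity of $\wt$ (Lemma~\ref{lemma:wt multiplicative}) together with $\wt(c_{2n}(X))=n$, pass back to $a_i$ via $\deg a_i=2n-1-\deg b_i$ and $\wt a_i=-2-\wt b_i$, and verify that the $B$-slot contributions cancel against $\deg a_i-n-\wt a_i$, leaving $e(\gamma_A,a_i)-e(\gamma)=1$ in both cases.

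The main obstacle is justifying the weight constraint, i.e. that nonvanishing of the Poincar\'e pairing $\int_X\mu\,\nu=\langle\mu,\nu\rangle$ forces $\wt(\mu)+\wt(\nu)=0$ for $\wt$-homogeneous $\mu,\nu$. This I would deduce from the fact that $\Wt=\act(e\wedge f+W\wedge F)$ lies in the LLV Lie algebra, which acts by infinitesimal isometries of the Poincar\'e pairing, so that $\Wt$ is skew-adjoint; consequently $\langle\Wt\mu,\nu\rangle=-\langle\mu,\Wt\nu\rangle$ gives $(\wt\mu+\wt\nu)\langle\mu,\nu\rangle=0$, as needed. Everything else is elementary arithmetic.
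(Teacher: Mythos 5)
Your proposal is correct, and in substance it is the same argument as the paper's. The paper simply packages the quantity $\deg-n-\wt$ as the eigenvalue of the single LLV operator $h_{FW}=\act(F\wedge W)=h-\Wt$, so that all four identities follow from commutator computations and multiplicativity ``parallel to Lemma~\ref{lemma:k evals}''; your component-wise tracking of the shifts of $\deg$ and $\wt$ under $U$ and $T_\alpha$ is the unpackaged version of exactly this bookkeeping, and your arithmetic in all four cases (including the $g_2=0,1$ cases of the second identity via \eqref{degree zero virtual class} and $\wt(c_{2n}(X))=n$) checks out.

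One justification should be repaired. You assert that $\Wt$ is skew-adjoint because ``the LLV Lie algebra acts by infinitesimal isometries of the Poincar\'e pairing.'' As a statement about the full algebra $\Fg(S^{[n]})$ this is false: the cup-product operators $e_a\in\Fg(S^{[n]})$ are \emph{self}-adjoint with respect to the Poincar\'e pairing, not skew-adjoint. What is true, and suffices for your purpose, is that $\Wt$ lies in the degree-zero part $\Fg(S^{[n]})_0=\so(H^2)\oplus\BQ h$: the reduced summand $\so(H^2)$ acts skew-adjointly because the integrated LLV representation acts by \emph{orthogonal} ring isomorphisms (as recalled in Section~\ref{subsec:LLV algebra}), while $h$ is skew-adjoint by the elementary degree argument $\langle hx,y\rangle+\langle x,hy\rangle=(\deg x+\deg y-2n)\langle x,y\rangle=0$. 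Alternatively, you may simply invoke the identity $\Wt^{t}=-\Wt$ that is already used in the proof of Lemma~\ref{lemma: weight of U}. With this substitution your proof is complete.
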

\begin{proof}
With the notation of Section~\ref{sec:Weight grading} define $h_{FW} := \act(F \wedge W)$,
which acts semisimply on $H^{\ast}(S^{[n]})$.
For an eigenvector $\gamma$, define $\deg_{FW}(\gamma)$ to be the eigenvalue of $h_{FW}$, i.e.
\[ h_{FW}(\gamma) = \deg_{FW}(\gamma) \gamma. \]
Then because we have
\[ (\deg(\gamma)-n-\wt(\gamma)) \gamma = (h - \Wt)\gamma = - \act(W \wedge F)\gamma = h_{FW}(\gamma) \]
we find
\[ e(\gamma_1, \ldots, \gamma_N) = \sum_i \deg_{FW}(\gamma_i). \]
The claim now follows parallel to Lemma~\ref{lemma:k evals}
(use that $h_{FW}=h-\Wt$, so the corresponding properties for the grading operator $h_{FW}$ are easily derived).
\end{proof}

\section{Fiber classes} \label{sec:fiber case}
\subsection{Overview}
We study the generating series of Gromov-Witten invariants of $S^{[n]}$ in fiber classes
of the Lagrangian fibration $S^{[n]} \to \p^n$,
\[
F_{g,0}(\taut; \gamma_1, \ldots, \gamma_N) = 
\sum_{d \geq 0} \sum_{\substack{r \in \BZ\\ (d,r) \neq (0,0)}} \left\langle \taut ; \gamma_1, \ldots, \gamma_N \right\rangle^{S^{[n]}}_{g, dF+rA} q^d (-p)^r.
\]
Recall from Theorem~\ref{thm:2132}
the weight $n$ (meromorphic) quasi-Jacobi forms
\[
\A_n(z,\tau) = B_n + \delta_{n,1}\frac{1}{2} \frac{p^{1/2}+p^{-1/2}}{p^{1/2} - p^{-1/2}} - n \sum_{k,d \geq 1} d^{n-1} (p^{k} + (-1)^n p^{-k}) q^{kd} \ \ 
\in \MQJac_{0,n}.
\]
For any $(\deg,\wt)$-bihomogeneous class $\gamma$ define the modified degree:
\[ \deg_{WF}(\gamma) = n + \wt(\gamma) - \deg(\gamma). \]

\begin{rmk} \label{rmk:WFgrading}
Consider the basis of $H^{\ast}(S,\BQ)$ given by $\CB = 
\{ 1, \pt, W, F, e_a \}$, where $\{ e_a \}$ is a basis of $\{ W, F \}^{\perp} \subset H^2(S,\BQ)$.
If $\gamma = \prod_{i} \Fq_{n_i}(\delta_i) \1$ for $\delta_i \in \CB$, we have
\[ \deg_{WF}(\gamma) = | \{ i : \delta_i = W \} | - | \{ i | \delta_i = F \} |. \]
By Section~\ref{sec:Weight grading},
$\deg_{WF}(\gamma)$ is also the eigenvalue of the operator
$h_{WF} := \act(W \wedge F)$. \qed
\end{rmk}

The main result of this section is the following:
\begin{thm} \label{thm:fiber classes} Fix $g,N$ with $2g-2+N>0$ such that the following holds:
\begin{itemize}
\item[(i)] the Multiple Cover Conjecture (Conjecture~\ref{MCConjecture}) holds for this $g,N$, 
\item[(ii)] $\left\langle \taut ; \gamma_1, \ldots, \gamma_N \right\rangle_{g,dF+rA} = 0$
for all $(d,r) \neq (0,0)$, whenever $\sum_i \deg_{WF}(\gamma_i) < 0$.
\end{itemize}
Let $\gamma_i$ be $(\wt,\deg)$-bihomogeneous classes and let
\[ a=3g-3+N-\deg(\taut), \quad b=\sum_{i=1}^{N} \deg_{WF}(\gamma_i). \]
If $a,b \geq 0$, then in $\BC((p))[[q]]/\BC$ we have
\begin{multline}
\label{multline:sum Fg0}
F_{g,0}(\taut; \gamma_1, \ldots, \gamma_N)
\equiv
\left\langle \taut ; \gamma_1, \ldots, \gamma_N \right\rangle^{S^{[n]}}_{g, F} \sum_{d,k \geq 1} k^a d^b q^{kd} \\
+ 
\sum_{r \geq 1} 
(-1)^r \left\langle \taut ; \gamma_1, \ldots, \gamma_N \right\rangle^{S^{[n]}}_{g, F+rA}
\left( \frac{-1}{b+1} \left( p \frac{d}{dp} \right)^a \A_{b+1}(p,q) \right)\Big|_{p \mapsto p^{r}}.
\end{multline}
In particular,
$F_{g,0}(\taut; \gamma_1, \ldots, \gamma_N)$
is a meromorphic quasi-Jacobi form of weight $k=n(2g-2+N) + \sum_i \wt(\gamma_i)$ and index $0$,
with poles at torsion points.
\end{thm}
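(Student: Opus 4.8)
The plan is to use the Multiple Cover Conjecture (hypothesis (i)) together with the monodromy of $S^{[n]}$ to reduce every fiber-class invariant $\langle \taut;\gamma_1,\ldots,\gamma_N\rangle_{g,dF+rA}$ to the $d=1$ invariants $\langle\taut;\gamma\rangle_{g,F}$ and $\langle\taut;\gamma\rangle_{g,F+rA}$, to invoke the vanishing hypothesis (ii) to control which of these survive, and finally to resum the resulting divisor sums and match them against the Fourier expansions of $\A_{b+1}$ given in Theorem~\ref{thm:2132}(b).

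First I would set up the reduction to standard representatives. The fiber classes span a rank-two isotropic sublattice, and for fixed $A$-degree $r\neq 0$ the classes $dF+rA$ and $F+rA$ are primitive of the same divisibility and the same Beauville--Bogomolov square $r^2(A,A)$ (using that $F$ is isotropic and $(F,A)=0$), so by the description of $\Mon(S^{[n]})$ in Section~\ref{subsec:monodromy} and Eichler's criterion they lie in a single orbit of $\widetilde{O}^{+}(H^2(S^{[n]},\BZ))$; by the global Torelli theorem the corresponding invariants are then related by a monodromy operator, provided both classes pair positively with a K\"ahler class. Combining this monodromy covariance with the Multiple Cover Conjecture (Conjecture~\ref{MCConjecture}) for the $B+F$ direction yields a reduction formula expressing $\langle\taut;\gamma\rangle_{g,dF+rA}$ as a divisor sum over the building blocks $\langle\taut;\gamma\rangle_{g,F}$ (for $r=0$) and $\langle\taut;\gamma\rangle_{g,F+r'A}$ (for $r\neq 0$): the cover degree $k$ in the $F$-direction contributes a factor $k^{a}$ and the passage to the standard representative contributes $(d/k)^{b}$, where $a=3g-3+N-\deg(\taut)$ and $b=\sum_i\deg_{WF}(\gamma_i)$ are exactly the scaling weights of the reduced virtual class in the two independent directions. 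Since the monodromy identity is valid only across effective classes, this step is organised as an induction on the total weight $\sum_i\wt(\gamma_i)$, exactly as in the proof of Proposition~\ref{prop:constrains monodromy} (using the shift operator $e^{\lambda T_\delta}$ of Section~\ref{monodromy:shift}, the $\int dA$-device, and the fact that $T_\delta$ lowers the weight by one), the new feature being the effectivity boundary in the $F$-degree.

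Here the vanishing hypothesis (ii) is essential. A short commutator computation, $[h_{WF},T_\delta]=\act([W\wedge F,\delta\wedge F])=-T_\delta$ with $h_{WF}=\act(W\wedge F)$ (using $(W,F)=1$ and $(W,\delta)=(F,\delta)=(F,F)=0$), shows that $T_\delta$ lowers $\deg_{WF}$ by one; hence the lower-weight correction terms produced by the monodromy reduction have strictly smaller $\deg_{WF}$, and those dropping below $0$ vanish identically by (ii). This is what makes the induction terminate and pins the building blocks down to the representatives with $\deg_{WF}\geq 0$. I expect this interplay — carrying the monodromy reduction across the effectivity boundary while tracking the $T_\delta$-corrections and applying (ii) at the right moment, together with the precise matching of the exponents $a,b$ and the signs — to be the main obstacle, since it is where the ``subtle vanishing'' genuinely enters.

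Finally I would resum. Substituting the reduction formula into $F_{g,0}=\sum_{d\geq 0}\sum_{(d,r)\neq 0}\langle\taut;\gamma\rangle_{g,dF+rA}\,q^d(-p)^r$ and reindexing by the cover degree ($d=k\delta$, $r=kr'$), the $r=0$ part collapses to $\langle\taut;\gamma\rangle_{g,F}\sum_{k,\delta\geq 1}k^{a}\delta^{b}q^{k\delta}$, the first term of \eqref{multline:sum Fg0}. For the $r\neq 0$ part, the inner divisor sum over $k,\delta$ is, after applying $(p\tfrac{d}{dp})^{a}$ and the substitution $p\mapsto p^{r'}$, precisely the Fourier expansion of $\tfrac{-1}{b+1}(p\tfrac{d}{dp})^{a}\A_{b+1}(p,q)\big|_{p\mapsto p^{r'}}$ by Theorem~\ref{thm:2132}(b); the $p\mapsto p^{-1}$ symmetry of Lemma~\ref{lemma:Laurent polynomial} supplies the $(-1)^{a+b+1}p^{-k}$ half of $\A_{b+1}$ and fixes the sign $(-1)^{r'}$, the whole identity holding modulo the constant term. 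This produces the second term of \eqref{multline:sum Fg0}. The quasi-Jacobi statement is then immediate: each $\A_{b+1}$ is a meromorphic quasi-Jacobi form of index $0$ (Theorem~\ref{thm:2132}), the operators $p\tfrac{d}{dp}$ and $p\mapsto p^{r'}$ preserve $\MQJac_0$, and a weight count against the dimension constraint $\deg(\taut)+\sum_i\deg(\gamma_i)=\vd$ shows the total weight equals $k=n(2g-2+N)+\sum_i\wt(\gamma_i)$; the only poles arise from the lattice-point pole of $\A_1$, which $p\mapsto p^{r'}$ moves to torsion points.
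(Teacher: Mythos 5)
Your overall strategy coincides with the paper's (reduce to the classes $F$ and $F+rA$ via the multiple cover conjecture in its real-isometry form, then resum the divisor sums against the Fourier expansion of $\A_{b+1}$ from Theorem~\ref{thm:2132}(b)), but as written there is a genuine gap: you never treat the pure exceptional classes $rA$, i.e.\ the terms with $d=0$, $r\neq 0$, in the case $b>0$. These terms are the $q^0$-coefficient of $F_{g,0}$, and they are $p$-dependent; on the other hand, for $b>0$ the $q^0$-coefficient of the right-hand side of \eqref{multline:sum Fg0} is a constant (the term $\delta_{b+1,1}$ in the expansion of $\A_{b+1}$ is absent). So the theorem forces the vanishing $\sum_{r\geq 1}\left\langle \taut;\gamma_1,\ldots,\gamma_N\right\rangle^{S^{[n]}}_{g,rA}(-p)^r=0$ whenever $b>0$, and this must be proved. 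It does not follow from hypothesis (ii), which only covers \emph{negative} total $\deg_{WF}$, nor from your shift-monodromy mechanism: applying $e^{-T_\delta}$ gives $\left\langle \taut;\gamma_1,\ldots,\gamma_N\right\rangle_{g,rA}=\left\langle \taut;e^{-T_\delta}\gamma_1,\ldots,e^{-T_\delta}\gamma_N\right\rangle_{g,rF+rA}$, and for $b>0$ the expansion of the exponentials produces terms of total $\deg_{WF}$ anywhere in $\{0,1,\ldots,b\}$, none of which is killed by (ii) and which have no reason to cancel. (Note also that Conjecture~\ref{conj mc hilb} cannot be applied to $rA$ directly, since there $\beta=0$ is not effective; this is exactly why the shift to $rF+rA$ is needed at all.) The paper's Lemma~\ref{lemma positive} supplies the missing vanishing by a different argument: monodromy invariance forces $\ev_{\ast}\left(\taut\cdot[\Mbar_{g,N}(S^{[n]},rA)]^{\vir}\right)$ to have weight zero for the grading operator $\act(W\wedge F)$, so its pairing against $\gamma_1\otimes\cdots\otimes\gamma_N$, which has $\act(W\wedge F)$-weight $b\neq 0$, vanishes. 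Your shift-plus-(ii) device is correct precisely in the complementary case $b=0$ (it reproduces the paper's Lemma~\ref{lemma:zero} and the $\delta_{b+1,1}$-term of $\A_1$); no induction on $\sum_i\wt(\gamma_i)$ and no $\int dA$-device are needed anywhere — each monodromy is applied once.

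A second, reparable, problem is the reduction step itself. You propose to relate $\tfrac{d}{k}F+\tfrac{r}{k}A$ to $F+\tfrac{r}{k}A$ by an honest monodromy operator produced by Eichler's criterion. Such an integral isometry exists, but it cannot yield the factor $(d/k)^{b}$: an integral isometry does not scale $W$ and $F$ by $d/k$ and $k/d$, and its actual action on the insertions $\gamma_i$ is uncontrolled. The scaling factor $(d/k)^{\deg_{WF}(\gamma_i)}$ comes from the \emph{real} isometry $\varphi_k$ ($F\mapsto\tfrac{k}{d}F$, $W\mapsto\tfrac{d}{k}W$, identity on $\{W,F\}^{\perp}$) built into the formulation of Conjecture~\ref{conj mc hilb}, which hypothesis (i) provides via the equivalence with Conjecture~\ref{MCConjecture} (\cite[Lemma 3]{ObMC}); Eichler's criterion is thus both insufficient and unnecessary. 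Finally, your sign bookkeeping (matching the $(-1)^{a+b+1}p^{-k}$ half of $\A_{b+1}$ with the $p\mapsto p^{-1}$ symmetry of Lemma~\ref{lemma:Laurent polynomial}) silently uses $(-1)^{nN+\sum_i l(\gamma_i)}=(-1)^{a+b+1}$; this is exactly Lemma~\ref{lemma:parity}, which requires the invariant-theory observation that an even number of the insertion weights lie in $\{W,F\}^{\perp}$, and should be made explicit.
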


Here for two power series $f(p,q),g(p,q) \in \BC((p))[[q]]$, we write $f \equiv g$ if they are equal in $\BC((p))[[q]]/\BC$
that is if there exists a constant $c \in \BC$ such that $f(p,q)=g(p,q)+c$.

In \eqref{multline:sum Fg0} the sum over $r$ is finite by Lemma~\ref{lemma:Laurent polynomial},
hence the statement of the theorem is well-defined.
If $a<0$ in Theorem~\ref{thm:fiber classes},
then $\taut=0$, so all Gromov-Witten invariants would vanish.
Condition (ii) in Theorem~\ref{thm:fiber classes} would follow from 
a family-version of the GW/Hilb correspondence (Section~\ref{sec:hilb/GW correspondence}),
where one does not fix the complex structure of the source curve.
Hence (ii) is expected to hold for all $g,N$ with $2g-2+N>0$.
We prove condition (ii) for $(g,N)=(0,3)$ below and obtain the following.

\begin{thm} \label{thm:HAE g=0,N=3}
For any $\gamma_1, \gamma_2 ,\gamma_3 \in H^{\ast}(S^{[n]})$
the series
$F_{g=0,0}(\taut; \gamma_1, \ldots, \gamma_N)$
is a meromorphic quasi-Jacobi form of weight $n + \sum_i \wt(\gamma_i)$ and index $0$
with poles at torsion points (of the form given in \eqref{multline:sum Fg0}).
Moreover, in $\BC((p))[[q]]/\BC$ we have
\begin{gather}
\frac{d}{dG_2} F_{0,0}(\taut; \gamma_1, \gamma_2, \gamma_3) \equiv 0  \notag \\
\frac{d}{d \A} F_{0,0}(\taut; \gamma_1, \gamma_2, \gamma_3) \equiv 
T_{\delta} F_{0,0}(\taut; \gamma_1, \gamma_2, \gamma_3). \label{HAE dda for g=0N=3}
\end{gather}
\end{thm}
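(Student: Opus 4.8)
The plan is to deduce Theorem~\ref{thm:HAE g=0,N=3} from Theorem~\ref{thm:fiber classes} applied to $(g,N)=(0,3)$, so the first task is to verify its two hypotheses. Hypothesis (i), the Multiple Cover Conjecture for $(g,N)=(0,3)$, holds by Theorem~\ref{MCConjecture for g=0 N<=3}. Since $\Mbar_{0,3}$ is a point we may take $\taut=1$, so $a=3g-3+N-\deg(\taut)=0\geq 0$; when $b=\sum_i\deg_{WF}(\gamma_i)\geq 0$ the theorem then produces the explicit formula \eqref{multline:sum Fg0} with $a=0$, while when $b<0$ hypothesis (ii) forces $F_{0,0}\equiv 0$, so in either case the quasi-Jacobi form property follows. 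Hence the real content is hypothesis (ii): the vanishing $\langle 1;\gamma_1,\gamma_2,\gamma_3\rangle_{0,dF+rA}=0$ for all $(d,r)\neq(0,0)$ whenever $\sum_i\deg_{WF}(\gamma_i)<0$. By multilinearity I reduce to $(\wt,\deg)$-bihomogeneous $\gamma_i$ written in the Nakajima basis, so each is $\deg_{WF}$-homogeneous and $\sum_i\deg_{WF}(\gamma_i)<0$ means the $S$-cohomology weights contain strictly more factors equal to $F$ than to $W$ (Remark~\ref{rmk:WFgrading}); in particular at least one weight equals $F$.

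For the classes with $d\geq 1$ I would argue as in Step~1 of Proposition~\ref{prop:quasi-modularity}: the Multiple Cover Conjecture for the K3 fiber classes $dF$, proven in \cite{BB}, makes the GW/Hilb correspondence of Theorem~\ref{thm:GW/Hilb} applicable by Remark~\ref{rmk:GW/Hilb imprimitive}, so that $\sum_r(-p)^r\langle 1;\gamma\rangle_{0,dF+rA}$ equals the relative series $Z^{(S\times\p^1,S_{0,1,\infty})}_{\GW,(dF,n)}(\gamma_1,\gamma_2,\gamma_3)$. Applying the product formula exactly as in the proof of Theorem~\ref{thm:rel HAE} rewrites this relative invariant through the Gromov-Witten theory of the K3 surface $S$ in the fiber class $dF$, with the $S$-insertions given by the Nakajima weights $\delta_{ij}$. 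Since at least one $\delta_{ij}$ equals the divisor $F$ and $dF\cdot F=0$, the divisor equation on $S$ annihilates the invariant. The case $d=0$ (classes $rA$, $r\neq 0$) is not covered by GW/Hilb directly, but reduces to the previous case via the monodromy $e^{\lambda T_{\delta}}$ of Section~\ref{monodromy:shift}: the computation there gives $e^{\lambda T_{\delta}}(dF+rA)=(d-\lambda r)F+rA$, so this monodromy preserves fiber classes and (taking $\lambda=-1$) sends the effective class $rA$ to the effective class $rF+rA$ with $d=r\geq 1$, while $T_{\delta}$ lowers $\deg_{WF}$ by one because $[\act(W\wedge F),\act(\delta\wedge F)]=-\act(\delta\wedge F)$. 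Hence deformation invariance expresses $\langle 1;\gamma\rangle_{0,rA}$ as a combination of $d\geq 1$ invariants whose insertions still have total $\deg_{WF}<0$, which already vanish.

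With hypothesis (ii) in hand, Theorem~\ref{thm:fiber classes} gives that $F_{0,0}(\taut;\gamma_1,\gamma_2,\gamma_3)$ is a meromorphic quasi-Jacobi form of weight $n+\sum_i\wt(\gamma_i)$ and index $0$ with poles at torsion points, of the shape \eqref{multline:sum Fg0}. For the anomaly equation in $\A$ I would repeat the monodromy argument of Proposition~\ref{prop:constrains monodromy}(a) in index $0$: deformation invariance under $e^{\lambda T_{\delta}}$, after multiplying by $q^d(-p)^r$ and reindexing $d\mapsto d-\lambda r$, yields the elliptic transformation law $F_{0,0}(pq^{\lambda},q)\equiv F_{0,0}(e^{-\lambda T_{\delta}}\gamma_1,e^{-\lambda T_{\delta}}\gamma_2,e^{-\lambda T_{\delta}}\gamma_3)$ modulo constants. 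Comparing with the intrinsic transformation law of an index-$0$ quasi-Jacobi form (Lemma~\ref{elliptic transformation}, valid also for meromorphic forms) and matching the linear term in $\lambda$ gives $\frac{d}{d\A}F_{0,0}\equiv T_{\delta}F_{0,0}$, which is \eqref{HAE dda for g=0N=3}. For the anomaly equation in $G_2$ I would read it off \eqref{multline:sum Fg0} with $a=0$: the building blocks $\A_{b+1}(p,q)|_{p\mapsto p^r}=\A_{b+1}(rz,\tau)$ satisfy $\frac{d}{dG_2}\A_{b+1}=0$ (Theorem~\ref{thm:2132}(a)), and scaling the elliptic variable introduces only $\alpha$- and no $\nu$-terms in the almost-holomorphic completion, so their $G_2$-derivative still vanishes; while the remaining term $\sum_{d,k\geq 1}d^b q^{kd}$ equals $G_{b+1}$ modulo constants, whose $G_2$-derivative is $\equiv 0$ (it is $0$ for $b+1\neq 2$, and $\frac{d}{dG_2}G_2=1$ is constant). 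Hence $\frac{d}{dG_2}F_{0,0}\equiv 0$.

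The main obstacle is the verification of hypothesis (ii), and within it the bookkeeping of the divisor equation after the product-formula reduction: the tautological data carried from the relative moduli space can place $\psi$-classes at the $F$-weighted marking, so the naive divisor equation acquires correction terms. I expect these corrections to vanish as well, using $F^2=0$ together with the fibration structure of $S\to\p^1$ (so that $F=\pi^{\ast}[\mathrm{pt}]$ and curves in class $dF$ are contracted by $\pi$, forcing the evaluation at the $F$-marking to factor through the base point), but making this uniform in $d$ and across all genera appearing in the $\sharp$-disconnected sum is the step that requires genuine care.
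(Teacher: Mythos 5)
Your proposal follows the same architecture as the paper's proof: hypothesis (i) of Theorem~\ref{thm:fiber classes} via Theorem~\ref{MCConjecture for g=0 N<=3}; hypothesis (ii) via the GW/Hilb correspondence (Theorem~\ref{thm:GW/Hilb}, extended to fiber classes through Remark~\ref{rmk:GW/Hilb imprimitive} and \cite{BB}) combined with the product formula; the quasi-Jacobi property and the $\frac{d}{dG_2}$-equation read off from \eqref{multline:sum Fg0} using $\frac{d}{dG_2}\A_{b+1}=0$ (Theorem~\ref{thm:2132}); and \eqref{HAE dda for g=0N=3} via the monodromy of Section~\ref{monodromy:shift}, which is precisely the paper's alternative (a). Your reduction of the $d=0$ classes $rA$ to the case $d\geq 1$ by the monodromy $e^{\lambda T_{\delta}}$ is correct, and in fact more explicit than what the paper writes.

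The genuine gap is the vanishing required by hypothesis (ii) for $d\geq 1$, which you try to obtain from the divisor equation. After GW/Hilb and the product formula one is left with integrals over $\Mbar^{\sharp}_{g,M}(S,dF)$ of $\tau^{\ast}(\alpha)\prod_k \ev_k^{\ast}(\delta_k)$, where $\alpha$ is an essentially arbitrary tautological class (a pushforward of relative Gromov--Witten classes of $(\p^1,\{0,1,\infty\})$, carrying $\psi$- and boundary contributions at the marking weighted by $F$). The naive divisor equation therefore does not apply: removing the $F$-insertion requires pushing $\tau^{\ast}(\alpha)$ forward along the forgetful map, which produces descendant corrections and new insertions $\delta_j\cdot F$, and you offer only a heuristic for why these disappear; organizing that induction uniformly in $d$, in the genus, and over the $\sharp$-disconnected components is exactly the unproven step, as you concede. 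The paper's Lemma~\ref{lemma:K3} is the missing ingredient, and it proceeds by the opposite move: rather than removing the $F$-insertion, it uses $F=[E]$ to localize the curve to a single fiber $\iota : E\hookrightarrow S$, as in \eqref{reduction to E}. Then either another weight equals $F$ (and $\iota^{\ast}F=0$ kills the invariant), or no weight equals $W$, in which case every remaining insertion restricts under $\iota^{\ast}$ to a multiple of $1\in H^{0}(E)$ and the whole integrand on $\Mbar_{g,N}(E,d)$ (tautological classes and the Hodge factor $\lambda_{g-1}$ included) is invariant under the translation action of $E$, so the integral vanishes by the argument of \cite[Sec.5.4]{OkPandVir}. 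This localization-plus-translation argument is insensitive to $\taut$ and to descendants, which is exactly what your divisor-equation route lacks; substituting it for your vanishing step closes the gap, and the rest of your proof then goes through.
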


\subsection{Multiple cover conjecture}
We first recall an equivalent form of the Multiple Cover Conjecture (Conjecture \ref{MCConjecture}).
Let $S$ be any K3 surface with an effective curve class $\beta \in H_2(S,\BZ)$.
For every divisor $k|\beta$ let $S_k$ be some K3 surface and
consider any {\em real} isometry
\[ \varphi_k : H^{2}(S,\BR) \to H^{2}(S_k, \BR) \]
such that $\varphi_k(\beta/k) \in H_2(S_k,\BZ)$
is a primitive effective curve class.
We extend $\varphi_k$ to the full cohomology lattice
by $\varphi_k(\pt) = \pt$ and $\varphi_k(1) = 1$.
Define an extension to the Hilbert scheme by acting factorwise in the Nakajima operators:
\[
\varphi_k : H^{\ast}(S^{[n]}) \to H^{\ast}( S_k^{[n]} ),
\quad \prod_i \Fq_{n_i}(\delta_i) \1 \mapsto \prod_i \Fq_{n_i}( \varphi_k(\delta_i)) \1.
\]

\begin{conj} \label{conj mc hilb} We have
\[
\blangle \taut ; \gamma_1, \ldots \gamma_N \brangle^{S^{[n]}}_{g, \beta+rA} 
= \sum_{k | (\beta,r)}  k^{3g-3+N-\deg(\taut)} (-1)^{r + \frac{r}{k}}
\blangle \taut ; \varphi_k(\gamma_1), \ldots \varphi_k(\gamma_N) \brangle^{S^{[n]}}_{g, \varphi_k\left( \frac{\beta}{k} \right)+\frac{r}{k} A}.
\]
\end{conj}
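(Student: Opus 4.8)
The plan is to establish that Conjecture~\ref{conj mc hilb} is the coefficient-wise reformulation of the Multiple Cover Conjecture (Conjecture~\ref{MCConjecture}), so that the two are equivalent; since Conjecture~\ref{MCConjecture} is the form used elsewhere in the paper, it suffices to unroll its Hecke operator and match the result term by term with Conjecture~\ref{conj mc hilb}. First I would reduce everything to the elliptic K3 surface $S\to\p^1$ and verify that the right-hand side of Conjecture~\ref{conj mc hilb} is well defined. By deformation invariance of reduced Gromov--Witten invariants along families keeping the curve class of Hodge type, together with the global Torelli theorem (cf.\ the introduction and Section~\ref{subsec:monodromy}), any invariant in a primitive class is computed on $S^{[n]}$ in a class $W+mF+rA$ with $W=B+F$, and the value depends only on the square of the class, the residual multiplicity $\tfrac rk A$, and the combinatorial (Nakajima) type of the transported insertions. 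Since $\varphi_k$ is a real isometry carrying $\beta/k$ to a primitive effective class and fixing $1$ and $\pt$, this shows that $\blangle \taut;\varphi_k(\gamma_1),\dots,\varphi_k(\gamma_N)\brangle_{g,\varphi_k(\beta/k)+\frac rk A}$ is independent of the auxiliary data $(S_k,\varphi_k)$, and reduces the problem to matching two explicit expressions built from the same primitive invariants of $S^{[n]}$.

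Next I would unroll the formal Hecke operator. For the elliptic K3 the coefficient of $q^dp^r$ in $F_{g,\ell}$ is $(-1)^r\blangle\taut;\gamma_1,\dots,\gamma_N\brangle_{g,\ell(B+F)+dF+rA}$, while by \eqref{Hecke formal} the coefficient of $q^dp^r$ in $\ell^{E}\,T_{w,\ell}F_{g,1}$ equals $\ell^{E}\sum_{a\mid(\ell,d,r)}a^{w-1}(-1)^{r/a}\blangle\taut;\gamma_1,\dots,\gamma_N\brangle_{g,(B+F)+\frac{\ell d}{a^2}F+\frac ra A}$, where $w=n(2g-2+N)+\sum_i\wt(\gamma_i)$ and $E=\sum_i(\deg(\gamma_i)-n-\wt(\gamma_i))$. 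The class $\beta=\ell B+(\ell+d)F$ has square $\beta^2=2\ell d$, so $\beta^2/a^2$ equals the square of the primitive class $(B+F)+\tfrac{\ell d}{a^2}F$ occurring in $F_{g,1}$ at index $(\tfrac{\ell d}{a^2},\tfrac ra)$. Identifying the Hecke variable $a$ with the divisor $k\mid(\beta,r)=\gcd(\ell,d,r)$ and invoking the transport of the first step, each summand of the unrolled Conjecture~\ref{MCConjecture} becomes, up to the scalar prefactor, the $k$-th summand of Conjecture~\ref{conj mc hilb}, with $\varphi_k$ realising the identification of primitive insertions. Dividing the identity by the overall sign $(-1)^r$ converts the Hecke sign $(-1)^{r/a}$ into the sign $(-1)^{r+r/k}$ of Conjecture~\ref{conj mc hilb}.

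The delicate point, which I expect to be the main obstacle, is reconciling the numerical prefactors: I must show that $\ell^{E}a^{w-1}$ collapses, after the identification $a=k$ and the isometry transport above, to the single power $k^{3g-3+N-\deg(\taut)}$. This forces one to track how the bigrading $(\deg,\wt)$ transforms under $\varphi_k$ and the subsequent Hodge-type-preserving deformation, since $\varphi_k$ alters the hyperbolic pair $\langle W,F\rangle$ and hence the weights of the transported insertions. Concretely I would use the relation $\deg(\gamma)-n-\wt(\gamma)=\deg_{FW}(\gamma)$, the eigenvalue of $\act(F\wedge W)$ (as in the proof of Lemma~\ref{lemma:e evals}), to rewrite $E=\sum_i\deg_{FW}(\gamma_i)$, and then verify that the $\varphi_k$-transport rescales the graded pieces by exactly the power that turns $\ell^{E}a^{w-1}$ into $a^{3g-3+N-\deg(\taut)}$; here the dimension constraint $\deg(\taut)+\sum_i\deg(\gamma_i)=2n(1-g)+N+1$ for non-vanishing of the reduced invariant is what converts between the weight $w$ and the tautological degree $\deg(\taut)$. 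Once the prefactors are matched, summing over all $k\mid(\beta,r)$ yields Conjecture~\ref{conj mc hilb}, and reading the identification backwards gives the converse implication, establishing the equivalence of the two forms.
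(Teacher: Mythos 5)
First, a point of framing: the statement you were asked to prove is a \emph{conjecture}, open in general (it is currently known only for $g=0$, $N\leq 3$ via \cite{QuasiK3}), and the paper itself proves nothing here — it merely records that Conjecture~\ref{conj mc hilb} is equivalent to Conjecture~\ref{MCConjecture}, citing \cite[Lemma 3]{ObMC} for this. Your proposal reads the task the right way and sets out to prove exactly that equivalence, by the route the cited lemma must take: reduce to the elliptic K3 surface, check well-definedness of the right-hand side (deformation invariance and global Torelli — plus a point you gloss over, namely that $\varphi_k$ is only a \emph{real} isometry, so one also needs Zariski density of the integral monodromy together with multilinearity of the invariants in the insertions), unroll the formal Hecke operator coefficient by coefficient, match the Beauville--Bogomolov squares and the signs, and finally match the numerical prefactors using the grading $\deg(\gamma)-n-\wt(\gamma)$ and the dimension constraint. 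The sign bookkeeping ($(-1)^{r+r/k}$) and the square bookkeeping ($\beta^2=2\ell d$, $(\beta/k)^2=2\ell d/k^2$) are correct, and the identification of the transporting isometry ($W\mapsto (k/\ell)W$, $F\mapsto(\ell/k)F$, identity on $\{W,F\}^{\perp}$) is the right one.

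However, the step you yourself single out as the crux — the prefactor matching — does not close as you have written it, because you quote the wrong dimension constraint. You use $\deg(\taut)+\sum_i\deg(\gamma_i)=2n(1-g)+N+1$, which is the displayed formula in the paper's introduction; that display is a typo, inconsistent with Table~\ref{table:virtual dimension} and with Step 1 of the proof of Theorem~\ref{thm:fiber classes}, where the reduced virtual dimension is correctly taken to be $(2n-3)(1-g)+N+1$. Writing $w=n(2g-2+N)+\sum_i\wt(\gamma_i)$ for the weight called $k$ in Conjecture~\ref{MCConjecture} and $E=\sum_i(\deg(\gamma_i)-n-\wt(\gamma_i))$, the $k$-th Hecke summand carries the factor $\ell^{E}k^{w-1}$, while the $k$-th summand of Conjecture~\ref{conj mc hilb}, after transporting the insertions back by the above isometry, carries $k^{3g-3+N-\deg(\taut)}(\ell/k)^{E}$; so the identity you must verify is
\[
w-1+E \;=\; 3g-3+N-\deg(\taut).
\]
Now $w-1+E=2n(g-1)-1+\sum_i\deg(\gamma_i)$. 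Substituting the \emph{correct} constraint $\sum_i\deg(\gamma_i)=(2n-3)(1-g)+N+1-\deg(\taut)$ gives $w-1+E=3(g-1)+N-\deg(\taut)$, exactly as required. Substituting the constraint you quote gives instead $w-1+E=N-\deg(\taut)$, so the two sides differ by $k^{3g-3}$ — a genuine failure for every $g\neq 1$, in particular in the genus-$0$ case this paper is mainly about. The repair is only to use the correct virtual dimension, but as proposed the decisive computation fails.
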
 

This conjecture is equivalent to the one we have given in the introduction.
\begin{lemma}[{\cite[Lemma 3]{ObMC}}]
Conjecture~\ref{conj mc hilb} is equivalent to Conjecture~\ref{MCConjecture}.
\end{lemma}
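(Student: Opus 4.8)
The plan is to unwind both conjectures into statements about individual reduced invariants and then match them term by term, identifying the Hecke summation index with the divisor $k$ in Conjecture~\ref{conj mc hilb}.

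First I would extract Fourier coefficients from Conjecture~\ref{MCConjecture}. Writing $F_{g,1} = \sum_{d',r'} c(d',r') q^{d'}(-p)^{r'}$ with $c(d',r') = \blangle \taut; \gamma_1,\ldots,\gamma_N \brangle^{S^{[n]}}_{g, W+d'F+r'A}$ and applying the formal Hecke operator \eqref{Hecke formal} of weight $\kappa = n(2g-2+N)+\sum_i \wt(\gamma_i)$, the coefficient of $q^d(-p)^r$ on the right-hand side of \eqref{ellmc} becomes a sum over $a \mid (\ell,d,r)$. This rewrites Conjecture~\ref{MCConjecture} as the assertion
\begin{equation*}
\blangle \taut; \gamma_\bullet \brangle^{S^{[n]}}_{g,\ell W+dF+rA} = \ell^{-D} \sum_{a \mid (\ell,d,r)} a^{\kappa-1} (-1)^{r+r/a}\, \blangle \taut; \gamma_\bullet \brangle^{S^{[n]}}_{g, W + \frac{\ell d}{a^2}F + \frac{r}{a}A},
\end{equation*}
where $D = \sum_i \deg_{WF}(\gamma_i) = \sum_i(n+\wt(\gamma_i)-\deg(\gamma_i))$ and the $\ell$-power equals $\ell^{\sum_i(\deg(\gamma_i)-n-\wt(\gamma_i))}$.

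Next I would specialize Conjecture~\ref{conj mc hilb} to $\beta = \ell W + dF$, so that the divisors $k \mid (\beta,r)$ are exactly the $k \mid \gcd(\ell,d,r)$, and I would convert each primitive invariant on $S_k^{[n]}$ back to the elliptic K3. The class $\varphi_k(\beta/k)$ is primitive of Beauville--Bogomolov square $2\ell d/k^2$, so by deformation invariance through a family keeping the curve class of Hodge type (global Torelli theorem, as used in Section~\ref{subsec:monodromy}) the invariant $\blangle \taut; \varphi_k(\gamma_\bullet)\brangle^{S_k^{[n]}}_{g,\varphi_k(\beta/k)+\frac{r}{k}A}$ equals the elliptic invariant in the primitive class $W + \frac{\ell d}{k^2}F + \frac{r}{k}A$, with insertions transported by the induced real isometry. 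On the hyperbolic plane $\langle W, F\rangle$ this isometry carries $\frac{\ell}{k}W+\frac{d}{k}F$ to $W + \frac{\ell d}{k^2}F$, hence acts by $W \mapsto \frac{k}{\ell}W$, $F \mapsto \frac{\ell}{k}F$; by Remark~\ref{rmk:WFgrading} it scales a bihomogeneous class $\gamma_i$ by $(k/\ell)^{\deg_{WF}(\gamma_i)}$. Multilinearity of the bracket then produces the overall factor $(k/\ell)^{D}$.

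Substituting this identification turns Conjecture~\ref{conj mc hilb} into the same shape as the rewritten Conjecture~\ref{MCConjecture}, with $k$ playing the role of $a$. The $\ell$-powers agree at once ($\ell^{-D}$ on both sides), and it remains only to match the $k$-exponents: the Hecke side carries $a^{\kappa-1}$ while the covering side carries $k^{\,3g-3+N-\deg(\taut)+D}$. The final and most delicate step is to verify $\kappa - 1 = 3g-3+N-\deg(\taut)+D$, which after cancelling $\sum_i\wt(\gamma_i)$ reduces to the dimension constraint $\deg(\taut)+\sum_i\deg(\gamma_i) = \vd$ for a nonvanishing integral against the reduced class. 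This is exactly where the virtual dimension enters, and care is needed because the two exponent expressions would differ by $3g-3$ unless one uses the correct reduced virtual dimension $(2n-3)(1-g)+N+1$ recorded in Table~\ref{table:virtual dimension}. I expect this exponent bookkeeping, together with the precise matching of the transported insertions and the signs $(-1)^{r+r/k}$, to be the main obstacle; the identification of primitive invariants across different K3 surfaces via global Torelli is conceptually the crucial input but is already available from Section~\ref{subsec:monodromy}. The reverse implication follows by running the same matching backwards, since every step (Hecke coefficient extraction, deformation identification, insertion scaling) is an equivalence.
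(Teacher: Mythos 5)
Your core bookkeeping is correct, and it is the heart of the matter: extracting Fourier coefficients of $T_{k,\ell}$, taking $S_k=S$ with the hyperbolic rescaling $\varphi_k(W)=\tfrac{k}{\ell}W$, $\varphi_k(F)=\tfrac{\ell}{k}F$ (identity on $\{W,F\}^{\perp}$), the insertion scaling $(k/\ell)^{\deg_{WF}(\gamma_i)}$ via Remark~\ref{rmk:WFgrading}, and the exponent identity $\kappa-1=3g-3+N-\deg(\taut)+D$ with $\kappa=n(2g-2+N)+\sum_i\wt(\gamma_i)$ and $D=\sum_i\deg_{WF}(\gamma_i)$. I verified that this identity is indeed equivalent to the dimension constraint with the \emph{reduced} virtual dimension $(2n-3)(1-g)+N+1$ consistent with Table~\ref{table:virtual dimension} (the formula in the introduction is off), and that when the constraint fails both sides vanish, so the exponent mismatch is harmless there. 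Bear in mind that the paper itself does not prove this lemma but cites \cite[Lemma 3]{ObMC}, so you are supplying an argument rather than reproducing one.

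There are, however, two genuine gaps. First, your $\varphi_k$ is not always admissible: Conjecture~\ref{conj mc hilb} requires $\varphi_k(\beta/k)$ to be \emph{effective}, and $W+\tfrac{\ell d}{k^2}F=B+(1+\tfrac{\ell d}{k^2})F$ is effective only when $\ell d/k^2\geq -1$. Since $F_{g,\ell}$ runs over $d\geq-\ell$, this fails for classes with $d<0$; e.g.\ for $\beta=\ell B$ (so $d=-\ell$) and $k=1$ one gets $\ell d/k^2=-\ell^2<-1$ once $\ell\geq 2$. For such divisors the conjecture forces an admissible $\varphi_k$ landing in a primitive effective class of Beauville--Bogomolov square $2\ell d/k^2<-2$ (these exist, e.g.\ $aB+bF$ with $a(b-a)=\ell d/k^2$), whereas the corresponding Hecke coefficient is zero because the series $F_{g,1}$ has no such term. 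Matching these terms therefore needs the extra input that reduced invariants in primitive classes of square $<-2$ vanish --- true, by deforming within the Hodge locus to a K3 on which the class is no longer effective, but this must be stated and proved; your term-by-term identification silently assumes it. Second, the reverse implication is not just "running the matching backwards": Conjecture~\ref{conj mc hilb} quantifies over \emph{arbitrary} K3 surfaces, arbitrary effective $\beta$, and arbitrary admissible pairs $(S_k,\varphi_k)$, so deducing it from Conjecture~\ref{MCConjecture} requires (i) the global Torelli deformation of $(S,\beta)$ to the elliptic model $\ell W+dF$, available only when the primitive part of $\beta$ has square $\geq-2$ (otherwise one needs the same vanishing again), and (ii) independence of the right-hand side from the choice of $(S_k,\varphi_k)$, which needs a monodromy/density argument for the stabilizer of the class, subject to the positivity caveat the paper itself flags in Section~\ref{subsec:monodromy} (monodromy invariance only applies between classes pairing positively with a K\"ahler class). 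These two points are exactly what a complete proof of the lemma must supply beyond the coefficient matching you carried out.
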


\subsection{Proof of Theorem~\ref{thm:fiber classes}}

\emph{Step 1: Positive part.}
We apply the Multiple Cover Conjecture (in the form of Conjecture~\ref{conj mc hilb})
to the following series, where we sum only over curve classes which have positive fiber degree:
\[
F_{g,0}^{+}(\taut; \gamma_1, \ldots, \gamma_N) = 
\sum_{d \geq 1} \sum_{r \in \BZ} \left\langle \taut ; \gamma_1, \ldots, \gamma_N \right\rangle^{S^{[n]}}_{g, dF+rA} q^d (-p)^r.
\]
For any $k|(d,r)$ let
$\varphi_k : H^2(S,\BQ) \to H^2(S,\BQ)$ be the isometry defined by:
\[ F \mapsto \frac{k}{d} F, \quad W \mapsto \frac{d}{k} W, \quad \varphi_{k}|_{\{W,F\}^{\perp}} = \id. \]
Assuming that all $\gamma_i$ are written in the Nakajima basis with weightings from the fixed basis $\CB$ (defined in Remark~\ref{rmk:WFgrading}),
we obtain:
\[ F^{+}_{g,0}(\taut; \gamma_1, \ldots, \gamma_N)
= \sum_{d \geq 1} \sum_{r \in \BZ} \sum_{k|(d,r)} k^{b} \left( \frac{d}{k} \right)^a (-1)^{r/k} \left\langle \taut ; \gamma_1, \ldots, \gamma_N \right\rangle^{S^{[n]}}_{g, F+rA} p^r q^d \]
Using the monodromy \eqref{monodromy:involution} we have
\[
\left\langle \taut ; \gamma_1, \ldots, \gamma_N \right\rangle^{S^{[n]}}_{g, F+rA} = (-1)^{n N + \sum_i l(\gamma_i)}
\left\langle \taut ; \gamma_1, \ldots, \gamma_N \right\rangle^{S^{[n]}}_{g, F-rA}.
\]
Hence we conclude that:
\begin{multline*}
F^{+}_{g,0}(\taut; \gamma_1, \ldots, \gamma_N)
=
\left\langle \taut ; \gamma_1, \ldots, \gamma_N \right\rangle^{S^{[n]}}_{g, F} \sum_{d,k \geq 1} k^a d^b q^{kd} \\
+ 
\sum_{r \geq 1} (-1)^r \left\langle \taut ; \gamma_1, \ldots, \gamma_N \right\rangle^{S^{[n]}}_{g, F+rA}
\left( \sum_{k,d \geq 1} k^{a} d^{b} (p^k + (-1)^{n N + \sum_i l(\gamma_i)} p^{-k}) q^{kd} \right)\Bigg|_{p \mapsto p^r}.
\end{multline*}

We analyse now the second term on the right.
Since otherwise all invariants vanish, we can assume the dimension constraint
\[ \vd \ \Mbar_{g,N}(S^{[n]},\beta) = (2n-3)(1-g)+N+1 = \deg(\taut) + \sum_i \deg(\gamma_i) \]
or equivalently,
\begin{equation} a = 3g-3+N-\deg(\taut) = 2n(g-1)-1+\sum_i \deg(\gamma_i). \label{dim constraint} \end{equation}

Let further $\gamma_{i,j} \in H^{\ast}(S)$ be the cohomology weights of $\gamma_i$ in the Nakajima basis.
Let $V = \{ W, F \}^{\perp} \subset H^2(S,\BZ)$.
Since $\ev_{\ast}[ \Mbar_{g,N}(S^{[n]}, dF+rA) ]^{\vir}$
is invariant under the monodromy group $O(V,\BZ)$,
by standard invariant theory for the orthogonal group (e.g. \cite[Sec.6.1]{MarkedRelative})
we can assume that there is an even number of $\gamma_{ij}$ such that $\gamma_{ij} \in V$.
Indeed, otherwise, all the invariants $\left\langle \taut ; \gamma_1, \ldots, \gamma_N \right\rangle^{S^{[n]}}_{g, F+rA}$ vanishes and there is nothing to prove.
We obtain the following parity result.

\begin{lemma} \label{lemma:parity} $a + nN + \sum_i l(\gamma_i) \equiv b-1$ modulo $2$. \end{lemma}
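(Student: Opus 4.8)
The plan is to reduce the claimed congruence to a clean parity statement about Nakajima length versus weight, and then read off that statement from the explicit weight grading together with the invariant-theoretic constraint already invoked in Step~1.

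First I would substitute the two expressions available for the bookkeeping quantities. The dimension constraint \eqref{dim constraint} gives $a = 2n(g-1) - 1 + \sum_i \deg(\gamma_i)$, so modulo $2$ we have $a \equiv 1 + \sum_i \deg(\gamma_i)$. On the other side, unwinding the definition $\deg_{WF}(\gamma) = n + \wt(\gamma) - \deg(\gamma)$ yields $b = nN + \sum_i \wt(\gamma_i) - \sum_i \deg(\gamma_i)$. Plugging both into the desired congruence $a + nN + \sum_i l(\gamma_i) \equiv b - 1$ and cancelling the common terms $nN$ and $\sum_i \deg(\gamma_i)$ modulo $2$, the statement collapses to
\[ \sum_i l(\gamma_i) \equiv \sum_i \wt(\gamma_i) \quad \text{modulo } 2. \]
This is the only content of the lemma; the rest is bookkeeping.

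Next I would establish this congruence one Nakajima factor at a time. For a basis element $\delta \in \CB = \{ 1, \pt, W, F, e_a \}$ the weight grading of Section~\ref{sec:Weight grading} gives $\wt(\delta) = \pm 1$ when $\delta \in \{ W, \pt, F, 1 \}$ and $\wt(\delta) = 0$ when $\delta \in V = \{ W, F \}^{\perp}$. Since $+1 \equiv -1 \equiv 1$ modulo $2$, every factor whose cohomology weight lies in $\{ W, \pt, F, 1 \}$ contributes $1$ to both the length and the weight modulo $2$, whereas every factor whose weight lies in $V$ contributes $1$ to the length but $0$ to the weight. Hence for each insertion written in the Nakajima basis, $l(\gamma_i) - \wt(\gamma_i)$ is congruent modulo $2$ to the number of its Nakajima factors whose cohomology weight lies in $V$.

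Finally I would invoke the evenness already secured in Step~1: by monodromy invariance under $O(V,\BZ)$ and standard invariant theory, we may assume the total number of factors $\gamma_{ij}$ with $\gamma_{ij} \in V$ is even, since otherwise all the relevant invariants vanish. Summing the per-factor relation over all $i$, the right-hand side is exactly this total count, which is even, so $\sum_i l(\gamma_i) \equiv \sum_i \wt(\gamma_i)$ modulo $2$, completing the reduction. I do not expect a genuine obstacle here: the only point requiring care is the sign and parity bookkeeping in the first step, and the one substantive input — the evenness of the number of $V$-weighted factors — is already available from the invariant-theory argument preceding the lemma.
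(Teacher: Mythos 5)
Your proof is correct and takes essentially the same route as the paper's: both arguments rest on the dimension constraint \eqref{dim constraint}, mod-$2$ bookkeeping of Nakajima factors, and the evenness of the number of factors weighted by classes in $V = \{W,F\}^{\perp}$ supplied by the $O(V,\BZ)$-invariance argument. The only cosmetic difference is that you route the computation through the weight grading (reducing to $\sum_i l(\gamma_i) \equiv \sum_i \wt(\gamma_i)$ modulo $2$ and checking it factor by factor), whereas the paper tracks $\sum_{i,j}\deg(\gamma_{ij})$ and invokes the combinatorial description of $\deg_{WF}$ from Remark~\ref{rmk:WFgrading}; both collapse to the same count of $\{W,F\}$-weighted factors.
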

\begin{proof}[Proof of Lemma~\ref{lemma:parity}]
Using \eqref{deg Nakajima cycle} we have
\[
\sum_i \deg(\gamma_i) = nN - \sum_{i} l(\gamma_i) + \sum_{i,j} \deg(\gamma_{ij}).
\]
Hence by the dimension constraint \eqref{dim constraint} and modulo $2$ we have
\begin{align*}
a + nN + \sum_i l(\gamma_i)
& \equiv -1+\deg(\gamma_i) + n N + \sum_i l(\gamma_i) \\
& \equiv -1+\sum_{i,j} \deg(\gamma_{ij}) \\
& \equiv -1+ \sum_i |\{ j : \gamma_{ij} \in H^2(S) \}| \\
& \overset{(\ast)}{\equiv} -1+ \sum_i |\{ j : \gamma_{ij} \in \{ W, F \} \}| \\
& \equiv b - 1
\end{align*}
where in (*) we used that there is an even number of $\gamma_{ij}$ in $\{ W, F \}^{\perp}$.
\end{proof}

\begin{multline} \label{Fgplus eval}
F^{+}_{g,0}(\taut; \gamma_1, \ldots, \gamma_N)
=
\left\langle \taut ; \gamma_1, \ldots, \gamma_N \right\rangle^{S^{[n]}}_{g, F} \sum_{d,k \geq 1} k^a d^b q^{kd} \\
+ 
\sum_{r \geq 1} (-1)^r \left\langle \taut ; \gamma_1, \ldots, \gamma_N \right\rangle^{S^{[n]}}_{g, F+rA}
\left( \left( p \frac{d}{dp} \right)^a \sum_{k,d \geq 1} d^{b} (p^k + (-1)^{b+1} p^{-k}) q^{kd} \right)\Bigg|_{p \mapsto p^r}.
\end{multline}

\noindent \emph{Step 2: Fiber degree zero part.} It remains to compute the degree zero part
\[ F^{(0)}_{g,0}(\taut; \gamma_1, \ldots, \gamma_N) = \sum_{r \geq 1} \left\langle \taut ; \gamma_1, \ldots, \gamma_N \right\rangle^{S^{[n]}}_{g, rA} (-p)^r. \]

\begin{lemma} \label{lemma positive}
If $\sum_i \deg_{WF}(\gamma_i) \neq 0$, then $F^{(0)}_{g,0}(\taut; \gamma_1, \ldots, \gamma_N) = 0$. 
\end{lemma}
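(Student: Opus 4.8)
The plan is to show that each individual invariant $\langle \taut; \gamma_1, \ldots, \gamma_N \rangle^{S^{[n]}}_{g, rA}$ vanishes for every $r \geq 1$ under the weight hypothesis, by exploiting a symmetry of the purely exceptional class $rA$. Fix $r \geq 1$ and regard $c(\gamma_1, \ldots, \gamma_N) := \langle \taut; \gamma_1, \ldots, \gamma_N \rangle^{S^{[n]}}_{g, rA}$ as a multilinear functional on $H^{\ast}(S^{[n]})^{\otimes N}$ (the factor $\taut$ is pulled back from $\Mbar_{g,N}$ and is unaffected by monodromies of $S^{[n]}$). The monodromy group $\Mon(S) = O^+(H^2(S,\BZ))$ of the underlying K3 maps into the subgroup of $\Mon(S^{[n]})$ fixing $\delta$, and by Property~3 of Section~\ref{subsec:zariski closure} each such $g$ acts on $H^{\ast}(S^{[n]})$ by transporting the Nakajima generators through $\tilde g = g|_{H^2(S)} \oplus \id$. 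Since these operators fix $\delta$, they fix the dual class $A$, hence $\mon(g)(rA) = rA$; as $A$ is effective and stays algebraic and K\"ahler-positive along the associated deformation of $S$, the deformation invariance of Gromov--Witten invariants recorded in Section~\ref{subsec:monodromy} gives $c(\gamma_1, \ldots, \gamma_N) = c(\mon(g)\gamma_1, \ldots, \mon(g)\gamma_N)$. Thus $c$ is invariant under the diagonal action of $\Mon(S)$.

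Next I would upgrade this discrete invariance to a continuous one. The factorwise action of $O^+(H^2(S,\BZ))$ on $H^{\ast}(S^{[n]})$ is polynomial, and $\Mon(S)$ is Zariski dense in $\SO(H^2(S,\BC))_{\delta}$ (this is precisely the density statement underlying the justification of Property~3). Consequently $c$ is invariant under the diagonal action of the entire connected group $\SO(H^2(S,\BC))_{\delta}$, and in particular under the one-parameter subgroup $\exp(t\, W \wedge F)$ for all $t \in \BC$. Its integrated action is $e^{t\, h_{WF}}$ with $h_{WF} = \act(W \wedge F)$ by Property~1; here $W \wedge F \in \wedge^2 H^2(S) \subset \so(H^2(S))$ indeed annihilates $\delta$ (as $\delta \perp H^2(S)$), so this subgroup fixes $A$, consistent with the previous paragraph.

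Finally, differentiating $c(\gamma_1, \ldots, \gamma_N) = c(e^{t\,h_{WF}}\gamma_1, \ldots, e^{t\,h_{WF}}\gamma_N)$ at $t = 0$ shows that $c$ is annihilated by the diagonal infinitesimal action $\sum_{i=1}^N \id^{\otimes (i-1)} \otimes\, h_{WF} \otimes \id^{\otimes (N-i)}$. For the $(\wt,\deg)$-bihomogeneous insertions we have $h_{WF}\gamma_i = \deg_{WF}(\gamma_i)\,\gamma_i$ by Remark~\ref{rmk:WFgrading}, whence
\[
\Big( \sum_{i=1}^N \deg_{WF}(\gamma_i) \Big)\, c(\gamma_1, \ldots, \gamma_N) = 0.
\]
Therefore, if $\sum_i \deg_{WF}(\gamma_i) \neq 0$, every invariant $\langle \taut; \gamma_1, \ldots, \gamma_N \rangle^{S^{[n]}}_{g, rA}$ vanishes; summing the (finite, by Lemma~\ref{lemma:Laurent polynomial}) series over $r \geq 1$ yields $F^{(0)}_{g,0}(\taut; \gamma_1, \ldots, \gamma_N) = 0$, as claimed. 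I expect the one genuinely delicate point to be the density step: rigorously passing from invariance under the arithmetic group $\Mon(S)$ to invariance under the full one-parameter subgroup $e^{t\,h_{WF}}$, which rests on the polynomiality of the factorwise action together with Borel density of $\Mon(S)$ in $\SO(H^2(S,\BC))_{\delta}$ (the same input used to establish Property~3); the remaining steps are bookkeeping of the $\deg_{WF}$-eigenvalues and the observation that $W \wedge F$ fixes $\delta$.
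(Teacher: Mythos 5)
Your proof is correct and is essentially the paper's own argument: the paper likewise uses monodromy invariance (for monodromies fixing $rA$, induced from $\Mon(S)$ via Property~3) together with the Zariski density underlying that property to conclude that $\ev_{\ast}\big( \taut \cdot [\Mbar_{g,N}(S^{[n]},rA)]^{\vir} \big)$ has $h_{WF}$-weight zero, so its pairing with $\gamma_1 \otimes \cdots \otimes \gamma_N$ of weight $\sum_i \deg_{WF}(\gamma_i) \neq 0$ vanishes. Your version merely dualizes this, working with the multilinear functional and differentiating the one-parameter subgroup $e^{t\,h_{WF}}$ explicitly, which is the same computation.
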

\begin{proof}
By monodromy invariance, the class
\[ \ev_{\ast}\left( \taut \cdot [ \Mbar_{g,N}(S^{[n]},rA) ]^{\vir} \right) \in H^{\ast}(S^{[n]})^{\otimes N} \]
has weight zero with respect to the grading operator $h_{WF} = \act(W \wedge F)$.
On the other hand,
\[ h_{WF}(\gamma_1 \otimes \cdots \otimes \gamma_N) = \sum_{i} \gamma_1 \otimes \cdots h_{WF}(\gamma_i) \otimes \cdots \gamma_N
= (\sum_i \deg_{WF}(\gamma_i)) \gamma_1 \otimes \cdots \otimes \gamma_N. \]
Hence if $\sum_i \deg_{WF}(\gamma_i) \neq 0$, the pairing between these two classes vanishes.
\end{proof}

\begin{lemma} \label{lemma:zero}
If $\sum_i \deg_{WF}(\gamma_i) = 0$ and under the assumptions of Theorem~\ref{thm:fiber classes}, we have
\[ F^{(0)}_{g,0}(\taut; \gamma_1, \ldots, \gamma_N) = \sum_{r \geq 1} (-1)^r \left\langle \taut ; \gamma_1, \ldots, \gamma_N \right\rangle^{S^{[n]}}_{g, F+rA}
\sum_{k \geq 1} k^a p^{kr}. \]
\end{lemma}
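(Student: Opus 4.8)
The plan is to reduce the stated generating-series identity to the coefficient-wise statement that, for every $s\geq 1$,
\[ \langle \taut; \gamma_1, \ldots, \gamma_N\rangle^{S^{[n]}}_{g, sA} = \sum_{r\mid s}(-1)^{s+r}(s/r)^a\,\langle \taut; \gamma_1, \ldots, \gamma_N\rangle^{S^{[n]}}_{g, F+rA}. \]
Granting this, I would substitute it into $F^{(0)}_{g,0}=\sum_{s\geq 1}\langle\cdots\rangle_{g,sA}(-p)^s$ and reindex by $s=kr$: using $(-1)^s(-1)^{s+r}=(-1)^r$, $p^s=p^{kr}$ and $(s/r)^a=k^a$, the double sum collapses to $\sum_{r\geq 1}(-1)^r\langle\cdots\rangle_{g,F+rA}\sum_{k\geq 1}k^a p^{kr}$, which is the asserted right-hand side. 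Two ingredients drive the proof of the coefficient identity: the shift monodromy of Section~\ref{monodromy:shift}, and the Multiple Cover Conjecture in the form of Conjecture~\ref{conj mc hilb}, which is available by hypothesis (i) of Theorem~\ref{thm:fiber classes}.

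First I would move the pure exceptional class off the zero-fibre locus. Since $T_\delta$ acts on $H_2$ by $A\mapsto -F$ and $F\mapsto 0$, the monodromy $e^{-T_\delta}$ sends $sA$ to $s(F+A)=sF+sA$. Both classes are effective, hence of Hodge type $(2n-1,2n-1)$ and positive against a Kähler class, so deformation invariance along $e^{-T_\delta}$ (Section ``Monodromies preserving the Hodge type of a curve class'') gives $\langle\taut;\gamma_1,\ldots,\gamma_N\rangle_{g,sA}=\langle\taut;e^{-T_\delta}\gamma_1,\ldots,e^{-T_\delta}\gamma_N\rangle_{g,s(F+A)}$. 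Applying Conjecture~\ref{conj mc hilb} to $s(F+A)$ with $\beta=sF$ and exceptional coefficient $s$, and choosing the real isometry $\varphi_k$ ($k\mid s$) by $F\mapsto (k/s)F$, $W\mapsto (s/k)W$, identity on $\{W,F\}^{\perp}$, so that $\varphi_k(\beta/k)=F$ is primitive effective on $S$ itself, yields
\[ \langle\taut;\gamma_1,\ldots,\gamma_N\rangle_{g,sA}=\sum_{k\mid s}k^a(-1)^{s+s/k}\,\langle\taut;\varphi_k(e^{-T_\delta}\gamma_1),\ldots,\varphi_k(e^{-T_\delta}\gamma_N)\rangle_{g,\,F+(s/k)A}. \]

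The heart of the argument is to show that the transported insertions $\varphi_k(e^{-T_\delta}\gamma_i)$ may be replaced by $\gamma_i$ under the hypothesis $\sum_i\deg_{WF}(\gamma_i)=0$. By Remark~\ref{rmk:WFgrading} the factorwise operator $\varphi_k$ acts diagonally in the Nakajima basis with eigenvalue $(s/k)^{\#W-\#F}=(s/k)^{\deg_{WF}}$, so $\varphi_k=(s/k)^{h_{WF}}$ with $h_{WF}=\act(W\wedge F)$. A direct computation of the Lie bracket in $\wedge^2(V\oplus U_{\BQ})$ gives $[h_{WF},T_\delta]=\act(-\delta\wedge F)=-T_\delta$, whence conjugation yields the operator identity $(s/k)^{h_{WF}}e^{-T_\delta}=e^{-(k/s)T_\delta}(s/k)^{h_{WF}}$. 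Therefore, for $\deg_{WF}$-homogeneous $\gamma_i$ and with $r=s/k$,
\[ \varphi_k(e^{-T_\delta}\gamma_i)=(s/k)^{\deg_{WF}(\gamma_i)}\,e^{-(1/r)T_\delta}\gamma_i. \]
Inside the $N$-point invariant the scalar prefactors multiply to $(s/k)^{\sum_i\deg_{WF}(\gamma_i)}=1$. Expanding $e^{-(1/r)T_\delta}$ by multilinearity produces terms $\langle\taut;T_\delta^{m_1}\gamma_1,\ldots,T_\delta^{m_N}\gamma_N\rangle_{g,F+rA}$; since $T_\delta$ lowers $\deg_{WF}$ by one, such a term has total modified degree $-\sum_i m_i$. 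When $\sum_i m_i\geq 1$ this is negative, and as $F+rA$ has $(d,r)\neq(0,0)$ the invariant vanishes by hypothesis (ii). Only the term with all $m_i=0$ survives, equal to $\langle\taut;\gamma_1,\ldots,\gamma_N\rangle_{g,F+rA}$, which proves the coefficient identity.

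The main obstacle is precisely this last step: the multiple-cover isometry introduces the \emph{non-integer} conjugation parameter $1/r$, and one must see that the weight hypothesis and hypothesis (ii) conspire to remove all resulting corrections. It is the condition $\sum_i\deg_{WF}(\gamma_i)=0$ that cancels the overall scaling $(s/k)^{\sum_i\deg_{WF}(\gamma_i)}$, and it is hypothesis (ii) that annihilates every lower-weight term of $e^{-(1/r)T_\delta}$; neither alone suffices. The remaining points are routine bookkeeping that I would verify explicitly, namely the commutator $[h_{WF},T_\delta]=-T_\delta$, the effectivity and positivity needed to legitimize the monodromy in the first step, and the sign and exponent reindexing $s=kr$ used to pass between the coefficient identity and the generating series.
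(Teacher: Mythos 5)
Your proof is correct and follows essentially the same route as the paper's: the shift monodromy $e^{-T_\delta}$ sending $sA$ to $sF+sA$, the commutator $[h_{WF},T_\delta]=-T_\delta$, the weight-zero hypothesis $\sum_i \deg_{WF}(\gamma_i)=0$, condition (ii), and the multiple cover conjecture, followed by the same reindexing $s=kr$. The only difference is one of ordering: the paper uses condition (ii) to strip the tail of $e^{-T_\delta}\gamma_i$ in the class $rF+rA$ \emph{before} applying the multiple cover formula (so the $\varphi_k$-scaling is simply $(r/k)^b=1$), whereas you apply the multiple cover formula first and then remove the corrections via the conjugation identity $\varphi_k\, e^{-T_\delta}=(s/k)^{\deg_{WF}}e^{-(k/s)T_\delta}$ — the two orderings are equivalent.
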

\begin{proof}
Recall the monodromy $e^{-T_{\delta}}$ from Section~\ref{monodromy:shift}
which satisfies $e^{-T_{\delta}} A = A + F$.
We conclude that
\begin{equation} \label{39sig0ig}
\left\langle \taut ; \gamma_1, \ldots, \gamma_N \right\rangle^{S^{[n]}}_{g, rA}
=
\left\langle \taut ; e^{-T_{\delta}} \gamma_1, \ldots, e^{-T_{\delta}} \gamma_N \right\rangle^{S^{[n]}}_{g, rF+rA}. \end{equation}
The operator $T_{\delta}$ satisfies the commutation relation
\[ [h_{WF}, T_{\delta}] = [ \act(W \wedge F), \act(\delta \wedge F) ] = -T_{\delta} \]
and hence $\deg_{WF}(T_{\delta} \gamma) = \deg_{WF}(\gamma)-1$.
Because we assumed $\sum_i \deg_{WF}(\gamma_i)=0$ and condition (ii) of Theorem~\ref{thm:fiber classes},
only the leading term in $e^{-T_{\delta}} \gamma_i$ can contribute:
\[ 
\text{(Term in Eqn. \eqref{39sig0ig})}
\, = \, \left\langle \taut ; \gamma_1, \ldots, \gamma_N \right\rangle^{S^{[n]}}_{g, rF+rA}. \]
Using the multiple cover formula (Conjecture~\ref{conj mc hilb}) 
and $b=\sum_i \deg_{WF}(\gamma_i)=0$ this becomes:
\[ = \sum_{k|r} k^{a} (-1)^{r+r/k} \left\langle \taut ; \gamma_1, \ldots, \gamma_N \right\rangle^{S^{[n]}}_{g, F+\frac{r}{k}A}. \]
The claim of the lemma follows from this by rearranging the sums.
\end{proof}

\noindent \emph{Step 3: Proof of Eqn. \eqref{multline:sum Fg0}.}
If $b=\sum_i \deg_{WF}(\gamma_i) > 0$, then
by Lemma~\ref{lemma positive} 
the series $F_{g,0}(\taut; \gamma_1, \ldots, \gamma_N)$
is given by \eqref{Fgplus eval},
and since $[\A_{b+1}]_{q^0}$ is a constant in $p$,
the right hand side of \eqref{Fgplus eval} is precisely as claimed in \eqref{multline:sum Fg0}.
If $b=0$, we add the evaluation of Lemma~\ref{lemma:zero} to \eqref{Fgplus eval}
and use the straightforward identity:
\[
\sum_{k \geq 1} k^a p^{kr} = \text{constant} + \left( - \left( p \frac{d}{dp} \right)^a \frac{1}{2} \frac{p^{1/2}+p^{-1/2}}{p^{1/2} - p^{-1/2}} \right)_{p \mapsto p^r}
\]

\noindent \emph{Step 4: Quasi-Jacobi form property}.
Since $\A_{b+1} \in \MQJac_{b+1,0}$, the derivative $p\frac{d}{dp}$ increases the weight by $1$,
and if the operator $f(p,q) \mapsto f(p^r,q)$ sends quasi-Jacobi forms of weight $k$ and index $m$
to quasi-Jacobi forms of weight $k$ and index $m r^2$ (see \cite[Thm I.4.1]{EZ}),
we see that the second term on the right in \eqref{multline:sum Fg0}
is a quasi-Jacobi form of weight
\[ a+b+1 = 2n(g-1)+\sum_i \deg(\gamma_i) + \sum_i \deg_{WF}(\gamma_i) = n(2g-2+N) + \sum_i \wt(\gamma_i). \]

By the monodromy of Section~\ref{mon:involution} we have
\[ \left\langle \taut ; \gamma_1, \ldots, \gamma_N \right\rangle^{S^{[n]}}_{g, F} = 
(-1)^{n N + l(\gamma_1) + \ldots + l(\gamma_N)}
\left\langle \taut ; \gamma_1, \ldots, \gamma_N \right\rangle^{S^{[n]}}_{g, F}. \]
Hence the first term in \eqref{multline:sum Fg0} unless 
$n N + l(\gamma_1) + \ldots + l(\gamma_N)$ is even, in which case 
$a \equiv b+1$ modulo $2$ by~\ref{lemma:parity}.
If $a > b$ we find in $\BC[[q]]/\BC$ the equality
\[ \sum_{d,k \geq 1} k^a d^b q^{kd} \equiv \left( q \frac{d}{dq} \right)^{b} \sum_{m \geq 1} \sum_{k|m} k^{a-b} q^m, \]
and since this is the $q$-derivative of an Eisenstein series we get
\[ \text{constant} + \sum_{d,k \geq 1} k^a d^b q^{kd} \in \QMod_{a+b+1}. \]
The case $b>a$ is parallel.
\qed

\subsection{Conclusion}
We prove Theorem~\ref{thm:HAE g=0,N=3} and Theorem~\ref{thm:fiber classes intro} of the introduction.

\begin{proof}[Proof of Theorem \ref{thm:fiber classes intro}(i)]
If there is an index $i$ (let us say $i=1$) with $\gamma_1 = F \cdot \tilde{\gamma}_1$,
then by using $F = [E]$ for a smooth elliptic fiber $\iota : E \hookrightarrow S$ a straightforward computation gives
\begin{equation}\label{reduction to E}
F_{g,0}^S(\taut; \gamma_1, \ldots, \gamma_N) = \sum_{d=1}^{\infty}
\left\langle \taut \cdot (-1)^{g-1} \lambda_{g-1} ; \iota^{\ast}(\tilde{\gamma}_1), \iota^{\ast}(\gamma_2) , \ldots , \iota^{\ast}(\gamma_N) \right\rangle^{E}_{g,d[E]} q^d,
\end{equation}
where we used the standard notation for the (ordinary, non-reduced) Gromov-Witten invariants of the elliptic curve $E$.
In this case Conjecture~\ref{conj:Fiber class}(i) follows from \cite{OkPand_Completed_Cycles}, and one checks that the holomorphic anomaly equation of \cite{HAE}
implies the one stated in Conjecture~\ref{conj:Fiber class}(ii).

In case there is no such $i$,
by expressing $\taut$ as boundary classes and the splitting formula, as well as using the divisor equation,
we can reduce the claim to the case $(g,N)=(0,3)$. This base case holds by inspection from the explicit evaluation: 
\[ F_{g,0}^S(1; W,W,W) = \langle W,W,W \rangle^{S}_{0,F} \sum_{k,d \geq 1} d^3 q^{kd} = \text{cst} + 24 G_4(q). \qedhere \]
\end{proof}

We prove a basic vanishing for the Gromov-Witten theory of the elliptic K3 surface $S$:
\begin{lemma} \label{lemma:K3} If $\sum_i \deg_{WF}(\gamma_i) < 0$, then
every $\left\langle \taut ; \gamma_1, \ldots, \gamma_N \right\rangle^S_{g,dF}=0$ for all $d>0$.
\end{lemma}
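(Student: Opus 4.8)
The plan is to reduce the statement to the Gromov--Witten theory of a single elliptic fiber, and then to a dimension count. First I would use multilinearity of the invariants to assume that each $\gamma_i$ is one of the basis classes $\{1, W, F, e_a, \pt\}$ of Remark~\ref{rmk:WFgrading}. For these, $\deg_{WF}(1) = \deg_{WF}(\pt) = \deg_{WF}(e_a) = 0$, $\deg_{WF}(W) = 1$ and $\deg_{WF}(F) = -1$, so the hypothesis $\sum_i \deg_{WF}(\gamma_i) < 0$ becomes $\#F > \#W$; in particular at least one insertion equals $F$. After reordering I may take $\gamma_1 = F$ and apply the reduction to the elliptic fiber \eqref{reduction to E} (using $F = [E]$ with $\iota : E \hookrightarrow S$ a smooth fiber). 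This expresses the invariant as a genus $g$, degree $d$ Gromov--Witten invariant of $E$ carrying the class $(-1)^{g-1}\lambda_{g-1}$ and insertions $\iota^{\ast}(1), \iota^{\ast}(\gamma_2), \dots, \iota^{\ast}(\gamma_N)$.

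Next I would exploit the restriction map $\iota^{\ast} : H^{\ast}(S) \to H^{\ast}(E)$, which sends $1 \mapsto 1_E$, $\pt \mapsto \pt_E$, $W \mapsto \pt_E$ and kills both $F$ and every $e_a$ (the latter because $e_a \cdot F = 0$). Hence if a second insertion equals $F$, or any insertion equals some $e_a$, the elliptic invariant already vanishes. I may therefore assume that $F$ occurs exactly once and that no $e_a$ occurs; the inequality $\#W < \#F = 1$ then forces $\#W = 0$. At this stage the surviving configuration consists of the single $F$ (contributing the fundamental class $1_E$ after restriction) together with insertions $\gamma_2, \dots, \gamma_N \in \{1, \pt\}$, that is, $E$-insertions lying in $\{1_E, \pt_E\}$ with at least one fundamental class.

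I would then finish by splitting on the genus. For $g = 0$ the factor $\lambda_{-1}$ vanishes and there is nothing to prove. For $g \geq 1$, comparing the reduced virtual dimension $3 - 2g + N$ of $\Mbar_{g,N}(S,dF)$ with the virtual dimension $2g - 2 + N$ of $\Mbar_{g,N}(E,d)$ and accounting for $\deg \lambda_{g-1} = g-1$ shows that a nonzero invariant forces the number of $\pt_E$-insertions to equal $3 - 3g$. For $g \geq 2$ this is negative, so the invariant vanishes; for $g = 1$ it forces all $E$-insertions to be the fundamental class $1_E$, and the resulting genus one, positive degree invariant of $E$ vanishes by the string and dilaton equations (using $c_1(T_E) = 0$, which makes the genus one dilaton factor $2g-2+n$ vanish once the remaining markings are exhausted).

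I expect the main work to be this last genus dichotomy: carrying out the dimension bookkeeping with $\lambda_{g-1}$ cleanly, and establishing the genus one vanishing for \emph{all} tautological $\taut$ --- in particular when $\taut$ carries a $\psi$-class at the $F$-marking, so that the fundamental-class insertion appears as a descendent $\tau_a(1_E)$ rather than $\tau_0(1_E)$ and must be reduced by repeated string and dilaton applications. The unstable ranges $2g-2+N \leq 0$, where $\taut = 1$, should be treated as degenerate cases of the same argument.
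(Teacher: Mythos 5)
Your reduction to basis classes, the observation that $\sum_i \deg_{WF}(\gamma_i)<0$ forces an $F$-insertion, and the passage to the elliptic fiber via \eqref{reduction to E} all coincide with the paper's proof, as does killing a second $F$-insertion or any $e_a$-insertion using $\iota^{\ast}(F)=\iota^{\ast}(e_a)=0$. But your endgame contains two concrete errors that break the argument. First, $\iota^{\ast}(\pt)=0$, not $\pt_E$: the point class lies in $H^4(S)$ and $H^4(E)=0$, so there is no ``surviving configuration'' with $\pt_E$-insertions --- any $\pt$-insertion kills the invariant outright, and after your reductions every remaining insertion restricts to $1_E$. Second, the reduced virtual dimension of $\Mbar_{g,N}(S,dF)$ is $g+N$, not $3-2g+N$; the formula you used is the one misprinted in the introduction, and it is inconsistent with Table~\ref{table:virtual dimension} and with \eqref{dim constraint} at $n=1$ (for instance the genus $0$, unmarked Yau--Zaslow counts require reduced virtual dimension $0$). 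These two errors conspire to produce your relation ``number of $\pt_E$-insertions $=3-3g$''. With the correct restriction map and the correct dimension, the dimension constraints on the $S$-side and the $E$-side are \emph{equivalent} in the only nontrivial case (one $F$, all other insertions equal to $1$): both say $\deg(\taut)=g-1+N$, which satisfies $\deg(\taut)\le 3g-3+N$ for every $g\ge 1$. So the genus dichotomy collapses: there is no dimension contradiction for $g\ge 2$, and your proposal has no argument left in precisely the case that matters.

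The missing idea is translation invariance, which is the paper's case (ii): when no $W$-insertion is present, the integrand on $\Mbar_{g,N}(E,d)$ consists of $\tau^{\ast}(\alpha)$, the Hodge class $\lambda_{g-1}$, and fundamental-class evaluations; all of these are pulled back along the quotient by the translation action of $E$ on the moduli space (equivalently, via the trivialization $\Mbar_{g,N}(E,d)\cong E\times \ev_1^{-1}(0_E)$ none of them has a component along the $E$-factor), hence the integral vanishes for every genus simultaneously, see \cite[Sec.5.4]{OkPandVir}. Note that $\ev_i^{\ast}(\pt_E)$ is exactly the kind of class this argument does \emph{not} kill, which is why the presence or absence of $W$-insertions (restricting to $\pt_E$) is the real dividing line, not the genus. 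Your string/dilaton sketch for $g=1$ is in effect a special instance of this vanishing, but it would still require the comparison of $\tau^{\ast}\psi_i$ with $\psi_i$ on the map space and does nothing for $g\ge 2$; replacing the dimension count by the translation-invariance argument repairs the proof and makes the genus case distinction unnecessary.
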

\begin{proof}
We assume $\gamma_i \in \CB$ for all $i$.
By Remark~\ref{rmk:WFgrading}, if $\sum_i \deg_{WF}(\gamma_i) < 0$
there exists at least one cohomology class with $\gamma_i=F$.
Hence by expressing the invariants of $S$ in terms of the invariants of the elliptic fiber $E$ as in \eqref{reduction to E} we see that,
(i) if $\gamma_j = F$ for some $j \neq i$, then the invariant vanishes,
and (ii) if there are no other cohomology classes with $\gamma_i=W$ then the integrand on $\Mbar_{g,N}(E,d)$ is
invariant under translation by $E$ and hence the integral vanishes (see e.g. \cite[Sec.5.4]{OkPandVir}).
Since we are always at least in one of the two cases, this proves the claim.
\end{proof}

\begin{proof}[Proof of Theorem~\ref{thm:HAE g=0,N=3}]
In case $(g,N)=(0,3)$ we can take $\taut=1$, so $a=0$.
By Theorem~\ref{MCConjecture for g=0 N<=3} the multiple cover conjecture holds for this $(g,N)$.
Moreover, using the GW/Hilb correspondence (Theorem~\ref{thm:GW/Hilb})
the product formula for the relative Gromov-Witten theory of $(S \times \p^1, S_{0,1,\infty})$,
and Lemma~\ref{lemma:K3} also condition (ii) of Theorem~\ref{thm:fiber classes} holds.
Hence the first two claims follow directly from Theorem~\ref{thm:fiber classes} and the $\frac{d}{dG_2}$-holomorphic anomaly equation
for $\A_n$ proven in \ref{thm:2132}.
It remains to prove \eqref{HAE dda for g=0N=3}.
This last claim follows by 
either (a) using the monodromy of Section~\ref{monodromy:shift}
to derive the elliptic transformation law in the meromorphic case
or by (b) applying the GW/Hilb correspondence (possible since the multiple cover conjecture is proven
for fiber classes \cite{BB}, see Remark~\ref{rmk:GW/Hilb imprimitive})
 and then use \eqref{fiberwise ddg2} to calculate the $\frac{d}{d\A}$ derivative
in terms of the $z$-expansion (similarly to what was done in Section~\ref{sec:HAE primitive case}).
We leave the details to the reader.
\end{proof}

\section{Applications} \label{sec:applications}
In this section we prove two applications of the holomorphic anomaly equation for the Hilbert scheme stated in the introduction.
The first considers the $2$-point function on the Hilbert scheme 
(Corollary~\ref{cor:transformation law of 2 pt operator})
which is implied by Proposition~\ref{prop:HAE for Z} below.
The second concerns the Jacobi form property for CHL Calabi-Yau threefolds (Theorem~\ref{thm:CHL}).
Here we first prove by a deformation argument a version of the holomorphic anomaly equation for generating series which keep track of curve classes of the form $W+dF+\alpha$ where $\alpha$ runs over a lattice $E_8(-2) \subset \Pic(S)$ orthogonal to $W,F$ (Proposition~\ref{prop:extended HAE}).
Then Theorem~\ref{thm:CHL} follows formally by the degeneration formula and the GW/Hilb correspondence (Theorem~\ref{thm:GW/Hilb}).

\subsection{The $2$-point function}
\label{subsec:2 pt function}
Recall the notation of Section~\ref{sec:Weight grading}, in particular the LLV algebra
\[ \Fg(S^{[n]}) = \wedge^2( V \oplus U_{\BR} ), \quad V = H^2(S^{[n]}). \]
Extend the definition of the operator $T_{\alpha}$ by defining 
\[ T_{\alpha} := \act(\alpha \wedge F). \]
for all $\alpha \in V \oplus U_{\BR}$ with $\alpha \perp \{ W, F \}$.
In particular, we have
\begin{equation} T_{e} = \act(e \wedge F) = e_{F}, \quad T_{f} = \act(f \wedge F) = -U. \label{esfdg09s} \end{equation} 

For any operator $a \in \Fg(S^{[n]})$ which is homogeneous of degree $\deg(a)$
(i.e. if $\deg(a\gamma) = \deg(\gamma)+\deg(a)$ for all homogeneous $\gamma$),
define the induced operator
\begin{equation} \label{esfdg09s2}
\begin{gathered}
 a : H^{\ast}(S^{[n]})^{\otimes N} \to H^{\ast}(S^{[n]})^{\otimes N}, \\
a(\gamma_1 \otimes \ldots \otimes \gamma_{N}) 
= \sum_{i=1}^{N} \gamma_1 \otimes \cdots \gamma_{i-1} \otimes ((-1)^{i \cdot \deg(a)} a \gamma_i) \otimes \gamma_{i+1} \otimes \ldots \otimes \gamma_{N}.
\end{gathered}
\end{equation}

By the quasi-Jacobi form part of Theorem~\ref{thm:Main result}
the generating series $\Z^{S^{[n]}}(p,q)$ defined in \eqref{def 2 pt function}
can be identified with a vector with entries quasi-Jacobi forms.
We prove the following anomaly equation,
which combined with Lemma~\ref{lemma:QJac transformation laws} (and using that $\Wt$ is anti-symmetric)
immediately implies Corollary~\ref{cor:transformation law of 2 pt operator}.

\begin{prop} \label{prop:HAE for Z}
\begin{gather*}
\frac{d}{d G_2} \Z^{S^{[n]}}(p,q) = - \sum_{\alpha,\beta} (\tilde{g}^{-1})_{\alpha \beta} T_{\alpha} T_{\beta} \Z^{S^{[n]}}(p,q) \\
\frac{d}{d \A} \Z^{S^{[n]}}(p,q) = - T_{\delta} \Z^{S^{[n]}}(p,q),
\end{gather*}
where $\alpha,\beta$ run over a basis of $\{ W, F \}^{\perp} \subset V \oplus U_{\BQ}$
with intersection matrix $\tilde{g}_{ab} = \langle \alpha, \beta \rangle$.
\end{prop}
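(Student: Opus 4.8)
The plan is to deduce both equations from the holomorphic anomaly equation of Conjecture~\ref{Conj:HAE}, which holds by Theorem~\ref{thm:Main result} in the case $g=0$, $N=2$, $\ell=1$, $\taut=1$, by reading that scalar equation as an identity of cohomology classes on $(S^{[n]})^2$. Concretely, for all $\gamma_1,\gamma_2$ one has $\int_{(S^{[n]})^2}\widetilde{\Z}^{S^{[n]}}\cup(\gamma_1\boxtimes\gamma_2)=F^{S^{[n]}}_{0}(1;\gamma_1,\gamma_2)$, so pairing the asserted class identities against $\gamma_1\boxtimes\gamma_2$ must reproduce the scalar anomaly equations of Conjecture~\ref{Conj:HAE}. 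Since a class is determined by its pairings and the operators $T_\alpha$, $U$, $e_F$ are (skew-)self-adjoint for the Poincar\'e pairing, it suffices to match, term by term, the pairing of the right-hand sides of the Proposition with $\gamma_1\boxtimes\gamma_2$ against the four terms of Conjecture~\ref{Conj:HAE}. Throughout I use that $T_\alpha$ acts on the two-point class by the total action \eqref{esfdg09s2}, and the identifications $T_e=e_F$, $T_f=-U$ of \eqref{esfdg09s}.

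For $\frac{d}{d\A}$ this is quick. The $\frac{d}{d\A}$-part of Conjecture~\ref{Conj:HAE} gives $\frac{d}{d\A}F^{S^{[n]}}_{0}(1;\gamma_1,\gamma_2)=T_\delta F^{S^{[n]}}_{0}(1;\gamma_1,\gamma_2)$, which in the total-operator convention \eqref{esfdg09s2} is exactly the pairing of $\frac{d}{d\A}\widetilde{\Z}^{S^{[n]}}=-T_\delta\widetilde{\Z}^{S^{[n]}}$ against $\gamma_1\boxtimes\gamma_2$, the sign being that incurred by the skew-adjoint $T_\delta$. To pass from $\widetilde{\Z}^{S^{[n]}}$ to $\Z^{S^{[n]}}$ I note two facts: first, $T_\delta$ is skew-adjoint and degree-preserving, so $(T_\delta\otimes 1+1\otimes T_\delta)\Delta_{S^{[n]}}=0$; second, writing $\mathbf{G}=-\Theta^2 D_x\A$ and using that $\Theta$ is a weak Jacobi form together with $[\frac{d}{d\A},D_x]=2\,\mathrm{ind}$ from \eqref{eq:comm relations 1}, one gets $\frac{d}{d\A}\mathbf{G}=0$, so the scalar $\mathbf{G}^n/(\Theta^2\Delta)$ is $\frac{d}{d\A}$-closed. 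Hence the correction term in \eqref{def 2 pt function} is annihilated by both sides and the $\frac{d}{d\A}$-equation descends to $\Z^{S^{[n]}}$.

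The $\frac{d}{dG_2}$-equation is the substantial part. Using the basis $\{e_a\}\cup\{\delta\}\cup\{e,f\}$ of $\{W,F\}^\perp\subset V\oplus U_\BQ$, the target Casimir decomposes as $\sum_{\alpha,\beta}(\tilde g^{-1})_{\alpha\beta}T_\alpha T_\beta=\sum_{a,b}(g^{-1})_{ab}T_{e_a}T_{e_b}+\tfrac{1}{2-2n}T_\delta^2+(T_eT_f+T_fT_e)$, with hyperbolic part $-(T_eT_f+T_fT_e)=e_FU+Ue_F$. The first summand matches verbatim the fourth term of Conjecture~\ref{Conj:HAE}, and the first term of that conjecture (the $F_{g-1}$ contribution) vanishes since $g=0$. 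Thus the entire content is to show that the degeneration term and the $\psi$-term of Conjecture~\ref{Conj:HAE} together reproduce the $\delta$-block $-\tfrac{1}{2-2n}T_\delta^2$, the hyperbolic block $e_FU+Ue_F$, and a multiple of $\mathbf{G}^n/(\Theta^2\Delta)\,\Delta_{S^{[n]}}$. I would compute these exactly as in the proof of Theorem~\ref{thm:prim HAE}: for $N=2$ the degeneration term forces the splitting into an empty and a full part, and the resulting one-point series is evaluated through Proposition~\ref{K3xP1 lemma} and Proposition~\ref{prop:rubber in terms of non-rubber}, producing the $\mathbf{G}^n/(\Theta^2\Delta)$ contribution; the $\psi$-insertions are removed via the relative $\psi$-splitting used in Term 3 of Section~\ref{sec:HAE primitive case} together with Proposition~\ref{prop:rubber std}, producing the $e_FU+Ue_F$ and $T_\delta^2$ structure after using $T_\delta=[e_\delta,U]$.

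Finally I would account for the correction term: from $\mathbf{G}=-\Theta^2 D_x\A$ and $[\frac{d}{dG_2},D_x]=-2\frac{d}{d\A}$ one computes $\frac{d}{dG_2}\mathbf{G}=2\Theta^2$, hence $\frac{d}{dG_2}\big(\mathbf{G}^n/(\Theta^2\Delta)\big)=2n\,\mathbf{G}^{n-1}/\Delta$; combined with the diagonal identities $(e_F\otimes 1+1\otimes e_F)\Delta_{S^{[n]}}=2(1\otimes e_F)\Delta_{S^{[n]}}$ and $T_\delta\Delta_{S^{[n]}}=0$, this verifies that subtracting $\mathbf{G}^n/(\Theta^2\Delta)\,\Delta_{S^{[n]}}$ in \eqref{def 2 pt function} exactly cancels the anomalous $\mathbf{G}$-terms on both sides, turning the equation for $\widetilde{\Z}^{S^{[n]}}$ into the clean equation for $\Z^{S^{[n]}}$. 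The main obstacle is precisely this bookkeeping: extracting the operator $e_F$ (equivalently $T_e$) and the $\delta$-block from the purely geometric degeneration and $\psi$-terms, and matching every $\mathbf{G}^n/(\Theta^2\Delta)$-contribution, including the adjoint signs incurred when transporting the self-adjoint $e_F$ and the skew-adjoint $T_\alpha$ across the Poincar\'e pairing. This is the step where the extended lattice $V\oplus U_\BQ$ — rather than the smaller $\{W,F\}^\perp\subset H^2(S)$ visible in Conjecture~\ref{Conj:HAE} — genuinely enters, and tracking it carefully is the crux of the argument. The resulting class identities, combined with Lemma~\ref{lemma:QJac transformation laws}, then yield Corollary~\ref{cor:transformation law of 2 pt operator}.
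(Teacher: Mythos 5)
Your overall strategy is the paper's: deduce both identities from Theorem~\ref{thm:Main result} at $(g,N,\ell)=(0,2,1)$, pair the claimed class identities against $\gamma_1\boxtimes\gamma_2$, and transport operators across the Poincar\'e pairing using that $T_\alpha$ is skew-adjoint for $\alpha\in V$ and self-adjoint for $\alpha\in U_{\BQ}$. Your treatment of the $\frac{d}{d\A}$-equation is correct and essentially identical to the paper's ($\frac{d}{d\A}\mathbf{G}=0$, skew-adjointness of $T_\delta$, and $T_\delta\Delta_{S^{[n]}}=0$ for the induced operator \eqref{esfdg09s2} are exactly what is needed), and your scalar computations $\frac{d}{dG_2}\mathbf{G}=2\Theta^2$ and the block decomposition of the Casimir are also correct. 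A secondary difference: your plan to re-run the relative/rubber computations of Sections~\ref{sec:Rel GW of K3xC}--\ref{sec:HAE primitive case} is unnecessary and not directly applicable (those are set up for the three-pointed geometry $(S\times\p^1,S_{0,1,\infty})$); the paper stays entirely inside Hilbert-scheme Gromov--Witten theory, evaluating the degeneration term by $U(\pt_{S^{[n]}})=n\,\Fq_1(F)\Fq_1(\pt)^{n-1}\1$ together with the one-point evaluation of \cite[Thm.~2]{HilbK3}, and removing the $\psi$-classes by the divisor equation plus the splitting axiom.

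The genuine gap is in the $\frac{d}{dG_2}$-equation, at the point where you claim the $\delta$-block $\tfrac{1}{2-2n}T_\delta^2$ of the extended Casimir is ``produced by the degeneration and $\psi$-terms \dots\ after using $T_\delta=[e_\delta,U]$''. It is not. For $N=2$ the scalar anomaly equation has exactly three surviving terms, and they account for everything \emph{except} the $\delta$-block: (i) the degeneration term equals $2F_{0,1}^{S^{[n]}}(U(\gamma_1\cup\gamma_2))=2n\,\mathbf{G}^{n-1}\Delta^{-1}\int_{S^{[n]}}\gamma_1\cup\gamma_2$, which cancels exactly against $\bigl(\int\gamma_1\cup\gamma_2\bigr)\frac{d}{dG_2}\frac{\mathbf{G}^n}{\Theta^2\Delta}$; (ii) the $\psi$-terms, converted via the divisor equation with respect to $D(F)$ (the unique divisor whose pairing with every class $W+dF+rA$ is the constant $1$), give $F_{0,1}^{S^{[n]}}(\psi_1;U\gamma_1,\gamma_2)=F_{0,1}^{S^{[n]}}(U\gamma_1,e_F\gamma_2)-F_{0,1}^{S^{[n]}}(e_FU\gamma_1,\gamma_2)$, whose symmetrization is precisely $2\,Ue_F=-(T_eT_f+T_fT_e)$ in the convention \eqref{esfdg09s2} --- the hyperbolic block and nothing more; (iii) the $e_a$-block matches the fourth term verbatim. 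The class $\delta$ and the operator $e_\delta$ never enter this computation: $\delta$ pairs with $W+dF+rA$ to $r$, so using it as a divisor produces $p\frac{d}{dp}$-derivatives of the generating series, not Casimir terms, and the identity $T_\delta=[e_\delta,U]$ is used in the paper only in Section~\ref{sec:HAE primitive case}, where rubber integrals (Proposition~\ref{prop:rubber std}) genuinely create $\delta\cdot U(\lambda)$-insertions; no such insertions arise here. Consequently your argument, if executed, would establish the identity with the Casimir restricted to $\{e_a\}\cup\{e,f\}$ and would never account for the summand $-\tfrac{1}{2-2n}T_\delta^2\,\Z^{S^{[n]}}$ in the statement. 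Closing this requires the separate assertion that this block annihilates $\Z^{S^{[n]}}$, equivalently (by iterating the second equation of the Proposition) that $\bigl(\frac{d}{d\A}\bigr)^2$ kills every entry of $\Z^{S^{[n]}}$ --- a nontrivial claim your write-up obscures by attributing the block to terms that do not contain it. (The paper's own matching likewise produces only (i)--(iii) and then concludes by (anti)symmetry, so this $\delta$-block is exactly the place where extra care, and an explicit argument, is needed.)
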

\begin{proof}
By Theorem~\ref{thm:Main result} 
for any $\gamma_1, \gamma_2 \in H^{\ast}(S^{[n]})$ we have
\begin{multline*}
\frac{d}{d G_2} F_{0,1}^{S^{[n]}}(\gamma_1, \gamma_2)=
2 F_{0,1}^{S^{[n]}}( U(\gamma_1 \cup \gamma_2)) - 2 F_{0,1}^{S^{[n]}}(\psi_1 ; U \gamma_1, \gamma_2) - 2 F_{0,1}(\psi_2 ; \gamma_1, U \gamma_2) \\
- \sum_{a,b} (g^{-1})_{ab} F_{0,1}^{S^{[n]}}( T_{e_a} T_{e_b} ( \gamma_1 \otimes \gamma_2 )).
\end{multline*}
Let $\pt_{S^{[n]}} = \Fq_1(\pt)^n \1$ be the class of a point on $S^{[n]}$.
By $U(\pt_{S^{[n]}}) = n \Fq_1(F) \Fq_1(\pt)^{n-1} \1$ and the evaluation \cite[Thm.2]{HilbK3} we have
\[
2 F_0^{S^{[n]}}( U(\gamma_1 \cup \gamma_2)) = 2 F_0^{S^{[n]}}(U(\pt)) \int_{S^{[n]}} \gamma_1 \cup \gamma_2
= 2n \frac{\mathbf{G}(p,q)^{n-1}}{\Delta(q)} \int_{S^{[n]}} \gamma_1 \cup \gamma_2.
\]
Similarly, using the divisor equation with respect to $\frac{1}{(n-1)!} \Fq_1(F) \Fq_1(1)^{n-1} \1$ to add a marking,
rewriting the $\psi$-class in terms of boundary and applying the splitting axiom of Gromov-Witten theory
(see for example Section~1.2 of \cite{COT} for a similar case) yields
\[
F_{0,1}^{S^{[n]}}(\psi_1 ; U \gamma_1, \gamma_2) = 
F_{0,1}^{S^{[n]}}(U \gamma_1, e_F \gamma_2) - F_{0,1}^{S^{[n]}}( e_F U \gamma_1, \gamma_2).
\]
Rewriting this using \eqref{esfdg09s} and using convention \eqref{esfdg09s2} we get:
\begin{multline*}
- 2 F_{0,1}^{S^{[n]}}(\psi_1 ; U \gamma_1, \gamma_2) - 2 F_{0,1}^{S^{[n]}}(\psi_2 ; \gamma_1, U \gamma_2) 
=
2 F_{0,1}^{S^{[n]}}( U e_F ( \gamma_1 \otimes \gamma_2 )) =
-2 F_{0,1}^{S^{[n]}}( T_e T_f (\gamma_1 \otimes \gamma_2 )).
\end{multline*}
Finally, by the commutation relations \eqref{eq:comm relations 1} we have
\[ \frac{d}{d G_2} \mathbf{G}(p,q) = 2 \Theta(p,q)^2. \]
Putting all this together we obtain:
\begin{align*}
\frac{d}{d G_2} \int_{S^{[n]} \times S^{[n]}} \Z^{S^{[n]}}(p,q) \cup (\gamma_1 \otimes \gamma_2)
& =
\frac{d}{d G_2} F_{0,1}^{S^{[n]}}(\gamma_1 \otimes \gamma_2) - \left( \int_{S^{[n]}} \gamma_1 \cup \gamma_2 \right) \frac{d}{d G_2} \frac{\mathbf{G}^n}{\Theta^2 \Delta(q)} \\
& =
- \sum_{\alpha,\beta} (G^{-1})_{\alpha \beta} T_{\alpha} T_{\beta} \Z^{S^{[n]}}(p,q)
\end{align*}
The first claim now follows since $T_{\alpha}$ is anti-symmetric if $\alpha \in V$,
and symmetric if $\alpha \in U_{\BQ}$ (both orthogonal to $W,F$).
The second claim follows from
$\frac{d}{d \A} \mathbf{G} = 0$,
the holomorphic anomaly equation for $\frac{d}{d\A}$ (proven in Theorem~\ref{thm:Main result}), and since $T_{\delta}$ is anti-symmetric.
\end{proof}

\subsection{CHL Calabi-Yau threefolds}
We work in the setting introduced in Section~\ref{subsec:intro CHL}.
For a general element $\alpha \in E_8(-2)$ (where $E_8(-2) \subset \Pic(S)$ is the anti-invariant part of the symplectic involution $g : S \to S$)
and with $W=B+F$ as usual, consider the curve class
\[ W + d F + \alpha \in H_2(S,\BZ). \]

Let $b_1, \ldots, b_8$ be a fixed integral basis of $E_8(-2)$,
and identify $w=(w_1,\ldots, w_8) \in \BC^8$ with $\sum_i w_i b_i \in E_8(-2) \otimes \BC$.
Given a class $\alpha \in E_8(-2)$, we write
\begin{equation}\label{eq:zetabeta}
\zeta^{\alpha} = \exp( \langle w, \alpha\rangle ) = \prod_{i=1}^{8} e( \langle b_i, \alpha \rangle w_i ).
\end{equation}
We also refer to \cite[Section 2.1.4]{OPix2} for parallel definitions.

Form the extended generating series
\begin{equation*}
\widetilde{F}^{S^{[n]}}_{g}(\taut; \gamma_1, \ldots, \gamma_N) = 
\sum_{d = -1}^{\infty} \sum_{r \in \BZ} \sum_{\alpha \in E_8(-2)} \left\langle \taut ; \gamma_1, \ldots, \gamma_N \right\rangle^{S^{[n]}}_{g, W+dF+\alpha+rA} q^d (-p)^r \zeta^{\alpha}.
\end{equation*}
Usually we drop the supscript $S^{[n]}$. The first step is to prove the following:
\begin{prop} \label{prop:extended HAE}
If Conjecture~\ref{Conj: Quasi Jacobi property} and Conjecture~\ref{Conj:HAE} holds for $(g,N)$, then
\[
\widetilde{F}^{S^{[n]}}_{g}(\taut; \gamma_1, \ldots, \gamma_N) \in  \frac{1}{\Delta(q)} \QJac_{k+12,(n-1) \oplus \frac{1}{2} E_8(-2)}(\Gamma_0(2) \ltimes (2 \BZ \oplus \BZ)),
\]
where $k=n(2g-2+N)+\sum_i \wt(\gamma_i) - 6$ and
$\QJac_{k,L}$ is the vector space of weight $k$ multi-variable quasi-Jacobi forms of lattice index $L$ 
as defined in \cite[Sec.1]{OPix2},
except that here we work with respect to the Jacobi group $\Gamma_0(2) \ltimes (2 \BZ \oplus \BZ)$.\footnote{More explicitly, the quasi-Jacobi forms we consider will simply be linear combinations of derivatives of the theta function of the $E_8(2)$-lattice, see the proof.}
Moreover,
\begin{equation} \label{HAEextended}
\begin{aligned}
\frac{d}{d G_2}
\widetilde{F}_{g}^{S^{[n]}}(\taut; \gamma_1, \ldots, \gamma_N) 
&= 
\widetilde{F}_{g-1}^{S^{[n]}}(\taut'; \gamma_1, \ldots, \gamma_N, U) \\
& + 2 \sum_{\substack{g=g_1 + g_2 \\ \{ 1, \ldots, N \} = A \sqcup B }} \widetilde{F}_{g_1}^{S^{[n]}}(\taut_1 ; \gamma_A, U_1) F_{g_2}^{S^{[n]}, \std}(\taut_2 ; \gamma_B , U_2 ) \\
& - 2 \sum_{i=1}^{N} \widetilde{F}_{g}^{S^{[n]}}( \psi_i \cdot \taut ; \gamma_1, \ldots, \gamma_{i-1}, U \gamma_i, \gamma_{i+1}, \ldots, \gamma_N) \\
& - \sum_{a,b} (\widehat{g}^{-1})_{ab} T_{e_a} T_{e_b} F_{g}^{S^{[n]}}(\taut; \gamma_1, \ldots, \gamma_N),
\end{aligned}
\end{equation} 
where the $e_a$ form a basis of $( \mathrm{Span}_{\BZ}(B,F) \oplus E_8(-2) )^{\perp} \subset H^2(S,\BQ)$
with intersection matrix $\widehat{g}_{ab} = \langle e_a, e_b \rangle$, and
\[
\frac{d}{d \A} \widetilde{F}_{g}^{S^{[n]}}(\taut; \gamma_1, \ldots, \gamma_N)  = T_{\delta} \widetilde{F}_{g}^{S^{[n]}}(\taut; \gamma_1, \ldots, \gamma_N).
\]
\end{prop}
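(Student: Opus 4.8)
The plan is to deduce Proposition~\ref{prop:extended HAE} from the curve-class case (Conjectures~\ref{Conj: Quasi Jacobi property} and~\ref{Conj:HAE}) by summing over the lattice $E_8(-2)$ and packaging the $\zeta^\alpha$-dependence into a theta-type Jacobi form of lattice index. First I would fix $(g,N)$ and observe that $\widetilde{F}_g = \sum_{\alpha \in E_8(-2)} F_g^{W+\cdots+\alpha}\,\zeta^\alpha$, so the holomorphic anomaly equations for $\widetilde F_g$ follow \emph{term-by-term} from those of Conjecture~\ref{Conj:HAE} applied in each curve class $W+dF+\alpha+rA$, provided the two modular-anomaly operators and the extra $\zeta^\alpha$-weight are compatible. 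The key point is that the insertions $\gamma_i$ and the operators $U, T_\delta, T_{e_a}$ act only on cohomology and commute with the summation over $\alpha$; hence the $\frac{d}{d\A}$ equation is immediate, and the $\frac{d}{dG_2}$ equation is immediate \emph{except} for the last (Noether--Lefschetz / anomaly-matrix) term. I would isolate that term as the main obstacle, since the inverse intersection matrix $(\widehat g^{-1})_{ab}$ is taken over the smaller lattice $(\mathrm{Span}_{\BZ}(B,F)\oplus E_8(-2))^\perp$, whereas Conjecture~\ref{Conj:HAE} uses $\{F,W\}^\perp$; the discrepancy is precisely the $E_8(-2)$-directions, and these must reappear as the derivatives in the Jacobi variables $w$, i.e.\ as the $\frac{d}{dG_2}$-anomaly of the $E_8(-2)$-theta function.

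The second step, and the technical heart, is to identify the $\alpha$-summation with a multivariable Jacobi form. By Conjecture~\ref{Conj: Quasi Jacobi property} each $F_g^{W+dF+\alpha+rA}$ lies in $\frac{1}{\Delta(q)}\QJac_{k+12,n-1}$, and the monodromy group $O(E_8(-2))$ acts compatibly (as in Step~1 of Proposition~\ref{prop:constrains monodromy}). I would invoke the invariant theory for the orthogonal group of $E_8(-2)$ to argue that the dependence on $\alpha$ is governed by derivatives of the genus-one theta function
\[
\Theta_{E_8(-2)}(w,\tau) = \sum_{\alpha \in E_8(-2)} q^{-\alpha^2/4}\,\zeta^{\alpha},
\]
whose index is $\tfrac{1}{2}E_8(-2)$ and which transforms under $\Gamma_0(2)\ltimes(2\BZ\oplus\BZ)$ because $E_8(-2)=E_8(2)$ up to sign is even with the stated level. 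Concretely, since $W+dF+\alpha$ has square $2d + \alpha^2$, the $q$-grading shift by $\alpha^2/2$ combines with $\zeta^\alpha$ to produce a theta-factor, and by the footnote's remark every $\widetilde F_g$ is a finite $\BC$-linear combination (with quasi-Jacobi coefficients in $p,q$) of $w$-derivatives of $\Theta_{E_8(-2)}$. The weight bookkeeping gives $k+12 = n(2g-2+N)+\sum_i\wt(\gamma_i)+6$, matching the stated $k$ after the $-6$ shift coming from the half of $E_8(-2)$ having rank $8$ (each theta-direction contributing $-\tfrac12$ to the weight, so $-4$, together with the residual $-\tfrac{1}{2}\chi$ normalization from $\Delta$).

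The third step is to match the anomaly term precisely. Here I would write the full $\{F,W\}^\perp$-sum from Conjecture~\ref{Conj:HAE} as an orthogonal direct sum over the $E_8(-2)$-part and its complement $(\mathrm{Span}_{\BZ}(B,F)\oplus E_8(-2))^\perp$. The complement gives exactly the $(\widehat g^{-1})_{ab}T_{e_a}T_{e_b}$ term of~\eqref{HAEextended}. The $E_8(-2)$-part, by contrast, does not act on the cohomology insertions but on the curve class: each $T_{e_a}$ with $e_a\in E_8(-2)$ shifts $\alpha$, and applying $\frac{d}{dG_2}$ to $\Theta_{E_8(-2)}$ reproduces precisely this quadratic action (this is the standard heat-equation / anomaly relation $\frac{d}{dG_2}$ acting on lattice theta functions, analogous to Theorem~\ref{thm:2132} for $\A_n$). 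Thus the $E_8(-2)$-directions are absorbed into the Jacobi-form structure in the $w$-variables rather than appearing as an explicit cohomological term, which is exactly why the sum in~\eqref{HAEextended} runs only over the complement. I expect this matching --- reconciling the $\frac{d}{dG_2}$-anomaly of the theta function with the missing lattice directions in the Noether--Lefschetz sum --- to be the one genuinely delicate computation; everything else is deformation-invariance and linearity. I would carry it out by comparing Fourier coefficients in $\zeta^\alpha$ on both sides, using that the $O(E_8(-2),\BZ)$-invariance forces the $\alpha$-dependence of the Gromov--Witten invariants to be constant on orbits, so that the two expressions agree coefficient-wise.
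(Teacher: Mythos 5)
Your overall architecture---absorb the sum over $\alpha \in E_8(-2)$ into the theta function of $E_8(2)$ and its $w$-derivatives, then match the anomaly equation by letting the $E_8(-2)$-directions of the anomaly matrix cancel against theta-function contributions---is the same as the paper's. However, the mechanism you propose for the crucial reduction step has a genuine gap. Conjectures~\ref{Conj: Quasi Jacobi property} and~\ref{Conj:HAE} are statements about the series \eqref{Fgl}, i.e.\ about curve classes $\ell(B+F)+dF+rA$ with \emph{no} $E_8(-2)$-component; they cannot be applied ``in each curve class $W+dF+\alpha+rA$'' as in your first step, and $O(E_8(-2),\BZ)$-invariant theory does not bridge this: constancy of the invariants on orthogonal-group orbits neither relates classes with nonzero $\alpha$ to the classes the conjectures govern, nor does it produce the theta-derivative structure (it only organizes the $\zeta$-sum into norm shells). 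The missing ingredient is the lattice-translation monodromy: for $\alpha \in E_8(-2)$ the operator $e^{-T_{\alpha}}$ with $T_{\alpha} = \act(\alpha \wedge F)$ is a monodromy operator in the sense of Section~\ref{subsec:monodromy} (equivalently, the action induced by translation $t_{-\alpha}:S \to S$ by a section), and it satisfies $e^{-T_{\alpha}}( W + dF+rA + \alpha ) = W + \big(d + \tfrac{1}{2}\langle \alpha, \alpha \rangle \big)F+ r A$. Deformation invariance therefore gives
\[
\widetilde{F}_{g}(\taut; \gamma_1, \ldots, \gamma_N)
= \sum_{\alpha \in E_8(-2)} F_{g}\big(\taut ; e^{-T_{\alpha}} \gamma_1, \ldots, e^{-T_{\alpha}} \gamma_N \big)\, q^{- \frac{1}{2} \langle \alpha, \alpha \rangle} \zeta^{\alpha},
\]
and writing $T_{\alpha} = \sum_{i,j} h^{ij} \langle \alpha, b_i\rangle\, T_{b_j}$ converts the $\alpha$-sum into $w$-derivatives $D_{b_i}$ of $\Theta_{E_8(2)}(\zeta,q) = \sum_{\alpha} q^{-\frac{1}{2}\langle\alpha,\alpha\rangle}\zeta^{\alpha}$ applied against $F_g$ with insertions twisted by $e^{-\sum_{i,j}h^{ij}D_{b_i}T_{b_j}}$. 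This identity is what makes the conjectures applicable (only standard classes occur on the right) and what exhibits $\widetilde{F}_g$ as a finite combination of derivatives of $\Theta_{E_8(2)}$ with quasi-Jacobi coefficients. Note also that the correct exponent is $q^{-\frac{1}{2}\langle\alpha,\alpha\rangle}$, not $q^{-\alpha^2/4}$ as in your proposal.

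The matching of the anomaly term is likewise misdescribed. Since $\Theta_{E_8(2)}$ is an honest Jacobi form (of weight $+4$; your bookkeeping assigns the theta directions total weight $-4$, which has the wrong sign), one has $\frac{d}{dG_2}\Theta_{E_8(2)}=0$, so there is no ``heat-equation anomaly of the theta function'' to invoke. What cancels the $E_8(-2)$-block of $\sum_{a,b}(g^{-1})_{ab}T_{e_a}T_{e_b}$---which, after the reduction above, genuinely acts on the twisted insertions $e^{-T_{\alpha}}\gamma_i$, contrary to your claim that it ``does not act on the cohomology insertions but on the curve class''---are the cross terms arising because $\frac{d}{dG_2}$ fails to commute with the derivatives $D_{b_i}$ hitting the theta function; these commutators are computed from the relations in \cite[Eqn.(12)]{OPix2} (the lattice analogue of \eqref{eq:comm relations 1}). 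One also needs $(e^{-T_{\alpha}}\otimes e^{-T_{\alpha}})(U)=U$ so that the first three terms of \eqref{HAEextended} come out untwisted, and $[T_{\delta},T_{\alpha}]=0$ for the $\frac{d}{d\A}$-equation. With the translation-monodromy identity in place, your outline can be completed exactly as intended; but as written, your steps rest on an inapplicable form of the conjectures and on invariant theory that is too weak to substitute for it.
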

\begin{proof}
For $\alpha \in E_8(-2)$ the operator $T_{\alpha} = \act(\alpha \wedge F)$ 
satisfies
\[ e^{-T_{\alpha}}( W + dF+rA + \alpha ) = W + \left(d + \frac{1}{2}\langle \alpha, \alpha \rangle \right)F+ r A. \]
Moreover, $e^{-T_{\alpha}}$ can either be viewed as a monodromy operator (as in Section \ref{subsec:monodromy})
or identified with the induced action on the Hilbert schemes coming from the automorphism $t_{-\alpha} : S \to S$ given by translation by the section labeled by $-\alpha$,
compare \cite[Sec.3.4]{OPix2}.
In either case, we have invariance of Gromov-Witten invariants, so we find:
\begin{align*}
& \widetilde{F}_{g}^{S^{[n]}}(\taut; \gamma_1, \ldots, \gamma_N) \\
& = 
\sum_{d,r} \sum_{\alpha \in E_8(-2)} 
\left\langle \taut ; e^{-T_{\alpha}} \gamma_1, \ldots, e^{-T_{\alpha}} \gamma_N \right\rangle^{S^{[n]}}_{g, W+\left(d + \frac{1}{2}\langle \alpha, \alpha \rangle \right)F+ r A} q^d (-p)^r \zeta^{\alpha} \\
& = \sum_{\tilde{d},r} 
\left\langle \taut ; e^{-T_{\alpha}} \gamma_1, \ldots, e^{-T_{\alpha}} \gamma_N \right\rangle^{S^{[n]}}_{g, W+ \tilde{d} F + r A}
 q^{\tilde{d}}  q^{- \frac{1}{2} \langle \alpha, \alpha \rangle} (-p)^r \zeta^{\alpha} \\
& = \sum_{\alpha \in E_8(-2)} F_{g}(\taut ; e^{-T_{\alpha}} \gamma_1, \ldots, e^{-T_{\alpha}} \gamma_N ) q^{- \frac{1}{2} \langle \alpha, \alpha \rangle} \zeta^{\alpha}.
\end{align*}
Let $h^{ij}$ be the inverse matrix of the intersection matrix $\langle b_i, b_j \rangle$. Then
\[ T_{\alpha} = \sum_{i,j} h^{ij} \langle \alpha, b_i \rangle T_{b_j}. \]
Moreover, let
\begin{equation} \Theta_{E_8(2)}(\zeta,q) = \sum_{\alpha \in E_8(-2)} q^{- \frac{1}{2} \langle \alpha, \alpha \rangle} \zeta^{\alpha} \label{esgsdf} \end{equation}
be the theta functions of the $E_8(2)$ lattice,
 which is a Jacobi form of weight $\frac{1}{2} \mathrm{rk} E_8(-2) = 4$ 
and lattice index $\frac{1}{2} E_8(2)$ for the Jacobi group $\Gamma_0(2) \ltimes (2 \BZ \times \BZ)$, see 
\cite[Sec.3]{Ziegler}.\footnote{Concretely, the theta function $\Theta_{E_8}(\tau,z)$ for the unimodular lattice $E_8$ is a Jacobi form
for the full Jacobi group $\SL_2(\BZ) \ltimes \BZ^2$, and we replace $\tau$ by $2 \tau$, which introduces the congruence subgroup.}
Similarly, if we multiply the summand in \eqref{esgsdf} with products of $\langle \alpha, b_i \rangle$,
the function becomes derivatives of the theta functions by the differential operators 
\[ D_{b_i} = \frac{1}{2 \pi i} \frac{d}{d w_i}. \]
For example,
\[
\sum_{\alpha \in E_8(-2)} \langle \alpha, b_i \rangle q^{- \frac{1}{2} \langle \alpha, \alpha \rangle} \zeta^{\alpha} = D_{b_i} \Theta_{E_8(2)}(\zeta,q).
\]
Putting this together we find that,
\begin{equation} \label{FGefsd}
\widetilde{F}_{g}^{S^{[n]}}(\taut; \gamma_1, \ldots, \gamma_N)
=
F_{g}\left( \taut ; e^{ -\sum_{i,j} h^{ij} D_{b_i} T_{b_j} } \gamma_1,\, \ldots, \, e^{-\sum_{i,j} h^{ij} D_{b_i} T_{b_j}} \gamma_N \right) \Theta_{E_8(2)}(\zeta,q)
\end{equation}
which is understood as expanding all the exponentials and then applying the derivatives $D_{b_i}$ to the theta function.
The operator $D_{b_i}$ preserves the algebra of quasi-Jacobi forms, see \cite{OPix2}.
Moreover, since $D_{b_i}$ increases the weight by $1$, and $T_{b_i}$ is of degree $-1$ with respect to the weight grading $\wt$ on cohomology,
we conclude that \eqref{FGefsd} is a quasi-Jacobi form of weight 
equal to the weight of $F_{g}(\taut; \gamma_1, \ldots, \gamma_N)$ plus $4$.
Finally, the claimed holomorphic anomaly equations also follow from \eqref{FGefsd} by a straightforward computation:
The terms where $\frac{d}{d G_2}$ does not interact with the derivatives $D_{b_i}$ are evaluated by Conjecture~\ref{Conj:HAE}.
For any $\alpha \in E_8(-2)$ one has $(e^{-T_{\alpha}} \otimes e^{-T_{\alpha}})(U) = U$ (proven by differentiating with respect to $\alpha$ and then as in Lemma~\ref{lemma: weight of U}).
Hence one sees that these terms give precisely the $4$ terms in \eqref{HAEextended}
up to the extra term coming from summing over the basis of $E_8(-2)$ in the last term.
This extra term cancels with the terms coming from interactions of $\frac{d}{d G_2}$ with the $D_{b_i}$. These are calculated using the commutation relations \cite[Eqn.(12)]{OPix2}.
Since the $E_8$-theta function does not depend on $p$, the $\frac{d}{d \A}$ derivative follows directly
from the one in Conjecture~\ref{Conj:HAE}.
\end{proof}

\begin{proof}[Proof of Theorem~\ref{thm:CHL}]
By the arguments of \cite{ObReduced} we can work with stable pairs invariants of $X$.
We then use the degeneration formula for the degeneration
\[ (S \times E)/\BZ_2 \rightsquigarrow (S \times \p^1)/ ( (s,0) \sim (gs, \infty)), \]
which was worked out explicitly in \cite[Sec.1.6]{BO-CHL}.
This reduces us to invariants of $(S \times \p^1, S_{0,\infty})$
with relative condition specified with the graph
of the automorphism of $S^{[n]}$ induced by the involution $g : S \to S$,
\[ \Gamma_{g} \in H^{\ast}(S^{[n]} \times S^{[n]}). \]
We then apply Nesterov's wall-crossing \cite{N1,N2,QuasiK3}.
Putting all together yields:
\begin{align} 
\DT_n(p,q) 
& = \frac{1}{2} \tilde{F}_0(\Gamma_g)|_{\zeta^{\alpha}=1} - \frac{1}{2} \sum_{\alpha, d, r} q^d p^r \mathrm{Coeff}_{q^{d + \frac{1}{2} \langle \alpha, \alpha \rangle} p^r}
\left( \frac{\mathbf{G}(p,q)^{n}}{\Theta(p,q)^2 \Delta(q)} \right) \int_{S^{[n]} \times S^{[n]}} \Delta_{S^{[n]}} \cdot \Gamma_g \notag \\
& = \frac{1}{2} \tilde{F}_0(\Gamma_g)|_{\zeta^{\alpha}=1} - \frac{1}{2} \frac{\mathbf{G}(p,q)^n}{\Theta^2(p,q) \Delta(q)} \Theta_{E_8(2)}(q) \mathrm{Tr}(g |H^{\ast}(S^{[n]}) ), \label{e0sdif0d}
\end{align}
where $\Theta_{E_8(2)}(q) = \sum_{\alpha \in E_8(-2)} q^{-\frac{1}{2} \langle \alpha, \alpha \rangle} = E_4(q^2)$ is the theta function of the $E_8$-lattice.
This shows that $\DT_n(p,q)$ is a quasi-Jacobi form of weight $-6$ and index $n-1$ for $\Gamma_0(2)$.

It remains to compute the derivative with respect to $G_2$ and $\A$ of the first term (the second is clearly Jacobi).
Since the anomaly operators $\frac{d}{d G_2}$ and $\frac{d}{d \A}$ commute with specializing of the variable $\zeta$
(compare \cite[Sec.1.3.5]{OPix2}) we have
\[
\frac{d}{d G_2}\left(  \tilde{F}_0(\Gamma_g)\Big|_{\zeta^{\alpha}=1} \right)
= \left( \frac{d}{dG_2} \tilde{F}_{0}(\Gamma_g) \right)\Big|_{\zeta^{\alpha}=1}.
\]
By Proposition~\ref{prop:extended HAE}, arguing then as in the proof of Proposition~\ref{prop:HAE for Z},
and using $[T_{e_a},g]=0$ for $e_a \in E_8(-2)^{\perp}$
and $[U,g]=0$ one finds
\[
\frac{d}{d G_2} \tilde{F}_0(\Gamma_g)|_{\zeta^{\alpha}=1}
=
2n \frac{\mathbf{G}(p,q)^{n-1}}{\Delta(q)} \Theta_{E_8(2)}(q) \mathrm{Tr}(g |H^{\ast}(S^{[n]}) ).
\]
Since this cancels precisely with the $G_2$-derivative of the second term in \eqref{e0sdif0d} we get
\[ \frac{d}{d G_2} \DT_n(p,q) = 0. \]
The claim $\frac{d}{d \A} \DT_n(p,q)=0$ follows from $T_{\delta}(\Gamma_g) = [T_{\delta},g]=0$.
\end{proof}

KTH Royal Institute of Technology, Department of Mathematics 

georgo@kth.se\\

\end{document}